\documentclass[11pt, reqno]{amsart}

\title{Unipotent Representations and the Categorical Trace of Frobenius}
\author{Wyatt Reeves}
\date{September 30, 2026}

\usepackage[utf8]{inputenc}
\usepackage[T1]{fontenc}
\usepackage{baskervald}
\usepackage{amsmath}
\usepackage{amssymb}
\usepackage{amsfonts}
\usepackage{amsthm}
\usepackage[all,arc]{xy}
\usepackage{enumerate}
\usepackage{mathrsfs}
\usepackage[dvipsnames]{xcolor}
\usepackage{appendix}
\usepackage{changepage}
\usepackage{slashed}
\usepackage{tikz-cd}
\usepackage{tcolorbox}
\usepackage{hyperref}
\usepackage{cleveref}
\usepackage{thmtools,thm-restate}
\usepackage{verbatim}
\usepackage{stmaryrd}
\usepackage{spectralsequences}
\usepackage{mathtools}
\usepackage{dynkin-diagrams}
\usepackage[mathscr]{euscript}

\hypersetup{
    colorlinks=true,
    linkcolor=Maroon,
    citecolor=MidnightBlue,
    urlcolor=MidnightBlue
}

\usepackage{graphicx}
\graphicspath{ {images/} }

\DeclareMathSymbol{A}{\mathalpha}{operators}{`A}
\DeclareMathSymbol{B}{\mathalpha}{operators}{`B}
\DeclareMathSymbol{C}{\mathalpha}{operators}{`C}
\DeclareMathSymbol{D}{\mathalpha}{operators}{`D}
\DeclareMathSymbol{E}{\mathalpha}{operators}{`E}
\DeclareMathSymbol{F}{\mathalpha}{operators}{`F}
\DeclareMathSymbol{G}{\mathalpha}{operators}{`G}
\DeclareMathSymbol{H}{\mathalpha}{operators}{`H}
\DeclareMathSymbol{I}{\mathalpha}{operators}{`I}
\DeclareMathSymbol{J}{\mathalpha}{operators}{`J}
\DeclareMathSymbol{K}{\mathalpha}{operators}{`K}
\DeclareMathSymbol{L}{\mathalpha}{operators}{`L}
\DeclareMathSymbol{M}{\mathalpha}{operators}{`M}
\DeclareMathSymbol{N}{\mathalpha}{operators}{`N}
\DeclareMathSymbol{O}{\mathalpha}{operators}{`O}
\DeclareMathSymbol{P}{\mathalpha}{operators}{`P}
\DeclareMathSymbol{Q}{\mathalpha}{operators}{`Q}
\DeclareMathSymbol{R}{\mathalpha}{operators}{`R}
\DeclareMathSymbol{S}{\mathalpha}{operators}{`S}
\DeclareMathSymbol{T}{\mathalpha}{operators}{`T}
\DeclareMathSymbol{U}{\mathalpha}{operators}{`U}
\DeclareMathSymbol{V}{\mathalpha}{operators}{`V}
\DeclareMathSymbol{W}{\mathalpha}{operators}{`W}
\DeclareMathSymbol{X}{\mathalpha}{operators}{`X}
\DeclareMathSymbol{Y}{\mathalpha}{operators}{`Y}
\DeclareMathSymbol{Z}{\mathalpha}{operators}{`Z}

\newcommand{\sA}{\mathscr{A}}

\newcommand{\sB}{\mathscr{B}}

\newcommand{\sC}{\mathscr{C}}

\newcommand{\sD}{\mathscr{D}}

\newcommand{\sF}{\mathscr{F}}

\newcommand{\sG}{\mathscr{G}}

\newcommand{\sH}{\mathscr{H}}

\newcommand{\sJ}{\mathscr{J}}

\newcommand{\sL}{\mathscr{L}}

\newcommand{\sM}{\mathscr{M}}

\newcommand{\sO}{\mathscr{O}}

\newcommand{\sW}{\mathscr{W}}

\newcommand{\sY}{\mathscr{Y}}

\newcommand{\A}{\mathbf{A}}

\newcommand{\E}{\mathbf{E}}
\newcommand{\F}{\mathbf{F}}

\newcommand{\bO}{\mathbf{O}}
\newcommand{\bP}{\mathbf{P}}
\newcommand{\Q}{\mathbf{Q}}

\newcommand{\V}{\mathbf{V}}
\newcommand{\W}{\mathbf{W}}

\newcommand{\Z}{\mathbf{Z}}

\DeclareFontFamily{U}{min}{}
\DeclareFontShape{U}{min}{m}{n}{<-> udmj30}{}

\newcommand{\set}[1]{\left\lbrace #1 \right\rbrace}

\newcommand{\bracket}[1]{\left\langle #1 \right\rangle}

\newcommand{\p}{\partial}
\newcommand{\pr}[1]{\frac{\partial}{\partial #1}}

\newcommand{\inv}{^{-1}}

\newcommand{\ol}[1]{\overline{#1}}

\newcommand{\totext}[1]{\xrightarrow{#1}}

\newcommand{\mto}{\hookrightarrow}

\newcommand{\ch}[1]{\check{#1}}
\newcommand{\bs}{\backslash}
\newcommand{\sbullet}{{\mathbin{\vcenter{\hbox{\scalebox{.5}{$\bullet$}}}}}}
\newcommand{\op}{\mathrm{op}}

\newcommand{\dgCat}{\mathrm{dgCat}}

\newcommand{\oblv}{\mathrm{oblv}}
\newcommand{\ind}{\mathrm{ind}}

\newcommand{\QLisse}{\mathrm{QLisse}}

\newcommand{\tmod}{\textrm{-mod}}

\newcommand{\ins}{\mathrm{ins}}

\newcommand{\Ad}{\mathrm{Ad}}

\newcommand{\unip}{\mathrm{unip}}

\newcommand{\Shv}{\mathrm{Shv}}

\newcommand{\rev}{\mathrm{rev}}
\newcommand{\Gm}{{\mathbf{G}_m}}
\newcommand{\bimod}{\mathrm{-bimod}}
\newcommand{\mult}{\mathrm{mult}}
\newcommand{\unit}{\mathrm{unit}}
\newcommand{\triv}{\mathrm{triv}}

\newcommand{\ren}{\mathrm{ren}}

\newcommand{\act}{\mathrm{act}}
\newcommand{\Weil}{\mathrm{Weil}}
\newcommand{\Fr}{\mathrm{Fr}}
\newcommand{\coact}{\mathrm{coact}}
\newcommand{\relev}{\mathrm{relev}}
\newcommand{\Perv}{\mathrm{Perv}}
\newcommand{\access}{\mathrm{access}}
\newcommand{\inaccess}{\mathrm{inaccess}}
\newcommand{\Alg}{\mathrm{Alg}}
\newcommand{\Fil}{\mathrm{Fil}}
\newcommand{\Spr}{\mathrm{Spr}}
\newcommand{\naive}{\mathrm{naive}}
\newcommand{\Frob}{\mathrm{Frob}}
\newcommand{\Av}{\mathrm{Av}}

\DeclareMathOperator{\Hom}{Hom}
\DeclareMathOperator*{\colim}{colim}

\DeclareMathOperator{\End}{End}

\DeclareMathOperator{\tr}{Tr}

\DeclareMathOperator{\coker}{coker}
\DeclareMathOperator{\Spec}{Spec}

\DeclareMathOperator{\gr}{gr}

\DeclareMathOperator{\Vect}{Vect}

\DeclareMathOperator{\id}{id}

\DeclareMathOperator{\Ind}{Ind}
\DeclareMathOperator{\Perf}{Perf}

\DeclareMathOperator{\QCoh}{QCoh}

\DeclareMathOperator{\ICoh}{IndCoh}

\DeclareMathOperator{\cof}{cof}
\DeclareMathOperator{\Irr}{Irr}
\DeclareMathOperator{\Conj}{Conj}

\DeclareMathOperator{\Rep}{Rep}

\DeclareMathOperator{\ev}{ev}

\DeclareMathOperator{\Sh}{Sh}

\newcommand\newkevintheorem[3][]{%
    \newtheorem{#2}[kevinthm]{#3}%
    \csletcs{old#2}{#2}%
    \expandafter\renewcommand\csname #2\endcsname{\stepcounter{subsubsection}\csname old#2\endcsname}%
}

\crefname{appsec}{Appendix}{Appendices}

\newtheoremstyle{kevintheoremstyle}
{}
{}
{}
{0pt}
{\bfseries}
{.}
{4pt}
{\thmnumber{#2}\thmname{ #1}\thmnote{ (#3)}}

\theoremstyle{kevintheoremstyle}

\newkevintheorem{theorem}{Theorem}
\crefname{theorem}{Theorem}{Theorems}
\newkevintheorem{lemma}{Lemma}
\crefname{lemma}{Lemma}{Lemmas}
\newkevintheorem{proposition}{Proposition}
\crefname{proposition}{Proposition}{Propositions}
\newkevintheorem{corollary}{Corollary}
\crefname{corollary}{Corollary}{Corollaries}
\newkevintheorem{conjecture}{Conjecture}
\crefname{conjecture}{Conjecture}{Conjectures}
\newkevintheorem{construction}{Construction}
\crefname{construction}{Construction}{Constructions}
\newkevintheorem{computation}{Computation}
\crefname{computation}{Computation}{Computation}

\makeatletter
\def\l@subsection{\@tocline{2}{0pt}{2.5pc}{5pc}{}}
\makeatother

\begin{document}

\begin{abstract}
We give a new proof of the classification of unipotent representations for split finite groups of Lie type. To do this, we first construct an $(\infty, 2)$-categorification of Lusztig's map from the Hecke algebra to the asymptotic algebra. Then we use the theory of weights to show that it induces an isomorphism after applying trace of Frobenius. 
\end{abstract}

\maketitle

\tableofcontents


\section{Introduction}

\subsection{Motiviation}

\subsubsection{} A basic problem in representation theory is to understand the category of representations of a group. When $\Gamma$ is a finite group and the field of coefficients has characteristic zero, a classical theorem of H. Maschke says that $\Rep(\Gamma)$ is semisimple. As a result, to describe the structure of $\Rep(\Gamma)$ in this case, it suffices to give a parameterization of the irreducible representations of $\Gamma$. 

\subsubsection{} In a sense, the most basic case where one can try to enumerate the irreducible representations of $\Gamma$ is when the group is simple and the field of coefficients is algebraically closed. If $\Gamma$ is $\Z/p$ then the answer is very simple, and if $\Gamma$ is one of the sporadic simple groups the irreducible representations can be enumerated by hand. If $\Gamma$ is the alternating group $A_n$, then because of the short exact sequence 
\[1 \to A_n \to S_n \to \Z/2 \to 1,\]
and the vanishing of $H^2(\Z/2; \Gm)$, it is relatively straightforward to describe the irreducible representations of $A_n$ in terms of those of $S_n$. The classical work of G. Frobenius, A. Young, I. Schur, and A. Specht then provides a parameterization of the irrducible representations of the symmetric group $S_n$ in terms of partitions of $n$.

\medskip 

The remaining finite simple groups are all of \textit{Lie type}, meaning that there is a short exact sequence 
\[1 \to \Gamma \to G^F \to A \to 1\]
where $G$ is a simple algebraic group of adjoint type defined over $\ol{\F}_p$, $F: G \to G$ is a Steinberg endomorphism of $G$ (meaning that some power of $F$ is a Frobenius endomorphism of $G$), and $A$ is a finite abelian group. Like in the case of the alternating group, the key input needed to understand the representations of $\Gamma$ is an understanding of the representations of $G^F$. For simplicity, we will restrict our attention to the case that $F$ is actually a Frobenius automorphism of $G$: in this case the group $G^F$ identifies with $G(\F_q)$. 

\subsubsection{} In the work of P. Deligne and G. Lusztig \cite{deligne1976representations}, it was shown that the irreducible representations of $G(\F_q)$ fall into families parameterized by semisimple elements in $\ch{G}(\F_q)$ up to geometric conjugacy, where $\ch{G}$ denotes the Langlands dual group. The representations whose semsisimple parameter is $1 \in \ch G(\F_q)$ are called \textit{unipotent}. For a general semisimple element $s \in \ch{G}(\F_q)$, a theorem of R. Steinberg and T. Springer \cite[Lemma 3.9.(2)]{steinberg1963representations} shows that the centralizer of $s$ is a connected reductive group. Furthermore, by the work of Lusztig \cite{lusztig1984characters} (see also \cite{LUSZTIG_YUN_2020}) the category of representations of $G(\F_q)$ with semisimple parameter $s$ is naturally equivalent to the category of unipotent representations of $H(\F_q)$, where $H$ is the Langlands dual group of $Z(s)$ (although $Z(s)$ may not be split even if $G$ is).     

\subsubsection{}\label{sss:lusztig-unipotent-reps} In \cite{lusztig1984characters}, Lusztig also showed that the irreducible unipotent representations of $G(\F_q)$ themselves break into families parameterized by the two-sided cells of the Weyl group of $G$. Furthermore, to each two-sided cell $c$, one can attach a finite group $A_c$ such that the irreducible unipotent representations in the family associated to $c$ are parameterized by pairs $(x, \sigma)$, where $x$ is an element of $A_c$ and $\sigma$ is an irreducible representation of the centralizer of $x$ in $A_c$. 

\medskip 

Despite the fact that Lusztig's result can be stated uniformly in the group $G$, the proof relies on extensive type-by-type calculations. This has the advantage of being very explicit, but it is natural to wonder whether there is a uniform proof of the classification of unipotent representations. The goal of this paper is to provide a uniform proof of a result which is slightly weaker than the one stated above. Namely, we will prove 

\begin{theorem}[\Cref{thm:main}]\label{thm:intro-main}
The category $\Rep(G(\F_q))^\unip$ admits a direct sum decomposition indexed by two-sided cells 
\[\Rep(G(\F_q))^\unip \simeq \bigoplus_c \Rep(G(\F_q))^\unip_c\]
such that 
\[\Rep(G(\F_q))^\unip_c \simeq \QCoh_\sG(A_c/A_c)\]
for a finite group $A_c$ and gerbe $\sG$.  
\end{theorem}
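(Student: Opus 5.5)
The proof will realize $\Rep(G(\F_q))^\unip$ as the categorical trace of Frobenius on the finite Hecke category, and then compare it with the trace of a categorified asymptotic algebra.

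\textbf{Step 1: the Hecke category.} Let $\sH = \Shv(B\bs G/B)$ be the (unipotent) finite Hecke category, with Frobenius $\Fr$ acting on it. By the theory of categorical traces (Deligne--Lusztig induction realized as the horocycle/character sheaf correspondence, in the style of Ben-Zvi--Nadler and Gaitsgory et al.), the trace $\tr(\Fr, \sH\tmod)$ identifies with the category of Frobenius-twisted unipotent character sheaves. Its categorical trace of Frobenius recovers $\Rep(G(\F_q))^\unip$, via the Lang isogeny $G/\Ad_\Fr G \simeq \mathrm{pt}/G(\F_q)$.

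\textbf{Step 2: the categorified Lusztig map.} Construct the $(\infty,2)$-categorical version of Lusztig's homomorphism $\phi\colon \mathcal{H} \to J$ from the Hecke algebra to the asymptotic algebra, as the abstract says. Using the filtration of $\sH$ by two-sided cells (perverse degree / $a$-function), take the associated graded. This gives a monoidal functor $\sH \to \bigoplus_c \sJ_c$, where $\sJ_c$ is the semisimplified subquotient attached to the cell $c$, equipped with truncated convolution. By Bezrukavnikov--Finkelberg--Ostrik and Lusztig's conjecture (proved via character sheaves), there is a monoidal equivalence
\[\sJ_c \simeq \QCoh_{\sG}\bigl((Y\times Y)/A_c\bigr)\]
for a finite $A_c$-set $Y$ and a twist $\sG$. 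Hence $\sJ_c$ is Morita equivalent to $\QCoh_\sG(pt/A_c)$ with the convolution structure. Its categorical trace, which is its Drinfeld center up to Morita equivalence, is $\QCoh_\sG(A_c/A_c)$. Frobenius acts trivially here, since $G$ is split and everything is defined over $\F_q$ with pure weights.

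\textbf{Step 3: the trace isomorphism (main obstacle).} One must show that the functor $\sH \to \bigoplus_c \sJ_c$ induces an equivalence on traces of Frobenius. It is not an equivalence before taking traces, since $\sH$ is not semisimple. The plan is a weights argument. The cell filtration induces a filtration on $\tr(\Fr,\sH)$, and the associated graded pieces are $\tr(\Fr,\sJ_c)$, because trace is additive on semi-orthogonal (recollement) decompositions of module categories. One then shows that the filtration splits canonically. Objects of the graded pieces arising from pure Frobenius-equivariant perverse sheaves (weight $0$) admit no nontrivial extensions of the relevant weight. Frobenius acts on the relevant $\Ext^1$ with nonzero weights, so Frobenius-invariant extensions vanish after taking the trace.

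I expect this step to be the hardest. The subtle points are controlling $\Ext$ groups between cell subquotients in mixed sheaves, and checking that the splitting is compatible with the trace functor. Concretely, I would first reduce to showing that the connecting maps of the recollement become zero on Frobenius-fixed objects, using the semisimplicity of Frobenius on pure objects (Gabber/BBD decomposition).

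\textbf{Step 4: conclusion.} Combining Steps 1–3 gives
\[\Rep(G(\F_q))^\unip \simeq \bigoplus_c \tr(\Fr,\sJ_c) \simeq \bigoplus_c \QCoh_\sG(A_c/A_c),\]
which is the decomposition asserted in \Cref{thm:intro-main}.
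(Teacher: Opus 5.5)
There is a genuine gap, and it lies exactly where the content of the theorem is.

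\textbf{The functor in Step 2 does not exist.} You assert that ``taking the associated graded'' of the cell filtration gives a monoidal functor $\sH \to \bigoplus_c \sJ_c$. On $K_0$ this would be the naive map $b_w \mapsto j_w$, which is not a ring homomorphism. For instance, it sends the unit $b_1$ to $j_1$ rather than to $\sum_c\sum_{d\in\sD_c} j_d$. Even Lusztig's corrected homomorphism, which first applies $e_c * (-)$, has no monoidal lift. For $SL_3$ and $c=\{s,t,st,ts\}$, the self-dual object $L_s$ would have to map to $J_{st}\oplus J_s\langle -1\rangle[1]\oplus J_s\langle 1\rangle[-1] \in \sJ_c^{\Gm}$. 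That object is not self-dual, since $J_{st}^\vee = J_{ts}$. This is why the $(\infty,2)$-categorification in the paper is a map of module $2$-categories $\sH_c\tmod\to\sB_c\tmod\to\sJ_c\tmod$, not a monoidal functor out of $\sH$.

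\textbf{The graded pieces in Step 3 are misidentified.} You claim that the graded pieces of the cell filtration on $\tr(\Fr;\sH\tmod)$ are $\tr(\Fr;\sJ_c\tmod)$ ``by additivity''. Additivity only gives $\tr(\Fr;\sH_c\tmod)$, where $\sH_c$ is the non-semisimple, unbounded subquotient; $\sJ_c$ is not a semiorthogonal piece of $\sH$. Already for $SL_2$ and $c=\{1\}$ one has $\sH_c\simeq\QCoh(\Omega\A^1)$ and $\sJ_c\simeq\Vect$. The untwisted $HH(\sH_c)$ is far larger than $\Vect$ and only collapses after twisting by Frobenius. So $\tr(\Fr;\sH_c\tmod)\simeq\tr(\id;\sJ_c\tmod)$ is assumed rather than proved. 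Conversely, the step you single out as the main obstacle, splitting the cell filtration, is automatic. $\Rep(G(\F_q))^\unip$ is semisimple, so every semiorthogonal decomposition of it is a direct sum, and no Ext-vanishing is needed.

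\textbf{What is missing} is a mechanism that degenerates $\sH_c$ to $\sJ_c$ compatibly with twisted traces. The paper proceeds as follows.
\begin{itemize}
\item Weights give an action of the monoid $\A^1$ on $\sH_c$ whose invariants, after the shift by $a$, are $\sJ_c$.
\item Soergel--Wendt's motivic description shows that $\Frob$ acts as $\act_{\sqrt q}$.
\item This action is only laxly compatible with convolution on $\sH_c$, so one passes to $\sB_c=\gr_0(1_c)\bimod(\sH_c)$. There the action is strictly monoidal and $\sB_c^{\A^1}\simeq\sJ_c$.
\item $\sB_c$ is only Morita equivalent to $\sH_c^\access$. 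So one must also show that the inaccessible part of $\sH_{\le c}$ has zero trace of Frobenius; this part is unavoidable, since only the unbounded $\sH_{\le c}$ has a left adjoint to its inclusion. The proof uses that the trace lands in constructible objects.
\item Finally, a categorified localization argument shows the induced $\A^1$-action on $HH(\sB_c;\Frob)$ is trivial. This category is semisimple, and $\act_0$ is conservative on it, which is proved via the renormalization $\sB_c^\ren$ with strongly continuous coaction. Hence $HH(\sB_c;\Frob)\simeq HH(\sB_c^{\A^1};\Frob)\simeq HH(\sJ_c;\id)$.
\end{itemize}
Your heuristic that Frobenius acts with nonzero weights is in this spirit. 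However, it must be applied to this degeneration, not to extensions between cells.

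Two smaller points. First, ``Frobenius acts trivially on $\sJ_c$ by purity'' needs an argument, since fixing each simple up to isomorphism does not make a monoidal autoequivalence trivial; in the paper it follows from $\Frob\simeq\act_{\sqrt q}$. Second, citing Bezrukavnikov--Finkelberg--Ostrik instead of Losev--Ostrik for $\sJ_c\tmod\simeq\Vect_{A_c,\omega}\tmod$ is a legitimate substitution, at the cost of relying on Lusztig's case-by-case results.
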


As we have stated it, Lusztig's classification follows from this theorem plus the knowledge that the gerbe $\sG$ is always trivializable. Using the prior work of \cite{Losev_Ostrik_2014}, it is easy to verify that in all cases the gerbe $\sG$ is trivializable; unfortunately we cannot give a uniform proof of this fact. On the other hand, we should emphasize that even though $\sG$ is always trivializable, it may not have a canonical trivialization. This causes the appearance of certain constant factors in the formulas for the characters of unipotent representations in types $E_7$ and $E_8$ (see \cite{ostrik2014tensor}). 

\subsection{Overview}

\subsubsection{}\label{sss:intro-hh-defn} The goal of this section is to give an overview of our proof of \Cref{thm:intro-main}. Our point of departure is a recent result of A. Eteve \cite{eteve2024free}. To state it, let's introduce some notation: if $\sA$ is a monoidal dg category and $\sM$ is an $(\sA, \sA)$-bimodule category, we will write $HH(\sA; \sM)$ for the category  
\[\sA \underset{\sA \otimes \sA^\rev}{\otimes} \sM.\] 
As a special case, if $\phi$ is a monoidal automorphism of $\sA$, then we will write
\[HH(\sA; \phi) = HH(\sA; \sA_\phi)\]
where $\sA_\phi$ is the $(\sA, \sA)$-bimodule category where the right action of $\sA$ has been twisted by $\phi$. 

\medskip 

If $G$ is a reductive group with Borel subgroup $B$, let $\sH$ denote the bi-equivariant Hecke category
\[\sH = \Sh(B \bs G /B)\]
where $\Sh(-)$ denotes the dg category of ind-constructible $\ol{\Q}_\ell$-sheaves. $\sH$ is a monoidal category via convolution, and Eteve's theorem shows that 
\[HH(\sH; \Frob) \simeq \Rep(G(\F_q))^\unip.\]
(One caveat: this result doesn't appear as stated in \cite{eteve2024free}, but it follows from \cite[Theorem 6.1.1]{eteve2024free} by setting the semisimple parameter to 1. Alternatively, one can deduce this equivalence from an even more recent and very general result of X. Zhu \cite[Proposition 8.57]{zhu2025tamecategoricallocallanglands}, see \S\ref{sss:trace-of-hecke} for more discussion.)

\subsubsection{} Let us now explain how the decomposition of unipotent representations into families indexed by two-sided cells arises from this point of view. Recall from \cite[Definition 1.4]{benzvi2015charactertheorycomplexgroup}, \cite[Appendix C]{arinkin2022stacklocalsystemsrestricted} that a monoidal dg category $\sA$ is said to be \textit{semi-rigid} if 
\begin{enumerate}
    \item The multiplication map $\mult: \sA \otimes \sA \to \sA$ admits a continuous $(\sA, \sA)$-bilinear right adjoint.
    \item $\sA$ is dualizable.
\end{enumerate}
In \Cref{constr:semiorthogonal-trace}, we show that in a high degree of generality, semiorthogonal decompositions of semi-rigid monoidal categories induce semiorthogonal decompositions after passing to $HH(-)$. Because $\Rep(G(\F_q))^\unip$ is semi-simple, every semiorthogonal decomoposition of $\Rep(G(\F_q))^\unip$ is actually a direct sum decomposition. 

In \cite[Theorem 1.8]{benzvi2015charactertheorycomplexgroup}, D. Ben-Zvi and D. Nadler prove that the monoidal category $\sH$ is semi-rigid, so to produce a direct sum decomposition of $\Rep(G(\F_q))^\unip$ indexed by two sided cells, it suffices to produce a system of semiorthogonal decompositions of $\sH$ indexed by two-sided cells. 

\subsubsection{}\label{sss:intro-cells} Let us now briefly recall the definition of two-sided cells. First, one defines a preorder on the Weyl group as follows: for any $w \in W$, we can consider the full subcategory $\sH_w^\access \subseteq \sH$ which is the smallest cocomplete two-sided ideal containing $L_w$; say $v \le_{LR} w$ if $L_v$ is contained in $\sH_w$. Using this preorder we may partition $W$ into subsets called two-sided cells by stipulating that $w$ and $v$ lie in the same two-sided cell when $v \le_{LR} w$ and $w \le_{LR} v$. One can define left cells and right cells similarly, by replacing $\sH_w$ with the smallest cocomplete left ideal (resp. right ideal) containing $L_w$. 

\medskip 

If $c$ is a two sided cell, then we will write $w \le_{LR} c$ to mean $w \le_{LR} v$ for some (equiv. any) $v \in c$. We will also write $\sH_{\le c}^\access$ to refer to the category we called $\sH_w^\access$ for $w \in c$. There is also a different category that one can consider, which we simply denote $\sH_{\le c}$. This is the subcategory of $\sH$ consisting of objects such that all of the simple subquotients of all of their cohomology groups are of the form $L_w$ for $w \le c$. In general, $\sH_{\le c}$ is strictly larger than $\sH_{\le c}^\access$, but every object of $\sH_{\le c}$ which is cohomologically bounded below lies in $\sH_{\le c}^\access$ (see \S\ref{sss:access-vs-all-sl2} for more discussion of this point). 

\medskip 

We should note that the two-sided order is traditionally defined purely combinatorially (see \cite{kazhdan1979representations}). However, Soergel's conjecture implies that the combinatorial definition agrees with the definition in terms of two-sided ideals in $\sH$ (for finite Weyl groups, Soergel's conjecture is a consequence of the decomposition theorem of \cite{BBD}; more generally see \cite{EliasWilliamson2014Hodge}). 

\subsubsection{} The bounded-derived versions of the categories $\sH_{\le c}$ have been studied extensively (see \cite{bezrukavnikov2012character}, \cite{EliasWilliamson2021}). However, an important feature of $\sH_{\le c}$ only appears at the level of unbounded derived categories: the inclusion $\sH_{\le c} \to \sH$ admits a left adjoint (for this to be true it is necessary to consider $\sH_{\le c}$ rather than $\sH_{\le c}^\access$). This was first proved in the de Rham setting in \cite{dhillon2025singularsupportgcategories} using the theory of degenerate Whittaker coefficients.\footnote{Strictly speaking, the results of \textit{loc. cit.} concern the categories $\sH_Z$ obtained by imposing a singular support condition on the objects of $\sH$, but in the de Rham setting one can pass between two-sided cells and singular support conditions using \cite[Theorem 1.1]{BARBASCH1983350}. (See also \cite{tanisaki1988characteristic} for a nice overview.)} In \Cref{prop:semiorthogonal-compat-H-O}, we give a new proof of this result just using basic facts about category $\sO$. It follows that at the level of unbounded derived categories, we obtain a system of semiorthogonal decompositions
\[\sH_{\not\le c} \rightleftarrows \sH \rightleftarrows \sH_{\le c},\]
and by \Cref{constr:semiorthogonal-trace} this produces the desired direct sum decomposition of $\Rep(G(\F_q))^\unip$ into families. 

\medskip 

These semiorthogonal decompositions of $\sH$ are of fundamental importance for us, so let us briefly say what happens for $SL_2$ (see \S\ref{ss:sl2-calculation} for more discussion). The semiorthogonal decomposition of $\sH$ arises from a similar semiorthogonal decomposition of (the unbounded derived category of) the principal block of category $\sO$. If we take $c$ to be the two-sided cell $\set{s}$, where $s$ is the nontrivial element of the Weyl group, the subcategory $(\sO_0)_{\not\le c}$ is just the full subcategory generated under colimits and shifts by the big projective, and $(\sO_0)_{\le c}$ is the derived category of lisse sheaves on $\bP^1$. If $L_1$ is the delta sheaf on the closed Bruhat stratum, then this semiorthogonal decomoposition produces an exact triangle 
\[\tau_{\not\le c}L_1 \to L_1 \to \tau_{\le c}L_1\]
and one can show that 
\[H_i(\tau_{\le c}L_1) = \begin{cases}
L_s & i \ge 1 \\
0 & i < 1
\end{cases}\]  
and
\[H_i(\tau_{\not\le c}L_1) = \begin{cases}
L_s & i > 0 \\
\nabla_s & i = 0 \\
0 & i < 0
\end{cases}\]  
where $\nabla_s$ refers to the costandard object with socle $L_s$. 

\subsubsection{} Now that we have a decomposition 
\[\Rep(G(\F_q))^\unip \simeq \bigoplus_c \Rep(G(\F_q))^\unip_c\]
of the category of unipotent representations into families indexed by two-sided cells, we still need to describe the categories $\Rep(G(\F_q))^\unip_c$. Note that if we pick a representative of each conjugacy class of $A_c$, then we obtain an isomorphism of stacks
\[A_c/A_c \simeq \bigsqcup_{[x] \in \Conj(A_c)} */Z_{A_c}(x),\]
so we can rephrase \S\ref{sss:lusztig-unipotent-reps} as saying that there is an equivalence
\[\Rep(G(\F_q))^\unip_c \simeq \QCoh(A_c/A_c).\]
On the other hand, by a uniform argument we will be able to show that 
\begin{equation}\label{eq:rep-unip-c-group}
\Rep(G(\F_q))^\unip_c \simeq \QCoh_{\sG}(A_c/A_c)    
\end{equation}
for some gerbe $\sG$.\footnote{As we have mentioned already, one can show by case-by-case calculations that $\sG$ is non-canonically trivializable.} We should caution that the group we are calling $A_c$ is defined quite inexplicitly (see \S\ref{sss:intro-losev-ostrik}, \S\ref{sss:characterizing-Ac}). 

\subsubsection{} If $\Gamma$ is a finite group and $\omega \in H^3(\Gamma; \Gm)$ is a cohomology class, then following \cite[\S2.3.3]{quinn1998group} (see also \cite[\S3.1]{Losev_Ostrik_2014}) one can define a monoidal category 
\[\Vect_{\Gamma, \omega}\]
which agrees with $\QCoh(\Gamma)$ as a plain category, but whose associativity constraint is twisted by by $\omega$. By \cite[\S 8]{freed1994higher}, \cite{willerton2008twisted},  
\[HH(\Vect_{\Gamma, \omega}; \id) \simeq \QCoh_{\sG}(\Gamma/\Gamma)\]
for a certain gerbe $\sG$ constructed from $\omega$. In fact, the gerbe on the right hand side of (\ref{eq:rep-unip-c-group}) arises in this way (for a certain cohomology class $\omega$ which we will return to in \S\ref{sss:intro-losev-ostrik}). It follows that we may rewrite (\ref{eq:rep-unip-c-group}) as an equivalence 
\begin{equation}\label{eq:hh-rep-unip-c-group}
HH(\sH_c; \Frob) \simeq HH(\Vect_{A_c, \omega}; \id).    
\end{equation}

\subsubsection{}\label{sss:hh-morita-invt} To prove an equivalence like (\ref{eq:hh-rep-unip-c-group}), it is useful to know that $HH(-)$ is a Morita invariant: if there is an equivalence of categories 
\begin{equation}\label{eq:intro-hh-morita}
\sA \tmod \simeq \sB \tmod,    
\end{equation}
then $HH(\sA) \simeq HH(\sB)$. More generally, if $\sA$ and $\sB$ are equipped with endomorphisms $\phi$ and $\psi$, and the equivalence (\ref{eq:intro-hh-morita}) intertwines the induced endofunctors of module categories,
\[
\begin{tikzcd}
\sA \tmod \arrow[d, "\ind_\phi"'] \arrow[r, "\sim"]& \sB \tmod \arrow[d, "\ind_\psi"]\\
\sA \tmod \arrow[r, "\sim"] & \sB \tmod 
\end{tikzcd}
\]
then $HH(\sA; \phi) \simeq HH(\sB; \psi)$. 

\medskip 

These properties of $HH$ are easy to see from the point of view of categorical traces (see \cite{benzvi2013secondarytraces,kondyrev2022equivariant, gaitsgory2022toymodeldrinfeldlafforgueshtuka}). Namely, for any symmetric monoidal category $\sC$ and any dualizable object $x \in \sC$, there is a map 
\[\tr: \End(x) \to \End(1_\sC)\]
which sends an endomorphism $\phi$ to $\tr(\phi; x)$, defined to be the composite 
\[1_\sC \totext{\epsilon} x \otimes x^\vee \totext{\phi \otimes \id} x \otimes x^\vee \totext{\eta} 1_\sC.\]
For our purposes, one should take $\sC = 2\dgCat$ (see \cite{gaitsgory2026applicationshighercategoricaltrace}) and $x = \sA \tmod$. Every $(\sA, \sA)$-bimodule $\sM$ defines an endomorphism of $\sA \tmod$, and \cite[\S3.7.1]{gaitsgory2022toymodeldrinfeldlafforgueshtuka} shows that 
\[HH(\sA; \sM) \simeq \tr(\sM; \sA\tmod).\]

\subsubsection{} Our proof of (\ref{eq:hh-rep-unip-c-group}) passes through an intermediate object, Lusztig's asymptotic category $\sJ_c$ (see \cite{LUSZTIG199785}, \cite[\S5]{EliasWilliamson2021}). This is a semisimple monoidal category, whose simple objects $J_w$ are labelled by elements $w \in c$. To describe the monoidal structure, we first have to recall the definition of Lusztig's $a$-function. By definition, for a two-sided cell $c$, the number $a(c)$ is the smallest integer such that 
\[H^{i}(L_w * L_v) \in \sH_{< c}\] 
for all $i > a(c)$ and $w, v \in c$. The monoidal structure on $\sJ_c$ is defined so that 
\[J_w * J_v \simeq \bigoplus_{u \in c} J_u^{\oplus n_u}\]
where $n_u$ is the multiplicity of $L_u$ in $H^{a(c)}(L_w * L_v)$. Roughly speaking, $\sJ_c$ remembers the ``leading term'' of the monoidal structure of $\sH_c$. 

\subsubsection{}\label{sss:intro-losev-ostrik} We should now explain how $\sJ_c$ is related to both $\sH_c$ and the category $\Vect_{A_c, \omega}$ (which we have not yet fully defined!). Let's begin with the relationship between $\sJ_c$ and $\Vect_{A_c, \omega}$, which is supplied by a theorem of I. Losev and V. Ostrik \cite[Theorem 5.1]{Losev_Ostrik_2014}. Using the theory of finite $\sW$-algebras, Losev and Ostrik construct a module category $\sM_c$ for $\sJ_c$, with the property that 
\[\End_{\sJ_c}(\sM_c)\]
is a \textit{group category}, in the sense of \cite{quinn1998group} (this means that the category is semisimple and every simple object is tensor invertible). This implies (by \S2.3.2 of \textit{loc. cit.}) that 
\[\End_{\sJ_c}(\sM_c) \simeq \Vect_{A_c, \omega}\]
for some group $A_c$ and some $\omega \in H^3(A_c; \Gm)$. \textit{We will take this as our definition of $A_c$}. Note that the group we call $A_c$ is denoted $\ol{\A}$ in \cite{Losev_Ostrik_2014}; they reserve the notation $A_c$ for a group which is defined in terms of Springer theory (see \S\ref{sss:characterizing-Ac} for more discussion). In fact, Losev and Ostrik prove that $\sM_c$ defines a Morita equivalence 
\[\sJ_c \tmod \simeq \Vect_{A_c, \omega} \tmod.\]
Because $HH(-)$ is a Morita invariant, this implies that we may reformulate (\ref{eq:hh-rep-unip-c-group}) as an equivalence 
\[HH(\sH_c; \Frob) \simeq HH(\sJ_c; \id).\]

\subsubsection{}\label{sss:intro-a1-on-hecke} It is in some sense obvious from the definition that the asymptotic category $\sJ_c$ is related to the Hecke category $\sH_c$. However, one of the contributions of this paper is to articulate a new relationship between the two, which in fact supplies a new construction of $\sJ_c$ as a monoidal category. Namely, in \cite{gaitsgory2026excursionalgebra}, D. Gaitsgory, K. Lin, and the author showed that the theory of weights gives rise to an action of the monoid $\A^1$ on a category $\Sh(X)^\relev$ for any scheme $X$ over $\F_q$. Here the monoid structure on $\A^1$ is given by multiplication and the category $\Sh(X)^\relev$ is a certain full subcategory of the category $\Sh(X)$ of ind-constructible $\ol{\Q}_\ell$-sheaves on $X$. 

\medskip 

It is not hard to extend this construction to Artin stacks, and when $X = B \bs G /B$ every ind-constructible sheaf lies in $\Sh(X)^\relev$. We therefore obtain an action of $\A^1$ on $\sH$, and in \S\ref{ss:asymptotic-cat} we show that this induces an action of $\A^1$ on $\sH_c$ such that $\sJ_c$ is simply the category of invariants, 
\[\sJ_c \simeq \sH_c^{\A^1}.\]
We think of this as a precise articulation of the slogan that says that $\sJ_c$ is the limit of $\sH_c$ as $v$ goes to zero.

\subsubsection{}\label{sss:toy-model-localization} With the results of the previous paragraphs in mind, it remains to prove an equivalence 
\begin{equation}\label{eq:hh-localization}
HH(\sH_c; \Frob) \simeq HH(\sH_c^{\A^1}; \id).
\end{equation}
This occupies the bulk of our paper. Let's describe a toy model for our argument: if $A$ is a non-negatively graded algebra and $\act_\lambda$ is the automorphism of $A$ that rescales the $i$th weight space by $\lambda^i$, then whenever $\lambda$ isn't a root of unity, 
\begin{equation}\label{eq:localization-toy-model}
HH(A; \act_\lambda) \simeq HH(A_0; \id),    
\end{equation}
where $A_0$ is the zeroth graded piece of $A$. The argument is simple: note that $A$ is acted on by the monoid $\A^1$, and $A_0 = A^{\A^1}$ is the ring of fixed points for this action. Then because $\A^1$ is commutative, the $\A^1$-action on $A$ induces an action on $HH(A; \act_\lambda)$, and the fixed points of this action are 
\[HH(A_0; \act_\lambda) = HH(A_0; \id).\] 
Now note the action of $\lambda$ on $HH(A; \act_\lambda)$ is canonically trivial. As a consequence, whenever $\lambda$ is not a root of unity (and therefore the powers of $\lambda$ are Zariski dense in $\A^1$), the action of $\A^1$ on $HH(A; \act_\lambda)$ is trivial, so $HH(A; \act_\lambda)$ is equal to its $\A^1$-fixed points. 

\medskip 

To relate our desired equivalence (\ref{eq:hh-localization}) to this toy model, we need to explain why the Frobenius automorphism of $\sH_c$ has the form $\act_\lambda$ for some $\lambda$. In \S\ref{ss:motives} we show that this follows from W. Soergel and M. Wendt's motivic construction of the Hecke category \cite{soergel2018perverse}. As a result, roughly speaking, our equivalence (\ref{eq:hh-localization}) is just a categorified version of the equivariant localization isomorphism (\ref{eq:localization-toy-model}). However, several problems arise when we try to make this precise. 

\subsubsection{} The most serious problem with viewing the equivalence (\ref{eq:hh-localization}) as an instance of equivariant localization is the fact that the $\A^1$-action on $\sH_c$ of \S\ref{sss:intro-a1-on-hecke} is only laxly compatible with the monoidal structure on $\sH_c$. As a result, it is unclear why there should be an actual $\A^1$-action on $HH(\sH_c; \Frob)$. However, we are able to get around this problem by introducing a new monoidal category 
\[\sB_c = \gr_0(1_c)\bimod(\sH_c)\]
such that 
\begin{enumerate}
    \item $\sB_c^{\A^1} \simeq \sJ_c$. (See \Cref{prop:Bc-A1-invts-Jc})
    \item The natural $\A^1$ action on $\sB_c$ is strictly compatible with the monoidal structure. (See \Cref{prop:A1-action-strict})
    \item $HH(\sH_c; \Frob) \simeq HH(\sB_c; \Frob)$. (See \Cref{prop:hh-hc-equals-hh-bc}) 
\end{enumerate}
Here, $\gr_0(1_c)$ is a certain algebra object in $\sH_c$ defined in terms of the $\A^1$-action on $\sH_c$. 

\subsubsection{} Unfortunately for the purposes of proving point (3) of the previous paragraph, the category $\sB_c$ is not quite Morita equivalent to $\sH_c$. Nevertheless, if we let 
\[\sH_c^\access = \sH_c \otimes_{\sH_{\le c}} \sH_{\le c}^{\access}\] 
then we show in \S\ref{ss:accessible} that the natural inclusion $\sH_c^\access \to \sH_c$ admits a continuous right adjoint, and $\sH_c^\access$ admits a monoidal structure such that this right adjoint is strictly monoidal. In \Cref{prop:Bc-morita-equiv} we show that the action of $\sH_c$ on $\gr_0(1_c)\tmod(\sH_c)$ factors through $\sH_c^\access$ and $\gr_0(1_c)\tmod(\sH_c)$ defines a Morita equivalence between $\sB_c$ and $\sH_c^\access$. Finally, in \Cref{prop:tr-inaccess}, using the equivalence 
\[HH(\sH; \Frob) \simeq \Rep(G(\F_q))^\unip\] 
we show that the map 
\[\sH_c \to \sH_c^\access\]
induces an isomorphism after passing to $HH(-; \Frob)$. 

\subsubsection{} The final difficulty that arises when trying to implement the argument of \S\ref{sss:toy-model-localization} one category level up is that dg categories are much less ``rigid'' than vector spaces: in general, if the action of $\A^1$ on a dg category $\sC$ is trivializable on a Zariski dense subset, there's no reason to expect that the whole action of $\A^1$ on $\sC$ is trivializable. However, if $\sC$ is semisimple and the coaction functor 
\[\coact: \sC \to \sC \otimes \QCoh(\A^1)\]
is strongly continuous (meaning that it admits a continuous right adjoint), it is actually true that the action of $\A^1$ on $\sC$ is trivializable. 

\medskip 

Annoyingly, it is not \textit{a priori} obvious that the coaction of $\A^1$ on $HH(\sB_c; \Frob)$ is strongly continuous. However, in \Cref{prop:str-cts-semisimple-conservative} we show that to prove the triviality of the $\A^1$-action it suffices to show that $HH(\sB_c; \Frob)$ is an $\A^1$-equivariant colocalization of a category whose coaction functor is strongly continuous. We then supply the needed colocalization by analyzing a certain renormalization of $\sB_c$ (see \S\ref{ss:Bc-ren}). 

\subsubsection{}\label{sss:intro-lusztig-hom} To summarize, we have constructed maps 
\begin{equation}\label{eq:categorification-lusztig-hom}
\sH_c\tmod \totext{\gr_0(1_c)\tmod(\sH_c) \otimes_{\sH_c} (-)} \sB_c \tmod \totext{\ind_{\gr_\sbullet}} \sJ_c \tmod
\end{equation}
and shown that they induce isomorphisms 
\[HH(\sH_c; \Frob) \simeq HH(\sB_c; \Frob) \simeq HH(\sJ_c; \id)\]
after applying categorical trace. We would like to conclude by suggesting that the composite map (\ref{eq:categorification-lusztig-hom}) should be viewed as a 2-categorification of Lusztig's map from the Hecke algebra to the asymptotic algebra (see \cite[\S2.4]{LUSZTIG1987536}, \cite[Theorem 9.2]{Curtis1988}). We remind that the asymptotic algebra $J_c$ is $K^0(\sJ_c)$. It has a distinguished basis $\set{j_w}$ given by the K-theory classes of the simple objects $J_w$. There is a naive map 
\[\gr_c: H \to J_c[v^{\pm 1}]\]
of $\Z[v^{\pm 1}]$-modules which sends 
\[b_w \mapsto \begin{cases}
j_w & w \in c \\
0 & w \notin c
\end{cases}.\]
This naive map is basically never a ring homomorphism. However, Lusztig observed that if we write 
\[e_c = \sum_{d \in \sD_c} b_d\] 
where $\sD_c$ refers to the set of Duflo involutions in the two-sided cell $c$, then the composite
\begin{equation}\label{eq:lusztig-hom}
H \totext{e_c * (-)} H \totext{\gr_c} J_c[v^{\pm 1}]    
\end{equation}
is actually a map of rings. In \Cref{prop:jc-unit}, we show that
\[\gr_0(1_c) \simeq \bigoplus_{d \in \sD_c} L_d[-a]\]
so the maps (\ref{eq:categorification-lusztig-hom}) and (\ref{eq:lusztig-hom}) seem quite clearly related. However, we do not know how to obtain the latter map as a decategorification of the former. 

\subsection{Future directions}

\subsubsection{Lusztig's character formula} It follows from a combination of \Cref{prop:semi-rigid-duality-left-right-mod}, \Cref{prop:Bc-semi-rigid}, and \Cref{prop:Jc-rigid} that in the diagram 
\[
\begin{tikzcd}
& \sB_c \tmod \arrow[dl, "\gr_0(1_c)\tmod^r(\sH_c) \otimes_{\sB_c} (-)"'] \arrow[dr, "\ind_{\gr_\sbullet}"]& \\
\sH_c \tmod & & \sJ_c \tmod
\end{tikzcd}
\]
all of the objects are 2-dualizable and all of the arrows are 2-adjointable (in the sense of \cite{gaitsgory2026applicationshighercategoricaltrace}). Moreover, the arrows in the diagram intertwine the pair of commuting endomorphisms $(\id, \Frob)$ of $\sB_c \tmod$ with the pairs $(\id, \Frob)$ of $\sH_c \tmod$ and $(\id, \id)$ of $\sJ_c \tmod$. It follows that we obtain a commuting diagram 
\[
\begin{tikzcd}
\tr(\id; \tr(\Frob; \sH_c\tmod)) \arrow[d, "S"] & \tr(\id; \tr(\Frob; \sB_c\tmod)) \arrow[l] \arrow[r] \arrow[d, "S"] & \tr(\id; \tr(\id; \sJ_c\tmod)) \arrow[d, "S"] \\
\tr(\Frob; \tr(\id; \sH_c\tmod)) & \tr(\Frob; \tr(\id; \sB_c\tmod)) \arrow[l] \arrow[r] & \tr(\id; \tr(\id; \sJ_c\tmod)) 
\end{tikzcd}
\] 
where the vertical arrows are the isomorphisms constructed in \cite{benzvi2013secondarytraces} (see also \cite{gaitsgory2026applicationshighercategoricaltrace}), and the horizontal arrows in the upper row are isomorphisms by \Cref{prop:tr-inaccess} and \Cref{prop:hh-bc-equals-hh-jc}. By \S\ref{sss:intro-losev-ostrik}, 
\[\tr(\id; \tr(\id; \sJ_c\tmod)) \simeq \tr(\id; \tr(\id; \Vect_{A_c, \omega}\tmod)) \simeq \Gamma(Z^{(2)}_{A_c}; \sL)\]
where $\sL$ is some line bundle depending on $\omega$ and $Z^{(2)}_{A_c}$ refers to the ``commuting stack'' of pairs of commuting elements of $A_c$ mod conjugacy. The line bundle $\sL$ is known by case-by-case computations to be trivializable, see \S\ref{sss:gerbe-trivalize}. With respect to this identification, $S$ acts on $\Gamma(Z^{(2)}_{A_c}, \sO)$ via the automorphism of $Z^{(2)}_{A_c}$ that swaps the elements in the commuting pair (up to a scalar factor if $\omega \in H^3(A_c; \Gm)$ is nontrivial), see \cite{freed1993chern}. It should be possible to rederive Lusztig's character formula from this point of view, as predicted by Ben-Zvi and Nadler (see \cite[Remark 1.5]{benzvi2013secondarytraces}). There are two main difficulties that arise: first, before passing to trace of Frobenius, the arrows 
\[\begin{tikzcd}
 & \tr(\id; \sB_c\tmod) \arrow[dl] \arrow[dr] & \\
\tr(\id; \sH_c\tmod) & & \tr(\id; \sJ_c\tmod)    
\end{tikzcd}\] 
are not isomorphisms. Second, in the diagram 
\[\sH_{\not\le c} \tmod \rightleftarrows \sH \tmod \rightleftarrows \sH_{\le c} \tmod\]
only the rightward arrows are 2-adjointable, so the vector space
\[\tr(\id; \tr(\Frob; \sH_c\tmod)) \simeq \tr(\Frob; \tr(\id; \sH_c\tmod))\]
is \textit{a priori} only a subquotient of the space of class functions on $G(\F_q)$. We hope to address both of these points in a future publication.

\subsubsection{Characterizing $A_c$}\label{sss:characterizing-Ac} An unsatisfactory feature of the analysis in this paper is the fact that for us the group $A_c$ is defined very inexplicitly. In fact, Losev and Ostrik show \cite[Theorem 7.4]{Losev_Ostrik_2014} that the group $A_c$ can be completely characterized as follows: in characteristic zero, there is a bijection between two-sided cells and certain nilpotent orbits called special nilpotent orbits. Let $\bO$ denote the special nilpotent orbit associated to $c$, and let $\Spr(\bO)$ denote the Springer representation attached $\bO$. We can then consider the subrepresentation 
\[\Spr(\bO)_c \subseteq \Spr(\bO)\] 
consisting of the isotypic components in the family $\Irr(W)_c$ corresponding to $c$; the group $A_c$ can then be characterized as the smallest quotient of the component group $A(\bO)$ through which the action on $\Spr(\bO)_c$ factors. 

\medskip 

Unfortunately for our goal of finding a uniform account of the theory of unipotent representations, Losev and Ostrik's characterization of $A_c$ relies on many type-by-type calculations. However, recently in the work of G. Dhillon and J. Faergeman \cite[Theorem 1.10.8.1]{dhillon2025singularsupportgcategories} the theory of categorical traces has led to simplifications in certain parts of the Losev and Ostrik's argument. We hope to investigate the theory of the aysmptotic algebra futher from the perspective of $\A^1$ actions.

\subsubsection{Quasisplit and Suzuki-Ree groups}

Although we have restricted for simplicity to the case of split groups, we expect that the results of this paper on the relationship between the Hecke category and the asymptotic category should extend to the cases of quasisplit and Suzuki-Ree groups with only minor adjustments: most importantly, the Steinberg endomorphism of $G$ induces a nontrivial endomorphism $F$ of the asymptotic category. For quasisplit groups, the equivalence 
\[\sJ_c \tmod \simeq \Vect_{A_c} \tmod \]
of \S\ref{sss:intro-losev-ostrik} should acquire a natural $F$-equivariant structure. However, one should \textit{not} expect this for the Suzuki-Ree groups. 

\medskip

For example, consider the case $G = B_2$ with $F$ the endomorphism giving rise to the Suzuki group. If we take 
\[c = \set{s, t, st, ts, sts, tst}\] 
to be the two-sided cell corresponding to the subregular nilpotent orbit, then $F$ acts on $\sJ_c$ by swapping $s$ and $t$. The group $A_c$ is $\Z/2$, and the category $\sM_c$ of \S\ref{sss:intro-losev-ostrik} is 
\[\sM_c = \QCoh(Y_c),\]
where the $A_c$-set $Y_c$ is $\set{L, M, N}$. Here the action of $A_c$ fixes $L$ and swaps $M$ and $N$. Under the equivalence 
\[\sJ_c \simeq \QCoh_{A_c}(Y_c \times Y_c)\]
of \cite{bezrukavnikov2009tensor,Losev_Ostrik_2014}, the object $J_s$ goes to the trivial representation of the stabilizer of $(L, L)$, while the object $J_t$ goes to the constant sheaf on the orbit $\set{(M, M), (N, N)}$. It follows that there is no automorphism of $Y_c$ which is compatible with the Suzuki endomorphism of $\sJ_c$ with respect to the action of $\sJ_c$ on $\sM_c$. 

\medskip 

In fact, \cite[p. 373, 2nd Table]{lusztig1984characters} (see also \cite{BRUNAT2006869}) shows that the family of unipotent representations of the Suzuki group associated to the cell $c$ has only two elements. In particular, it cannot arise as $\QCoh(\Gamma/\Gamma)$ for any finite group $\Gamma$. 

\subsubsection{Other Coxeter groups} Even though the Hecke category $\sH$ and the two-sided cell filtration make sense for any Coxeter group $W$, in this paper we have only investigated the case where $W$ is a finite Weyl group. Let us highlight a few places in our analysis where we expect that the results should generalize to other Coxeter groups, even though our current proof relies on the fact that $W$ is a finite Weyl group. First, our argument that 
\begin{equation}\label{eq:intro-tr-B-tr-J}
\tr(\Frob; \sB_c\tmod) \simeq \tr(\id; \sJ_c\tmod)    
\end{equation}
is semisimple relies on the fact that $\tr(\Frob; \sH_c\tmod)$ is known to be semisimple (via its relationship to $\Rep(G(\F_q))$), however, we suspect that it is possible to prove the identity (\ref{eq:intro-tr-B-tr-J}) for any finite Coxeter group, and deduce semisimplicity as a consequence. 

\medskip 

Additionally, our argument in \S\ref{ss:A1-action-analysis} that the action of $\A^1$ on $\sB_c$ is strictly compatible with the monoidal structure seems far from optimal. One could instead try to prove result this by categorifying Lusztig's proof that the map $\psi_c$ is a ring homomorphism. This strategy would presumably involve analyzing the category $\sH_c^{\Gm \times \Gm}$ of bigraded objects, and would hopefully apply to Coxeter groups which are not finite.

\subsection{Notation and conventions}

\subsubsection{} Our conventions follow those of \cite{beraldo2024coherentsheavessheareddmodules}. 

\medskip

Let's highlight that for a category $\sA$ we use the notation $HH(\sA)$ to refer to the category
\[\sA \underset{\sA \otimes \sA^\rev}{\otimes} \sA.\]
In particular, for us $HH(-)$ does not change the categorical level. On the other hand, we use the notation $\tr(-)$ to denote the operation of categorical trace, which has the effect of decreasing the categorical level by one. This is in agreement with \cite{arinkin2022stacklocalsystemsrestricted,gaitsgory2026applicationshighercategoricaltrace} but disagrees with the notation of \cite{benzvi2015charactertheorycomplexgroup, zhu2025tamecategoricallocallanglands}.

\subsection{Acknowledgements}

I would like to thank A. Eteve and D. Gaitsgory for several helpful discussions about Lusztig's theory of unipotent representations, in the context of our ongoing work on the representations of finite groups of Lie type. In particular, I would like to thank D. Gaitsgory for suggesting to me that Lusztig's asymptotic category should be related to the theory of weights, and I would like to thank A. Eteve for emphasizing to me the importance of the fact that $\sJ_c$ is independent of $\ell$. Additionally, I would like to thank D. Gaitsgory for carefully reading a draft of this paper and providing many helpful comments. Finally, I must note that it would have been appropriate for A. Eteve to have been a coauthor of this paper, but he humbly declined my offer. 

\medskip

I would additionally like to thank J. Baine for many helpful discussions about Soergel bimodules, and for sharing his then-unpublished work on the Jones-Wenzl idempotent. I would also like to thank K. Lin: much of my understanding of the theory of weights was developed during our ongoing joint work on Arthur's conjectures.

\medskip 

This work was conducted while the author was working at the Max Planck Institute for Mathematics. I thank them for their hospitality. 

\section{Semi-rigid monoidal categories}

\subsubsection{} The goal of this section is to prove some basic facts about semi-rigid monoidal categories which will be useful for us later. 

\medskip 

The outline of this section is as follows: 
\begin{itemize}
    \item In \S\ref{ss:2-sr-so} we prove a couple of facts about semiorthogonal decompositions of semi-rigid categories. 
    \item In \S\ref{ss:2-tr} we recall some basic facts from the formalism of categorical traces, and study the relationship between semi-rigidity and the formation of categorical traces. 
\end{itemize}

\subsection{Semiorthogonal decompositions}\label{ss:2-sr-so}

\begin{proposition}\label{prop:semiorthogonal-semirigid}
Suppose that $\sA$ is a semi-rigid monoidal category. If 
\[\sA_\ell \rightleftarrows \sA \rightleftarrows \sA_r\]
is a semiorthogonal decomposition of $\sA$, and if either $\sA_\ell$ or $\sA_r$ is a two-sided ideal in $\sA$, then the other category is too, and both $\sA_\ell$ and $\sA_r$ admit unique monoidal structures such that $\sA \to \sA_\ell$ and $\sA \to \sA_r$ are monoidal. 
\end{proposition}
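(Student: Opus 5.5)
The plan is to reduce everything to one basic lemma about semi-rigid categories, and then to use the two localization functors in the decomposition
\[\sA_\ell \rightleftarrows \sA \rightleftarrows \sA_r.\]
Write $\iota_\ell, \iota_r$ for the inclusions. Write $\tau_\ell = \iota_\ell^R : \sA \to \sA_\ell$ for the colocalization and $\tau_r = \iota_r^L : \sA \to \sA_r$ for the localization. These fit into exact triangles $\iota_\ell\tau_\ell x \to x \to \iota_r\tau_r x$. We have $\ker(\tau_r) = \sA_\ell$ and $\ker(\tau_\ell) = \sA_r$.

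\textbf{Key lemma.} If $\sA$ is semi-rigid, and $F : \sM \to \sN$ is a map of $(\sA,\sA)$-bimodule categories admitting a continuous adjoint (on either side), then the adjoint, which is a priori only lax (resp. oplax) bilinear, is in fact strictly bilinear. For right adjoints I would prove this following \cite[Appendix C]{arinkin2022stacklocalsystemsrestricted}. Semi-rigidity gives a bilinear right adjoint $\mult^R$ and dualizability. From these one writes the action map $\sA \otimes \sM \to \sM$ as having a bilinear right adjoint. The lax structure map $a * F^R(n) \to F^R(a * n)$ is then identified with a map obtained by composing strict bilinear maps with units and counits, and these are all invertible. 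For left adjoints I would run the dual argument: the oplax map $G(a*n) \to a * G(n)$ is the mate of the strict structure of $F$ under the same duality data. I expect this lemma, especially the left-adjoint case, to be the main obstacle. If it fails in that generality, I would instead reduce to the right-adjoint case by exchanging the roles of $\sA_\ell$ and $\sA_r$ using the duality $\sA^\vee \simeq \sA$ induced by semi-rigidity. That duality swaps left and right adjoints and converts the semiorthogonal decomposition into the dual one.

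\textbf{Case $\sA_\ell$ is an ideal.} Since $\sA_\ell$ is a sub-bimodule, the Verdier quotient $\sA/\sA_\ell \simeq \sA_r$ inherits a unique bimodule structure for which $\tau_r$ is strictly bilinear. Its right adjoint $\iota_r$ is therefore lax bilinear, and by the key lemma it is strict. Hence the essential image $\sA_r \subseteq \sA$ is closed under left and right multiplication by $\sA$, i.e.\ it is a two-sided ideal.

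\textbf{Case $\sA_r$ is an ideal.} Here $\sA_r = \ker(\tau_\ell)$ is a sub-bimodule, so $\tau_\ell$ exhibits $\sA_\ell$ as the quotient $\sA/\sA_r$, and $\tau_\ell$ is strictly bilinear. Its left adjoint $\iota_\ell$ is oplax bilinear, hence strict by the lemma, so $\sA_\ell$ is an ideal.

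\textbf{Monoidal structures.} Once both pieces are ideals, I would put the monoidal structure on $\sA_r$ (the argument for $\sA_\ell$ is identical, using $\tau_\ell$) by observing that the kernel of
\[\tau_r \otimes \tau_r : \sA \otimes \sA \to \sA_r \otimes \sA_r\]
is generated by $\sA_\ell \otimes \sA$ and $\sA \otimes \sA_\ell$. The composite $\tau_r \circ \mult$ kills both of these because $\sA_\ell$ is an ideal. So $\tau_r \circ \mult$ factors uniquely through $\sA_r \otimes \sA_r$, and the unit is $\tau_r(1)$. The associativity data, and indeed the full $E_1$-structure, descend by the same argument applied to $\sA^{\otimes n} \to \sA_r^{\otimes n}$, since these are localizations. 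This is cleanest if phrased as: $\sA_r \simeq \sA \otimes_{\sA} \sA_r$ is the quotient of an algebra by a two-sided ideal in $\dgCat$. Uniqueness follows because $\tau_r$ and all $\tau_r^{\otimes n}$ are localizations, hence epimorphisms in $\dgCat$, so any monoidal structure making $\tau_r$ monoidal is forced.
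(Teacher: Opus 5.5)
Your proposal is correct and is essentially the paper's argument: give the quotient by the ideal its induced bimodule structure, use semi-rigidity to upgrade the a priori (op)lax bilinear structure on the adjoint inclusion to a strict one, and then descend the monoidal structure along the localization as in \cite[Proposition 2.2.1.9]{lurie2017higher}. The left-adjoint version of your key lemma is exactly what the paper itself uses, without comment, for $\sA_\ell \to \sA$ and $\sA \to \sA_r$; it is not a real obstacle, since oplax structure maps are automatically invertible on dualizable objects, and your fallback via the self-duality of $\sA$ would also work.
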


\begin{proof}
We will consider the case when $\sA_r$ is a two-sided ideal. The case when $\sA_\ell$ is a two-sided ideal is completely analogous. Since $\sA_r$ is a two-sided ideal, it acquires a unique $(\sA, \sA)$-bimodule structure such that the inclusion $\sA_r \to \sA$ is bilinear. It follows that 
\[\sA_\ell = \cof(\sA_r \to \sA)\]
acquires a unique $(\sA, \sA)$-bimodule structure such that the projection $\sA \to \sA_\ell$ is bilinear. Since $\sA$ is semi-rigid, the a priori oplax $(\sA, \sA)$-bilinear structures on the inclusion $\sA_\ell \to \sA$ and the projection $\sA \to \sA_r$ are actually strict. 

\medskip 

Because $\sA_\ell$ is a two-sided ideal in $\sA$, it follows from \cite[Proposition 2.2.1.9]{lurie2017higher} (arguing as in \cite[Proposition 4.8.2.7]{lurie2017higher}) that $\sA_r$ admits a unique monoidal structure such that $\sA \to \sA_r$ is monoidal. The same is also true for $\sA \to \sA_\ell$. 
\end{proof}

\begin{proposition}\label{prop:semirigid-loc-coloc}
Suppose that $\sA$ is a semi-rigid monoidal category. If $\sB$ is a monoidal localization or colocalization of $\sA$ then $\sB$ is semi-rigid as well.
\end{proposition}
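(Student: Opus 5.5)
We must verify the two conditions of semi-rigidity for $\sB$: that $\sB$ is dualizable, and that $\mult_\sB$ admits a continuous $(\sB,\sB)$-bilinear right adjoint. We treat the localization case. Here $p:\sA\to\sB$ is monoidal with fully faithful right adjoint $i$; the colocalization case, with $i$ a left adjoint, is formally dual, and we indicate the changes at the end. We read ``monoidal localization'' as saying that $i$ is continuous and $\sB$ is a retract of $\sA$ in $\dgCat$, via $p\circ i\simeq \id_\sB$. This is how such localizations arise in \Cref{prop:semiorthogonal-semirigid}, where both functors in $\sA\rightleftarrows\sA_r$ are continuous.

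\emph{Dualizability.} A retract of a dualizable object in $\dgCat$ is dualizable, and $\sB$ is a retract of $\sA$ via $i$ and $p$. So $\sB$ is dualizable, with $\sB^\vee$ the corresponding retract of $\sA^\vee$.

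\emph{The adjoint to multiplication.} Since $p$ is monoidal, $\mult_\sB\circ(p\otimes p)\simeq p\circ \mult_\sA$. Using $p\circ i\simeq\id$, we get
\[\mult_\sB\simeq p\circ\mult_\sA\circ(i\otimes i).\]
We define the candidate right adjoint
\[\mult_\sB^R := (p\otimes p)\circ \mult_\sA^R\circ i.\]
It is continuous because each factor is. To check the adjunction, we use the unit of $i\otimes i\dashv$ (its right adjoint) and the fact that $p\otimes p$ is a localization with fully faithful right adjoint $i\otimes i$, so that $(p\otimes p)(i\otimes i)\simeq\id$. A Yoneda computation then gives
\[\Hom(\mult_\sB(x),y)\simeq\Hom(p\,\mult_\sA(ix),y)\simeq\Hom(ix,\mult_\sA^R(iy)).\]
Since $ix$ is local, this equals $\Hom(x,(p\otimes p)\mult_\sA^R(iy))$ provided $\mult_\sA^R(iy)$ can be replaced by its localization. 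The cleanest way to organize this is to observe that $i\otimes i$ is right adjoint to $p\otimes p$, so it suffices to show that $\mult_\sA^R(iy)$ already lies in the essential image of $i\otimes i$, i.e.\ is local.

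\emph{Main obstacle: locality of $\mult_\sA^R(iy)$.} We expect this step to be the crux, and we plan to prove it using bilinearity of $\mult_\sA^R$. The kernel $K=\ker p$ is a two-sided ideal, because $p$ is monoidal. An object $z\in\sA\otimes\sA$ is local iff it is right orthogonal to $K\otimes\sA+\sA\otimes K$. For $k\in K$ and $a\in\sA$ we compute
\[\Hom(k\otimes a,\mult_\sA^R(iy))\simeq\Hom(k* a,iy)=0,\]
since $k* a\in K$ and $iy$ is local. The symmetric case $a\otimes k$ is identical. Here we must be careful that $K\otimes\sA$ is generated under colimits by pure tensors, which holds since $\sA$ is dualizable.

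\emph{Bilinearity.} The $(\sB,\sB)$-bilinearity of $\mult_\sB^R$ then follows from the $(\sA,\sA)$-bilinearity of $\mult_\sA^R$ together with strict bilinearity of $p$ and of $i$. That $i$ is strictly bilinear, rather than merely lax, follows because its image is closed under the action, this closure being exactly the locality argument above applied to $a* iy$.

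\emph{Colocalization case.} Here $i\dashv p$, and we use $\mult_\sB^R=p\otimes p\circ\mult_\sA^R\circ i$ again. The only new point is that the adjunction identity follows directly from $p\otimes p$ being right adjoint to $i\otimes i$, which makes this case easier.
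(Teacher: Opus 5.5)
Your localization-case continuity argument is correct, but it takes a different route from the paper. The paper never writes down $\mult_\sB^R$. Instead it checks continuity after postcomposing with the conservative functor $i\otimes\id$. The left adjoint of that composite is the action map $\sA\otimes\sB\to\sB$, whose right adjoint is continuous because $\sA$ is semi-rigid, and bilinearity then comes for free. Your explicit formula $\mult_\sB^R\simeq(p\otimes p)\circ\mult_\sA^R\circ i$ is a valid and more concrete alternative, with locality of $\mult_\sA^R(iy)$ coming from $\ker p$ being a two-sided ideal. The problem lies in each place where you need the lax (or oplax) linearity of $i$ to be strict.

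\textbf{Localization case.} Your justification that $a*iy$ is local is circular. The locality argument worked because $\mult_\sA\dashv\mult_\sA^R$ turns $\Hom(k\otimes a,\mult_\sA^R(iy))$ into $\Hom(k*a,iy)$. Locality of $a*iy$ instead requires $\Hom(k,a*iy)=0$ for $k\in\ker p$. Nothing moves $a$ to the other side unless $a$ is dualizable.

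\textbf{Colocalization case.} Here the same issue breaks the adjunction itself, not just bilinearity. Now $p$ is the \emph{right} adjoint of $i$. So $\Hom_\sB(p\,\mult_\sA(ix),y)\simeq\Hom_\sA(\mult_\sA(ix),iy)$ holds only if $\mult_\sA(ix)$ lies in the essential image of $i$. The cofiber of the counit $ip(z)\to z$ lies in $\ker p$. You know that maps from the image of $i$ into $\ker p$ vanish, but not maps from $\ker p$ into the image of $i$. So this case does not ``follow directly'' from $i\otimes i\dashv p\otimes p$; you first need $i$ to be strictly monoidal.

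\textbf{Repair.} Both gaps close with the fact the paper relies on: over a semi-rigid $\sA$, a continuous lax or oplax $\sA$-linear functor between module categories is strictly linear. This is what stands behind ``automatically $\sA$-bilinear'' in the paper's proof and the strictness claim in \Cref{prop:semiorthogonal-semirigid}. Applied to $i$, it gives closure of the image of $i$ under the action. If $\sA$ is compactly generated by dualizable objects, you can also see this directly by moving a dual of $a$ across and passing to colimits.

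For colocalizations there is a cleaner route, which is the paper's. Write $\mult_\sB\simeq\act\circ(i\otimes\id)$ with $\act(a\otimes b)=p(a)*b$. This uses only $p\circ i\simeq\id$ and monoidality of $p$. Both factors have continuous right adjoints: $p\otimes\id$ for the first, and $\act^R$ for the second, which is continuous by semi-rigidity of $\sA$.
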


\begin{proof}
First suppose that $\sB$ is a monoidal localization of $\sA$. Because $\sB$ is a retract of $\sA$, it is dualizable. It remains to show that 
\[\mult: \sB \otimes \sB \to \sB\]
admits a continuous, $\sB$-bilinear right adjoint. Since $\mult$ is $\sA$-bilinear, if $\mult^R$ is continuous then it is automatically $\sA$-bilinear, and hence $\sB$-bilinear. It suffices to check that $\mult^R$ is continuous after post-composing with the continuous, conservative functor $i \otimes \id: \sB \otimes \sB \to \sA \otimes \sB$. The left adjoint of the composite functor is 
\[\sA \otimes \sB \totext{\pi} \sB \otimes \sB \totext{\mult} \sB\]
which identifies with the action of $\sA$ on $\sB$. Because $\sA$ is semi-rigid, this functor automatically has a continuous right adjoint. 

Now suppose that $\sB$ is a monoidal colocalization of $\sA$. Like in the previous case, it's enough to show that $\mult: \sB \otimes \sB \to \sB$ admits a continuous right adjoint. However, this functor can be factored as 
\[\sB \otimes \sB \totext{i \otimes \id} \sA \otimes \sB \totext{\act} \sB,\]
and both functors in the composite admit continuous right adjoints.
\end{proof}

\subsection{Traces}\label{ss:2-tr}

\subsubsection{}\label{sss:trace-functoriality} Recall from \S\ref{sss:intro-hh-defn} that for a monoidal category $\sA$ and $(\sA, \sA)$-bimodule $\sM$, the Hochschild homology $HH(\sA, \sM)$ is defined to be 
\[HH(\sA, \sM) = \sA \underset{\sA \otimes \sA^\rev}{\otimes} \sM.\]
Recall also that the formalism of categorical traces \S\ref{sss:hh-morita-invt} allows us to rewrite 
\[HH(\sA, \sM) \simeq \tr(\sM; \sA \tmod).\]
As a result, the functoriality of categorical traces (see for example \cite{gaitsgory2022toymodeldrinfeldlafforgueshtuka}) gives rise to functoriality for $HH$. 

\medskip 

We will be especially interested in the following functoriality of $HH$: let $\sA$ and $\sB$ be monoidal categories, and let $f: \sA \to \sB$ be a monoidal functor between them. Suppose that $\sA$ and $\sB$ have monoidal endomorphisms $\phi$ and $\psi$ that are intertwined by $f$. Then the functoriality of categorical traces gives us a map $HH(\sA; \phi) \to HH(\sB; \psi)$, which is the composite 
\begin{equation*}
\begin{split}
&\sA \underset{\sA \otimes \sA^\rev}{\otimes} \sA_\phi \to \sB \underset{\sA \otimes \sA^\rev}{\otimes} \sA_\phi \simeq \sB \underset{\sB \otimes \sB^\rev}{\otimes} \left(\sB \underset{\sA}{\otimes} \sA_\phi \underset{\sA}{\otimes} \sB \right) \\
&\simeq \sB \underset{\sB \otimes \sB^\rev}{\otimes} \left(\sB_\psi \underset{\sB}{\otimes} \sB \underset{\sA}{\otimes} \sB \right) \to \sB \underset{\sB \otimes \sB^\rev}{\otimes} \left(\sB_\psi \underset{\sB}{\otimes} \sB \right) \simeq \sB \underset{\sB \otimes \sB^\rev}{\otimes} \sB_\psi 
\end{split}
\end{equation*}
where the first non-invertible arrow is obtained by tensoring up from $f: \sA \to \sB$, and the second non-invertible arrow is obtained by tensoring up from 
\[\mult: \sB \otimes_\sA \sB \to \sB.\] 

\subsubsection{}\label{sss:hh-monadic} Because the multiplication functor 
\[\mult: \sA \otimes \sA \to \sA\]
is $(\sA, \sA)$-bilinear, we may tensor this map with $\sM$ over $\sA \otimes \sA^\rev$ to obtain a canonical map
\[\sM \to HH(\sA, \sM)\]
called the universal trace. When $\sA$ is semi-rigid, $\mult$ admits a continuous $(\sA, \sA)$-bilinear right adjoint, so the universal trace necessarily admits a continuous right adjoint. Additionally, because $\mult$ is generating (because it is after all essentially surjective), it follows that the universal trace is as well. In particular, $HH(\sA, \sM)$ is monadic over $\sM$.

\begin{construction}\label{constr:semiorthogonal-trace}
Suppose that $\sA$ is a semi-rigid monoidal category and 
\[\sA_\ell \rightleftarrows \sA \rightleftarrows \sA_r\]
is a semiorthogonal decomposition such that the arrows are all $(\sA, \sA)$-bilinear. Suppose that $\phi$ is a monoidal endomorphism of $\sA$ that preserves the given semiorthogonal decomposition. Then there is a commutative diagram 
\[
\begin{tikzcd}
\tr(\phi; \sA_\ell\tmod) \arrow[r, shift left=1] \arrow[d, shift left=1] & \tr(\phi; \sA\tmod) \arrow[r, shift left=1] \arrow[l, shift left=1] \arrow[d, shift left=1]& \tr(\phi; \sA_r\tmod) \arrow[l, shift left=1] \arrow[d, shift left=1]\\
\sA_\ell \arrow[r, shift left=1] \arrow[u, shift left=1]& \sA \arrow[r, shift left=1] \arrow[l, shift left=1] \arrow[u, shift left=1]& \sA_r \arrow[l, shift left=1]\arrow[u, shift left=1]
\end{tikzcd}
\]
where the horizontal rows are semiorthogonal decomopositions and the vertical arrows realize the upper categories as monadic over the lower categories. 
\end{construction}

\begin{proof}
Starting with the semiorthogonal decomposition 
\[\sA_\ell \rightleftarrows \sA \rightleftarrows \sA_r\]
of $(\sA, \sA)$-bimodules, we can twist the action of $\sA$ on the right by $\phi$ to obtain a semiorthogonal decomposition 
\begin{equation}\label{eq:phi-semi-orthogonal}
(\sA_\ell)_\phi \rightleftarrows \sA_\phi \rightleftarrows (\sA_r)_\phi
\end{equation}
of $(\sA, \sA)$-bimodules. As in \S\ref{sss:hh-monadic} the adjunction 
\begin{equation}\label{eq:mult-monad}
\mult: \sA \otimes \sA \rightleftarrows \sA : \mult^R
\end{equation}
realizes $\sA$ as monadic over $\sA \otimes \sA$. Tensoring (\ref{eq:phi-semi-orthogonal}) and (\ref{eq:mult-monad}) over $\sA \otimes \sA^\rev$ gives a diagram 
\begin{equation}\label{eq:semiorthogonal-diagram}
\begin{tikzcd}
(\sA_\ell)_\phi \underset{\sA \otimes \sA^\rev}{\otimes} \sA \arrow[r, shift left=1] \arrow[d, shift left=1] & \tr(\phi; \sA\tmod) \arrow[r, shift left=1] \arrow[l, shift left=1] \arrow[d, shift left=1]& (\sA_r)_\phi \underset{\sA \otimes \sA^\rev}{\otimes} \sA \arrow[l, shift left=1] \arrow[d, shift left=1]\\
\sA_\ell \arrow[r, shift left=1] \arrow[u, shift left=1]& \sA \arrow[r, shift left=1] \arrow[l, shift left=1] \arrow[u, shift left=1]& \sA_r \arrow[l, shift left=1]\arrow[u, shift left=1]
\end{tikzcd}
\end{equation}
Because the functor $(-) \otimes_{\sA \otimes \sA^\rev} \sA$ commutes with colimits, 
\[\cof\left((\sA_\ell)_\phi \underset{\sA \otimes \sA^\rev}{\otimes} \sA \to \sA_\phi \underset{\sA \otimes \sA^\rev}{\otimes} \sA\right) \simeq \cof((\sA_\ell)_\phi \to \sA_\phi) \underset{\sA \otimes \sA^\rev}{\otimes} \sA \simeq \sA_r \underset{\sA \otimes \sA^\rev}{\otimes} \sA\]
and it follows that the upper row of (\ref{eq:semiorthogonal-diagram}) is indeed a semiorthogonal decomposition. It remains to identify $(\sA_\ell)_\phi \underset{\sA \otimes \sA^\rev}{\otimes} \sA$ with $\tr(\phi; \sA_\ell \tmod)$ (the proof for $\sA_r$ is the same). Indeed: 
\[(\sA_\ell)_\phi \underset{\sA \otimes \sA^\rev}{\otimes} \sA \simeq (\sA_\ell)_\phi \underset{\sA_\ell \otimes \sA_\ell^\rev}{\otimes} \sA_\ell \otimes \sA_\ell \underset{\sA \otimes \sA^\rev}{\otimes} \sA \simeq (\sA_\ell)_\phi \underset{\sA_\ell \otimes \sA_\ell^\rev}{\otimes} \sA_\ell \underset{\sA}{\otimes} \sA_\ell\]
and the result follows because $\sA_\ell \otimes_\sA \sA_\ell \simeq \sA_\ell$ as an $(\sA_\ell, \sA_\ell)$-bimodule.
\end{proof}

\begin{proposition}\label{prop:hh-map}
The map 
\[HH(\sA; \phi) \to HH(\sA_\ell; \phi)\]
of \Cref{constr:semiorthogonal-trace} identifies with the map from \S\ref{sss:trace-functoriality}.
\end{proposition}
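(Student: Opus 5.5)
We need to compare two maps $HH(\sA;\phi)\to HH(\sA_\ell;\phi)$. The map from \Cref{constr:semiorthogonal-trace} is obtained by tensoring the bilinear projection $p:\sA_\phi\to(\sA_\ell)_\phi$ with $\sA$ over $\sA\otimes\sA^\rev$, followed by the identification
\[(\sA_\ell)_\phi\underset{\sA\otimes\sA^\rev}{\otimes}\sA\simeq(\sA_\ell)_\phi\underset{\sA_\ell\otimes\sA_\ell^\rev}{\otimes}(\sA_\ell\underset{\sA}{\otimes}\sA_\ell)\simeq HH(\sA_\ell;\phi),\]
where the second step uses $\mult:\sA_\ell\otimes_\sA\sA_\ell\simeq\sA_\ell$. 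The map of \S\ref{sss:trace-functoriality} for the monoidal functor $f=p:\sA\to\sA_\ell$ (monoidal by \Cref{prop:semiorthogonal-semirigid}) is instead the composite: tensor $f$ on the left factor, rewrite as $\sA_\ell\otimes_{\sA_\ell\otimes\sA_\ell^\rev}(\sA_\ell\otimes_\sA\sA_\phi\otimes_\sA\sA_\ell)$, identify $\sA_\ell\otimes_\sA\sA_\phi\simeq(\sA_\ell)_\phi\otimes_{\sA_\ell}\sA_\ell$, and finally apply $\mult:\sA_\ell\otimes_\sA\sA_\ell\to\sA_\ell$.

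The plan is to match these step by step, using the symmetry of the relative tensor product (swapping which factor we regard as "$\sM$" and which as "$\sA$"). First, I observe that in the functoriality composite the only input from $f$ is through the bimodule $\sA_\ell$ regarded as an $(\sA,\sA_\ell)$- and $(\sA_\ell,\sA)$-bimodule, and the key observation is that the bimodule map $\sA_\phi\to\sA_\ell\otimes_\sA\sA_\phi\otimes_\sA\sA_\ell\to(\sA_\ell)_\phi$ (unit of induction followed by multiplication) equals $p$ itself. Indeed, by \Cref{prop:semiorthogonal-semirigid} the localization $\sA\to\sA_\ell$ is idempotent: $\sA_\ell\otimes_\sA\sA_\ell\simeq\sA_\ell$ and $\sA_\ell\otimes_\sA\sA_\phi\simeq(\sA_\ell)_\phi$, via the cofiber description $\sA_\ell=\cof(\sA_r\to\sA)$ and the ideal property. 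So both constructions factor as "apply $p$ to one tensor factor, then use the canonical identification induced by $\mult$".

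Concretely, I would write both maps as obtained from a single map of $(\sA\otimes\sA^\rev)$-bimodule data, namely $p\otimes p:\sA\otimes\sA_\phi\to\sA_\ell\otimes(\sA_\ell)_\phi$ together with the equivalence $\sA_\ell\otimes_{\sA\otimes\sA^\rev}(\sA_\ell)_\phi\simeq HH(\sA_\ell;\phi)$ (base change along the monoidal localization, using idempotence). Then the construction map equals the composite where $p$ is applied first to the $\sA_\phi$ factor and then the identification $\sA\otimes_{\sA\otimes\sA^\rev}(\sA_\ell)_\phi\to\sA_\ell\otimes_{\sA\otimes\sA^\rev}(\sA_\ell)_\phi$ (which is an equivalence, since $(\sA_\ell)_\phi$ is already an $\sA_\ell$-bimodule), while the functoriality map applies $p$ first to the $\sA$ factor. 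By functoriality of the relative tensor product in both variables these agree, giving the claim.

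The main obstacle is coherence: verifying that the equivalence $\sA_\ell\otimes_\sA\sA_\ell\simeq\sA_\ell$ used in \Cref{constr:semiorthogonal-trace} is the same one induced by $\mult$ in \S\ref{sss:trace-functoriality}, i.e. that the unit $\sA\to\sA_\ell$ tensored with $\sA_\ell$ is inverse to multiplication. I would handle this using the universal property of localizations: both maps $\sA_\ell\to\sA_\ell\otimes_\sA\sA_\ell$ are $\sA_\ell$-bilinear and restrict to the same map on the image of the unit, hence coincide.
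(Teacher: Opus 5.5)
Your proof is correct, but it argues differently from the paper. The paper's argument is soft. It unwinds the functoriality map into the composite $\sA_\phi \otimes_{\sA\otimes\sA^\rev}\sA \to \sA_\phi\otimes_{\sA\otimes\sA^\rev}\sA_\ell \simeq HH(\sA_\ell;\phi)$, notes that both maps therefore admit fully faithful left adjoints, and then only checks that the two essential images inside $HH(\sA;\phi)$ coincide: both equal $(\sA_\ell)_\phi\otimes_{\sA\otimes\sA^\rev}\sA_\ell$. This identifies the two functors only up to an autoequivalence of $HH(\sA_\ell;\phi)$. That is all that is claimed and all that is used later, and it makes the unspecified equivalence $\sA_\ell\otimes_\sA\sA_\ell\simeq\sA_\ell$ in \Cref{constr:semiorthogonal-trace} irrelevant.

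You instead prove an honest equality of functors. Both maps equal $\mathrm{can}\circ(p\otimes p)$, where $\mathrm{can}\colon \sA_\ell\otimes_{\sA\otimes\sA^\rev}(\sA_\ell)_\phi\to HH(\sA_\ell;\phi)$ is the base-change map. The two orders of applying $p$ agree by bifunctoriality of the relative tensor product. Your key observation is exactly that the last step of the functoriality map is adjoint to $p$ under induction and restriction along $\sA\otimes\sA^\rev\to\sA_\ell\otimes\sA_\ell^\rev$. Your route is more canonical and recovers the fully faithful left adjoint of the functoriality map as a corollary. The price is that you must fix the identification in the construction to be the one induced by $\mult$.

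One correction to your coherence paragraph. An $(\sA_\ell,\sA_\ell)$-bilinear functor out of $\sA_\ell$ is not determined by the image of the unit, since bimodule functors out of $\sA_\ell$ carry extra centralizing data. So that argument does not work as stated. It is also not needed.

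\begin{itemize}
\item The composite $\mult\circ(\id\otimes p)\colon \sA_\ell\otimes_\sA\sA\to\sA_\ell$ is the action equivalence, because the right $\sA$-action on $\sA_\ell$ is restriction along the monoidal functor $p$.
\item Hence $\mult$ is inverse to $\id\otimes p$ as soon as $\id\otimes p$ is an equivalence.
\item That holds because $\sA_\ell\otimes_\sA\sA_r=0$. Since $\sA_\ell\to\sA$ is a bilinear colocalization, $\sA_\ell\otimes_\sA\sA_r$ embeds fully faithfully in $\sA_r$. Its image is generated by products $a*x$ with $a\in\sA_\ell$ and $x\in\sA_r$, and these lie in $\sA_\ell\cap\sA_r=0$ because both pieces are ideals.
\end{itemize}

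If the construction used some other bimodule equivalence, your comparison would still hold up to an autoequivalence of the target, which is precisely the paper's conclusion.
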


\begin{proof}
Unwinding the definition of the functoriality of $HH$ shows that the map 
\[HH(\sA; \phi) \to HH(\sA_\ell; \phi)\]
identifies with the composite 
\[\sA_\phi \underset{\sA \otimes \sA^\rev}{\otimes} \sA \to \sA_\phi \underset{\sA \otimes \sA^\rev}{\otimes} \sA_\ell \totext{\sim} (\sA_\ell)_\phi \underset{\sA_\ell \otimes \sA_\ell^\rev}{\otimes} \sA_\ell\]
where the second arrow is a certain isomorphism. In particular, both the map coming from the functoriality of $HH$ and the map of \Cref{constr:semiorthogonal-trace} admit fully faithful left adjoints. Therefore, to prove that the maps are isomorphic it suffices to show that 
\[\sA_\phi \underset{\sA \otimes \sA^\rev}{\otimes} \sA_\ell \quad \text{and} \quad (\sA_\ell)_\phi \underset{\sA \otimes \sA^\rev}{\otimes} \sA\]
agree as full subcategories of $HH(\sA; \phi)$. This follows because both subcategories are equal to 
\[(\sA_\ell)_\phi \underset{\sA \otimes \sA^\rev}{\otimes} \sA_\ell.\]
\end{proof}

\subsubsection{} The analogoue of \Cref{prop:hh-map} for $\sA_r$ is true as well, and the proof is the same. 

\section{$\A^1$-actions}

\subsubsection{} In \cite[\S1]{gaitsgory2026excursionalgebra}, D. Gaitsgory, K. Lin, and the author proved the following result about actions of the commutative monoid $\A^1$ (with the monoid structure given by multiplication) on dg categories:

\begin{proposition}[{\cite[Theorem 1.1.7]{gaitsgory2026excursionalgebra}}]\label{prop:A1-action-char}
Suppose that $\sC$ is a category acted on by $\Gm$. Then the following data are equivalent:
\begin{enumerate}
    \item An extension of the given $\Gm$ action to an action of $\A^1$. 
    \item A collection of colocalizations $\set{\sC^\Gm_{\le n}}$ of $\sC^\Gm$ such that 
    \[\sC^\Gm = \colim_n \sC^\Gm_{\le n}\]
    and $\sO(m) \cdot \sC^\Gm_{\le n} = \sC^\Gm_{\le n + m}$ (here $\sO(m)$ denote the weight $m$ representation of $\Gm$).
\end{enumerate}
Additionally, if we write $\sC^\Gm_{> n}$ for the right orthogonal of $\sC^\Gm_{\le n}$, then for two categories $\sC$, $\sD$ acted on by $\A^1$, an $\A^1$-equivariant functor $F: \sC \to \sD$ is the same as a $\Rep(\Gm)$-linear functor $F^\Gm: \sC^\Gm \to \sD^\Gm$ such that $\sC^\Gm_{\le 0}$ maps to $\sD^\Gm_{\le 0}$ and $\sC^\Gm_{> 0}$ maps to $\sD^\Gm_{> 0}$. 
\end{proposition}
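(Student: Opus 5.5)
The plan is to reduce everything to a statement about modules over $\QCoh(\Gm)$ and $\QCoh(\A^1)$ with the convolution monoidal structure, and to use the Rees construction. An action of the monoid $\A^1$ on $\sC$ is the same as a $\QCoh(\A^1)$-module structure on $\sC$ for the monoidal structure $(\QCoh(\A^1), \star)$ coming from multiplication $\A^1 \times \A^1 \to \A^1$. Pullback along $\Gm \to \A^1$ gives a monoidal localization $\QCoh(\A^1) \to \QCoh(\Gm)$, and extending a $\Gm$-action to an $\A^1$-action means lifting the $\QCoh(\Gm)$-module structure. I would pass to invariants: by $1$-affineness of $B\Gm$ (Gaitsgory), taking $\Gm$-invariants is an equivalence between $\QCoh(\Gm)$-modules and $\Rep(\Gm)$-modules, with inverse $\sD \mapsto \sD \otimes_{\Rep(\Gm)} \Vect$. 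So it suffices to identify the monoidal category $\QCoh(\A^1)$, viewed as a $\Rep(\Gm)$-bimodule via the $\Gm$-equivariance, or equivalently the algebra object ``$\QCoh(\A^1)^{\Gm}$'' in $\Rep(\Gm)\tmod$, and then describe its modules.

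The key computation is that $\QCoh(\A^1/\Gm)$, where $\Gm$ acts on $\A^1$ by scaling, identifies with filtered vector spaces $\Fil(\Vect) = \Fun(\Z^{\le}, \Vect)$, which is Rees. Under the $\A^1$-action by convolution, the invariant category $\QCoh(\A^1)\otimes_{\QCoh(\Gm)}\Rep(\Gm)$-type object should be the monoidal category of filtered vector spaces with the symmetric monoidal Day convolution, containing $\Rep(\Gm) = \mathrm{Gr}(\Vect)$ via ``associated split filtration'' and mapping to it via $\gr$. A module for this over $\Rep(\Gm)$ is then a $\Rep(\Gm)$-linear category $\sC^\Gm$ together with a compatible action of $\Fil(\Vect)$. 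I would unwind the latter: $\Fil(\Vect)$ is generated, relative to $\Rep(\Gm)$, by a single idempotent algebra object, namely the filtered vector space $\ol{\Q}_\ell$ placed in filtration degrees $\le 0$ (equivalently $\sO_{\A^1}$ with its $\Gm$-equivariance, i.e.\ the unit of the convolution structure restricted suitably). An idempotent algebra $e$ acting on $\sC^\Gm$ is exactly a colocalization $\sC^\Gm_{\le 0} := e \cdot \sC^\Gm$ (the counit $e \to 1$ gives a right adjoint-to-inclusion structure), and twisting by $\sO(m)$ yields $\sC^\Gm_{\le m} = \sO(m)\cdot \sC^\Gm_{\le 0}$. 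The condition that $\Fil(\Vect)$ is the colimit $\colim_n$ of its truncations (the filtration is exhaustive, i.e.\ $\colim_n e(n) = 1$ in $\Fil(\Vect)$) translates into $\sC^\Gm = \colim_n \sC^\Gm_{\le n}$. Conversely, given such colocalizations, one reconstructs the action of $e$ as the colocalization functor, and the relations in $\Fil(\Vect)$ (that $e \otimes e \simeq e$, that $e(m)\otimes e(n) \simeq e(m+n)$, nestedness) hold automatically because the $\sC^\Gm_{\le n}$ are nested colocalizations (nestedness itself follows from $\sO(1)$-shift and the colimit condition, or I would impose it by noting $e(-1) \to e$).

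For the functor statement, an $\A^1$-equivariant functor corresponds under the equivalence to a $\Rep(\Gm)$-linear functor that commutes with the action of the idempotent $e$, i.e.\ intertwines the colocalization functors. A $\Rep(\Gm)$-linear $F^\Gm$ intertwines the colocalizations precisely when it preserves both the subcategory $\sC^\Gm_{\le 0}$ and its right orthogonal $\sC^\Gm_{>0}$: preserving both pieces of the semiorthogonal decomposition forces $F$ to carry the triangle $e\cdot x \to x \to x'$ to the corresponding triangle, giving $F(e\cdot x) \simeq e\cdot F(x)$; the shifted versions follow by $\Rep(\Gm)$-linearity.

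The main obstacle is the middle step: identifying $\QCoh(\A^1)$ with convolution, after descent along $\Gm$, with the Rees/filtered category and showing it is generated as a $\Rep(\Gm)$-algebra by the single idempotent $e$ subject only to exhaustiveness, with no higher coherence data. I would attack it by computing $\Hom$'s between the $e(n)$ in $\Fil(\Vect)$ (they are $\ol{\Q}_\ell$ or $0$, concentrated in degree $0$), so the relevant algebra is a poset algebra and the coherence is discrete.
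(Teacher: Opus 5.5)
\textbf{There is a genuine gap, at the central identification.} It is structural, not a matter of missing coherences. You end up with $\sC^\Gm$ acted on by a monoidal category in which the image of $\Rep(\Gm)$ is central. Indeed, $\Fil(\Vect)$ with Day convolution is $\QCoh(\A^1/\Gm)$ with its symmetric monoidal $\otimes$, which is a commutative $\Rep(\Gm)$-algebra.

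In any such framework, the projector $P_0$ given by acting with $e$ commutes with $\sO(m)$. So the projector onto $\sO(m)\cdot\sC^\Gm_{\le 0}$ is $\sO(m)\circ P_0\circ\sO(-m)\simeq P_0$. Hence all the $\sC^\Gm_{\le n}$ coincide, and exhaustiveness forces $\sC^\Gm_{\le 0}=\sC^\Gm$: only the trivial datum can come out. The trivial $\A^1$-action on $\Vect$ is already not captured. There $\sC^\Gm=\Rep(\Gm)$, and $\sC^\Gm_{\le n}$ is the proper subcategory of weights $\le n$, which $\sO(m)$ genuinely moves. By $1$-affineness of $\A^1/\Gm$, modules for $\QCoh(\A^1/\Gm)$ under $\otimes$ are $\Gm$-equivariant families of categories over $\A^1$, which is a different notion.

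Two smaller slips point the same way. First, restriction from $\A^1$-actions to $\Gm$-actions is along $j_*$, which is monoidal for convolution because $j$ is a map of monoids. Restricting along the localization $j^*$ instead produces exactly the degenerate actions with $\act_0=0$ from the example following \Cref{prop:A1-action-triv}. Second, the convolution unit is $\delta_1$, not $\sO_{\A^1}$.

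\textbf{What $1$-affineness of $B\Gm$ actually gives.} $\A^1$-actions extending the $\Gm$-action are module structures on $\sC^\Gm$ for the Hecke category $\QCoh(\Gm\bs\A^1/\Gm)$ under convolution. This is an algebra in $\Rep(\Gm)$-\emph{bimodules}. Its left and right $\Rep(\Gm)$-actions differ even though $\A^1$ is commutative: they differ by a twist by a multiple of the weight of the coordinate $t$, and $t$ is not invertible on $\A^1$.

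The part on which the diagonal $\Gm$ acts trivially is $\QCoh(\A^1/\Gm)$ with pushforward along the multiplication of the monoid stack $\A^1/\Gm$. In filtered terms this is the \emph{pointwise} tensor product $(V\star W)_n=V_n\otimes W_n$, with unit the constant filtration (i.e.\ $\delta_1$). The whole Hecke category is the crossed product of this by $\Rep(\Gm)$, acting by shifting the filtration index.

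In the pointwise structure, let $e_n$ be $k$ in filtration degrees $\ge -n$; this is the paper's $\sO(n)$ in \Cref{prop:left-lax-preserve-le-0}, whose comodules are $\sC^\Gm_{\le n}$. These are idempotent \emph{coalgebras} via their maps to the unit, with $e_n\star e_m\simeq e_{\min(n,m)}$. Conjugation by $\sO(m)$ carries $e_n$ to $e_{n+m}$ (with the normalizations of the statement). This non-commutation is exactly the condition $\sO(m)\cdot\sC^\Gm_{\le n}=\sC^\Gm_{\le n+m}$. The fact that the unit is the non-compact colimit $\colim_n e_n$ is exactly exhaustiveness.

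With this replacement, the rest of your plan is sound. The mapping spaces between the $e_n$ are $k$ or $0$, so the coherences are those of a poset. Your argument for the functor statement also goes through: a $\Rep(\Gm)$-linear $F$ intertwines the weight projectors if and only if it preserves both $\sC^\Gm_{\le 0}$ and $\sC^\Gm_{>0}$. As written, though, the proposal rests on the wrong monoidal category and cannot produce condition (2).
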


\subsubsection{} The goal of this section is to prove a few more results about actions of $\A^1$ on dg categories which will be useful to us. Many of these results are based on conversations with K. Lin.

\medskip 

The outline of this section is as follows: 

\begin{itemize}
    \item In \S\ref{ss:3-inv} we give a simple criterion to recognize when a category $\sD$ is the category of $\A^1$-invariants for a category $\sC$ acted on by $\A^1$. 
    \item In \S\ref{ss:3-hh} we study the interaction of $\A^1$-actions with the formation of Hochschild homology. 
    \item In \S\ref{ss:3-lax} we give a characterization of when a functor between categories acted on by $\A^1$ admits a left- or right-lax $\A^1$-equivariant structure. 
    \item In \S\ref{ss:3-monad} we study $\A^1$ actions on categories which are monadic over a category with a preexisting action of $\A^1$.  
\end{itemize}

\subsection{Invariants}\label{ss:3-inv}

\subsubsection{} Let $\sC$ be a category acted on by $\A^1$. The goal of this section is to prove a simple characterization of the category of $\A^1$-invariants $\sC^{\A^1}$. First we note that because 
\[0 \cdot \lambda = \lambda \cdot 0 = 0\]
for all $\lambda \in \A^1$, the map 
\[\act_0: \sC \to \sC\]
naturally intertwines the \textit{trivial} $\A^1$-action on the source with the given $\A^1$-action on the target, and it intertwines the given $\A^1$-action on the source with the trivial $\A^1$-action on the target. In particular, by the universal property of the category of invariants, there is a unique map $r: \sC \to \sC^{\A^1}$ making the diagram 
\[
\begin{tikzcd}
& \sC^{\A^1} \arrow[dr, "i"]& \\
\sC \arrow[rr, "\act_0"] \arrow[dashed, ur, "r"] & & \sC 
\end{tikzcd}
\]
commute, where $i: \sC^{\A^1} \to \sC$ is the canonical map. 

\begin{proposition}\label{prop:invts-idempotent}
The maps $i$ and $r$ realize $\sC^{\A^1}$ as the ``subobject'' of $\sC$ corresponding to the idempotent endomorphism $\act_0$ of $\sC$. 
\end{proposition}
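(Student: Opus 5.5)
Concretely, the claim is that $r \circ i \simeq \id_{\sC^{\A^1}}$ and $i \circ r \simeq \act_0$, compatibly, so that $\sC^{\A^1}$ is a retract of $\sC$ exhibited as the splitting of the idempotent $\act_0$. The second identity holds by construction of $r$. The idempotence of $\act_0$ itself comes from $0 \cdot 0 = 0$ in the monoid $\A^1$, together with the higher coherences supplied by the action. So the plan reduces to showing $r \circ i \simeq \id_{\sC^{\A^1}}$, compatibly with the idempotent structure.

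\textbf{Step 1: reduce to a statement about $\act_0 \circ i$.} Since $i$ is conservative on invariants (indeed the forgetful map from invariants is a limit projection), it suffices to check the identity after postcomposing with $i$ and keeping track of the equivariant structure. By the universal property of $\sC^{\A^1}$, a functor $\sD \to \sC^{\A^1}$ from a category with trivial action is the same as an $\A^1$-equivariant functor $\sD \to \sC$ with trivial action on $\sD$. Hence both $r \circ i$ and $\id$ are determined by the corresponding equivariant functors $\sC^{\A^1} \to \sC$, namely $\act_0 \circ i$ and $i$, each equipped with its equivariant structure. The task is to produce an equivalence of equivariant functors $\act_0 \circ i \simeq i$.

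\textbf{Step 2: construct that equivalence.} The composite $i$ is $\A^1$-equivariant from the trivial action, so for every $\lambda$ there is an identification $\act_\lambda \circ i \simeq i$, coherently in $\lambda \in \A^1$. Specializing to $\lambda = 0$ gives $\act_0 \circ i \simeq i$. The point to verify is that this equivalence is compatible with the equivariant structure on $\act_0 \circ i$ coming from the construction of $r$, that is, from the two monoid identities $0\cdot\lambda = 0 = \lambda \cdot 0$.

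\textbf{Step 3: the coherence check, which is the main obstacle.} I would handle it by working universally rather than by hand. Write the action as a coaction of the bialgebra $\QCoh(\A^1)$, or equivalently as a module for the monoidal category $\Shv$ / $\QCoh(\A^1)$ under convolution. Invariants are then $\Hom_{\QCoh(\A^1)\tmod}(\Vect, \sC)$, where $\Vect$ is the trivial module given by restriction along $\A^1 \to *$. In these terms, $\act_0$ is the action of the object $\delta_0 \in \QCoh(\A^1)$. The augmentation $\QCoh(\A^1) \to \Vect$ given by fibre at the unit $1$ and the map given by fibre at $0$ are both monoidal. Moreover, $\delta_0$ is an idempotent algebra in $\QCoh(\A^1)$, with $\delta_0 * \delta_0 \simeq \delta_0$ and $\delta_0 * \sF \simeq \sF_0 \otimes \delta_0$. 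This exhibits $\QCoh(\A^1) \cdot \delta_0 \simeq \Vect$ as the trivial module.

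Consequently, $\delta_0$ realizes the trivial module $\Vect$ as a retract of the regular module $\QCoh(\A^1)$, via
\[
\Vect \totext{\delta_0} \QCoh(\A^1) \totext{(-)_0} \Vect.
\]
Both maps here are module maps; the second is one because $0$ is absorbing. Their composite is the identity, and the opposite composite is $\delta_0 * (-)$. Applying $\Hom_{\QCoh(\A^1)\tmod}(-, \sC)$, and using $\Hom(\QCoh(\A^1), \sC) \simeq \sC$, yields exactly $i$ and $r$ with the required relations. All coherences are then inherited from this single retraction, which only requires checking the two claims about $\delta_0$, namely module-linearity and idempotence.
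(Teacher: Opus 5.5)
Your Steps 1 and 2 are the paper's argument. The gap is in Step 3, the part you rely on to close the coherence. There, the retraction you write down is not a retraction of $\QCoh(\A^1)$-modules: you are mixing the convolution-module picture with the comodule (pointwise tensor) picture.

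\textbf{What goes wrong.} For convolution, the trivial module $\Vect$ is restriction along the monoidal functor $p_*=\Gamma(\A^1,-)$, where $p\colon \A^1\to *$.
\begin{enumerate}
\item Neither $i_1^*$ nor $i_0^*$ is monoidal for convolution.
\item One has $\delta_0*\sF\simeq p_*\sF\otimes\delta_0$, not $\sF_0\otimes\delta_0$. The latter is the projection formula for the pointwise $\otimes$, and for the pointwise $\otimes$ your other claim $\delta_0\otimes\delta_0\simeq\delta_0$ fails.
\item Concretely, $i_0^*$ is not a module map to the trivial module. For $\sG=\delta_0$ and $\sF=\delta_1$ one gets $i_0^*(\sG*\sF)=i_0^*i_{0*}k\simeq k\oplus k[1]$, whereas $p_*\sG\otimes i_0^*\sF=0$.
\item The composite $\Vect\to\QCoh(\A^1)\to\Vect$ is not the identity either: it is $V\mapsto V\otimes i_0^*i_{0*}k\simeq V\otimes(k\oplus k[1])$.
\item Also, $\delta_0$ is not an idempotent algebra for convolution, since $\Hom(\delta_1,\delta_0)=0$ and so there is no unit map. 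Only $\delta_0*\delta_0\simeq\delta_0$ holds.
\end{enumerate}
Your justification ``because $0$ is absorbing'' is valid for $i_0^*$ only in the comodule picture. There the matching first map is $\sO_{\A^1}\otimes-$, not $\delta_0$.

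\textbf{How to repair it.} In the module picture, use $\Vect\totext{\delta_0\otimes-}\QCoh(\A^1)\totext{p_*}\Vect$.
\begin{enumerate}
\item The first map is linear because $\sG*\delta_0\simeq p_*\sG\otimes\delta_0$, which is where absorption of $0$ enters. The second map is linear tautologically.
\item Since $p_*\delta_0=k$, the composite is the identity, and the opposite composite is $\delta_0*(-)$.
\item Applying $\Hom_{\QCoh(\A^1)\tmod}(-,\sC)$, precomposition with $p_*$ is $i$ (because $p_*\delta_1=k$), and precomposition with $\delta_0\otimes-$ is $r$.
\end{enumerate}
With this correction you get a valid and more universal proof, in which all coherences are inherited from a single retraction of modules.

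\textbf{It is also unnecessary.} The paper discharges exactly the coherence you flag with one commutative square: $\act_0\circ i\simeq i\circ\act_0$ as equivariant functors $(\sC^{\A^1})^\triv\to\sC$. This is naturality of $\act_0$ with respect to the equivariant functor $i$. Combined with the fact that $\act_0$ on $\sC^{\A^1}$ is the identity (its action is trivial), it gives $iri\simeq i$ equivariantly, and hence $ri\simeq\id$ by the universal property. That route is shorter and avoids choosing between module and comodule formalisms.
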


\begin{proof}
Since $i \circ r \simeq \act_0$, it remains to show that $r \circ i \simeq \id_{\sC^{\A^1}}$. By the universal property of $\sC^{\A^1}$, it suffices to show that 
\[iri \simeq i\]
as $\A^1$-equivariant maps from $\sC^{\A^1}$ to $\sC$. This follows from the commutativity of the square 
\[
\begin{tikzcd}
(\sC^{\A^1})^\triv \arrow[r, "\act_0"] \arrow[d, "i"]& \sC^{\A^1} \arrow[d, "i"] \\
\sC^\triv \arrow[r, "\act_0"] & \sC
\end{tikzcd}
\]
and the fact that, because the $\A^1$-action on $\sC^{\A^1}$ is already trivial, the map 
\[\act_0: (\sC^{\A^1})^\triv \to \sC^{\A^1}\]
is isomorphic to the identity. 
\end{proof}

\begin{corollary}\label{prop:A1-invts-char}
Suppose that $\sC$ is a category acted on by $\A^1$. Suppose that we are given a category $\sD$ and morphisms $i: \sD \to \sC$, $r: \sC \to \sD$ such that the following diagrams commute 
\[
\begin{tikzcd}
& \sD \arrow[rd, "i"]& \\
\sC \arrow[rr, "\act_0"'] \arrow[ru, "r"] & & \sC
\end{tikzcd}
\quad\quad\quad
\begin{tikzcd}
\sD \arrow[dr, "i"'] \arrow[rr, "\id_\sD"] & & \sD \\
& \sC \arrow[ur, "r"']&
\end{tikzcd}
\]
Then $\sD$ is equivalent to $\sC^{\A^1}$.  
\end{corollary}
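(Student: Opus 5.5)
The approach is to compare the idempotent splitting provided by $(i,r)$ with the canonical one from \Cref{prop:invts-idempotent}. In an idempotent-complete setting, the retract of $\sC$ cut out by the idempotent $\act_0$ is unique up to equivalence. The hypotheses say precisely that $\sD$, together with $i$ and $r$, splits the idempotent $\act_0$. Indeed, $i\circ r\simeq \act_0$ and $r\circ i\simeq \id_\sD$.

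Write $i_0\colon \sC^{\A^1}\to \sC$ and $r_0\colon \sC\to \sC^{\A^1}$ for the canonical maps, which satisfy $i_0 r_0\simeq \act_0$ and $r_0 i_0\simeq \id$. I would define
\[\Phi = r_0\circ i\colon \sD\to \sC^{\A^1}, \qquad \Psi = r\circ i_0\colon \sC^{\A^1}\to \sD.\]
Then
\[\Psi\Phi = r\, i_0 r_0\, i \simeq r\,\act_0\, i \simeq r\,(i r)\, i \simeq (ri)(ri)\simeq \id_\sD,\]
and symmetrically
\[\Phi\Psi = r_0\, i\, r\, i_0 \simeq r_0\,\act_0\, i_0 \simeq r_0 i_0 r_0 i_0\simeq \id.\]
So $\Phi$ and $\Psi$ are mutually inverse equivalences. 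It is also worth noting that $i_0\Phi = i_0 r_0 i \simeq \act_0 i \simeq i r i\simeq i$, so the equivalence is compatible with the maps to $\sC$.

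The main obstacle is $\infty$-categorical coherence. A splitting of an idempotent in an $(\infty,2)$-category such as $2\dgCat$ properly requires coherent idempotent data, not just homotopies in the homotopy category. However, for the conclusion ``$\sD\simeq\sC^{\A^1}$'' one only needs $\Phi$ and $\Psi$ to be inverse to each other up to natural isomorphism. That can be checked entirely in the homotopy $(2,1)$-category, so the computation above suffices.

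If one wants the stronger statement that $\sD$ inherits the structure of invariants, with $i$ being the canonical map, I would instead argue as follows. Since $\act_0$ is $\A^1$-equivariant from the trivial action on $\sC$ to the given action, one can try to upgrade $i$ to a map from $\sD^{\triv}$. However, this upgrade is not needed for the corollary as stated.
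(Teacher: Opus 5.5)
Your argument is correct and matches the paper's proof, which cites \Cref{prop:invts-idempotent} together with the uniqueness of idempotent splittings; you write that uniqueness out explicitly via $\Phi = r_0\circ i$ and $\Psi = r\circ i_0$. Your coherence remark is also right: invertibility of $\Phi$ can be checked in the homotopy category, so no coherent idempotent data is needed to conclude $\sD \simeq \sC^{\A^1}$.
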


\begin{proof}
Follows immediately from \Cref{prop:invts-idempotent} the uniqueness of ``subobjects'' corresponding to idempotents.  
\end{proof}

\subsubsection{} As a final remark, let's note that $r: \sC \to \sC^{\A^1}$ has a natural $\A^1$-equivariance data, where the source has the given $\A^1$ action. This is because $r$ is isomorphic to $r \circ \act_0$, and we can write the latter as a composite of $\A^1$-equivariant functors as follows:
\[\sC \totext{\act_0} \sC^\triv \totext{r} \sC^{\A^1}.\]

\subsection{Hochschild homology}\label{ss:3-hh}

\subsubsection{}\label{sss:hh-A1-equiv} Let's start by noting that one can perform the constructions of \S\ref{sss:trace-functoriality} internally to a general symmetric monoidal category $\sC$. (The material in \S\ref{sss:trace-functoriality} concerns the case when $\sC = \dgCat$). Namely, an for associative algebra object $A \in \sC$, one can form $HH(A) \in \sC$, which is the object 
\[A \underset{A \otimes A^\rev}{\otimes} A.\]
More generally, if $A$ admits an algebra endomorphism $\phi$, one can form the twisted Hochschild homology 
\[HH(A; \phi) = A \underset{A \otimes A^\rev}{\otimes} A_\phi.\]
Aditionally, if $A$ and $B$ are algebra objects with endomorphisms and $f: A \to B$ is a map of algebras which intertwines these endomorphisms, then we obtain a map 
\[HH(A; \phi_A) \totext{HH(f)} HH(B; \phi_B).\] 

\medskip 

Applying this paradigm to the situation where $\sC = \A^1\textrm{-Cat}$, the symmetric monoidal category of categories acted on by $\A^1$, we see that if $\sA$ is an $\A^1$-equivariantly monoidal category and $\phi$ is an $\A^1$-equivariant monoidal automorphism of $\sA$, then the category $HH(\sA; \phi)$ acquires a natural $\A^1$-action. Concretely, for $\lambda \in \A^1$, the endomorphism $\act_\lambda$ of $HH(\sA; \phi)$ identifies with 
\[HH(\act_\lambda): HH(\sA; \phi) \to HH(\sA; \phi).\]

\begin{proposition}\label{prop:invts-hh}
In the above setup, the action of $\A^1$ on $HH(\sA; \phi)$ has the property that 
\[HH(\sA; \phi)^{\A^1} \simeq HH(\sA^{\A^1}; \phi).\]
\end{proposition}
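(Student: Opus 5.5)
The plan is to use the idempotent characterization of invariants, \Cref{prop:A1-invts-char}, applied to $\sC = HH(\sA; \phi)$ and $\sD = HH(\sA^{\A^1}; \phi)$.

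\textbf{Step 1: monoidal structures.} Since the $\A^1$-action on $\sA$ is monoidal, $\act_0\colon \sA \to \sA$ is a monoidal functor. By \Cref{prop:invts-idempotent}, $\act_0$ factors as $i \circ r$, where $i\colon \sA^{\A^1} \to \sA$ and $r\colon \sA \to \sA^{\A^1}$. Because invariants are a limit in $\A^1$-equivariant monoidal categories, $\sA^{\A^1}$ inherits a monoidal structure for which $i$ is monoidal. The equivariance data on $r$, obtained by writing $r \simeq r \circ \act_0$, makes $r$ monoidal as well, using that $\act_0$ is monoidal into the trivial action. Since $\phi$ is $\A^1$-equivariant and monoidal, it restricts to a monoidal endomorphism of $\sA^{\A^1}$, and both $i$ and $r$ intertwine the two copies of $\phi$.

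\textbf{Step 2: induced maps on $HH$.} By the functoriality of \S\ref{sss:trace-functoriality} (equivalently \S\ref{sss:hh-A1-equiv} with $\sC = \dgCat$), we obtain maps
\[HH(i)\colon HH(\sA^{\A^1}; \phi) \to HH(\sA; \phi), \qquad HH(r)\colon HH(\sA; \phi) \to HH(\sA^{\A^1}; \phi).\]
Functoriality of $HH$ in pairs (algebra, endomorphism) is compatible with composition, so
\[HH(i) \circ HH(r) \simeq HH(i \circ r) = HH(\act_0), \qquad HH(r) \circ HH(i) \simeq HH(r \circ i) \simeq HH(\id) = \id.\]
By \S\ref{sss:hh-A1-equiv}, $HH(\act_0)$ is exactly the endomorphism $\act_0$ of $HH(\sA; \phi)$ for the induced $\A^1$-action. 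Thus both diagrams of \Cref{prop:A1-invts-char} commute, and that corollary gives $HH(\sA; \phi)^{\A^1} \simeq HH(\sA^{\A^1}; \phi)$.

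\textbf{Main obstacle.} The delicate point is Step 1, together with the functoriality it relies on. To compose maps of the form $HH(f)$, each $f$ must be monoidal, not merely lax monoidal, and must strictly intertwine the endomorphisms up to coherent data. For $r$ this needs an argument. I would obtain it by working in $\A^1\textrm{-Cat}$ and noting three things:
\begin{itemize}
\item $\act_0\colon \sA \to \sA^{\triv}$ is a map of algebra objects there, since the action is monoidal.
\item Taking invariants of the trivial action is lax symmetric monoidal.
\item The factorization of \Cref{prop:invts-idempotent} is therefore one of algebra objects.
\end{itemize}
A second subtlety is the identification of $\act_0$ on $HH(\sA;\phi)$ with $HH(\act_0)$ as an idempotent, including its coherence data. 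This follows because the whole $\A^1$-action on $HH$ was defined by applying the symmetric monoidal functor $HH(-;\phi)$ internally to $\A^1\textrm{-Cat}$, so it is compatible with composition.
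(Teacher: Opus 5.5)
Your proposal is correct and is essentially the paper's proof: the paper also applies $HH(-)$ to the two triangles $\act_0 \simeq i \circ r$ and $r \circ i \simeq \id$ and then concludes via \Cref{prop:A1-invts-char}. Your additional care about why $r$ is monoidal, and about identifying $\act_0$ on $HH(\sA;\phi)$ with $HH(\act_0)$, spells out points the paper leaves implicit.
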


\begin{proof}
Follows from \Cref{prop:A1-invts-char} by applying $HH(-)$ to the triangles 
\[
\begin{tikzcd}
& \sA^{\A^1} \arrow[rd, "i"]& \\
\sA \arrow[rr, "\act_0"'] \arrow[ru, "r"] & & \sA
\end{tikzcd}
\quad\quad\quad
\begin{tikzcd}
\sA^{\A^1} \arrow[dr, "i"'] \arrow[rr, "\id_\sD"] & & \sA^{\A^1} \\
& \sA \arrow[ur, "r"']&
\end{tikzcd}
\]
\end{proof}

\begin{proposition}\label{prop:hh-strong-cts-coact}
In the setup above, if $\sA$ is semi-rigid and the $\A^1$ action on $\sA$ has the property that 
\[\coact: \sA \to \sA \otimes \QCoh(\A^1)\]
admits a continuous right adjoint, then the same is true for the action of $\A^1$ on $HH(\sA; \phi)$. 
\end{proposition}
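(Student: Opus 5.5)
We will reduce the statement to a property of the monadic description of $HH(\sA;\phi)$ from \S\ref{sss:hh-monadic}. Since $\sA$ is semi-rigid, the universal trace $u: \sA_\phi \to HH(\sA;\phi)$ admits a continuous right adjoint $u^R$ and is generating. The first step is to observe that $u$ is $\A^1$-equivariant. Indeed, $\mult$ is an $\A^1$-equivariant $(\sA,\sA)$-bilinear functor, so tensoring it with $\sA_\phi$ over $\sA\otimes\sA^\rev$ inside $\A^1\textrm{-Cat}$ gives an equivariant functor. Consequently the square
\[
\begin{tikzcd}
\sA \arrow[r, "\coact"] \arrow[d, "u"'] & \sA \otimes \QCoh(\A^1) \arrow[d, "u \otimes \id"] \\
HH(\sA;\phi) \arrow[r, "\coact"] & HH(\sA;\phi) \otimes \QCoh(\A^1)
\end{tikzcd}
\]
commutes.

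The second step is to use this square to show that $\coact_{HH}$ preserves compact objects, or more generally that its right adjoint is continuous. We use generation by $u$. An object $y \in HH(\sA;\phi)\otimes\QCoh(\A^1)$ and a filtered colimit $y = \colim y_\alpha$ are given. We need to show that $\coact_{HH}^R(\colim y_\alpha) \simeq \colim \coact_{HH}^R(y_\alpha)$. Since $u$ is generating and admits a continuous right adjoint, $u^R$ is continuous and conservative. It therefore suffices to check the isomorphism after applying $u^R$.

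To compute $u^R \circ \coact_{HH}^R$, we pass to right adjoints in the commuting square. This gives
\[
u^R\circ \coact_{HH}^R \simeq \coact_\sA^R \circ (u\otimes\id)^R = \coact_\sA^R\circ(u^R\otimes\id).
\]
The right-hand side is a composite of continuous functors: $u^R$ is continuous by semi-rigidity, and $\coact_\sA^R$ is continuous by hypothesis. Hence $u^R\circ\coact_{HH}^R$ is continuous, and conservativity plus continuity of $u^R$ yields continuity of $\coact_{HH}^R$.

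The main subtlety is the last step. The conservative functor $u^R$ commutes with colimits, and $u^R\circ \coact^R_{HH}$ commutes with colimits. We must check that the comparison map $\colim \coact^R_{HH}(y_\alpha) \to \coact^R_{HH}(\colim y_\alpha)$ becomes an isomorphism after applying $u^R$, and this is exactly what the two continuity statements give.

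One must also make sure that $(u\otimes \id_{\QCoh(\A^1)})^R \simeq u^R\otimes\id$. This holds because $u^R$ is continuous and $\QCoh(\A^1)$ is dualizable, and it is the only place where continuity of $u^R$ enters beyond conservativity.

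The other point requiring care is that the commutation of the square is genuinely part of the $\A^1$-equivariance of $u$ constructed in \S\ref{sss:hh-A1-equiv}. This needs $\phi$ to be $\A^1$-equivariant, so that $\sA_\phi$ is an $\A^1$-equivariant bimodule.
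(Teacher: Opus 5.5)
Your proposal is correct and is essentially the paper's argument. Semi-rigidity makes the universal trace $u$ have a continuous, conservative right adjoint. Passing to right adjoints in the commuting square gives $u^R\circ\coact^R_{HH}\simeq\coact^R_{\sA}\circ(u^R\otimes\mathrm{id})$, and continuity of $\coact^R_{HH}$ follows.
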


\begin{proof}
Recall from \S\ref{sss:hh-monadic} that $HH(\sA; \phi)$ is monadic over $\sA$, so we may check that $\coact^R$ is continuous after applying $\oblv: HH(\sA; \phi) \to \sA$. Now the result follows from the commutativity of 
\[
\begin{tikzcd}
HH(\sA; \phi) \arrow[r] & HH(\sA; \phi) \otimes \QCoh(\A^1) \\
\sA \arrow[r] \arrow[u] & \sA \otimes \QCoh(\A^1) \arrow[u]    
\end{tikzcd}
\]
by passing to right adjoints. 
\end{proof}

\subsection{Lax equivariance}\label{ss:3-lax}

\subsubsection{} Let $\sC$ and $\sD$ be categories acted on by $\A^1$. We will say that a map $F: \sC \to \sD$ has a left-lax (resp. right-lax) $\A^1$-equivaraint structure if $F$ has the structure of a left-lax (resp. right-lax) linear map of $\QCoh(\A^1)$-modules. 

\begin{proposition}\label{prop:left-lax-preserve-le-0}
Suppose that $F: \sC \to \sD$ is left-lax $\A^1$-equivariant. Then the image of $\sC^\Gm_{\le n}$ under $F^\Gm$ is contained in $\sD_{\le n}^\Gm$. 
\end{proposition}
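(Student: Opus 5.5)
We first reduce to $n = 0$. A left-lax $\QCoh(\A^1)$-linear structure on $F$ restricts along $\QCoh(\Gm) \leftarrow \QCoh(\A^1)$, and since $\Gm$ is a group, lax $\Gm$-equivariance is automatically strict. So $F$ is genuinely $\Gm$-equivariant, and $F^\Gm$ is $\Rep(\Gm)$-linear. In particular $F^\Gm(\sO(n) \cdot c) \simeq \sO(n) \cdot F^\Gm(c)$. Combined with $\sO(n)\cdot \sC^\Gm_{\le 0} = \sC^\Gm_{\le n}$ (and likewise for $\sD$) from \Cref{prop:A1-action-char}, the general case follows from the case $n=0$.

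For $n = 0$, we use the description in \cite{gaitsgory2026excursionalgebra} of $\sC^\Gm_{\le 0}$ via the $\A^1$-action: it is the essential image of the functor $\sC^{\A^1} \to \sC^\Gm$, or equivalently it is generated by objects of the form $\ind$ from $\A^1$-equivariance. More precisely, we will use the following characterization. An object $c \in \sC^\Gm$ lies in $\sC^\Gm_{\le 0}$ if and only if the canonical map from the "$\A^1$-coaction" object is suitable. Concretely, $\sC^\Gm \simeq \sC \otimes_{\QCoh(\A^1)} \QCoh(\A^1/\Gm)$, and $\sC^\Gm_{\le 0}$ is the full subcategory generated under colimits by the image of
\[
\sC^{\A^1} = \sC \otimes_{\QCoh(\A^1)} \Vect \;\simeq\; \sC \otimes_{\QCoh(\A^1)} \QCoh(\A^1/\Gm)_{\le 0}.
\]
Here $\QCoh(\A^1/\Gm)_{\le 0}$ is the colocalization generated by $\sO_{\A^1}$ twisted by non-positive weights, namely $k[t]$-modules generated in degrees $\le 0$. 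The key point is that $\sC^\Gm_{\le 0} = \sC \otimes_{\QCoh(\A^1)} \QCoh(\A^1/\Gm)_{\le 0}$, the tensor product being taken with the $\QCoh(\A^1)$-submodule category $\QCoh(\A^1/\Gm)_{\le 0} \subseteq \QCoh(\A^1/\Gm)$.

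Given this, a left-lax $\QCoh(\A^1)$-linear functor $F$ induces, by functoriality of the relative tensor product for lax linear functors along a strictly linear module, a map
\[
\sC \otimes_{\QCoh(\A^1)} \QCoh(\A^1/\Gm)_{\le 0} \to \sD \otimes_{\QCoh(\A^1)} \QCoh(\A^1/\Gm)_{\le 0}
\]
compatible with $F^\Gm$. To make this concrete without the lax tensor formalism, write $\QCoh(\A^1/\Gm)_{\le 0}$ as generated by the objects $\sO(m)$ for $m \le 0$ (in the convention where these are $\A^1$-modules). An object of $\sC^\Gm_{\le 0}$ is then a colimit of objects $\sO(m)\cdot c'$, with $m \le 0$, coming from $\QCoh(\A^1)$-linear structures. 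The lax structure $F(f \cdot x) \to f \cdot F(x)$, or its opposite depending on the convention for "left-lax", is exactly what is needed to show that $F^\Gm$ of such a generator maps into, or receives a map from, $\sD^\Gm_{\le 0}$ in the required way.

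The main obstacle is to pin down the direction of the lax map so that it gives containment in $\sD^\Gm_{\le 0}$ rather than only a comparison morphism. I would try to phrase membership in $\sD^\Gm_{\le 0}$ as the condition that the multiplication-by-$t$ structure map $\sO(-1)\cdot d \to d$, coming from the $\A^1$-action, becomes an isomorphism after applying the right adjoint to $\sD^\Gm_{>0}$. The left-lax structure on $F$ makes $F^\Gm$ commute with $t$ up to the lax map. One then checks that $F^\Gm$ preserves the colimit presentation $c \simeq \colim(\cdots \to \sO(-1)c \to c)$ characterizing $\sC^\Gm_{\le 0}$ in \cite{gaitsgory2026excursionalgebra}, using continuity of $F$.
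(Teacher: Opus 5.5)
Your proposal has a genuine gap at its central step. The claim that a left-lax $\QCoh(\A^1)$-linear $F$ induces a functor $\sC \otimes_{\QCoh(\A^1)} \QCoh(\A^1/\Gm)_{\le 0} \to \sD \otimes_{\QCoh(\A^1)} \QCoh(\A^1/\Gm)_{\le 0}$ ``by functoriality of the relative tensor product'' is not justified. The relative tensor product is the geometric realization of a bar construction. The non-invertible maps $F(f\cdot x)\to f\cdot F(x)$ give only a lax transformation between the bar diagrams, and that does not induce a functor on colimits. Even granting such a functor, you would still need it to agree with the restriction of $F^\Gm$, which is essentially the statement.

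The surrounding identifications are also inconsistent. $\sC^{\A^1}$ is the quotient $\sC^\Gm_{\le 0}/\sC^\Gm_{\le -1}$, not $\sC^\Gm_{\le 0}$. Its image in $\sC^\Gm$ consists of objects of weight exactly zero, which is closed under colimits and strictly smaller than $\sC^\Gm_{\le 0}$.

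Your fallback runs into exactly the direction problem you flag. For a generator $P\cdot c'$, the lax structure gives a map $F^\Gm(P\cdot c')\to P\cdot F^\Gm(c')$ with target in $\sD^\Gm_{\le 0}$. But admitting a map to an object of a coreflective subcategory says nothing about membership in it. Finally, $c\simeq \colim(\cdots\to\sO(-1)c\to c)$ holds for every $c$, because the diagram has a terminal object, so it cannot characterize $\sC^\Gm_{\le 0}$.

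The missing idea is that the colocalization $\tau_{\le n}$ is itself the action of an object of $\QCoh(\A^1/\Gm)$. That object is $\sO(n)$, the filtered vector space with $V_i=k$ for $i\ge -n$ and $0$ otherwise. It is an idempotent coalgebra, and $\sC^\Gm_{\le n}$ is exactly the category of $\sO(n)$-comodules in $\sC^\Gm$.

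A left-lax $\QCoh(\A^1/\Gm)$-linear functor has structure maps $F^\Gm(a\cdot x)\to a\cdot F^\Gm(x)$, which point in precisely the direction that carries comodules to comodules. Concretely, for $x\in\sC^\Gm_{\le n}$ the composite $F^\Gm(x)\simeq F^\Gm(\sO(n)\cdot x)\to \sO(n)\cdot F^\Gm(x)\to F^\Gm(x)$ is the identity. This exhibits $F^\Gm(x)$ as a retract of an object of $\sD^\Gm_{\le n}$.

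This is the argument in the paper, and it treats all $n$ at once. So your reduction to $n=0$ is unnecessary, though harmless. The reason lax $\Gm$-equivariance is strict there is the rigidity of $\QCoh(\Gm)$, not merely that $\Gm$ is a group.
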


\begin{proof}
Under the equivalence of \Cref{prop:A1-action-char}, the category $\sC^\Gm_{\le n}$ identifies with 
\[\sO(n)\textrm{-comod}(\sC^\Gm)\] 
where $\sO(n) \in \QCoh(\A^1/\Gm)$ corresponds under the equivalence $\QCoh(\A^1/\Gm)\simeq \Vect^\Fil$ with the filtered vector space $V_\sbullet$ with 
\[V_i = \begin{cases}
k & i \ge -n \\
0 & \text{otherwise}
\end{cases}
\]
Since the functor $\sC^\Gm \to \sD^\Gm$ is assumed to be left-lax $\QCoh(\A^1/\Gm)$-linear, it induces a functor 
\[\sO(n)\textrm{-comod}(\sC^\Gm) \to \sO(n)\textrm{-comod}(\sD^\Gm)\]
which is what we wanted to show. 
\end{proof}

\begin{construction}\label{con:preserve-le-0-left-lax}
Suppose $\sC$ and $\sD$ are categories acted on by $\A^1$ and suppose that $F: \sC^\Gm \to \sD^\Gm$ has the property that $F^\Gm: \sC^\Gm \to \sD^\Gm$ sends $\sC^\Gm_{\le n}$ to $\sD^\Gm_{\le n}$. Then $F$ admits a structure of left-lax $\A^1$-equivariance. 
\end{construction}

\begin{proof}
Consider the comma category $\sD^\Gm_{F^\Gm/-}$, viewed as a $\Rep(\Gm)$-module category. The objects of $\sD^\Gm_{F^\Gm/-}$ consist of triples $(x, y, \alpha)$, where $x \in \sC^\Gm$, $y \in \sD^\Gm$, and $\alpha$ is a map $F^\Gm(x) \to y$. This category comes equipped with continuous and jointly conservative functors 
\begin{equation}\label{eq:projectors}
\begin{split}
\sD^\Gm_{F^\Gm/-} \to \sC^\Gm \\
\sD^\Gm_{F^\Gm/-} \to \sD^\Gm
\end{split}
\end{equation}
sending a triple $(x, y, \alpha)$ to $x$ and $y$, respectively. 

\medskip 

If we set $\sD^\Gm_{F^\Gm/-, \le 0}$ to be the full subcategory of $\sD^\Gm_{F^\Gm/-}$ consisting of triples $(x, y, \alpha)$ such that $x \in \sC^\Gm_{\le 0}$ and $y \in \sD^\Gm_{\le 0}$, then we claim that the inclusion 
\[\sD^\Gm_{F^\Gm/-, \le 0} \to \sD^\Gm_{F^\Gm/-}\]
admits a continuous right adjoint. This right adjoint sends a triple $(x, y, \alpha)$ to the triple $(\tau_{\le 0}x, \tau_{\le 0}y, \alpha_{\le 0})$, where 
\[\alpha_{\le 0}: F\tau_{\le 0}x \to \tau_{\le 0}y\]
is the map obtained by adjunction from 
\[F\tau_{\le 0}x \to Fx \totext{\alpha} y\]
using the fact that $F$ sends $\sC^\Gm_{\le 0}$ to $\sD^\Gm_{\le 0}$. Note that the continuity of this right adjoint can be checked after applying the functors (\ref{eq:projectors}), where it follows from the continuity of the weight truncation functors for $\sC$ and $\sD$. 

\medskip

This choice of $\sD^\Gm_{F^\Gm/-, \le 0}$ defines an $\A^1$ action on the de-equivariantization of $\sD^\Gm_{F^\Gm/-}$, and it has the feature that the forgetful functors to $\sC$ and $\sD$ are $\A^1$-equivariant (using the criterion for $\A^1$ equivariance from \Cref{prop:A1-action-char}). Now we note that the projection onto the first factor of (\ref{eq:projectors}) admits a left adjoint, which sends $x \in \sC^\Gm$ to $(x, F^\Gm x, \id_{F^\Gm x})$, and the composite of this left adjoint with the projection onto the second factor identifies with $F^\Gm$. As a composition of left-lax $\A^1$ equivariant morphisms, $F$ acquires the structure of left-lax $\A^1$-equivariance.  
\end{proof}

\subsubsection{} \Cref{prop:left-lax-preserve-le-0} and \Cref{con:preserve-le-0-left-lax} show that the essential image of the forgetful functor 
\[\Hom_{\A^1\textrm{-left-lax}}(\sC, \sD) \to \Hom_{\Gm}(\sC, \sD)\]
consists of the functors sending $\sC^\Gm_{\le n}$ to $\sD^\Gm_{\le n}$. More precisely, if we let 
\[\Hom_{\Gm}(\sC, \sD)_{\le 0}\] 
denote the subcategory of $\Hom_{\Gm}(\sC, \sD)$ consisting of functors that send $\sC^\Gm_{\le n}$ to $\sD^\Gm_{\le n}$, then we have constructed a section of the forgetful functor 
\begin{equation}\label{eq:oblv-A1-left-lax}
\Hom_{\A^1\textrm{-left-lax}}(\sC, \sD) \to \Hom_{\Gm}(\sC, \sD)_{\le 0}.    
\end{equation}
We expect that (\ref{eq:oblv-A1-left-lax}) is actually an equivalence. We are unable to show this because we can't show that if $F: \sC \to \sD$ is already left-lax $\A^1$-equivariant, then the left-lax $\A^1$-equivariance structure on $F$ constructed in \Cref{con:preserve-le-0-left-lax} agrees with the preexisting lax equivariance structure on $F$. 

\medskip

Let's note that if one could show \textit{a priori} that (\ref{eq:oblv-A1-left-lax}) were fully faithful, then it would automatically be an equivalence. Alternatively, one could show that left-lax equivariance structures on $F$ upgrading a given $\Gm$-equivariance data are equivalent to $\A^1$-actions on the comma category $\sD_{F/-}$ which upgrade the given $\Gm$ action and have the property that the projections onto $\sC$ and $\sD$ are $\A^1$-equivariant. In fact, we expect that this should follow from a homotopy coherent version of \cite{BOURKE2014708}. (This paper was the inspiration for our \Cref{con:preserve-le-0-left-lax}.)

\subsubsection{} A similar argument to the above shows that right-lax $\A^1$-equivariant functors send $\sC^\Gm_{\ge n}$ to $\sD^\Gm_{\ge n}$, and every functor with this property admits the structure of right-lax $\A^1$-equivariance. It follows that every functor $\sC \to \sD$ which is both left-lax $\A^1$-equivariant and right-lax $\A^1$-equivariant is actually $\A^1$-equivariant.  

\subsection{Monads}\label{ss:3-monad}

\subsubsection{}\label{sss:monad-A1-action-constr} Suppose that $\sC$ is a category acted on by $\A^1$, and suppose that $T$ is a $\Gm$-equivariant monad on $\sC$. Then the functor 
\begin{equation}\label{eq:monad-le-functor}
\sC^\Gm_{\le 0} \to \sC^\Gm \totext{\ind} T^\Gm\tmod(\sC^\Gm)    
\end{equation}
admits a continuous right adjoint, and so factors uniquely as 
\[\sC^\Gm_{\le 0} \to T^\Gm\tmod(\sC^\Gm)_{\le 0} \mto T^\Gm\tmod(\sC^\Gm)\]
where the first arrow is generating, the second arrow is fully-faithful, and both arrows have continuous right adjoints. More explicitly, $T^\Gm\tmod(\sC^\Gm)_{\le 0}$ is the full subcategory of $T^\Gm\tmod(\sC^\Gm)$ generated by the image of (\ref{eq:monad-le-functor}). 

\medskip

We claim that under the equivalence of \Cref{prop:A1-action-char}, $T^\Gm\tmod(\sC^\Gm)_{\le 0}$ specifies an $\A^1$-action on $T\tmod(\sC)$. Since the inclusion of this subcategory into $T^\Gm\tmod(\sC^\Gm)$ has a continuous right adjoint by construction, it just remains to see that 
\[T^\Gm\tmod(\sC^\Gm) \simeq \colim_n T^\Gm\tmod(\sC^\Gm)_{\le n}.\]
But this follows from the fact that $\ind: \sC^\Gm \to T^\Gm\tmod(\sC^\Gm)$ is generating and $\sC^\Gm$ is generated by $\sC^\Gm_{\le n}$. 

\medskip 

For this $\A^1$-action on $T\tmod(\sC)$, the functor $\ind: \sC \to T\tmod(\sC)$ is tautologically left-lax $\A^1$-equivariant (and hence $\oblv: T\tmod(\sC) \to \sC$ is right-lax $\A^1$-equivariant). In some sense, this $\A^1$-action is the minimal action with this property. 

\begin{proposition}\label{prop:left-lax-equiv-monad}
In the setup of the preceding paragraph, suppose that $T$ is left-lax $\A^1$-equivariant. Then the $\A^1$-action specified above is the unique action on $T\tmod(\sC)$ such that 
\[\oblv: T\tmod(\sC) \to \sC\] 
is $\A^1$-equivariant. For this $\A^1$-action, the category of invariants $T\tmod(\sC)^{\A^1}$ is monadic over $\sC^{\A^1}$, and the monad is $\tau_{\ge 0}T^\Gm$. 
\end{proposition}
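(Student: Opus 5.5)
The plan is to work throughout on the $\Gm$-equivariant side, using the dictionary of \Cref{prop:A1-action-char}. There, an $\A^1$-action extending the given $\Gm$-action is the same as a colocalization $(-)_{\le 0}$ of $T^\Gm\tmod(\sC^\Gm)$ with the listed properties. A $\Gm$-equivariant functor is $\A^1$-equivariant exactly when it preserves both the $\le 0$ parts and the $>0$ parts. I would prove uniqueness first, then existence, then the statement about invariants.

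\emph{Uniqueness.} Suppose some $\A^1$-action on $T\tmod(\sC)$ makes $\oblv$ equivariant, with associated subcategories $T^\Gm\tmod(\sC^\Gm)_{\le 0}$ and $T^\Gm\tmod(\sC^\Gm)_{>0}$.
\begin{enumerate}
    \item For any module $M$, the truncation triangle $M_{\le 0}\to M\to M_{>0}$ is sent by $\oblv$ to a triangle with terms in $\sC^\Gm_{\le 0}$ and $\sC^\Gm_{>0}$. By uniqueness of truncation triangles in $\sC^\Gm$, this is the truncation triangle of $\oblv M$.
    \item Hence, if $\oblv M\in\sC^\Gm_{\le 0}$, then $\oblv(M_{>0})=0$. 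Since $\oblv$ is conservative, $M_{>0}=0$.
    \item So the $\le 0$ subcategory must equal $\oblv^{-1}(\sC^\Gm_{\le 0})$, and the action is determined.
\end{enumerate}

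\emph{Existence.} It remains to check that the action of \S\ref{sss:monad-A1-action-constr} has this description, i.e.\ that $\oblv$ preserves both halves.
\begin{enumerate}
    \item The $\le 0$ part is generated under colimits by $\ind(\sC^\Gm_{\le 0})$. We have $\oblv\circ\ind = T^\Gm$, and $T^\Gm$ maps $\sC^\Gm_{\le 0}$ into itself by \Cref{prop:left-lax-preserve-le-0}, since $T$ is left-lax.
    \item $\sC^\Gm_{\le 0}$ is closed under colimits, being a colocalization, and $\oblv$ is continuous. So $\oblv$ sends the whole $\le 0$ part into $\sC^\Gm_{\le 0}$.
    \item If $M$ lies in the right orthogonal of $\ind(\sC^\Gm_{\le 0})$, then adjunction gives $\Hom(x,\oblv M)=0$ for all $x\in\sC^\Gm_{\le 0}$. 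Hence $\oblv M\in\sC^\Gm_{>0}$.
\end{enumerate}
So $\oblv$ is $\A^1$-equivariant by \Cref{prop:A1-action-char}.

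\emph{Invariants.} This is the step I expect to be the main obstacle, because it needs a concrete model of $\sC^{\A^1}$. I would identify $\sC^{\A^1}$ with the weight-zero part $\sC^\Gm_{\le 0}\cap\sC^\Gm_{>-1}$. Under this identification, $\act_0$ should be the functor $\gr_0=\tau_{\ge 0}\tau_{\le 0}$ followed by forgetting the $\Gm$-structure. I would establish this by verifying the two triangles of \Cref{prop:A1-invts-char}, with $i$ the forgetful map and $r=\gr_0$.

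Granting this, since $\oblv$ is equivariant it commutes with both truncations. Therefore $T\tmod(\sC)^{\A^1}$ is the full subcategory of $T$-modules whose underlying object lies in $\sC^{\A^1}$, and the restricted $\oblv$ is continuous and conservative. Its left adjoint sends $x\in\sC^{\A^1}$ to $\tau_{\ge 0}\ind(x)$. Indeed, $\ind(x)$ already lies in the $\le 0$ part, and $\tau_{\ge 0}$ is left adjoint to the inclusion of the $>-1$ part. By Barr–Beck, $T\tmod(\sC)^{\A^1}$ is then monadic over $\sC^{\A^1}$. Because $\oblv$ commutes with $\tau_{\ge 0}$, the resulting monad is $\oblv\,\tau_{\ge 0}\,\ind \simeq \tau_{\ge 0}T^\Gm$, as claimed.
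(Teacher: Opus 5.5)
Your proposal is correct. The uniqueness step is the paper's argument: an equivariant $\oblv$ carries truncation triangles to truncation triangles, and then conservativity finishes. The existence step is also the paper's: the generators $\ind(\sC^\Gm_{\le 0})$ land in $\sC^\Gm_{\le 0}$ because $T$ is left-lax, and the right orthogonal lands in $\sC^\Gm_{>0}$ by adjunction. You simply write both out in more detail.

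For the invariants you use a different model of the same quotient.
\begin{itemize}
\item The paper views $\ind \dashv \oblv$, restricted to the inclusion of the $\le -1$ pieces into the $\le 0$ pieces, as an adjunction in $\dgCat^{\Delta^1}$. It applies the $2$-functor $\cof$ to get $\sC^{\A^1}\rightleftarrows T\tmod(\sC)^{\A^1}$, then checks that the induced $\ind$ is generating.
\item You realize the quotient as the weight-zero right orthogonal. Your uniqueness description identifies $T\tmod(\sC)^{\A^1}$ with the $\oblv$-preimage of the weight-zero part. You then write the left adjoint explicitly as $\tau_{\ge 0}\ind$ and apply Barr--Beck via conservativity.
\end{itemize}
Your route shows concretely that invariant modules are just $T$-modules with weight-zero underlying object. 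The paper's route is more formal and never needs explicit adjoints.

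Two small remarks.
\begin{itemize}
\item The paper uses $\sC^{\A^1}\simeq \sC^\Gm_{\le 0}/\sC^\Gm_{\le -1}$ as known, so you need not re-derive it via \Cref{prop:A1-invts-char}. That derivation would in any case require knowing what $\act_0$ looks like under \Cref{prop:A1-action-char}, which this paper does not supply.
\item Your description of $\act_0$ is off. Under that identification, $(\act_0)^\Gm\simeq\bigoplus_n \gr_n$, and $r$ is the de-equivariantization of $\gr_\sbullet$ as in \Cref{prop:gr-monoidal}. It is not $\gr_0$, which is not $\Rep(\Gm)$-linear and so does not descend to an endofunctor of $\sC$.
\end{itemize}
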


\begin{proof}
The uniqueness of such an $\A^1$ action is clear: for $\oblv$ to be left-lax $\A^1$-equivariant, it must be the case that 
\[T^\Gm\tmod(\sC^\Gm)_{\le 0} \subseteq T^\Gm\tmod(\sC^\Gm) \underset{\sC^\Gm}{\times} \sC^\Gm_{\le 0}.\]
Conversely, if $x \in T^\Gm\tmod(\sC^\Gm)$ has the proerty that $\oblv(x) \in \sC^\Gm_{\le 0}$, then 
\[\oblv(\tau_{> 0}x) = \tau_{> 0}\oblv(x) = 0\]
so $\tau_{> 0 }x = 0$. 

\medskip 

To see that the $\A^1$-action specified above has the property that $\oblv$ is $\A^1$-equivariant, first note that by construction $\ind: \sC \to T\tmod(\sC)$ is left-lax $\A^1$-equivariant, and hence $\oblv: T\tmod(\sC) \to \sC$ is right-lax $\A^1$-equivariant (this doesn't use the assumption that $T$ is left-lax $\A^1$-equivariant). It remains to check that $\oblv$ is left-lax $\A^1$-equivariant. We can check on generators whether the image of $T^\Gm\tmod(\sC^\Gm)_{\le 0}$ under $\oblv$ is contained in $\sC^\Gm_{\le 0}$, and this follows by the assumption that $T$ is left-lax $\A^1$-equivariant. 

\medskip 

It remains to characterize the category of invariants for this $\A^1$-action. By our assumption on $T$, we have a commutative diagram  
\[
\begin{tikzcd}
T^\Gm\tmod(\sC^\Gm)_{\le -1} \arrow[d, shift left=1] \arrow[r] & T^\Gm\tmod(\sC^\Gm)_{\le 0} \arrow[d, shift left=1] \\
\sC^\Gm_{\le -1} \arrow[r] \arrow[u, shift left=1] & \sC^\Gm_{\le 0} \arrow[u, shift left=1]
\end{tikzcd}
\]
which specifies an adjunction in $\dgCat^{\Delta^1}$. Applying the 2-functor 
\[\cof(-): \dgCat^{\Delta^1} \to \dgCat\]
we obtain an adjunction 
\[\ind: \sC^{\A^1} \rightleftarrows T\tmod(\sC)^{\A^1}: \oblv\]
which fits into the commutative diagram 
\[
\begin{tikzcd}
T^\Gm\tmod(\sC^\Gm)_{\le -1} \arrow[d, shift left=1] \arrow[r] & T^\Gm\tmod(\sC^\Gm)_{\le 0} \arrow[d, shift left=1] \arrow[r] & T\tmod(\sC)^{\A^1} \arrow[d, shift left=1]  \\
\sC^\Gm_{\le -1} \arrow[r] \arrow[u, shift left=1] & \sC^\Gm_{\le 0} \arrow[u, shift left=1] \arrow[r] & \sC^{\A^1} \arrow[u, shift left=1] 
\end{tikzcd}
\]
It is easy to see that the functor we have called $\ind$ is generating by precomposing with the quotient map $\sC^\Gm_{\le 0} \to \sC^{\A^1}$, so it follows that $T\tmod(\sC)^{\A^1}$ is monadic over $\sC^{\A^1}$. By construction, the monad $\oblv \circ \ind$ is the monad on the quotient $\sC^\Gm_{\le 0}/\sC^\Gm_{\le -1}$ induced by $T^\Gm$, which shows that it is $\tau_{\ge 0}T^\Gm$. 
\end{proof}

\section{Analysis of the Hecke category} 

\subsubsection{} The goal of this section is to construct the semiorthogonal decomposition of $\sH$ by cells, and show that the associated graded pieces admit $\A^1$-actions whose categories of $\A^1$-invariants recover the asymptotic category $\sJ_c$. 

\medskip 

The outline of this section is as follows:
\begin{itemize}
    \item In \S\ref{ss:4-so-decomp} we construct our desired semiorthogonal decomposition of $\sH$ by two-sided cells. 
    \item In \S\ref{ss:action-of-A1} we show that the theory of weights equips the associated graded pieces $\sH_c$ of the semiorthogonal decompositions with an $\A^1$-action, and that this action is ``left-compatible'' with the monoidal structure on $\sH_c$. 
    \item In \S\ref{ss:asymptotic-cat} we show that the category $\sH_c^{\A^1}$ identifies with the asymptotic category $\sJ_c$. 
    \item In \S\ref{ss:accessible} we study a variant of the subquotient category which will be useful in later sections. 
\end{itemize}

\subsection{A semiorthogonal decomposition indexed by cells}\label{ss:4-so-decomp}

\subsubsection{} Let $\sO_0$ denote $\Sh(N\bs G/B)$. This is the $\ell$-adic version of the unbounded derived category of the ind-completion of the principal block of the BGG category $\sO$. For $w \in W$, let $L_w$ denote the IC sheaf of the Schubert variety $X_w$ and let $P_w$ denote the indecomposable projective cover of $L_w$. 

\medskip

For $S$ a subset of $W$, let $(\sO_0)_S$ denote the full subcategory of $\sO_0$ consisting of objects with the property that the composition factors of all of their perverse cohomology groups are of the form $L_w$ for $w \in S$. Let $(\sO_0)^S$ denote the full subcategory of $\sO_0$ generated under colimits and shifts by $P_w$ for $w \notin S$. 

\begin{proposition}
The inclusion of $(\sO_0)_S$ admits a left adjoint, the inclusion of $(\sO_0)^S$ admits a continuous right adjoint, and 
\[(\sO_0)^S \rightleftarrows \sO_0 \rightleftarrows (\sO_0)_S\]
is a semiorthogonal decomposition of $\sO_0$. 
\end{proposition}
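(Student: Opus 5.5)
Write $S^c = W \setminus S$. The plan is to check the definition of a semiorthogonal decomposition directly, using that $\sO_0$ is the ind-completion of the bounded derived category of a finite-length highest weight category with enough projectives, namely the principal block of $\sO$. By Beilinson--Ginzburg--Soergel, $\sO_0$ is compactly generated by the compact projectives $P_w$, and $\Hom(P_w, X) = H^0$ of the multiplicity space of $L_w$ in $X$.

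\textbf{Step 1: orthogonality.} We show that $(\sO_0)_S$ is exactly the right orthogonal of $(\sO_0)^S$. For any object $X$, the complex $\Hom(P_w, X[n])$ computes the multiplicity of $L_w$ in the perverse cohomology $H^n(X)$. Here we use that $P_w$ is projective and compact, and that $\Hom(P_w, -)$ commutes with filtered colimits and the perverse t-structure is compatible with filtered colimits. Hence $\Hom(P_w, X[n]) = 0$ for all $w \notin S$ and all $n$ if and only if every perverse cohomology of $X$ has composition factors only among the $L_v$ with $v \in S$. Since $(\sO_0)^S$ is generated under colimits and shifts by $\{P_w\}_{w\notin S}$, we get $((\sO_0)^S)^{\perp} = (\sO_0)_S$.

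\textbf{Step 2: adjoints.} The inclusion of $(\sO_0)^S$ preserves colimits, and $(\sO_0)^S$ is compactly generated by objects that are compact in $\sO_0$. By the adjoint functor theorem it therefore has a right adjoint $\tau^S$, and this right adjoint is continuous because the inclusion preserves compact objects. For $X \in \sO_0$, form the cofiber $X'$ of the counit $\tau^S X \to X$. Then $\Hom(P_w, X') = 0$ for $w \notin S$, so $X' \in (\sO_0)_S$ by Step 1. This gives the standard fiber sequence
\[\tau^S X \to X \to X'.\]
It follows that $X \mapsto X'$ is left adjoint to the inclusion of $(\sO_0)_S$: for $Y \in (\sO_0)_S$, $\Hom(\tau^S X, Y) = 0$ by Step 1, hence $\Hom(X', Y) = \Hom(X, Y)$.

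\textbf{Step 3: conclusion.} The two subcategories are orthogonal, every object sits in such a fiber sequence, and $(\sO_0)_S$ is the cofiber $\sO_0/(\sO_0)^S$. This is precisely a semiorthogonal decomposition.

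The only real obstacle is the identification in Step 1 for unbounded objects. One must know that $\Hom(P_w, -)$ is t-exact and detects multiplicities of perverse cohomology even for unbounded ind-objects. Since $P_w$ is compact, this reduces to two facts: the perverse t-structure on $\Sh(N\bs G/B)$ is compatible with filtered colimits, and $\Sh(N\bs G/B) \simeq D(A\tmod)$ for the finite-dimensional algebra $A = \End(\bigoplus P_w)$. With that equivalence in hand, the statement becomes the elementary fact that the right orthogonal to $\langle e_w A \rangle_{w \notin S}$ consists of complexes whose cohomology is killed by the idempotents $e_w$.
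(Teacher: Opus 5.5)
Your proposal is correct and follows essentially the same route as the paper. Both arguments get the continuous right adjoint from generation by the compact objects $P_w$ for $w \notin S$. Both identify the right orthogonal with $(\sO_0)_S$ using projectivity of $P_w$ and the multiplicity formula $\dim \Hom(P_w, M) = [M : L_w]$.

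Your explicit construction of the left adjoint as the cofiber of the counit, and your treatment of unbounded objects via $D(A\tmod)$ with $A = \End(\bigoplus P_w)$, spell out details the paper leaves implicit.
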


\begin{proof}
Because $(\sO_0)^S$ is generated by objects which are compact in $\sO_0$, it's clear that the inclusion $(\sO_0)^S \to \sO_0$ admits a continuous right adjoint. It remains to identify the right orthogonal $(\sO_0)^{S, \perp}$ with $(\sO_0)_S$. Because $P_w$ is projective, the mapping spectrum
\[\Hom_{\sO_0}(P_w, \sF)\]
vanishes if and only if 
\[\Hom_{\sO_0^\heartsuit}(P_w, H^p_i(\sF))\]
vanishes for all $i$. Now the result follows from the well-known fact that 
\[\dim \Hom_{\sO_0^\heartsuit}(P_w, x) = [x : L_w].\]
\end{proof}

\subsubsection{Example}\label{ss:sl2-calculation} Let's consider the case when $G = SL_2$ and we take $S = \set{s} \subseteq W$. In this case $(\sO_0)^S$ is the subcategory generated by the big projective $P_1$. This category is equivalent to the category of modules for $R = \End(P_1)$ (this ring is commutative). In terms of this description of $(\sO_0)^S$, the inclusion 
\[(\sO_0)^S \to \sO_0\]
identifies with the left adjoint of Soergel's $\V$-functor. 

\medskip

Explicitly, $R = k[x]/x^2$. Let $k$ denote the $R$ module where $x$ acts by zero. Let's calculate $H_*(\V^L(k))$. Because $\V$ is t-exact, we know that $\V^L$ is right t-exact. We know that 
\[k \simeq \coker(R \totext{\cdot x} R) \text{ in } R\tmod^\heartsuit\]  
so it follows that 
\[H_0(\V^L(k)) \simeq \coker(P_1 \totext{x} P_1) \simeq \nabla_s.\]
From the exact triangle 
\[k \to R \to k\]
in $R\tmod$ we obtain an exact triangle 
\[\V^L(k) \to P_1 \to \V^L(k)\]
in $\sO_0$. In particular, we have an exact sequence 
\[0 = H_1(P_1) \to H_1(\V^L(k)) \to \nabla_s \to P_1 \to \nabla_s \to 0\]
so we calculate that $H_1(\V^L(k)) \simeq L_s$. For all $i \ge 1$ we have an exact sequence 
\[0 = H_{i + 1}(P_1) \to H_{i + 1}(\V^L(k)) \to H_i(\V^L(k)) \to H_i(P_1) = 0,\]
so it follows that 
\[
H_i(\V^L(k)) \simeq \begin{cases}
L_s & i > 0 \\
\nabla_s & i = 0 \\
0 & i < 0
\end{cases}
\] 

\medskip 

Using this calculation, one can compute the projections $\tau_S(x)$ and $\tau^S(x)$ for the various indecomposable objects of $\sO_0$. As an illustrative example, let's consider the case $x = L_1$. Then 
\[\tau^S(x) \simeq \V^L\V(x) \simeq \V^L(k)\]
whose homology groups we have already computed. Since 
\[\tau_S(x) \simeq \cof(\tau^S(x) \to x),\]
we learn that
\[
H_i(\tau_S(x)) \simeq \begin{cases}
L_s & i \ge 1 \\
0 & i < 1
\end{cases}
\]   

\subsubsection{}\label{ss:defn-H} Let $\sH$ denote the equivariant Hecke category $\Sh(B\bs G /B)$. In \cite[Theorem 1.8.1]{benzvi2015charactertheorycomplexgroup} it was shown that $\sH$ is a pivotal semi-rigid monoidal category. Let 
\[\pi: N\bs G/B \to B\bs G/B\]
denote the natural projection. The IC sheaf $L_w$ has the property of being $T$-equivariant (as an object of the abelian category $\sO_0^\heartsuit$), and we will abuse notation by using the same symbol $L_w$ to denote the corresponding object $L_w \in \Sh(B\bs G/B)$. 

\medskip

Like in the case of $\sO_0$, given a subset $S \subseteq W$, define $\sH_S$ to be the subcategory of $\sH$ consisting of objects, all of whose perverse cohomologies are filtered colimits of extensions of $L_w$ for $w \in S$, and define $\sH^S$ to be the subcategory of $\sH$ generated by $\pi_!P_w$ for $w \notin S$. 

\begin{proposition}\label{prop:semiorthogonal-compat-H-O}
The inclusion $\sH^S \to \sH$ admits a continuous right adjoint, $\sH_S \to \sH$ admits a continuous left adjoint, and 
\[\sH^S \rightleftarrows \sH \rightleftarrows \sH_S\]
is a semiorthogonal decomposition of $\sH$. Additionally, $\pi^!: \sH \to \sO_0$ sends $\sH_S$ to $(\sO_0)_S$ and $\sH^S$ to $(\sO_0)^S$. 
\end{proposition}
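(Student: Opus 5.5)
We deduce the statement from the corresponding semiorthogonal decomposition of $\sO_0$ in the preceding proposition, using the adjunction $\pi_! \dashv \pi^!$ (with $\pi$ smooth, $\pi^! \simeq \pi^*[2\dim T]$ is t-exact up to shift, continuous and conservative). The first claim is formal: $\pi_!P_w$ is compact in $\sH$ because $\pi^!$ is continuous, so $\sH^S$ is compactly generated by objects compact in $\sH$. Hence the inclusion $\sH^S \to \sH$ has a continuous right adjoint, and its right orthogonal $(\sH^S)^\perp$ is a localization of $\sH$, giving a semiorthogonal decomposition
\[\sH^S \rightleftarrows \sH \rightleftarrows (\sH^S)^\perp.\]
It then remains to identify $(\sH^S)^\perp$ with $\sH_S$.

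By adjunction, $\sF \in (\sH^S)^\perp$ iff $\Hom_{\sO_0}(P_w, \pi^!\sF) = 0$ for all $w \notin S$, i.e. iff $\pi^!\sF \in (\sO_0)^{S,\perp} = (\sO_0)_S$. So we must show that $\sF \in \sH_S$ iff $\pi^!\sF \in (\sO_0)_S$.
\begin{itemize}
\item Since $\pi^!$ is t-exact up to a fixed shift, it commutes with perverse cohomology.
\item $\pi^!$ sends $L_w$ to $L_w$ (up to shift); this is the abuse of notation in \S\ref{ss:defn-H}.
\item $\pi^!$ is exact on hearts and commutes with filtered colimits.
\end{itemize}
These give one direction at once: $\pi^!(\sH_S) \subseteq (\sO_0)_S$.

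For the converse, let $\sF \in \sH^\heartsuit$ with $\pi^!\sF \in (\sO_0)_S^\heartsuit$. We must show that $\sF$ is a filtered colimit of extensions of $L_w$ with $w \in S$. We write $\sF$ as the filtered colimit of its compact (finite length) perverse subobjects $\sF'$. Each $\pi^!\sF'$ is a subobject of $\pi^!\sF$, so its composition factors lie in $S$. The simple objects of $\sH^\heartsuit$ are exactly the $L_w$ (the $B$-orbits on $G/B$ are Bruhat cells with connected stabilizers), and $\pi^!$ sends a composition series of $\sF'$ to a filtration of $\pi^!\sF'$ with subquotients $L_w$. Jordan–Hölder therefore forces the composition factors of $\sF'$ to be among $\{L_w : w \in S\}$. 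This proves $(\sH^S)^\perp = \sH_S$, and hence $\pi^!(\sH_S) \subseteq (\sO_0)_S$.

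For $\pi^!(\sH^S) \subseteq (\sO_0)^S$, it suffices by continuity to show $\pi^!\pi_!P_w \in (\sO_0)^S$ for $w \notin S$. Here $\pi^!\pi_!$ is convolution with the pushforward of the dualizing sheaf of $T$, which is built from $\Sh(T)$-averaging. We plan to argue that $\pi^!\pi_!P_w$ lies in the cocomplete subcategory generated by twists of $P_w$ under the unipotent-monodromic $T$-action. This action is by exact autoequivalences of the block preserving $\sO_0^\heartsuit$ and fixing each simple $L_v$, so it preserves the set of projective covers $\{P_v\}_{v \notin S}$.

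An alternative, more robust route uses the semiorthogonal decompositions themselves. Write $\sF = \pi^!\pi_!P_w$ and take its triangle $\tau^S\sF \to \sF \to \tau_S\sF$ in $\sO_0$. The component $\tau_S\sF$ vanishes iff $\Hom(\sF, \sG) = 0$ for all $\sG \in (\sO_0)_S$. By the adjunctions this is $\Hom(\pi_!P_w, \pi_*\sG)$, and we would show $\pi_*\sG \in \sH_S$ using the same composition-factor argument (with $\pi_* = \pi_!$ up to shift, composed with averaging). This vanishing is where I expect the main obstacle to lie: controlling $\pi_*$ and $\pi_!$ on unbounded objects, for which the continuity and t-exactness of averaging need to be checked carefully.
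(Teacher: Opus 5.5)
Your proofs of the first two claims are correct and follow the paper. The continuous right adjoint comes from compactness of $\pi_!P_w$. The orthogonal $(\sH^S)^\perp$ is identified with $\sH_S$ by checking composition factors after $\pi^!$; you add the Jordan--H\"older details the paper leaves implicit. One small slip: finite-length perverse sheaves in $\sH$ are constructible, not compact (the unit $L_1$ is not compact), but your argument only uses finite length.

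\textbf{The gap is the last claim}, $\pi^!(\sH^S)\subseteq(\sO_0)^S$, which you leave as two unfinished plans.

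\emph{Route A.} The justification you give does not address the real issue. Since $T$ is connected, every translate of $P_w$ is isomorphic to $P_w$, so preserving the set of projective covers is automatic. The actual question is whether $\pi^!\pi_!P_w\simeq \act_!(\omega_T\boxtimes P_w)$ lies in the colimit-closure of those translates. This does not follow formally, because the averaging is an integral over $T$, not a colimit of pointwise translates. You would need an extra input, for instance a Koszul-type description of $\pi^!\pi_!$ on unipotently monodromic objects in terms of the logarithm of monodromy. Your sketch does not supply one.

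\emph{Route B.} This is the right setup and is exactly the adjunction chain the paper uses. However, you stop at the step you flag as the obstacle, and that step is where the one real idea is needed. You do not have to control $\pi_*$ on arbitrary unbounded $\sG\in(\sO_0)_S$. The object $\pi^!\pi_!P_w$ is compact in $\sO_0$, i.e.\ a finite complex of projectives. Hence $\Hom(\pi^!\pi_!P_w,-)$ commutes with filtered colimits and only sees a bounded window of perverse cohomology. So it suffices to test against $\sG=L_v$ with $v\in S$.

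For $\sG=L_v$ the computation is immediate, because $L_v$ is $T$-equivariant: $\pi^!\pi_*L_v\simeq C_*^{BM}(T)\otimes L_v$ is a finite sum of shifts of $L_v$. Therefore
\[\Hom(\pi^!\pi_!P_w,L_v)\simeq\Hom(\pi^*\pi_!P_w,L_v)[-2\dim T]\simeq\Hom(P_w,\pi^!\pi_*L_v)[-2\dim T]=0,\]
since $w\notin S$ and $v\in S$ force $w\neq v$. In your own formulation: $\pi_*L_v$ is a finite extension of shifts of $L_v\in\sH_S$, so it is right orthogonal to $\pi_!P_w$ by the part you already proved.

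Your parenthetical identifying $\pi_*$ with a shift of $\pi_!$ is neither justified in your sketch nor needed.
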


\begin{proof}
As before, the inclusion $\sH^S \to \sH$ admits a continuous right adjoint because $\sH^S$ is generated by objects which are compact in $\sH$. The assertion that $\sH^{S, \perp} = \sH_S$ follows from the fact that you can check whether $L_w$ appears as a composition factor of $x \in \sH^\heartsuit$ after applying $\pi^!$. This also shows that $\pi^!(\sH_S) \subseteq (\sO_0)_S$. 

\medskip

It remains to show that $\pi^!(\sH^S) \subseteq (\sO_0)^S$. We need to show that $\pi^!\pi_!P_w \in (\sO_0)^S$ for $w \notin S$. It suffices to show that 
\[\Hom(\pi^!\pi_!P_w, \sF) = 0\]
for all $\sF \in (\sO_0)_S$. Because $\pi^!\pi_!P_w$ is compact, we can assume that $\sF = L_v$ for $v \in S$. By adjunction
\[\Hom(\pi^!\pi_!P_w, L_v) = \Hom(\pi^*\pi_!P_w, L_v)[-2\dim T] = \Hom(P_w, \pi^!\pi_* L_v)[-2\dim T].\]
Since 
\[\pi^!\pi_* L_v \simeq C_*^{BM}(T) \otimes L_v\]
the result follows. 
\end{proof}

\subsubsection{}\label{sss:cells-defn} Recall the notion of two-sided cells (already introduced in \S\ref{sss:intro-cells}): if $\sH_w^\access$ refers to the smallest two-sided ideal of $\sH$ containing $L_w$ and stable under colimits and shifts, then we say that $v \le_{LR} w$ if $L_v \in \sH_w^\access$. This defines a preorder on $W$ and the two-sided cells of $W$ are the corresponding partition: $w$ and $v$ lie in the same two-sided cell if $w \le_{LR} v$ and $v \le_{LR} w$. 

\medskip 

Let $c$ denote a two sided-cell in $W$. Let $W_{\le c}$, $W_{< c}$ denote the subsets of $w \in W$ such that $w \le_{LR} c$ and $w <_{LR} c$, respectively. Denote the corresponding localizations of $\sH$ by 
\[\sH_{\le c} \text{ and } \sH_{< c},\]
respectively. 

\begin{proposition}\label{prop:cells-ideals}
$\sH_{\le c}$ and $\sH_{< c}$ are monoidal two-sided ideals in $\sH$. 
\end{proposition}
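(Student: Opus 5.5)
The plan is to avoid working with unbounded objects of $\sH_{\le c}$ directly. Instead I would show that the left orthogonal $\sH^{S}$, with $S = W_{\le c}$ (respectively $W_{< c}$), is a two-sided ideal, and then deduce the statement for $\sH_S = \sH_{\le c}$ by adjunction. The same argument handles $\sH_{< c}$; the only input is that $W_{\le c}$ and $W_{<c}$ are downward closed for $\le_{LR}$, which holds by definition. By \Cref{prop:semiorthogonal-compat-H-O} we have $\sH_S = (\sH^S)^{\perp}$. Also $\sH$ is compactly generated by the $L_u$, whose shifts and finite extensions form the bounded constructible objects.

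\textbf{Step 1: convolutions of simple objects.} For $u, v \in W$, I claim $L_u * L_v$ and $L_v * L_u$ are bounded constructible objects whose perverse constituents $L_x$ satisfy $x \le_{LR} v$. By the decomposition theorem, both convolutions are direct sums of shifts of objects $L_x$. Each such summand is a retract of an object of the two-sided ideal $\sH^\access_v$, hence lies in $\sH^\access_v$, so $x \le_{LR} v$. In particular, if $v \in S$ then all constituents lie in $S$. Hence $L_u * L_v$ and $L_v * L_u$ lie in $\sH_S$ and are bounded.

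\textbf{Step 2: $\sH^S$ is a two-sided ideal.} Since $\sH^S$ is generated under colimits by the $\pi_! P_w$ with $w \notin S$, and convolution is continuous, it suffices to show that $L_u * \pi_! P_w$ and $\pi_! P_w * L_u$ lie in $\sH^S$. These objects are compact. For a compact $X$, membership in $\sH^S = {}^\perp \sH_S$ can be tested against the objects $L_v[n]$ with $v \in S$. The reduction is as follows. $\Hom(X, -)$ commutes with filtered colimits. A compact $X$ is bounded, so maps from $X$ to an object of $\sH_S$ only see finitely many perverse cohomologies, and each of these is a filtered colimit of extensions of the $L_v$.

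Now use the rigidity of compact objects in the pivotal semi-rigid category $\sH$ from \cite{benzvi2015charactertheorycomplexgroup}. The functor $L_u * (-)$ has a left adjoint given by convolution with the dual $L_u^\vee \simeq L_{u^{-1}}$, up to shift. This gives
\[\Hom(L_u * \pi_!P_w, L_v[n]) \simeq \Hom(\pi_!P_w, L_{u^{-1}} * L_v[n']).\]
By Step 1, the right-hand target is a bounded object of $\sH_S$, so the Hom vanishes because $\pi_! P_w \in {}^\perp \sH_S$. The right-hand convolution $\pi_!P_w * L_u$ is handled symmetrically.

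\textbf{Step 3: $\sH_S$ is a two-sided ideal.} Take $G \in \sH_S$ and $F \in \sH$. By continuity it suffices to treat $F = L_u$. We need $\Hom(\pi_!P_w, L_u * G) = 0$ for $w \notin S$, and adjunction identifies this with $\Hom(L_{u^{-1}} * \pi_! P_w, G)$ up to shift. This vanishes by Step 2, and the same argument works on the right. Finally, \Cref{prop:semiorthogonal-semirigid} provides the monoidal structure on the localization, making $\sH \to \sH_{\le c}$ monoidal.

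I expect the main obstacle to be the reduction in Step 2: that orthogonality of a compact object to $\sH_S$ can be tested on the $L_v[n]$. This needs care with unbounded objects and with the "filtered colimits of extensions" in the definition of $\sH_S$. If it becomes delicate, I would instead pull back along $\pi^!$ and use the projectivity argument for $\sO_0$, where $\Hom(P_w, -)$ sees only $[-:L_w]$.
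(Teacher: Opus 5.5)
Your proof is correct, but it takes a different route from the paper's.

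The paper works directly with $\sH_S$. It reduces to showing that $L_w * (-)$ and $(-) * L_w$ preserve $\sH_S$. It then writes $x \in \sH_S$ as $\lim_n \tau_{\le n} x$, where each cohomologically bounded-below truncation is a colimit of $L_v$'s with $v \in S$. Finally it uses that $L_w * (-)$ commutes with both limits and colimits (since $L_w$ is dualizable) and that $\sH_S$ is closed under both, which lands immediately in your Step 1.

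You instead first prove that the compactly generated left orthogonal $\sH^S$ is an ideal, by adjunction against the test objects $L_v[n]$. You then transfer this to $\sH_S = (\sH^S)^\perp$ by a second adjunction; your Step 3 is essentially the relevant half of \Cref{prop:semiorthogonal-semirigid}. Your route only touches unbounded objects through maps out of compact objects, and it gives the ideal property of $\sH_{\not\le c}$ directly. The paper's route is shorter, at the price of using left-completeness of the perverse t-structure and the fact that eventually coconnective objects of $\sH_S$ are generated by the $L_v$.

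One misconception in your write-up should be corrected: the $L_u$ are not compact in $\sH$. For example, $\End(L_1) \simeq C^*(B \bs B/B)$ is unbounded, and the paper itself notes that the unit of $\sH$ is not compact. Your reductions only use that the $L_u$ generate $\sH$ under colimits. That is true, because the compact generators $\pi_!P_w$ are bounded constructible, so nothing breaks. But two justifications need replacing.

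\begin{itemize}
\item Dualizability of $L_u$ does not come from rigidity of compact objects. It comes from rigidity of the bounded constructible Hecke category; the paper simply asserts it.
\item In Step 2, it is not boundedness of $X$ that makes $\Hom(X,-)$ see only finitely many perverse cohomologies. Indeed $L_1$ is bounded, yet $\Hom(L_1, L_1[2n]) \neq 0$ for all $n \ge 0$. What you need is the finite amplitude of $\Hom(\pi_!P_w,-) \simeq \Hom_{\sO_0}(P_w, \pi^!(-))$, which comes from projectivity of $P_w$.
\end{itemize}

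That amplitude bound is exactly your proposed fallback. With it, plus compactness to pass through the filtered colimits, Step 2 goes through.
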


\begin{proof}
If $S$ is a subset of $W$, then because $\sH_S$ is stable under colimits and $\sH$ is generated by $L_w$ for $w \in W$, one can show that $\sH_S$ is a two-sided ideal in $\sH$ just by showing that $L_w * (-)$ and $(-) * L_w$ preserve $\sH_S$. Every object $x \in \sH_S$ can be written as the limit of its truncations for the t-structure
\[x = \lim_n \tau_{\le n}x,\]
and each $\tau_{\le n}x$ can be written as a colimit of $L_v$ with $v \in S$. Because $L_w$ is left- and right-dualizable and $\sH_S$ is stable under limits as well as colimits, it suffices to show that $L_w * L_v$ and $L_v * L_w$ lie in $\sH_S$ for all $v \in S$. This is a direct consequence of the definition of two-sided cells (see \S\ref{sss:cells-defn}). 
\end{proof}

\subsubsection{}\label{ss:defn-Hc} It follows from \Cref{prop:semiorthogonal-compat-H-O}, \Cref{prop:cells-ideals}, \Cref{prop:semiorthogonal-semirigid}, and \Cref{prop:semirigid-loc-coloc} that we obtain a system of semiorthogonal decompositions 
\begin{equation}\label{eq:semiorthogonal-decomp-cells}
\sH_{\not\le c} \rightleftarrows \sH \rightleftarrows \sH_{\le c}    
\end{equation}
where the subcategories of $\sH$ admit monoidal structures making them semi-rigid monoidal categories, and the localization and colocalization functors are monoidal.

\medskip 

If we let $\sH_c$ denote the left-orthogonal to $\sH_{< c}$ inside of $\sH_{\le c}$, then it follows from \Cref{prop:semiorthogonal-semirigid} and \Cref{prop:semirigid-loc-coloc} that $\sH_c$ admits a unique monoidal structure such that the projection $\sH \to \sH_c$ is monoidal, and with respect to this monoidal structure $\sH_c$ has the structure of a semi-rigid category. We also note that 
\[\sH_c \simeq \sH_{\le c} \underset{\sH}{\otimes} \sH_{\not < c}\]
so we may think of the categories $\sH_c$ as the associated-graded pieces of the system of semiorthogonal decompositions (\ref{eq:semiorthogonal-decomp-cells}). 

\subsubsection{Example}\label{sss:sl2-1-qcoh} Let's return to the case of $SL_2$. Let $c$ denote the cell $\set{1} \subseteq W$. Then $\sH_c$ is the subcategory of $\sH$ generated by $\pi_!P_1$. 
\[\pi_i\End(\pi_!P_1) = \begin{cases}
k & i = 0, 1 \\
0 & \text{otherwise}
\end{cases}\] 
so it follows that $\sH_c$ identifies with $\QCoh(\Omega \A^1)$ as a plain dg category. One can show that the monoidal structure on $\sH_c$ identifies with the monoidal structure on $\QCoh(\Omega \A^1)$ coming from the group structure on $\Omega \A^1$. 

\subsection{An action of the monoid $\A^1$}\label{ss:action-of-A1}

\subsubsection{}\label{ss:defn-W} In \cite{gaitsgory2026excursionalgebra}, we introduced a sheaf theory $\Sh(-)^\relev$ which is defined on schemes over $\F_q$. If $X_0$ is a scheme over $\F_q$ whose base-change to $\ol{\F}_q$ is $X$, then in keeping with the convention of \textit{loc. cit.} we somewhat abusively denote the value of $\Sh(-)^\relev$ on $X_0$ by $\Sh(X)^\relev$. The definition of $\Sh(X)^\relev$ is as follows: the given $\F_q$ structure on $X$ equips this scheme with a geometric Frobenius endomorphism $\Frob$, and 
\[\Sh(X)^\relev \subseteq \Sh(X)\] 
is the subcategory generated under colimits and shifts by irreducible perverse sheaves $\sF \in \Perv(X)$ such that for some $n \ge 1$ there exists an isomorphism $\sF \simeq (\Frob^n)^*\sF$. 

\medskip

The important feature of $\Sh(X)^\relev$ is that it admits an action of a pro-algebraic group which we will denote $\W$.\footnote{This is the group which is denoted $\Z^{\mathrm{alg, wt}}$ in \textit{loc. cit.}} This action is constructed using the theory of weights for $\ell$-adic sheaves on varieties over finite fields and it has the property that $(\Sh(X)^\relev)^\W$ is the category $\Sh(X)^{\Weil, \mathrm{wt}}$ (introduced in \textit{loc. cit.}) which is a minor variant of the category of mixed sheaves (sometimes we will call an object of $\Sh(X)^{\Weil, \mathrm{wt}}$ a \textit{strongly} mixed sheaf). If $\Weil_q \subseteq \ol{\Q}^\times$ denotes the subgroup of $q$-Weil numbers, then $\W$ is the Cartier dual of $\Gm^\wedge_{\Weil_q}$. The homomorphism 
\[\mathrm{wt}: \Gm^\wedge_{\Weil_q} \to \Z\]
which sends a connected component of $\Gm^\wedge_{\Weil_q}$ to the weight of its underlying Weil number gives rise to a distinguished subgroup $\Gm \subseteq \W$ after applying Cartier duality. The inclusion 
\[\Gm^\wedge_{\Weil_q} \subseteq \Gm\]
gives rise to a homomorphism $\Frob: \Z \to \W$ whose image is Zariski dense. One of the defining features of the action of $\W$ on $\Sh(X)^\relev$ is that the element $\Frob \in \W$ acts by pullback along geometric Frobenius. 

\subsubsection{}\label{ss:A1-action} Another feature of the action of $\W$ on $\Sh(X)^\relev$ is that the action of the weight $\Gm$ extends to an action of the commutative monoid scheme $\A^1$. In terms of \Cref{prop:A1-action-char} the category $(\Sh(X)^\relev)^\Gm$ is a minor variant of the category of graded mixed sheaves introduced in \cite{Ho2022RevisitingMG}, and the subcategory $(\Sh(X)^\relev)^\Gm_{\le 0}$ is the category generated under colimits and shifts by perverse sheaves which are weight $\le 0$ in the sense of \cite{BBD} (this category was considered previously in \cite{morel2008complexes}). 

\subsubsection{Warning}\label{ss:warning-mixed-wt-zero} The category $(\Sh(X)^\relev)^\Gm_{\le 0}$ is not the derived category of mixed sheaves of weight $\le 0$ in the sense of \cite{BBD}. Unlike the category defined in \textit{loc. cit.}, the category $(\Sh(X)^\relev)^\Gm_{\le 0}$ is stable under shifts. This is advantageous for certain purposes, but it has the following drawback: if $f$ is a smooth proper map, then $f_*$ does not preserve weights in our sense.

\subsubsection{}\label{ss:A1-action-defn} Let $f: X \to Y$ be a smooth morphism between schemes. Define the perverse pullback functor 
\[f^{1/2}: \Sh(Y)^\relev \to \Sh(X)^\relev\]
by 
\[f^{1/2} = f^*[\dim(X/Y)] = f^![-\dim(X/Y)].\]
By construction, this functor is t-exact for the perverse t-structure. If we choose a square-root of $q$, we can equip $f^{1/2}$ with the structure of equivariance for $\W$ as follows: consider the functor (which we will also denote $f^{1/2}$) from $\Sh(Y)^{\Weil, \mathrm{wt}}$ to $\Sh(X)^{\Weil, \mathrm{wt}}$ defined by 
\[f^{1/2} = f^*(\tfrac{\dim(X/Y)}{2})[\dim(X/Y)] = f^!(-\tfrac{\dim(X/Y)}{2})[-\dim(X/Y)].\]
Then the functor between the categories of relevant sheaves arises as the de-equivariantization of this functor between the categories of strongly mixed sheaves, equipping it with equivariance data for $\W$. It is easy to see that the functor $f^{1/2}$ is $\A^1$-equivariant for the $\A^1$-action of \S\ref{ss:A1-action}.

\medskip

If $\sY$ is an Artin stack, then define 
\begin{equation}\label{eq:sh-relev-defn}
\Sh(\sY)^\relev = \lim_{Y \to \sY} \Sh(Y)^\relev    
\end{equation}
where the limit is taken over all schemes $Y$ mapping smoothly to $\sY$ and the transition maps are the perverse pullback functors. In other words, $\Sh(\sY)^\relev$ is the full subcategory of $\Sh(\sY)$ consisting of sheaves whose pullback to some (equiv. any) smooth cover is relevant. Taking the limit (\ref{eq:sh-relev-defn}) internally to categories acted on by $\A^1$ defines a natural $\A^1$-action on $\Sh(\sY)^\relev$. This $\A^1$-action has the property that if $f: Y \to \sY$ is a smooth cover, then an object $\sF$ of $\sY$ is in $\Shv(\sY)^\Gm_{\le 0}$ if and only if its pullback to $Y$ is, and similarly for $\Shv(\sY)^\Gm_{\ge 0}$.

\begin{proposition}
If $\sY = N \bs G /B$ or $\sY = B \bs G / B$, then 
\[\Sh(\sY)^\relev = \Sh(\sY).\]
\end{proposition}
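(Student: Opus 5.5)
The plan is to reduce the statement to a claim about irreducible perverse sheaves. Both $\Sh(\sY)$ and $\Sh(\sY)^\relev$ are compactly generated by (shifts of) irreducible perverse sheaves, so it suffices to show that every irreducible perverse sheaf on $\sY$ is relevant. Since relevance is tested after smooth pullback by \eqref{eq:sh-relev-defn}, we may work on a smooth atlas. It is cleaner, however, to use the stratification directly. Both $N\bs G/B$ and $B\bs G/B$ are stratified by finitely many Bruhat strata, indexed by $w \in W$, and these are defined over $\F_q$ since $G$ is split and $B$, $N$, $T$ are defined over $\F_q$.

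We first argue that $\Sh(\sY)$ is generated under colimits and shifts by the objects $L_w$, together with their twists by local systems on the strata. For $B\bs G/B$, each stratum is $B\bs BwB/B \simeq pt/(B\cap {}^wB)$, and $B \cap {}^wB$ is a connected group. The only irreducible perverse sheaf supported on the stratum is therefore the shifted constant sheaf, because sheaves on $pt/H$ with $H$ connected are generated by the constant sheaf. Recollement along the stratification then shows that $\Sh(B\bs G/B)$ is generated by the $j_{w!}$ of constant sheaves. Since $\sH$ is generated by the $L_w$, every irreducible perverse sheaf is isomorphic to some $L_w$. For $N\bs G/B$, each stratum is $N\bs NwB/B \simeq pt/(N\cap {}^wB)$, and $N \cap {}^wB$ is unipotent and connected. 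The same argument shows that the irreducible perverse sheaves are again the $L_w$, as noted in \S\ref{sss:sl2-1-qcoh} and the surrounding discussion of $\sO_0$.

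It remains to see that each $L_w$ satisfies $(\Frob^n)^*L_w \simeq L_w$; in fact this holds already for $n=1$. The closed Schubert substack $\ol{X}_w$ is defined over $\F_q$, so it is Frobenius-stable, and the intermediate extension of the constant sheaf is canonical. Hence $\Frob^* L_w \simeq L_w$, and on an atlas defined over $\F_q$ the pullback of $L_w$ is also Frobenius-fixed. Because $\Frob^*$ is t-exact, relevance passes to the perverse constituents of the pullback, which are Frobenius-stable up to permutation. Some power of $\Frob$ therefore fixes each of them.

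The main obstacle is the formal step of matching the definition of $\Sh(X)^\relev$ on schemes with generation on the stack. The issue is that the pullback of $L_w$ to a smooth scheme atlas $Y$ is a perverse sheaf (after shift) whose irreducible constituents must each be Frobenius-periodic. The constituents of $f^{1/2}L_w$ are finitely many, and the geometric Frobenius permutes their isomorphism classes, so some power fixes each one. Combined with the fact that a sheaf on $\sY$ is relevant iff its pullback is, this shows that each $L_w$ is relevant. Since $\Sh(\sY)^\relev$ is closed under colimits and shifts, it follows that $\Sh(\sY)^\relev = \Sh(\sY)$.
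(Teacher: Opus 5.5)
Your proposal is correct and takes essentially the same route as the paper. The paper's proof is one line: these categories are generated by the $L_w$, and the pullback of each $L_w$ to $G/B$ (a Frobenius-stable IC sheaf) is relevant; your stratification and Frobenius-permutation arguments simply fill in these two facts.

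One small slip: on $B\bs G/B$ the $L_w$ are not compact (the paper itself notes that the unit $L_1$ of $\sH$ is not compact), so ``compactly generated'' is wrong there. This does not affect the proof, since your argument only uses generation under colimits and shifts, which does hold.
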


\begin{proof}
These categories are generated by the IC sheaves $L_w$, whose pullbacks to $G/B$ are relevant. 
\end{proof}

\subsubsection{}\label{ss:A1-action-on-subquotient-defn} The above discussion shows that $\sH$ acquires an action of the monoid $\A^1$. Since the action of $\Gm \subseteq \A^1$ is compatible with six functors, the monoidal structure on $\sH$ is compatible with this $\Gm$ action. However, we caution that because of the behavior described in \S\ref{ss:warning-mixed-wt-zero}, the $\A^1$-action on $\sH$ is \textit{not} compatible with its monoidal structure.   

\begin{proposition}\label{prop:right-cat-A1-equiv}
For any $S \subseteq W$, the category $\sH_S$ is stable under the $\A^1$ on $\sH$. 
\end{proposition}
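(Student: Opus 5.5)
We want to show that for each $\lambda \in \A^1$ the functor $\act_\lambda$ preserves $\sH_S$; more precisely, that the coaction $\coact: \sH \to \sH \otimes \QCoh(\A^1)$ sends $\sH_S$ into $\sH_S \otimes \QCoh(\A^1)$. Using \Cref{prop:A1-action-char}, we work on the $\Gm$-equivariant side. The plan is to show that the subcategory $\sH_S^\Gm \subseteq \sH^\Gm$ (graded mixed objects whose underlying object lies in $\sH_S$) is stable under the truncations $\tau_{\le n}$ and $\tau_{> n}$. If it is, then $\sH_S^\Gm$ inherits colocalizations $\sH^\Gm_{S, \le n} = \sH_S^\Gm \cap \sH^\Gm_{\le n}$ satisfying the conditions of \Cref{prop:A1-action-char}. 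This gives an $\A^1$-action on $\sH_S$ for which the inclusion $\sH_S \to \sH$ is $\A^1$-equivariant, by the criterion on $\le 0$ and $> 0$ parts. Since the inclusion is fully faithful and equivariant, $\sH_S$ is an $\A^1$-stable subcategory.

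The first step is to establish the stability under truncation. Since $\sH_S$ is the right orthogonal to $\sH^S$, it suffices to check that $\tau_{\le n}$ (equivalently $\tau_{>n}$) of an object of $\sH_S^\Gm$ has underlying object with no maps from $\pi_!P_w$, $w \notin S$. I would instead use the description by composition factors. The weight truncation $\tau_{\le 0}$ is computed perverse-cohomology-wise in the following sense: the subcategory $\sH^\Gm_{\le 0}$ is generated by perverse sheaves of weight $\le 0$. Moreover, a graded mixed perverse sheaf $M$ has a functorial weight filtration $W_{\le n}M \subseteq M$ by subobjects whose composition factors are among those of $M$. So for $M$ in the heart of $\sH^\Gm$ with all composition factors $L_w(k)$, $w \in S$, both $\tau_{\le n}M = W_{\le n}M$ and $\tau_{>n}M = M/W_{\le n}M$ lie in $\sH_S^\Gm$. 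Here I would use that the weight truncations are perverse t-exact; this follows because $\sH^\Gm_{\le n}$ is generated by perverse objects, so its colocalization is right t-exact, and by the dual argument for $\sH^\Gm_{>n}$ it is also left t-exact.

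The second step is to pass from the heart to general objects. Since $\tau_{\le n}$ is t-exact and continuous, the perverse cohomologies of $\tau_{\le n}x$ are $\tau_{\le n}$ of the perverse cohomologies of $x$. These are filtered colimits of extensions of objects handled in the first step, and $\tau_{\le n}$ commutes with filtered colimits and extensions. Hence $\tau_{\le n}x \in \sH_S^\Gm$. The same holds for $\tau_{>n}$ via the cofiber sequence.

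I expect the main obstacle to be the t-exactness of the weight truncations in the unbounded, ind-constructible, stacky setting of $B\bs G/B$. I would try to reduce it to the scheme $G/B$ (or $N\bs G/B$) via smooth perverse pullback $f^{1/2}$. This pullback is t-exact, $\A^1$-equivariant, and detects membership in $\sH^\Gm_{\le 0}$ and $\sH^\Gm_{\ge 0}$ by \S\ref{ss:A1-action-defn}; it also detects composition factors, as in \Cref{prop:semiorthogonal-compat-H-O}. On schemes, the exactness of the weight filtration on mixed perverse sheaves is the classical result of \cite{BBD}. I would take it as the input for the $\Sh(X)^\relev$ formalism of \cite{gaitsgory2026excursionalgebra}.
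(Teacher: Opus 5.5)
You handle the weight truncations the same way the paper does in the second half of its proof. There too one uses that the weight truncations on $\sH^\Gm$ are t-exact (checked after forgetting to $\sO_0$) to reduce to objects of the heart, where the claim follows because $L_w$ is pure of weight $0$.

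\textbf{The gap.} The step that has to come first is missing. \Cref{prop:A1-action-char} starts from a category that is \emph{already} acted on by $\Gm$. Its equivariance criterion concerns a $\Rep(\Gm)$-linear functor between $\Gm$-equivariantizations. So before the colocalizations $\sH_S^\Gm\cap\sH^\Gm_{\le n}$ can ``give an $\A^1$-action on $\sH_S$'' with equivariant inclusion, you need $\sH_S$ to be $\Gm$-stable. Only then is your category (objects of $\sH^\Gm$ whose underlying object lies in $\sH_S$) really $(\sH_S)^\Gm$, and the inclusion $\sH_S\to\sH$ $\Gm$-equivariant.

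You define $\sH_S^\Gm$ by a condition on underlying objects and proceed without proving this. It is not formal. De-equivariantizing your $\Rep(\Gm)$-module only yields some full subcategory of $\sH$ built from objects admitting graded lifts, and identifying it with $\sH_S$ needs an argument. A generation argument by the $L_w$ is not available in general, since $\sH_S$ can contain inaccessible objects (cf. \S\ref{sss:access-vs-all-sl2}).

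\textbf{How the paper closes it.} Following \cite[Lemma 3.1.4]{gaitsgory2026excursionalgebra}, it suffices that the comonad $\oblv\circ\Av_\Gm$ on $\sH$ preserves $\sH_S$. This comonad is continuous and t-exact (again checked on $\sO_0$, reducing to schemes). Hence it suffices to treat $L_w$ for $w\in S$. Each such $L_w$ admits a strongly mixed structure, so the comonad sends it to $\sO_\Gm\otimes L_w\in\sH_S$. With this step added, your argument is complete and matches the paper's.

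\textbf{A smaller point.} Your abstract justification of t-exactness does not hold as a general principle. You argue that a colocalization onto a subcategory generated by perverse objects is right t-exact, and dually left t-exact. Counterexample: in $D(k[x])$ the $x$-torsion subcategory is generated by the heart object $k$, yet the colocalization $R\Gamma_{(x)}$ sends $k[x]$ to an object in cohomological degree $1$. Likewise, the projectors in \S\ref{ss:sl2-calculation} are not t-exact, even though $(\sO_0)^S$ is generated by the heart object $P_1$. t-exactness of the weight truncations is a genuine feature of weights. It must be imported, as your last paragraph proposes and as the paper does via \cite{gaitsgory2026excursionalgebra}.
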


\begin{proof}
The is follows by the arguments of \cite[\S3.1]{gaitsgory2026excursionalgebra}. In a little more detail, to show that $\sH_S$ is stable under the action of $\Gm$, then as in the proof of \cite[Lemma 3.1.4]{gaitsgory2026excursionalgebra} it suffices to show that the comonad 
\[\sH \totext{\Av_\Gm} \sH^\Gm \totext{\oblv} \sH\]
preserves $\sH_S$. The comonad is t-exact (this can be checked after applying $\oblv$ to $\sO_0$, where it follows from the corresponding statement for schemes, which is the content of \cite[\S2.6]{gaitsgory2026excursionalgebra}), so it's enough to check on objects in the heart. Since the comonad commutes with colimits, it suffices to show that it sends $L_w$ to an object of $\sH_S$ for all $w \in S$. Now each $L_w$ admits the structure of a strongly mixed sheaf, so the value of the comonad on $L_w$ is simply $\sO_\Gm \otimes L_w \in \sH_S$.

\medskip 

Once we know that $\sH_S$ is stable under the action of $\Gm$, as in \cite[Proposition 3.1.7]{gaitsgory2026excursionalgebra} to prove that it is stable under $\A^1$ it suffices to show that $\sH_S^\Gm$ is preserved by the weight truncation functors on $\sH^\Gm$. The weight truncation functors are t-exact (again this follows by applying $\oblv$ to $\sO_0$), so it suffices to check this for objects of the heart. Now the result follows because 
\[\tau_{\le i}L_w = \begin{cases}
L_w & i \ge 0 \\
0 & i < 0
\end{cases}\]
\end{proof}

By \Cref{prop:right-cat-A1-equiv} the inclusions 
\[\sH_{< c} \to \sH_{\le c} \to \sH\] 
are $\A^1$-equivariant. The left adjoints of these inclusions are left-lax $\A^1$-equivariant, but we caution that the left adjoints are \textit{not} strictly $\A^1$-equivariant. By the compatibility of the $\A^1$ action on $\sH$ with smooth pullbacks (see \S\ref{ss:A1-action-defn}) and \Cref{prop:semiorthogonal-compat-H-O} we see that the forgetful functor $\sH_{\le c} \to (\sO_0)_{\le c}$ is $\A^1$-equivariant. By writing 
\[\sH_c \simeq \sH_{\le c}/\sH_{< c}\]
$\sH_c$ acquires an action of $\A^1$ such that the projection $\sH_{\le c} \to \sH_c$ is tautologically $\A^1$-equivariant. We caution that the inclusion $\sH_c \to \sH_{\le c}$ is only left-lax $\A^1$-equivariant.

\subsubsection{Example} If we set $G = SL_2$ and take $c = \set{1}$, then like in \S\ref{ss:sl2-calculation}, one can calculate that
\[
H_i(\tau_{< c}(L_1)) \simeq \begin{cases}
L_s\bracket{-2i + 1} & i \ge 1 \\
0 & i < 1
\end{cases}
\]
and 
\[
H_i(\tau_c(L_1)) \simeq \begin{cases}
L_s\bracket{-2i - 1} & i > 0 \\
\nabla_s\bracket{-1} & i = 0 \\
0 & i < 0
\end{cases}
\] 
We note that for the action of $\A^1$ on $\sH_c$ defined above, the object $\tau_c(L_1)$ is pure of weight zero (since it is the image of a sheaf which is pure of weight zero under $\tau_c$). However, the image of $\tau_c(L_1)$ in $\sH$ is evidently not pure of weight zero. 

\subsubsection{Warning}\label{sss:automorphisms} Suppose that $\sC$ is a category acted on by $\A^1$. By \Cref{prop:A1-action-char}, the action of $\A^1$ on $\sC$ is completely determined by the action of $\Gm$ plus the data of a certain full subcategory $\sC^\Gm_{\le 0} \subseteq \sC^\Gm$. However, it is important to note that it may be the case that $\sC$ has more automorphisms as a category acted on by $\Gm$ than it does as a category acted on by $\A^1$. This leads to a somewhat counterintuitive situation, where different choices of $\sC^\Gm_{\le 0}$ can give rise to equivalent $\A^1$ actions on $\sC$. When this happens, the $\A^1$-actions are equivalent via an isomorphism that induces a non-trivial automorphism of $\sC^\Gm$, which interchanges the two choices of $\sC^\Gm_{\le 0}$. 

\subsubsection{}\label{sss:left-compat} If $\sA$ is a monoidal category with an action of $\A^1$, such that the monoidal structure is endowed with $\Gm$-equivariance data, we will say that the monoidal structure of $\sA$ is left-compatible with the action of $\A^1$ on $\sA$ if the maps 
\begin{align*}
&\mult: \sA \otimes \sA \to \sA \\
&\unit: \Vect \to \sA
\end{align*}
have the property that the associated maps 
\begin{align*}
\mult^\Gm: \sA^\Gm_{\le 0} \otimes \sA^\Gm_{\le 0} \to \sA^\Gm \\
\unit^\Gm: \Rep(\Gm)_{\le 0} \to \sA^\Gm
\end{align*}
factor through $\sA^\Gm_{\le 0}$.

\medskip 

Suppose that $\sA$ is a monoidal category equipped with an action of $\A^1$, such that the monoidal structure is endowed with $\Gm$-equivariance data. Using \Cref{prop:A1-action-char}, for any integer $k$ we can define a new category $\sA(k)$ with an action of $\A^1$, with $\Gm$-equivariant monoidal structure. Namely, $\sA(k) = \sA$ as a monoidal category acted on by $\Gm$, but $\sA(k)^\Gm_{\le n} = \sA^\Gm_{\le n - k}$. If we forget the monoidal structure on $\sA$, then $\sA(k)$ and $\sA$ are equivalent as plain categories acted on by $\A^1$ (acting by $\sO(k)$ is a $\Rep(\Gm)$-linear automorphism of $\sA^\Gm$ that intertwines the two $\le 0$ categories, as in \S\ref{sss:automorphisms}). However, $\sA(k)$ will not typically be equivalent to $\sA$ as a category acted on by $\A^1$ with a $\Gm$-equivariant monoidal structure; in fact, it may be the case that the action of $\A^1$ on $\sA(k)$ is left-compatible with the monoidal structure, even when the action on $\sA$ is not. 

\subsubsection{} The monoidal structure of $\sH_c$ is not left-compatible with the action of $\A^1$ as-is. However, let $a$ denote the value of Lusztig's $a$-function when evaluated on our given two-sided cell $c$.

\begin{proposition}\label{prop:shifted-A1-action-left-compatible}
The monoidal structure on $\sH_c(a)$ is left-compatible with the natural action of $\A^1$. 
\end{proposition}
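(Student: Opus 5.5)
We need to show two things: the unit map $\unit^\Gm:\Rep(\Gm)_{\le 0}\to \sH_c^\Gm$ lands in $(\sH_c(a))^\Gm_{\le 0} = (\sH_c)^\Gm_{\le -a}$, and the multiplication $\mult^\Gm$ sends $(\sH_c)^\Gm_{\le -a}\otimes(\sH_c)^\Gm_{\le -a}$ into $(\sH_c)^\Gm_{\le -a}$. Since $\mult^\Gm$ is $\Rep(\Gm)$-bilinear and $\sO(m)$ shifts the weight filtration, the multiplication claim is equivalent to
\[
(\sH_c)^\Gm_{\le 0} * (\sH_c)^\Gm_{\le 0}\subseteq (\sH_c)^\Gm_{\le a}.
\]

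\textbf{Step 1: reduce to generators.} Convolution is continuous, and by \Cref{prop:A1-action-char} the subcategories $(\sH_c)^\Gm_{\le n}$ are colocalizations, hence closed under colimits. So it suffices to check the claims on a generating set of $(\sH_c)^\Gm_{\le 0}$. Since $\sH_c$ is the quotient $\sH_{\le c}/\sH_{<c}$ with $\A^1$-equivariant projection, $(\sH_c)^\Gm_{\le 0}$ is generated by the images of the generators of $(\sH_{\le c})^\Gm_{\le 0}$. By \S\ref{ss:A1-action} these are perverse sheaves of weight $\le 0$ lying in $\sH_{\le c}$, together with their shifts. Using the t-exactness of weight truncation (as in the proof of \Cref{prop:right-cat-A1-equiv}), we may further reduce to mixed simple objects $L_w(\tfrac{k}{2})$ with $k\ge 0$ and $w\in c$; the objects with $w<_{LR}c$ die in $\sH_c$. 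Their shifts $L_w(\tfrac k2)[n]$ are also allowed, since the weight filtration is shift-stable.

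\textbf{Step 2: products of generators.} The monoidal structure on $\sH_c$ is induced from $\sH$ via the monoidal projection $\tau_c$. So for $w,v\in c$ the image of $L_w*L_v$ in $\sH_c$ is $\tau_c(L_w*L_v)$. The projection $\sH_{\le c}\to\sH_c$ is $\A^1$-equivariant, so it suffices to show that $L_w*L_v$, as an object of $\sH_{\le c}^\Gm$, lies in $(\sH_{\le c})^\Gm_{\le a}$ modulo $\sH_{<c}$.

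By the decomposition theorem and purity (BBD), $L_w*L_v$ is pure of weight $0$ in the classical sense, so it splits as $\bigoplus_i H^i(L_w*L_v)[-i]$. Each $H^i$ is a pure perverse sheaf of weight $i$, that is, a sum of $L_u(-\tfrac i2)$. In the paper's normalization, where weights are measured on perverse cohomology and the category is shift-stable, the summand $H^i[-i]$ therefore has weight exactly $i$.

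By the definition of $a=a(c)$, the summands with $i>a$ lie in $\sH_{<c}$ and vanish in $\sH_c$. All remaining summands have weight $\le a$. This is exactly the content of the shift by $a$, and it is the \textbf{main point}: the classical weight-preserving property of convolution becomes a bound in perverse degrees, and the $a$-function cuts that bound off at $a$. One subtlety must be handled here: the bound $i\le a$ should control all $u\in c$ appearing in $H^i$, including negative $i$. By hard Lefschetz symmetry, $H^{-i}\simeq H^i(i)$, so the negative degrees contribute weights $-i<a$ and cause no problem.

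\textbf{Step 3: the unit.} The unit of $\sH_c$ is $\tau_c(L_1)$. It is the image of a pure weight-zero object, so it lies in $(\sH_c)^\Gm_{\le 0}$. We need it in $(\sH_c)^\Gm_{\le -a}$. Here I would use duality and Duflo involutions: $\tau_c(L_1)$ should agree in $\sH_c$ with a complex built from the $L_d[-a]$ for $d\in\sD_c$, compare \Cref{prop:jc-unit}, and $L_d[-a]$ has weight $-a$.

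I expect this to be the hard step. To make it precise, I would argue via the semi-rigid structure. The unit $u$ is idempotent, $u*u\simeq u$, and by Step 2 we have $u\in(\sH_c)^\Gm_{\le n}$ implies $u\in(\sH_c)^\Gm_{\le 2n+a}$. This is not yet enough, so I would instead compute $\Hom(u,\tau_{>-a}u)$. Using the rigidity of the $L_w$ and the adjunction for $\tau_c$, this reduces to showing that $\Hom_\sH(L_1, X)=0$ for $X\in\sH_{\le c}$ of weight $>-a$ built from $L_w$ with $w\in c$. That vanishing should follow from Lusztig's inequality $a(w)\le \ell(w)$-type degree bounds, namely that $L_w$ appears in $\tau_{\le c}L_1$ only in degrees $\ge a$. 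The $SL_2$ example, where $H_i(\tau_{\le c}L_1)=L_s$ for $i\ge 1=a$, confirms this pattern.
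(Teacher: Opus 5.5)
Your treatment of products of simples (purity of $L_w*L_v$ plus the defining property of $a$) is exactly the paper's Step 1. Your reformulation of the multiplicative condition as $(\sH_c)^\Gm_{\le 0}*(\sH_c)^\Gm_{\le 0}\subseteq(\sH_c)^\Gm_{\le a}$ is also fine.

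\textbf{The gap is your reduction to generators.} It is false that $(\sH_{\le c})^\Gm_{\le 0}$ is generated under colimits and shifts by perverse sheaves of weight $\le 0$. The description in \S\ref{ss:A1-action} is for sheaves on schemes, and the full subcategory $\sH_{\le c}\subseteq\sH$ is \emph{not} generated by its heart. What its perverse objects generate is exactly $\sH_{\le c}^\access$, which is a proper colocalization (compare \S\ref{sss:access-vs-all-sl2}). The compact generators $\tau_{\le c}\pi_!P_w$ of $\sH_{\le c}$ can have infinitely many nonzero perverse cohomology sheaves, as in \S\ref{ss:sl2-calculation}. So your argument only gives left-compatibility on the accessible part. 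The statement, and the definition of $\sJ_c$ via \Cref{constr:monoidal-structure-A1-invts}, needs it on all of $\sH_c$.

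The missing idea is the paper's Steps 2--5. One proves $\tau_{>a}(x*y)\in\sH^\Gm_{<c}$ for $x,y\in\sH_{\le c}^\Gm$ of weight $\le 0$ in stages:
\begin{enumerate}
\item first for constructible $x,y$, by finite colimits from your case;
\item then for $x$ constructible and $y$ compact. Here $\tau_{>a}$ is t-exact, and $x*(-)$ has finite cohomological amplitude and commutes with limits because $x$ is dualizable. Hence each perverse cohomology sheaf of $x*y$ is computed by a constructible truncation of $y$;
\item then for $x,y$ both compact, by the symmetric argument, since compacts are dualizable by semi-rigidity;
\item finally for all objects, by compact generation.
\end{enumerate}

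\textbf{Your unit step is only a sketch and lacks the decisive input.} The paper works in $\sH_{\le c}$, not $\sH_c$: it shows $1_{\le c}$ has weights $\le -a$ and then projects. Working in $\sH_c$ is problematic for two reasons. The inclusion $\sH_c\to\sH_{\le c}$ is only left-lax, so lower weight bounds are not preserved. Also, the relevant objects involve $L_w$ with $w<_{LR}c$, not only $w\in c$.

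After forgetting to $\sO_0$, $\Hom(1_{\le c},\tau_{>-a}1_{\le c})$ becomes maps from $\triv$ to the costalk $i^!\tau_{>-a}1_{\le c}$ at the point stratum. One writes $\tau_{>-a}1_{\le c}$ as a limit of colimits of $L_w\langle j\rangle$ with $w\le_{LR}c$ and $j>-a$; this is again needed to cover non-accessible objects. It then suffices that $i^!(L_w\langle j\rangle)$ has weights $>0$. That follows from the Kazhdan--Lusztig description of $i^!L_w$ via $P_{1,w}$ together with Lusztig's inequality $\ell(w)-2\deg P_{1,w}\ge a(w)\ge a$. A bound of the form $a(w)\le\ell(w)$ does not suffice.

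Finally, your heuristic that the unit of $\sH_c$ is built from the $L_d[-a]$ is wrong: only its weight $-a$ graded piece is. Moreover, \Cref{prop:jc-unit} cannot be invoked here, since it is proved using this proposition.
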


\begin{proof}
Let's first show that the unit of $\sH_c^\Gm$ is concentrated in weights $\le -a$. Because the projection $\sH_{\le c} \to \sH_c$ is $\A^1$-equivariant and monoidal, it suffices to check that the unit of $\sH_{\le c}$ is concentrated in weights $\le -a$. By the definition of the $\A^1$-action on $\sH_{\le c}$ (see \S\ref{ss:A1-action-on-subquotient-defn}), we can check this after forgetting to $(\sO_0)_{\le c}$. We have an exact triangle 
\[\tau_{\le -a} 1_{\le c} \to 1_{\le c} \to \tau_{> -a} 1_{\le c}\]
coming from the semiorthogonal decomposition of $(\sO_0)_{\le c}^{\Gm}$ according to weights. It follows formally from the theory of semiorthogonal decompositions that the first map in this triangle is an isomorphism if and only if the second map is zero. Since 
\[\Hom_{(\sO_0)^\Gm_{\le c}}(1_{\le c}, \tau_{> -a} 1_{\le c}) \simeq \Hom_{(\sO_0)^\Gm}(1, \tau_{> -a} 1_{\le c})\] 
it suffices to show that the latter mapping space is zero. If we let $i$ denote the closed embedding $*/N \to N \bs G / B$, then this mapping space identifies with 
\begin{equation}\label{eq:hom-space-unit-subquot}
\Hom_{\Sh(*/N)^\Gm}(\triv, i^!\tau_{> - a}1_{\le c}).    
\end{equation}
However, we claim that for any object $x$ of $\sH_{\le c}^\Gm$ with weights strictly larger than $-a$, the !-fiber $i^!x$ has weights strictly larger than zero (this immediately implies the vanishing of (\ref{eq:hom-space-unit-subquot})). Since any such $x$ can be written as a limit of colimits of copies of $L_w\bracket{j}$, with $w \le c$ and $j > -a$, we can reduce to the case of $L_w\bracket{j}$. Now let $P_{w, v}$ denote the Kazhdan Lusztig polynomials. Define $p_{w, n}$ to be the coefficients of $P_{1, w}$:
\[P_{1, w}(q) = \sum p_{w, n}q^n.\]
By Kazhdan-Lusztig theory, we know that $i^!L_w$ is isomorphic to 
\[\bigoplus_{n \in \Z} \triv^{\oplus p_{w, n}}[2n - \ell(w)](n - \ell(w)/2).\] 
If we let $\delta(w)$ denote the degree of $P_{1, w}$, then by \cite[Equation 1.3.(a)]{LUSZTIG1987536} and \cite[Theorem 5.4]{lusztig1985cells} (see also \cite[Theorem 6.8.(i)]{Curtis1988} and \cite[Lemma 6.9.(ii)]{Curtis1988}) we know that for any $w \le c$, 
\[2\delta(w) - \ell(w) \le -a(w).\]
It follows that $i^!L_w\bracket{j}$ lies in weights strictly greater than zero in $\Sh(*/N)^\Gm$ for any $w \le c$ and any $j > -a$, as desired. 

\medskip

We now show that the product of two objects of $\sH_c^\Gm$ concentrated in weights $\le - a$ remains in weights $\le - a$. Since the inclusion $\sH_c \to \sH_{\le c}$ is left-lax $\A^1$ equivariant, if $x$ and $y$ are in weights $\le - a$ as objects of $\sH_c^\Gm$, then they will be in weights $\le -a$ as objects of $\sH_{\le c}^\Gm$, as well. Since the projection $\sH_{\le c} \to \sH_c$ is $\A^1$-equivariant, it suffices to show that for any $x, y \in \sH_{\le c}^\Gm$ with weights $\le 0$, the truncation $\tau_{> a}(x * y)$ lies in $\sH_{< c}^\Gm$. We will do this in 5 steps.

\medskip

\textit{Step 1}. Suppose that $x = L_w$ and $y = L_v$ for some $w, v \le c$. If either $x$ or $y$ is $< c$, then $x * y$ is $< c$. If both $x$ and $y$ lie in $c$, then by purity $\tau_{> a}(x * y)$ lies in cohomological degree $>a$. Now the result follows from \cite[\S2.3]{LUSZTIG199785}. 

\medskip

\textit{Step 2}. If both $x$ and $y$ are constructible, then the result follows from Step 1 because the condition that $\tau_{> a}(x * y)$ lies in $\sH_{< c}^\Gm$ is stable under the formation of finite colimits. 

\medskip 

\textit{Step 3}. Now suppose that $x$ is constructible and $y$ is compact. To show that $\tau_{> a}(x * y)$ lies in $\sH_{< c}^\Gm$, it's enough to check that all of its perverse cohomologies lie in $\sH_{< c}^\Gm$. By \cite[Proposition 2.9.2]{gaitsgory2026excursionalgebra}, $\tau_{> a}$ is t-exact. Additionally, $x * (-)$ has finite cohomological amplitude, and because $x$ is dualizable it commutes with limits. Since every bounded below truncation $\tau^p_{\le n}y$ of $y$ for the perverse t-structure is constructible, the result follows from Step 2. 

\medskip 

\textit{Step 4}. Suppose that both $x$ and $y$ are compact. Then $(-) * y$ has finite cohomological amplitude and because $\sH_{\le c}$ is semi-rigid, $y$ is dualizable. Then arguing like in Step 3 with the roles of $x$ and $y$ reversed, we may replace $x$ by its eventually coconnective truncations, which reduces to Step 3. 

\medskip 

\textit{Step 5}. If $x$ and $y$ are general objects of $\sH_{\le c}$, then the result follows from Step 4 using the compact generation of $\sH_{\le c}$ and the compatibility of the monoidal structure with colimits.  
\end{proof}

\subsection{The asymptotic category}\label{ss:asymptotic-cat}

\begin{construction}\label{constr:monoidal-structure-A1-invts}
Suppose that $\sA$ is a monoidal category with an action of $\A^1$ and suppose that the monoidal structure on $\sA$ is left-compatible with the action of $\A^1$. Then the category $\sA^{\A^1}$ acquires a natural $\A^1$-action. 
\end{construction}

\begin{proof}
The subcategory $\sA^{\Gm}_{\le 0}$ is evidently a monoidal subcategory of $\sA^\Gm$. In the semiorthogonal decomposition 
\[\sA^\Gm_{\le -1} \rightleftarrows \sA^\Gm_{\le 0} \rightleftarrows \sA^{\A^1}\]
the subcategory $\sA^\Gm_{\le -1}$ is a two-sided ideal inside of $\sA^\Gm_{\le 0}$, so like in the proof of \Cref{prop:semiorthogonal-semirigid}, \cite[Proposition 2.2.1.9]{lurie2017higher} equips $\sA^{\A^1}$ with a unique monoidal structure such that the projection $\sA^\Gm_{\le 0} \to \sA^{\A^1}$ is monoidal. 
\end{proof}

\subsubsection{} Define $\sJ_c = \sH_c(a)^{\A^1}$. By \Cref{constr:monoidal-structure-A1-invts} this category acquires a natural monoidal structure. The goal of this section is to show that $\sJ_c$ is equivalent as a monoidal category to Lusztig's asymptotic category as defined in \cite[\S2]{LUSZTIG199785} (see also \cite[\S5]{EliasWilliamson2021}).

\begin{proposition}\label{prop:Hlec-semisimple}
$\sH_{\le c}^{\A^1}$ is semisimple, and its simple objects are indexed by $w \in W_{\le c}$. 
\end{proposition}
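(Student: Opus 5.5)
Using \Cref{prop:A1-action-char} together with \Cref{prop:invts-idempotent}, I will identify $\sH_{\le c}^{\A^1}$ with the Verdier quotient $(\sH_{\le c})^\Gm_{\le 0}/(\sH_{\le c})^\Gm_{\le -1}$. This is the "weight exactly zero" part of the category of graded mixed sheaves on $B\bs G/B$ supported on $W_{\le c}$. The plan is to describe this quotient on explicit generators and then compute its morphism spaces.

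The first step is to find generators. Since $\sH_{\le c}$ is stable under the $\A^1$-action (\Cref{prop:right-cat-A1-equiv}), the weight truncations are t-exact, and $\tau_{\le i}L_w = L_w$ for $i\ge 0$ and $0$ for $i<0$, I expect $(\sH_{\le c})^\Gm_{\le 0}$ to be generated under colimits, limits of truncations and shifts by the twists $L_w\bracket{j}$ with $w \in W_{\le c}$ and $j\le 0$. Here $L_w$ carries its pure weight-zero mixed structure. This is the same reduction used in the proof of \Cref{prop:shifted-A1-action-left-compatible}. In the quotient, the objects $L_w\bracket{j}$ with $j<0$ die, so the images $\ol L_w$ of the $L_w$, for $w\in W_{\le c}$, generate $\sH_{\le c}^{\A^1}$.

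The second step, which is the main one, is to compute $\Hom_{\sH_{\le c}^{\A^1}}(\ol L_w,\ol L_v)$. Because $\sH_{\le c}^\Gm_{\le -1}\subseteq \sH^\Gm_{\le c,\le 0}$ is a colocalization, this Hom equals $\Hom_{\sH^\Gm}(L_w,\tau_{>-1}L_v)$, which is the weight-zero graded piece of $\Hom(L_w,L_v)$. Rather than using a truncation inside a quotient, I would argue directly with the weight filtration. The graded $\Gm$-Hom in $\sH^\Gm$ is $\bigoplus_n \Hom_{\mathrm{mixed}}(L_w, L_v\bracket{n})$. By purity (the decomposition theorem), $\Hom^k(L_w,L_v)$ is pure of weight $k$. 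In degree $k$, the Frobenius eigenvalue of weight $k$ on $\Hom^k$ is exactly cancelled only when the twist is $\bracket{k}$, so the weight-zero part of the Hom complex is concentrated in cohomological degree $0$, where it is $\Hom^0(L_w,L_v)=\delta_{wv}\,\ol{\Q}_\ell$. The piece I expect to be the main obstacle is making this bookkeeping rigorous. One has to check that passing to $\tau_{>-1}$ commutes with the Hom out of the compact object $L_w$, and that extensions between pure objects of different weights contribute nothing in weight zero. For this I would pull back along $\pi^!$ to $\sO_0$ and use the $\A^1$-equivariance of $\sH_{\le c}\to(\sO_0)_{\le c}$ to reduce to Soergel/Beilinson–Ginzburg–Soergel Koszul purity of $\Ext^\bullet(\bigoplus L_w)$: $\Ext^k$ is pure of weight $k$.

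Granting this, $\ol L_w$ for $w\in W_{\le c}$ are pairwise orthogonal objects with endomorphism ring $\ol{\Q}_\ell$ in degree $0$. They are compact, since $\ol L_w$ is the image of a compact object under a colimit-preserving localization whose right adjoint is continuous. They also generate. Hence $\sH_{\le c}^{\A^1}\simeq\bigoplus_{w\in W_{\le c}}\Vect$, which is semisimple with simples indexed by $W_{\le c}$, as claimed.
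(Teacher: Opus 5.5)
\paragraph{Verdict.} There is a genuine gap, and it is not where you placed it. Your morphism computation is essentially right: $\Hom(\ol L_w,\ol L_v[i])\simeq\Hom_{\sH^\Gm}(L_w,L_v[i])$ is the weight-zero part of $\Ext^i(L_w,L_v)$, and this vanishes for $i\neq 0$ by pointwise purity.

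\paragraph{Where the argument fails.} To conclude $\sH_{\le c}^{\A^1}\simeq\bigoplus_{w\in W_{\le c}}\Vect$ you need two facts: that the $\ol L_w$ are compact, and that they generate under colimits. Neither is justified. Pairwise orthogonality alone gives no semisimplicity.
\begin{itemize}
\item \emph{Compactness.} $L_w$ is \emph{not} compact in $\sH_{\le c}$, and hence not in $\sH^\Gm_{\le c,\le 0}$, since compactness there can be tested after forgetting the $\Gm$-structure. Every compact object of $\sH_{\le c}$ is built finitely from the generators $\tau_{\le c}\pi_!P_u$. Since $\Hom^\bullet(\tau_{\le c}\pi_!P_u,L_v)=\Hom^\bullet(P_u,\pi^!L_v)$ is finite-dimensional, every compact object has finite-dimensional total $\Hom^\bullet$ into each $L_v$. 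But $\End^\bullet(L_w)$ is infinite-dimensional. For instance, with $c=\{1\}$ one has $\sH_{\le c}=\sH$, and $L_1$ is the constant sheaf on $pt/B$ with $\End^\bullet(L_1)=H^*(BT)$. So ``$\ol L_w$ is the image of a compact object'' is false.
\item \emph{Generation.} Your generation statement for $\sH^\Gm_{\le c,\le 0}$ uses limits of Postnikov towers. The quotient functor $\sH^\Gm_{\le c,\le 0}\to\sH_{\le c}^{\A^1}$ is only a left adjoint and does not preserve such limits, so colimit-generation by the $\ol L_w$ does not follow. This is exactly the accessible/inaccessible phenomenon of \S\ref{ss:accessible}: $\sH_{\le c}$ already contains nonzero objects right-orthogonal to every $L_w$ (\S\ref{sss:access-vs-all-sl2}). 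You must rule out that something similar survives in weight zero.
\end{itemize}

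\paragraph{The missing idea.} Work with the genuine compact generators $\tau_{\le c}\pi_!P_w$, $w\in W_{\le c}$.
\begin{itemize}
\item As in the proof of \Cref{prop:Jc-char}, $\pi_!P_w\to L_w$ becomes an isomorphism after the weight truncation $\tau_{\ge 0}$. Left-lax equivariance of $\tau_{\le c}$ (\Cref{prop:left-lax-preserve-le-0}) gives the same for $\tau_{\le c}\pi_!P_w\to L_w$. So these objects lie in $\sH^\Gm_{\le c,\le 0}$, and their images in the quotient are the $\ol L_w$.
\item Together with the twists $\bracket{j}$, $j\le 0$, they compactly generate $\sH^\Gm_{\le c,\le 0}$. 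To see this, test against $\pi^!$, using projectivity of $P_w$ and t-exactness of the weight truncations: an object in weights $\le 0$ only has composition factors $L_w\bracket{j}$ with $j\le 0$.
\item Hence the $\ol L_w$ are compact generators of the quotient. With perverse normalizations, $\Hom(\ol L_w,\ol L_v[i])\simeq\Hom_{\sO_0^\Gm}(P_w,\pi^!L_v[i])=\delta_{i0}\delta_{wv}\,k$. This even removes the need for Ext-purity.
\end{itemize}
Your proposed reduction of purity to $\sO_0$ via the $\A^1$-equivariance of $\pi^!$ would in any case need more, since $\pi^!$ is not fully faithful.

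\paragraph{Comparison with the paper.} With this repair, your argument becomes a direct alternative to the paper's proof. The paper uses monadicity of $\sH_{\le c}$ over $(\sO_0)_{\le c}$ and \Cref{prop:left-lax-equiv-monad} to identify $\sH_{\le c}^{\A^1}$ with $(\sO_0)_{\le c}^{\A^1}$; the induced monad is trivial because $C_*(T)^{\A^1}=k$. It then imports semisimplicity, including the generation issue, from \cite[Proposition 2.7.5]{gaitsgory2026excursionalgebra}.
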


\begin{proof}
By \Cref{prop:semiorthogonal-compat-H-O}, $\sH_{\le c}$ is monadic over $(\sO_0)_{\le c}$. The monad is left-lax $\A^1$-equivariant (it suffices to check this after including $(\sO_0)_{\le c}$ into $\sO_0$), so \Cref{prop:left-lax-equiv-monad} shows that $\sH_{\le c}$ admits a unique $\A^1$-action such that the forgetful functor to $(\sO_0)_{\le c}$ is $\A^1$-equivariant. Since $\sH_{\le c}$ is monadic over $\sO$, and both the $\A^1$-action from \S\ref{ss:A1-action-on-subquotient-defn} and this new one have the property that $\oblv$ is strictly $\A^1$-equivariant, it follows from the uniqueness statement of \Cref{prop:left-lax-equiv-monad} that the ``new'' $\A^1$ action agrees with the one from \S\ref{ss:A1-action-on-subquotient-defn}. 

\medskip 

It follows from the characterization of the category of $\A^1$-invariants in \Cref{prop:left-lax-equiv-monad} that $\sH_{\le c}^{\A^1}$ is monadic over $(\sO_0)_{\le c}^{\A^1}$. In fact, because $C_*(T)^{\A^1} = k$, we know that 
\[\sH_{\le c}^{\A^1} \simeq (\sO_0)_{\le c}^{\A^1}.\]
Now the result follows from \cite[Proposition 2.7.5]{gaitsgory2026excursionalgebra}.
\end{proof}

\begin{proposition}\label{prop:Jc-char}
$\sJ_c$ is semisimple, and its simple objects are indexed by $w \in c$. 
\end{proposition}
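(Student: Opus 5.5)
I would deduce this from \Cref{prop:Hlec-semisimple}, applied to both $\sH_{\le c}$ and $\sH_{< c}$, by passing to $\A^1$-invariants in the localization sequence
\[\sH_{< c} \to \sH_{\le c} \to \sH_c.\]
The shift by $a$ in $\sH_c(a)$ is harmless for this purpose. By \S\ref{sss:left-compat}, $\sH_c(a)$ and $\sH_c$ are equivalent as plain categories acted on by $\A^1$, via the action of $\sO(a)$. Hence
\[\sJ_c = \sH_c(a)^{\A^1} \simeq \sH_c^{\A^1}\]
as plain categories, and the statement only concerns the underlying category.

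The first step is to show that taking $\A^1$-invariants preserves the cofiber sequence. By \Cref{prop:right-cat-A1-equiv}, the inclusion $\sH_{<c}\to\sH_{\le c}$ is $\A^1$-equivariant. By \S\ref{ss:A1-action-on-subquotient-defn}, $\sH_c \simeq \sH_{\le c}/\sH_{<c}$ carries the quotient action, so the projection is equivariant. By \Cref{prop:invts-idempotent}, $\sC^{\A^1}$ is the retract of $\sC$ cut out by the idempotent $\act_0$. Equivalently, it is the cofiber
\[\sC^{\A^1} \simeq \cof\bigl(\sC^\Gm_{\le -1} \to \sC^\Gm_{\le 0}\bigr).\]
Either description commutes with colimits in $\A^1$-categories, in particular with cofibers of equivariant fully faithful functors. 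Therefore
\[\sH_c^{\A^1} \simeq \cof\bigl(\sH_{<c}^{\A^1} \to \sH_{\le c}^{\A^1}\bigr).\]
It remains to check that $\sH_{<c}^{\A^1}\to\sH_{\le c}^{\A^1}$ is still fully faithful. It admits a right adjoint obtained by applying invariants to the equivariant right adjoint. Concretely, the inclusion $\sH^\Gm_{<c,\le 0}\subseteq \sH^\Gm_{\le c,\le 0}$ is fully faithful and compatible with the $\le -1$ subcategories.

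The second step is to identify the simples. \Cref{prop:Hlec-semisimple} applies verbatim to $W_{<c}$ in place of $W_{\le c}$, since $W_{<c}$ is also an order ideal. So $\sH_{\le c}^{\A^1}$ is semisimple with simples $L_w$ for $w\in W_{\le c}$, and $\sH_{<c}^{\A^1}$ is the full subcategory generated by the $L_w$ with $w\in W_{<c}$. In a semisimple category, the quotient by the subcategory generated by a subset of the simples is the subcategory generated by the remaining simples. Hence $\sH_c^{\A^1}$ is semisimple, with simples given by the images of $L_w$ for $w\in W_{\le c}\setminus W_{<c}=c$.

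The main obstacle is the compatibility of invariants with the quotient. The inclusion $\sH_c \to \sH_{\le c}$ is only left-lax equivariant, so I would avoid it and work purely with quotients. The cleanest route is through the description in \Cref{prop:A1-action-char}. Here $(\sH_c)^\Gm_{\le 0}$ should be the image of $(\sH_{\le c})^\Gm_{\le 0}$ in $\sH_c^\Gm=\sH_{\le c}^\Gm/\sH_{<c}^\Gm$; this holds because weight truncations preserve $\sH_{<c}^\Gm$ by \Cref{prop:right-cat-A1-equiv}. Then $\sH_c^{\A^1}=(\sH_c)^\Gm_{\le 0}/(\sH_c)^\Gm_{\le -1}$ is an iterated quotient of $\sH^\Gm_{\le c,\le 0}$, and the two quotients can be taken in either order.
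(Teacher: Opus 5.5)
Your argument is correct in substance, but it takes a different route from the paper.

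\textbf{The paper's route.} The paper realizes $\sH_c$ as the left orthogonal of $\sH_{<c}$ inside $\sH_{\le c}$ and works through the inclusion, which is only left-lax equivariant. For $x\in\sH_c^{\A^1}$, it uses the weight triangle $\tau_{\le -1}x\to x\to\gr_0(x)$ in $\sH_{\le c}$ to show $x\simeq\tau_{\not<c}\gr_0(x)$. So every invariant object comes from $\sH_{\le c}^{\A^1}$. It then applies \Cref{prop:Hlec-semisimple} only to $W_{\le c}$. Finally it computes $\Hom(\tau_{\not<c}L_w,\tau_{\not<c}L_v)$ by hand, using the map $\pi_!P_w\to L_w$ and left-lax equivariance of the projectors to get $\tau_{\ge 0}\tau_{\not<c}L_w\simeq L_w$.

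\textbf{Your route, and what each buys.} You use that $(-)^{\A^1}$ is a natural retract of the forgetful functor, hence preserves cofibers. You then apply \Cref{prop:Hlec-semisimple} to both $W_{<c}$ and $W_{\le c}$ and read off the quotient. This is shorter and more formal, and it never touches the left-lax inclusion or the Hom computation. The paper's route buys explicit control of $J_w=\tau_{\not<c}L_w$ inside $\sH_{\le c}$, namely $\tau_{\ge 0}\tau_{\not<c}L_w\simeq L_w$. That is the form in which $J_w$ is used afterwards, in the comparison with Lusztig's truncated convolution and in the computation of the unit.

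\textbf{One justification is wrong.} The inclusion $\iota:\sH_{<c}\hookrightarrow\sH_{\le c}$ has no continuous, strictly equivariant right adjoint to which you could apply invariants. Its right adjoint is in general discontinuous, because compact objects of $\sH_{<c}$ need not be compact in $\sH_{\le c}$. Compare \S\ref{ss:sl2-calculation}: there $\tau_S$ of the compact object $L_1$ has nonzero homology in every degree $\ge 1$, and the same phenomenon occurs in $\sH$ for $SL_2$ with $c=\{1\}$. Moreover, a right adjoint of an equivariant functor is a priori only right-lax equivariant. Your fallback, that $\iota$ is ``compatible with the $\le -1$ subcategories'', is also not enough by itself to make the induced functor on Verdier quotients fully faithful.

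\textbf{How to repair it.} The needed claim still holds, by either of these arguments:
\begin{itemize}
\item Use your own retract description. By naturality of $i$ and $r$, the map $\Hom(x,y)\to\Hom(\iota^{\A^1}x,\iota^{\A^1}y)$ is a retract of $\Hom(ix,iy)\to\Hom(\iota ix,\iota iy)$. The latter is an isomorphism because $\iota$ is fully faithful, so the former is too.
\item Identify invariants with weight-zero parts $\sC^\Gm_{\le 0}\cap\sC^\Gm_{\ge 0}$. These are preserved by $\iota$ because a strictly equivariant functor commutes with weight truncations.
\end{itemize}
In fact full faithfulness is not needed at all. The cofiber in $\dgCat$ of any functor is the right orthogonal of its image. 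So you only need that $L_w$ maps to $L_w$ for $w\in W_{<c}$, and this follows from $i\circ\iota^{\A^1}\simeq\iota\circ i$ together with semisimplicity. Your alternative via iterated quotients of $\sH^\Gm_{\le c,\le 0}$ in the last paragraph is also sound.
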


\begin{proof}
Suppose that $x \in \sH_c^{\A^1}$. Then $x$ is left-lax $\A^1$-equivariant as an object of $\sH_{\le c}(a)$ (for the rest of this proposition we will consider the $a$-shifted $\A^1$-action on $\sH_{\le c}$). Applying the projection onto $\sH_c$ to the exact triangle 
\[\tau_{\le -1}x \to x \to \gr_0(x)\]
we see that $\tau_{\le -1}x$ lies in $\sH_{< c}$ and that $\tau_{\not<c}\gr_0(x) = x$ (here $\tau_{\le -1}$ and $\gr_0$ refer to the weight filtration). In particular, every $x \in \sH_c^{\A^1}$ arises as $\tau_{\not<c}y$ for $y \in \sH_{\le c}^{\A^1}$. By \Cref{prop:Hlec-semisimple}, $\sH_{\le c}^{\A^1}$ is semisimple, with simple objects $L_w$ for $w \in W_{\le c}$. Since $\tau_{\not < c}L_w = 0$ when $w < c$, it remains to show that 
\[\Hom(\tau_{\not < c}L_w, \tau_{\not < c}L_v) = k^{\delta_{wv}}\]
for $w, v \in c$. To see this, note that
\[\Hom_{\sH_c}(\tau_{\not < c}L_w, \tau_{\not < c}L_v) \simeq \Hom_{\sH_{\le c}}(\tau_{\not < c}L_w, L_v) \simeq \Hom_{\sH_{\le c}}(\tau_{\ge 0}\tau_{\not < c}L_w, L_v),\]
so it's enough to show that $\tau_{\ge 0}\tau_{\not < c}L_w \simeq L_w$. Now note that there is a map 
\[\pi_!P_w \to L_w\]
in $\sH$ which induces an isomorphism after applying $\tau_{\ge 0}$ (the weight truncation for $\sH$). By the left-lax equivariance of the projector $\tau_{\le c}$, this implies that the map 
\[\tau_{\le c}\pi_!P_w \to L_w\]
is also an isomorphism after applying $\tau_{\ge 0}$. By the left-lax equivariance of $\tau_{\not < c}$ (viewed as an endofunctor of $\sH_{\le c}$), this implies that 
\[\tau_{\le c}\pi_!P_w \simeq \tau_{\not < c} \tau_{\le c}\pi_!P_w \to \tau_{\not < c}L_w\]
is an isomorphism after applying $\tau_{\ge 0}$. Therefore
\[\tau_{\ge 0}\tau_{\not < c}L_w \simeq \tau_{\ge 0}\tau_{\le c}\pi_!P_w \simeq \tau_{\ge 0}L_w \simeq L_w,\]
as desired. 
\end{proof}

\subsubsection{} In the context of the asymptotic category, we will write $J_w = \tau_{\not<c} L_w$ for the simple object corresponding to $w$. 

\begin{proposition}
$\sJ_c$ is equivalent to Lustzig's category as a monoidal category. 
\end{proposition}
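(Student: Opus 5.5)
We have to show that the monoidal category $\sJ_c = \sH_c(a)^{\A^1}$ of \Cref{constr:monoidal-structure-A1-invts} is monoidally equivalent to Lusztig's asymptotic category. The plan is to pass through the concrete model of \cite[\S5]{EliasWilliamson2021}, which realizes Lusztig's category as a subquotient of the pure part of the Hecke category: pure weight zero semisimple objects $\bigoplus L_w\bracket{j}$, modulo those with $w<c$, with product $L_w\cdot L_v := {}^pH^{a}(L_w*L_v)$ truncated to the cell. Pure weight zero objects have weight-zero perverse cohomology, so here the Tate twist is tied to the cohomological degree. Once we have a fully faithful monoidal functor from this model into $\sJ_c$ that hits every simple $J_w$, \Cref{prop:Jc-char} shows it is an equivalence on underlying semisimple categories.

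\emph{Step 1: the functor.} Take the full subcategory $\sP\subseteq\sH_{\le c}^\Gm$ of direct sums of $L_w$, $w\le c$, each placed in $\Gm$-weight $-a$, i.e.\ in $\sH_{\le c}(a)^\Gm_{\le 0}$. Compose the projection $\sH_{\le c}(a)^\Gm_{\le 0}\to\sH_c(a)^\Gm_{\le 0}$ with the quotient $\sH_c(a)^\Gm_{\le 0}\to \sH_c(a)^\Gm_{\le 0}/\sH_c(a)^\Gm_{\le -1}=\sJ_c$. This sends $L_w\mapsto J_w$ for $w\in c$ and kills $L_w$ for $w<c$. Full faithfulness on the relevant objects is exactly the computation $\Hom(J_w,J_v)=k^{\delta_{wv}}$ from \Cref{prop:Jc-char}.

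\emph{Step 2: the monoidal comparison.} The monoidal structure on $\sJ_c$ comes from the one on $\sH_c(a)^\Gm_{\le 0}$, which exists by \Cref{prop:shifted-A1-action-left-compatible}. So $J_w*J_v$ is the image in the quotient by weights $\le -1$ of $\tau_{\not<c}(L_w*L_v)$ with shifted weights. By the proof of \Cref{prop:shifted-A1-action-left-compatible}, Steps 1--2 there, $L_w*L_v$ is pure; modulo $\sH_{<c}$ its perverse cohomology in degrees $>a$ vanishes. Under the $a$-shift, the weight-$0$ graded piece ${\gr_0}$ of $L_w*L_v$ modulo $\sH_{<c}$ is then precisely ${}^pH^{a}(L_w*L_v)$ restricted to cell $c$. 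The parts in degrees $<a$ have weight $<0$ after the shift and are killed, while the degrees $>a$ lie in $\sH_{<c}$. This recovers Lusztig's multiplicities $n_u$, and equally the unit computation, which matches the unit through the Duflo involutions.

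\emph{Main obstacle: higher coherences.} Matching products on objects is easy; the difficulty is producing a monoidal structure on the functor with associators matching Lusztig's. Lusztig's associator is defined by the perverse-degree-$2a$ part of the decomposition of triple products. I would take $\sP$ with the monoidal structure ``$*$ followed by $\gr_0$'' induced from the localization $\sH_{\le c}(a)^\Gm_{\le 0}\to\sJ_c$. This localization is monoidal by \Cref{constr:monoidal-structure-A1-invts} and \cite[Proposition 2.2.1.9]{lurie2017higher}. I would then show that Elias--Williamson's construction is exactly this truncated monoidal structure, namely the perverse truncation of pure complexes, which coincides with the weight truncation on pure objects. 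The remaining point is that the truncation maps $x*y\to\gr_0(x*y)$ are functorial and compatible with associativity, which follows from uniqueness of the monoidal structure on the localization.
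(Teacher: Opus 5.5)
Your proposal is essentially the paper's argument: unwinding \Cref{constr:monoidal-structure-A1-invts} gives $J_w*J_v$ as the weight truncation $\tau_{\ge a}\tau_{\not<c}(L_w*L_v)$, which purity identifies with ${}^pH^{a}$, i.e.\ Lusztig's truncated convolution; the one difference is that the paper never matches units through Duflo involutions, but instead observes via \cite[Corollary 5.4.4.7]{lurie2017higher} that a non-unital monoidal equivalence suffices. One small correction: $\sH_{\le c}(a)^\Gm_{\le 0}$ is not closed under convolution, since one only knows $\tau_{>a}(x*y)\in\sH_{<c}^\Gm$ (e.g.\ for $SL_2$ and $c=\{1\}$, $L_s*L_s$ has a part of weight $1$), so the monoidal localization to invoke is $\sH_c(a)^\Gm_{\le 0}\to\sJ_c$ precomposed with the monoidal projection $\sH_{\le c}^\Gm\to\sH_c^\Gm$, exactly as in your Step 1.
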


\begin{proof}
By \cite[Corollary 5.4.4.7]{lurie2017higher}, it suffices to construct an equivalence of non-unital monoidal categories between our $\sJ_c$ and Lusztig's category (indeed, every non-unital equivalence between unital monoidal categories is automatically quasi-unital). Unwinding the definition of the monoidal structure from \Cref{constr:monoidal-structure-A1-invts}, we see that 
\[J_w * J_v = \tau_{\ge a}\tau_{\not<c}L_w * L_v\]
By purity, this agrees with 
\[H^a(\tau_{\not<c}L_w * L_v),\]
which is Lusztig's definition of truncated convolution.
\end{proof}

\subsubsection{} Recall (see \cite[\S6]{Curtis1988}) that an element $d$ in a cell $c$ is said to be a \textit{Duflo involution} if 
\[a(d) = \ell(d) - 2\delta(d)\]
(in \cite[\S1]{LUSZTIG1987536} these elements are called ``distinguished involutions''). This numerical equality implies, among other things, that $d$ has order 2 (see \cite[Proposition 1.4(a)]{LUSZTIG1987536}, \cite[Theorem 6.8(iii)]{Curtis1988}). 

\begin{proposition}\label{prop:jc-unit}
The unit of $\sJ_c$ is 
\[\bigoplus_{d \in \sD_c} J_d[-a]\]
where $\sD_c$ is the set of Duflo involutions in the two-sided cell $c$. 
\end{proposition}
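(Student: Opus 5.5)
The plan is to compute the unit of $\sJ_c$ directly from \Cref{constr:monoidal-structure-A1-invts}. The monoidal structure on $\sJ_c=\sH_c(a)^{\A^1}$ is characterized by the requirement that the projection $\sH_c(a)^\Gm_{\le 0}\to\sJ_c$ be monoidal. Hence the unit of $\sJ_c$ is the image of the unit of $\sH_c^\Gm$; by \Cref{prop:shifted-A1-action-left-compatible} that unit lies in $\sH_c(a)^\Gm_{\le 0}$. Concretely, the unit is $\gr_0$, for the $a$-shifted weight filtration, of $1_c=\tau_{\not<c}\tau_{\le c}\delta$, where $\delta=L_1$ is the monoidal unit of $\sH$. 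By \Cref{prop:Jc-char}, $\sJ_c$ is semisimple with simples $J_w$ for $w\in c$. It therefore suffices to compute the graded multiplicity space $\Hom_{\sJ_c}(\gr_0 1_c, J_w)$ for each $w\in c$.

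First, I would reduce this $\Hom$ to a stalk computation, following the proof of \Cref{prop:Jc-char}:
\[
\Hom_{\sJ_c}(\gr_0 1_c,J_w)\simeq \Hom_{\sH_{\le c}}(\tau_{\ge -a}1_{\le c},L_w)\simeq \Hom_{\sH_{\le c}}(1_{\le c},L_w)
\]
in the appropriate weight-graded sense. The first isomorphism comes from adjunction for the projections and from $\tau_{\ge 0}\tau_{\not<c}L_w\simeq L_w$. The second uses that $L_w$ is pure of weight $0$, whose image in the shifted grading has weight $-a$, so only the weight-$(-a)$ part of $1_{\le c}$ contributes. Next I would pass to $(\sO_0)_{\le c}$ exactly as in the first half of the proof of \Cref{prop:shifted-A1-action-left-compatible}. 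This turns $\Hom(1_{\le c},L_w)$ into $\Hom_{\Sh(*/N)^\Gm}(\triv, i^!L_w)$ restricted to the weight-$(-a)$ piece. By the Kazhdan--Lusztig formula already quoted there,
\[
i^!L_w\simeq\bigoplus_n \triv^{\oplus p_{w,n}}[2n-\ell(w)](n-\ell(w)/2).
\]

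The key step is to read off the weight-$(-a)$ part, i.e.\ the terms with $2n-\ell(w)=-a$. The inequality $2\delta(w)-\ell(w)\le -a(w)=-a$ for $w\in c$ shows that such a term exists only when $n=\delta(w)$ and $2\delta(w)-\ell(w)=-a$, which is exactly the Duflo condition. In that case the leading coefficient $p_{w,\delta(w)}$ of $P_{1,w}$ is $1$ for distinguished involutions, by \cite[\S1]{LUSZTIG1987536} and \cite[\S6]{Curtis1988}. The contribution then sits in cohomological degree $2\delta(w)-\ell(w)=-a$, which produces the shift $[-a]$. The result is that $\Hom(\gr_0 1_c,J_w)$ is one-dimensional and concentrated in the degree matching $J_d[-a]$ when $w=d\in\sD_c$, and vanishes otherwise. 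Semisimplicity of $\sJ_c$ then gives $\gr_0 1_c\simeq\bigoplus_{d\in\sD_c}J_d[-a]$.

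I expect the main obstacle to be the first reduction: making rigorous that, for pure $L_w$, maps out of the unit of $\sJ_c$ are computed by the weight-exactly-$(-a)$ part of $\Hom(1_{\le c},L_w)$ in $\sH_{\le c}^\Gm$. This requires care with the left-lax equivariance of $\sH_c\to\sH_{\le c}$, with the vanishing of contributions from $\tau_{\le -a-1}$, and with the fact that $\sH_{<c}$-parts do not contribute because $w\in c$. The shift conventions (homological versus cohomological, and the twist $\bracket{\cdot}$) also need to be tracked so that the sign in $[-a]$ comes out correctly. I would handle these issues by mimicking the chain of isomorphisms already used in \Cref{prop:Jc-char}.
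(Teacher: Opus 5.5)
Your proposal is correct and is essentially the paper's own (second) argument. The paper also identifies the unit with $\gr_0(1_c)$ and reads off the multiplicities of the weight-$(-a)$ piece of $1_{\le c}$ from the costalks $i^!L_w$. It uses the same three inputs: Kazhdan--Lusztig polynomials, the inequality $2\delta(w)-\ell(w)\le -a$, and $p_{d,\delta(d)}=1$ for Duflo involutions.

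The first reduction, which you flag as the main obstacle, is handled in the paper without computing $\Hom$ in $\sH_c$ at all. Since the projection $\sH_{\le c}\to\sH_c$ is strictly $\A^1$-equivariant, $\gr_0(1_c)\simeq\tau_{\not<c}\tau_{\ge -a}1_{\le c}$. Moreover, $\tau_{\ge -a}1_{\le c}$ is pure of weight $-a$, so it is a sum of (twists and shifts of) $L_w$'s by semisimplicity of $\sH_{\le c}^{\A^1}$.
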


\begin{proof}
This follows immediately from \Cref{prop:Jc-char} and Lusztig's description of the unit of $\sJ_c$ (see \cite{LUSZTIG199785}). However, for completeness, we will give another proof of this, using the the fact that the unit of $\sJ_c$ is $\gr_0(1_c)$. Because the projection $\sH_{\le c} \to \sH_c$ is $\A^1$-equivariant, we need to show that 
\[\tau_{\not < c}\tau_{\ge -a} 1_{\le c} \simeq \bigoplus_{d \in \sD_c} L_d.\]
Arguing as in \Cref{prop:shifted-A1-action-left-compatible}, we see that 
\[\tau_{\not < c}\tau_{\ge -a} 1_{\le c} \simeq \bigoplus_{\substack{w \in c \\ 2\delta(w) - \ell(w) = -a}} L_w^{\oplus p_{w, \delta(w)}} \]
But Duflo involutions are definitionally elements of $c$ such that $\ell(w) - 2\delta(w) = a$, and it is well-known that for $d$ a Duflo involution, $p_{d, \delta(d)} = 1$ (see \cite[Proposition 1.4(a)]{LUSZTIG1987536} or \cite[Theorem 6.8(iii)]{Curtis1988}).
\end{proof}

\subsection{A variant: accessible sheaves}\label{ss:accessible}

\subsubsection{} Let $\sH_{\le c}^\access$ denote the full subcategory of $\sH_{\le c}$ generated under colimits and shifts by the irreducible objects $L_w$ with $w \le c$. 

\begin{proposition}
The category $\sH_{\le c}^\access$ is generated by objects which are compact in $\sH_{\le c}$. (In fact, compact in $\sH$.)
\end{proposition}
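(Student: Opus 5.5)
The most naive approach fails: $L_w$ itself is not compact in $\sH$. Compact objects of $\Sh(B\bs G/B)$ are constructible sheaves that are bounded in both directions, and because of the $B$-equivariance the constructible objects are not all compact; for instance $\Hom(1, 1) = C^*(BT)$ is unbounded. So I will instead exhibit, for each $w \le c$, a compact object of $\sH$ lying in $\sH_{\le c}^\access$, and show that these objects generate it. The natural candidates are the $\pi_!$-images used earlier, adapted so that they lie in $\sH_{\le c}$. Concretely, I take $\pi_!$ of a compact object of $\sO_0$ whose composition factors are all $L_v$ with $v \le c$. The key such object is $\pi_! L_w$ for $w \le c$, where $L_w$ is regarded in $\sO_0 = \Sh(N\bs G/B)$.

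The first step is compactness. In $\sO_0$ the object $L_w$ is a finite-length object of the heart of a category equivalent to modules over a finite-dimensional algebra of finite global dimension (the principal block). Hence $L_w$ has a finite projective resolution and is compact in $\sO_0$. The functor $\pi_!$ is left adjoint to $\pi^!$, which is continuous, so $\pi_!$ preserves compact objects. Therefore $\pi_! L_w$ is compact in $\sH$, and a fortiori in $\sH_{\le c}$.

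The second step is to show that $\pi_! L_w \in \sH_{\le c}^\access$. Since $\pi$ is a $T$-torsor, $\pi_! \pi^* L_w \simeq C_*(T)\otimes L_w$ up to shift. Here I use the fact that $L_w \in \sO_0$ is the $\pi^*$ of the corresponding $L_w\in\sH$ up to shift. Now $C_*(T)$ is a finite complex of finite-dimensional vector spaces, so $\pi_! L_w$ is a finite extension of shifts of $L_w \in \sH$. In particular it lies in the subcategory generated by $L_w$ under finite colimits and shifts, and so it lies in $\sH_{\le c}^\access$.

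The third step, which I expect to be the main obstacle, is generation: I must show that the objects $\pi_! L_w$ generate $\sH_{\le c}^\access$ under colimits. It suffices to recover each $L_w$. I will use the $\pi^!$-$\pi_!$ adjunction, or equivalently write $L_w \in \sH$ as the colimit of the bar construction for the monad $\pi_!\pi^!$ (or $\pi_!\pi^*$) coming from $T$-equivariance. Since $L_w$ is $T$-equivariant, its equivariant version is computed by the Koszul-dual resolution: we have $L_w \simeq \colim$ of a simplicial object whose terms are $\pi_!\pi^*$ iterated applied to $L_w$, that is, $C_*(T)^{\otimes n}\otimes L_w$. This is the standard presentation of an object in $\Sh(\sY/T)$ as the geometric realization of induced objects from $\Sh(\sY)$, valid because $\pi^!$ is conservative and continuous. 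Every term of this realization is a finite extension of copies of $\pi_!L_w$ pushed forward from $\sO_0$, so it lies in the colimit-closure of the $\pi_!L_w$. Taking the colimit places $L_w$ there too, which proves the proposition, including the parenthetical claim of compactness in $\sH$.
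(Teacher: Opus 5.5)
Your proof is correct and is essentially the paper's argument. The paper also writes $L_w \simeq \colim_{\Delta^\op}(\pi_!\pi^!)^\sbullet L_w$ and observes that every term is compact in $\sH$ and a finite extension of shifts of $L_w$; you instead single out $\pi_!L_w$ (essentially the first bar term) as the generator and check that the other bar terms are built from it (one small correction: $\pi_!\pi^*L_w$ is not literally $C_*(T)\otimes L_w$ in $\sH$, since that would make the non-compact $L_w$ a retract of the compact object $\pi_!L_w$; rather, $\pi_!$ of the constant sheaf is pulled back from $BT$, where it is a non-split finite iterated extension of shifted constant sheaves, and this gives exactly the ``finite extension of shifts of $L_w$'' that you actually use).
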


\begin{proof}
By \Cref{prop:semiorthogonal-compat-H-O}, we may write
\[L_w \simeq \colim_{\Delta^\op} (\pi_!\pi^!)^\sbullet L_w,\]
where each term in the geometric realization lies in $\sH_{\le c}$. Each term in the geometric realization is compact in $\sH_{\le c}$ by virtue of being compact in $\sH$, and each term is a finite extension of shifts of $L_w$, so it lies in $\sH_{\le c}^\access$.
\end{proof}

\subsubsection{} It follows that we obtain a semiorthogonal decomposition 
\[\sH_{\le c}^\access \rightleftarrows \sH_{\le c} \rightleftarrows \sH_{\le c}^\inaccess\]
where $\sH_{\le c}^\inaccess$ is by definition the right orthogonal of $\sH_{\le c}^\access$ in $\sH_{\le c}$.  

\subsubsection{Example}\label{sss:access-vs-all-sl2} Let's consider the case when $G = SL_2$ and $c = {s} \subseteq W$. For simplicity, we will consider the category $(\sO_0)_{\le c}^\access$, which is defined similarly to $\sH_{\le c}^\access$. In this case, $(\sO_0)_{\le c}$ identifies with the category denoted $\QLisse(\bP^1)$ in \cite{arinkin2022stacklocalsystemsrestricted}, which is the full subcategory of the unbounded derived category of sheaves consisting of objects whose perverse cohomology groups are (filtered colimits of) lisse sheaves. Infamously, this category is not the derived category of its heart. In fact something more is true, which is that this category is not even generated under colimits by objects in its heart (see \textit{loc. cit.} \S E.2.6). 

The category $(\sO_0)_{\le c}^\access$ is the category denoted in \textit{loc. cit.} by $\mathrm{IndLisse}(\bP^1)$. It is exactly the full subcategory of $\QLisse(\bP^1)$ generated under colimits and shifts by objects in the heart. Here is an example of a sheaf $\sF$ which lies in $(\sO_0)_{\le c}^\inaccess$: consider an object such that 
\[H^p_i(\sF) = L_s\]
for all $i$, and with the property that the map
\[L_s[i] \to \tau_{\le i}\sF \totext{\p} \tau_{> i}\sF[1] \to L_s[i + 2]\]
is a nonzero element in $H^2(\bP^1)$ for all $i$. Then 
\[\Hom(L_s, \sF) = 0\]
so $\sF$ lies in $(\sO_0)_{\le c}^\inaccess$.

\begin{proposition}\label{prop:convolution-cpt-constr}
Suppose that $\sF$ is a compact object of $\Sh(B\bs G /B)$ and $\sG$ is constructible. Then $\sF * \sG$ and $\sG * \sF$ are both compact. 
\end{proposition}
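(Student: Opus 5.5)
The plan is to reduce to generators and then use a characterization of compactness that convolution visibly preserves. In $\Sh(B\bs G/B)$ the compact objects are the objects generated under finite colimits, shifts and retracts by the $\pi_!P_w$. Equivalently, they are the constructible objects that are also $!$-extended from a quasi-compact open substack along the way to $*/B$; more invariantly, they are the constructible $\sF$ for which $\pi^!\sF$ is compact in $\sO_0$, that is, has finite projective dimension.

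First, since convolution preserves finite colimits, shifts and retracts in each variable, I may assume $\sF = \pi_!P_w$. The object $\sG$ is constructible and therefore bounded with finitely many simple constituents. Any nonzero bounded constructible object is a finite extension of shifts of the $L_v$, so I may also assume $\sG = L_v$.

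Next I rewrite $\pi_!P_w * L_v$. Convolution on $B\bs G/B$ is compatible with the left $B$-action, and $\pi_!$ is induced from the $N$-equivariant category, so base change gives
\[\pi_!P_w * L_v \simeq \pi_!(P_w * L_v),\]
where $P_w * L_v$ now denotes the convolution action of $\sH$ on $\sO_0 = \Sh(N\bs G/B)$. Since $\pi_!$ preserves compact objects (its right adjoint $\pi^!$ is continuous), it suffices to show that $P_w * L_v$ is compact in $\sO_0$. The right action of $\sH$ on $\sO_0$ by a constructible object $L_v$ has a right adjoint given by convolution with the dual of $L_v$: this follows from rigidity of constructible objects in $\sH$, using semi-rigidity together with the fact (used in \Cref{prop:cells-ideals}) that $L_v$ is left and right dualizable. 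That right adjoint is continuous, so $(-)*L_v$ preserves compact objects of $\sO_0$, and $P_w * L_v$ is compact. The case $L_v * \pi_!P_w$ is handled symmetrically. One can also use the anti-involution of $B\bs G/B$ induced by $g\mapsto g^{-1}$, which swaps the order of convolution and preserves both compactness and constructibility.

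The main obstacle is the base-change identification $\pi_!P_w * L_v \simeq \pi_!(P_w*L_v)$, and in particular whether $(-)*L_v$ on $\sO_0$ really has a continuous right adjoint. If the duality argument is awkward, I would fall back on the proper–smooth description. Convolution with $L_v$ is $m_!(\,\cdot\,\boxtimes L_v)$ along $N\bs G\times^B G/B\to N\bs G/B$. Because the relevant Schubert variety is proper, this $!$-pushforward agrees with $*$-pushforward, so its right adjoint $p_*m^!$-type functor commutes with colimits, which is exactly the continuity needed.
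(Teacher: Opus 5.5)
Your argument is correct, but it follows a different route from the paper.

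\textbf{The paper's route.} The paper argues geometrically. It takes $\sF=\pi_!\sF_0$ with $\sF_0\in\Sh(G/B)$. Base change then writes $\sF*\sG$ as a pushforward from the scheme $G\times^B G/B$: proper multiplication followed by $!$-pushforward to $B\bs G/B$, and both preserve compact objects. It only remains to check that the pullback of $\sF_0\boxtimes\sG$ to $G\times^B G/B$ is constructible, which on a scheme is the same as compact.

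\textbf{Your route.} You use the formal fact that a dualizable object acts on any module category by a functor whose right adjoint is convolution with its dual. That right adjoint is continuous, so the action preserves compact objects.

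\textbf{Comments on your version.}
\begin{enumerate}
\item The passage through $\sO_0$ is unnecessary, and so is the identification $\pi_!P_w*L_v\simeq\pi_!(P_w*L_v)$ that you call the main obstacle. The same adjunction applies to $\sH$ acting on itself. So $(-)*L_v\colon\sH\to\sH$ has a continuous right adjoint, namely convolution with the dual of $L_v$, and $\pi_!P_w*L_v$ is compact at once. The same holds for $L_v*(-)$.
\item All the content of your proof sits in the dualizability of constructible objects of $\sH$. The paper asserts this in \Cref{prop:cells-ideals}, but it does not follow from semi-rigidity in the paper's sense. The clean reason is that constructible objects lie in the thick subcategory generated by the standard objects $\Delta_w$. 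These are invertible, and dualizable objects form a thick subcategory.
\item Your geometric fallback also works, but only once you include in the right adjoint the internal Hom out of the pullback of $L_v$. The continuity of that internal Hom is exactly where constructibility of $L_v$ enters; properness of $m$ is not the point.
\item Delete your ``equivalent'' descriptions of compact objects; they are false, though you never use them. Since $\sO_0$ has finite global dimension, $\pi^!\sF$ is compact for every constructible $\sF$. Yet the unit $L_1$ is constructible and is not compact in $\sH$, as the paper itself notes.
\end{enumerate}

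\textbf{Comparison.} Your route is shorter and more general: it works for any $\sH$-module and any dualizable $\sG$. The paper's route is self-contained and uses only that compact equals constructible on schemes.
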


\begin{proof}
We may assume that $\sF$ is of the form $\pi_! \sF_0$ for some $\sF_0 \in \Sh(G /B)$. By base change for the diagram 
\[
\begin{tikzcd}
G \times^B G/B \arrow[r] \arrow[d] & B \bs G \times^B G / B \arrow[d] \\
G/B \times B \bs G /B \arrow[r] & B \bs G / B \times B \bs G / B
\end{tikzcd}
\]
it suffices to show that the pullback of $\sF_0 \boxtimes \sG$ to $G \times^B G/B$ is compact. Because $G \times^B G/B$ is a scheme, its enough to check this after pulling back along the smooth cover $G \times^B G \to G \times^B G/B$. Then the result follows because the pullback of $\sG$ to $B \bs G$ is compact. 
\end{proof}

\begin{proposition}
$\sH_{\le c}^\access$ is a two-sided monoidal ideal in $\sH_{\le c}$.
\end{proposition}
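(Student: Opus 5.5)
The plan is to reduce to showing that $x * L_w$ and $L_w * x$ lie in $\sH_{\le c}^\access$ for every generator $x$ of $\sH_{\le c}$ and every $w \le c$, and then to bootstrap. Here $\sH_{\le c}$ carries the monoidal structure from \S\ref{ss:defn-Hc}, where the projection $\sH \to \sH_{\le c}$ is monoidal. Since $\sH_{\le c}^\access$ is generated under colimits and shifts by the $L_w$ with $w \le c$, and convolution on $\sH_{\le c}$ commutes with colimits in each variable separately, it suffices to show that $\sF * L_w$ and $L_w * \sF$ lie in $\sH_{\le c}^\access$ for every $\sF$ in a set of generators of $\sH_{\le c}$.

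For the generators I take the projections $\tau_{\le c}\pi_!P_v$. These generate $\sH_{\le c}$ because $\sH$ is generated by the compact objects $\pi_!P_v$, and $\tau_{\le c}$ is essentially surjective and continuous. Because the projection is monoidal, and because $L_w$ already lies in $\sH_{\le c}$ (so that $\tau_{\le c}L_w \simeq L_w$), we get
\[\tau_{\le c}\pi_!P_v * L_w \simeq \tau_{\le c}(\pi_!P_v * L_w).\]
The object $\pi_!P_v * L_w$ is computed in $\sH$. By \Cref{prop:cells-ideals} it lies in the two-sided ideal $\sH_{\le c}$, so the projection $\tau_{\le c}$ does nothing to it. Moreover, by \Cref{prop:convolution-cpt-constr} it is compact in $\sH$, since $\pi_!P_v$ is compact and $L_w$ is constructible. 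The same reasoning applies to $L_w * \tau_{\le c}\pi_!P_v$.

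This reduces the statement to the following key claim: every object of $\sH_{\le c}$ that is compact in $\sH$ lies in $\sH_{\le c}^\access$. To prove it, let $y$ be such an object. Compactness in $\sH$ should give that $y$ is bounded below for the perverse t-structure, since compact objects are finite colimits of the objects $\pi_!P_v$, each of which is bounded. A bounded-below object of $\sH_{\le c}$ lies in $\sH_{\le c}^\access$, by the remark in \S\ref{sss:intro-cells}. To justify that remark directly, I would write $y \simeq \colim_n \tau^p_{\le n} y$; this colimit converges because the perverse t-structure on $\sH$ is compatible with filtered colimits. Each term $\tau^p_{\le n} y$ is a finite extension of its perverse cohomologies, and each perverse cohomology is a filtered colimit of finite extensions of the $L_u$ with $u \le c$. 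All of these are in $\sH_{\le c}^\access$, and that subcategory is closed under colimits, so $y$ is too.

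The main obstacle is the boundedness step. It must be checked that compactness in $\sH$, which involves the non-proper stack $B\bs G/B$ and the $C_*(T)$-torsion phenomena illustrated in \S\ref{sss:access-vs-all-sl2}, really gives perverse boundedness below with the right convention. The inaccessible example in \S\ref{sss:access-vs-all-sl2} is unbounded in exactly the dangerous direction, so I would verify this carefully by pulling back along $\pi^!$ to $\sO_0$. There $\pi^!\pi_!P_v$ is a finite complex of projectives, so it has bounded perverse amplitude, and the bound transfers back to $\sH$ because $\pi^!$ is t-exact and conservative.
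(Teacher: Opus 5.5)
Your proposal is correct and follows the paper's proof. You reduce to the generators $\tau_{\le c}\pi_!P_w$ of $\sH_{\le c}$ and $L_v$ of $\sH_{\le c}^{\access}$, identify their convolution with $\pi_!P_w * L_v$ computed in $\sH$, and conclude that this object lies in $\sH_{\le c}^{\access}$. The only difference is that the paper observes directly that $\pi_!P_w * L_v$ is constructible, being a convolution of constructible objects, so it is a finite extension of shifts of $L_u$ with $u \le c$; this makes your detour through compactness and your boundedness-convention worry unnecessary.
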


\begin{proof}
It suffices to check that $x * y$ and $y * x$ lie in $\sH_{\le c}^\access$, where $x$ is a generator of $\sH_{\le c}$ and $y$ is a generator of $\sH_{\le c}^\access$. In particular, we may take $x = \tau_{\le c}\pi_!P_w$ for $w \le c$ and take $y = L_v$ for $v \le c$. Then we note that 
\[x * y = \pi_!P_w * L_v\]
is constructible, and similarly for $y * x$.  
\end{proof}

\subsubsection{} Now define 
\[\sH_c^\access = \sH_c \underset{\sH_{\le c}}{\otimes} \sH_{\le c}^\access.\]
This is equivalently the full subcategory of $\sH_c^\access$ generated by $\tau_c(L_w)$ for $w \in c$.

\section{A categorification of Lusztig's homomorphism}

\subsubsection{} Recall from \S\ref{sss:intro-lusztig-hom} the Lusztig's homomorphism
\[\psi_c: H \to J_c[t^{\pm 1}]\]
such that 
\[\psi_c(b_w) = \sum_{\substack{z \in c \\  d \in \sD_c}} h_{w, d, v}(t)j_v.\]
The fact that this map defines an algebra homomorphism is highly non-obvious, and in fact the following example shows that the map $\psi_c$ cannot be naively categorified to produce a monoidal functor from $\sH$ to $\sJ_c$.

\medskip 

Let $G$ be $SL_3$ and let $s$ and $t$ denote the simple reflections of the Weyl group. Take $c$ to be the two-sided cell $\set{s, t, st, ts}$. Then 
\[\psi_c(b_s) = j_{st} - (t + t^{-1})j_s.\]
However, the object $L_s \in \sH$ is self-dual, while the object 
\[J_{st} \oplus J_s\bracket{-1}[1] \oplus J_s\bracket{1}[-1] \in \sJ_c^\Gm\]
is not self-dual (because $J_{st}^\vee = J_{ts}$), so there cannot be a monoidal functor from $\sH$ to $\sJ_c^\Gm$ sending $L_s$ to $J_{st} \oplus J_s\bracket{-1}[1] \oplus J_s\bracket{1}[-1]$. 

\medskip

Nevertheless, in this section, we will show that after performing an additional categorification it is possible to construct an analogue of Lusztig's homomorphism. Namely, we will show that there is a map 
\[\sH\tmod \to \sJ_c\tmod.\]
In \S\ref{s:6}, we will apply trace of Frobenius to this map to recover the classification of unipotent representations.

\medskip 

The outline of this section is as follows: 
\begin{itemize}
    \item In \S\ref{ss:5-morita} we will construct a monoidal category $\sB_c$ which is Morita-equivalent to $\sH_c^\access$. 
    \item In \S\ref{ss:A1-action-analysis} we show that the action of $\A^1$ on $\sB_c$ is strictly compatible with the monoidal structure. 
    \item In \S\ref{ss:5-rees} we prove some technical results showing that two \textit{a priori} different monoidal structures on $\sJ_c$ are the same. 
\end{itemize}

\subsection{A monoidal category Morita equivalent to $\sH_c^\access$}\label{ss:5-morita}

\newcommand{\Rees}{\mathrm{Rees}}
\newcommand{\enh}{\mathrm{enh}}

\subsubsection{}\label{sss:algebra-structure-gr0} If $\sA$ is a category acted on by $\A^1$ and $\sA$ has a monoidal structure which is left-compatible with the $\A^1$-action, then we have a diagram 
\[
\begin{tikzcd}
& \sA^\Gm_{\le 0} \arrow[dl] \arrow[dr, "F"]& \\
\sA^\Gm & & \sA^{\A^1}
\end{tikzcd}
\]
where both arrows are monoidal by \Cref{constr:monoidal-structure-A1-invts}. The inclusion of $\sA^{\A^1}$ into $\sA^\Gm$ identifies with 
\[\sA^{\A^1} \totext{F^R} \sA^\Gm_{\le 0} \to \sA^\Gm,\]
so it acquires a lax-monoidal structure. As a result, the unit of $\sA^{\A^1}$, which is by definition $\gr_0(1)$, acquires the structure of an algebra object in $\sA^\Gm$. 

\subsubsection{}\label{sss:A1-action-Bc-defn} Applying the paradigm of \S\ref{sss:algebra-structure-gr0} to the case when $\sA = \sH_c$, we obtain an algebra structure on $\gr_0(1_c)$. The category $\sB_c^\Gm = \gr_0(1_c)\bimod(\sH_c^\Gm)$ is monadic over $\sH_c^\Gm$, and the monad preserves the subcategory $\sH_{c, \le 0}^\Gm$. It follows from \Cref{prop:left-lax-equiv-monad} that $\sB_c = \gr_0(1_c)\bimod(\sH_c)$ admits a unique $\A^1$-action such that 
\[\oblv: \sB_c \to \sH_c\]
is $\A^1$-equivariant. Similar considerations apply to the categories of left- and right-modules for $\gr_0(1_c)$ in $\sH_c$. 

\medskip 

The remainder of the current subsection is devoted to proving that $\sB_c$ is Morita equivalent to $\sH_c^\access$. 

\begin{proposition}\label{prop:Bc-semi-rigid}
The category $\sB_c$ is semi-rigid. 
\end{proposition}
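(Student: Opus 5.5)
We need to show that $\sB_c = \gr_0(1_c)\bimod(\sH_c)$ is dualizable and that its multiplication admits a continuous $(\sB_c,\sB_c)$-bilinear right adjoint. The monoidal structure is relative tensor product over $A := \gr_0(1_c)$: for bimodules $M, N$ we have $M \otimes_A N = |M * A^{*\sbullet} * N|$, the bar construction in $\sH_c$, with unit $A$. We know from \S\ref{ss:defn-Hc} that $\sH_c$ is semi-rigid.

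The plan is to mimic the standard argument that modules for an algebra in a semi-rigid (or rigid) category inherit semi-rigidity.

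\emph{Dualizability.} The category $\sB_c$ is monadic over $\sH_c$ via the continuous monad $A * (-) * A$, and this monad is given by convolution with a compact-type object. Concretely, $\sB_c \simeq (A \otimes A^{\rev})\tmod(\sH_c)$ as a module for the action of $\sH_c \otimes \sH_c^\rev$ on $\sH_c$. Since $\sH_c$ is dualizable and the monad is $\sH_c$-linear on both sides, I would show that $\sB_c$ is a retract of $\sH_c$ in $\dgCat$. The composite $\sB_c \totext{\oblv} \sH_c \totext{\ind} \sB_c$ is $M \mapsto A*M*A$, and $M$ is the geometric realization of the bar resolution. Rather than use a retract, it is cleaner to write
\[\sB_c \simeq \sH_c \underset{\sH_c \otimes \sH_c^\rev}{\otimes}\big(A\tmod^r(\sH_c)\otimes A\tmod^l(\sH_c)\big)\]
and argue as follows. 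Left modules over an algebra in a dualizable $\sH_c$-module, with continuous action, are dualizable: they are the category of modules over a monad that admits a continuous right adjoint, because $A * (-)$ has right adjoint $A^\vee * (-)$ when $A$ is compact. Here $A = \bigoplus_d J_d[-a]$ is compact in $\sH_c$. Then we apply \cite[Prop.~C]{arinkin2022stacklocalsystemsrestricted}-type results on the dualizability of $\sA$-module categories for semi-rigid $\sA$.

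\emph{Multiplication.} The functor $\mult_{\sB}: \sB_c \otimes \sB_c \to \sB_c$ factors as
\[\sB_c \otimes \sB_c \to \sB_c \underset{\sH_c}{\otimes} \sB_c \to \sB_c \underset{\sB_c}{\otimes}\sB_c \simeq \sB_c.\]
For the first arrow, I would use that it is obtained from $\mult_{\sH_c}$ by tensoring with bimodules, so its right adjoint is continuous and bilinear by semi-rigidity of $\sH_c$. The second arrow is the geometric-realization (bar) quotient $M \otimes_{\sH_c} N \mapsto M \otimes_A N$. Its right adjoint is the forgetful functor from $A$-balanced objects, which is monadic via $A$ acting in the middle; it is continuous and $(\sB_c,\sB_c)$-bilinear because it is computed by $\oblv$ to $\sH_c$-objects. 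Bilinearity of a continuous right adjoint to a bilinear functor is then automatic in the semi-rigid setting once continuity is known, as in the proof of \Cref{prop:semirigid-loc-coloc}: we check continuity after applying the conservative continuous forgetful functors to $\sH_c\otimes\sH_c$, where the relevant left adjoint becomes an action map of $\sH_c$ and hence has a continuous right adjoint.

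The main obstacle is the continuity check in this last step. Continuity of $\mult_\sB^R$ after forgetting reduces to continuity of the right adjoint of $X \mapsto A * X * A$-type relative tensor functors, which in turn needs $A$ to be dualizable (compact) in $\sH_c$. I would verify this using \Cref{prop:jc-unit} and the accessible/compactness results of \S\ref{ss:accessible}: $\tau_c L_d$ is the image of compact objects $\tau_{\le c}\pi_!P_d$ up to finite extension.
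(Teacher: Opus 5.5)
\textbf{There is a genuine gap, at exactly the step you flag as the main obstacle.} Your argument needs $A=\gr_0(1_c)$ to be compact in $\sH_c$, both in the dualizability paragraph and in the continuity check.

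\emph{The verification you propose is wrong.} You say $\tau_cL_d$ is a finite extension of images of the compact objects $\tau_{\le c}\pi_!P_d$. This reverses what \S\ref{ss:accessible} shows. There, $\pi_!\pi^!L_d$ is a finite extension of shifts of $L_d$. But $L_d$ itself is only the geometric realization of $(\pi_!\pi^!)^\sbullet L_d$, which is an infinite colimit. $L_d$ is constructible but not compact in $\sH$.

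\emph{The claim itself is false.} $A$ is not compact in $\sH_c$ in general. For $G=SL_2$ and $c=\{1\}$ one has $A=1_c$ and $\sB_c=\sH_c\simeq\QCoh(\Omega\A^1)$, and the monoidal unit there is not compact; the paper points this out in its example in \S\ref{ss:Bc-ren}. The error enters with the parenthetical ``dualizable (compact)''. $\sH_c$ is semi-rigid but not rigid, so dualizable objects need not be compact.

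\textbf{The missing idea:} compactness of $A$ is never needed.

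\emph{Correct reduction.} Your factorization through ``$\sB_c\otimes_{\sH_c}\sB_c$'' does not make sense as written. Since $\sB_c\simeq A\tmod(\sH_c)\otimes_{\sH_c}A\tmod^r(\sH_c)$, it carries no residual $\sH_c$-action. Instead, observe that $\mult_{\sB_c}$ is obtained from the pairing
$A\tmod^r(\sH_c)\otimes A\tmod(\sH_c)\to\sH_c$, $(M,N)\mapsto M*_A N$,
by applying $A\tmod(\sH_c)\otimes_{\sH_c}(-)\otimes_{\sH_c}A\tmod^r(\sH_c)$.

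\emph{What has to be checked.} The pairing sends the compact generators $y*A$ and $A*x$ (with $x,y$ compact in $\sH_c$) to $y*A*x$. So one only needs $y*A*x$ to be compact. Bilinearity of the right adjoint is then automatic from semi-rigidity of $\sH_c$.

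\emph{How the paper gets it.} The paper reduces to $\pi_!P_w*A*\pi_!P_v$. It then uses that $A$ is the image of the constructible object $\bigoplus_d L_d[-a]$, together with \Cref{prop:convolution-cpt-constr}.

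\emph{How your dualizability instinct gets it.} Your idea also works if you use dualizability rather than compactness:
\begin{enumerate}
\item $A$ is left- and right-dualizable in $\sH_c$, because each $L_d$ is dualizable in $\sH$ and $\sH\to\sH_c$ is monoidal.
\item Hence $A*(-)$ and $(-)*A$ have continuous right adjoints (convolution with the duals), so they preserve compact objects.
\item Convolution of compact objects is compact in the semi-rigid $\sH_c$, so $y*(A*x)$ is compact.
\end{enumerate}

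\emph{Dualizability of $\sB_c$.} This needs none of the above. The monad $A*(-)*A$ is continuous, so $\sB_c$ is compactly generated by the objects $A*x*A$ with $x$ compact in $\sH_c$.
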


\begin{proof}
The functor 
\[\mult: \sB_c \otimes \sB_c \to \sB_c\]
is obtained from 
\begin{equation}\label{eq:multiplication-for-bimod-cat}
(-) \underset{\gr_0(1_c)}{*} (-): \gr_0(1_c)\tmod^r(\sH_c) \otimes \gr_0(1_c)\tmod(\sH_c) \to \sH_c    
\end{equation}
by applying 
\[\gr_0(1_c)\tmod \underset{\sH_c}{\otimes} (-) \underset{\sH_c}{\otimes} \gr_0(1_c)\tmod^r(\sH_c),\]
so it suffices to show that (\ref{eq:multiplication-for-bimod-cat}) admits a continuous $\sH_c$-bilinear right adjoint. 

$\gr_0(1_c)\tmod(\sH_c)$ is compactly generated by objects of the form $\gr_0(1_c) * x$, where $x$ is compact in $\sH_c$. Similarly, $\gr_0(1_c)\tmod^r(\sH_c)$ is compactly generated by objects of the form $y * \gr_0(1_c)$, so to show that (\ref{eq:multiplication-for-bimod-cat}) admits a continuous right adjoint it's enough to show that 
\[y * \gr_0(1_c) * x\]
is compact in $\sH_c$. Since $\sH_c$ is compactly generated by objects of the form $\tau_{\le c}\pi_! P_w$ for $w \in W_c$, it's enough to show that 
\[\tau_{\le c}\pi_! P_w * \gr_0(1_c) * \tau_{\le c}\pi_! P_v\]
is compact. Since $\gr_0(1_c)$ already lies in $\sH_{\le c}$, this product identifies with 
\[\pi_! P_w * \gr_0(1_c) * \pi_! P_v.\]
Now compactness follows from by \Cref{prop:convolution-cpt-constr}, which says that the convolution of a constructible object with a compact object of $\sH$ is again compact. Recall (see \S\ref{ss:defn-Hc}) that $\sH_c$ is semi-rigid, and (\ref{eq:multiplication-for-bimod-cat}) is evidently $\sH_c$-bilinear, so we know that the right adjoint of (\ref{eq:multiplication-for-bimod-cat}) is also $\sH_c$-bilinear. 
\end{proof}

\begin{proposition}\label{prop:semi-rigid-duality-left-right-mod}
Suppose that $\sA$ is a monoidal category and $x \in \Alg(\sA)$ is an algebra in $\sA$. Let $\sB$ denote $x\textrm{-bimod}(\sA)$. Then $x\tmod(\sA)$ is left-dualizable as a $(\sB, \sA)$-bimodule, with left dual $x\tmod^r(\sA)$. 
\end{proposition}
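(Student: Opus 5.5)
We write $\sM = x\tmod(\sA)$, the category of left $x$-modules in $\sA$. It is a $(\sB, \sA)$-bimodule: $\sB = x\bimod(\sA)$ acts on the left by $b \cdot m = b \otimes_x m$, and $\sA$ acts on the right by $m \cdot a = m \otimes a$. Similarly $\sN = x\tmod^r(\sA)$ is an $(\sA, \sB)$-bimodule, with $a \cdot n = a \otimes n$ and $n \cdot b = n \otimes_x b$. The plan is to write down explicit unit and counit bimodule functors and verify the two triangle identities directly.

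\textbf{Step 1: the counit.} The candidate counit is the $(\sA,\sA)$-bilinear functor
\[\epsilon: \sN \underset{\sB}{\otimes} \sM \to \sA, \qquad n \otimes m \mapsto n \underset{x}{\otimes} m.\]
The relative tensor product is defined as a geometric realization of $n \otimes x^{\otimes \sbullet} \otimes m$ in $\sA$, which exists because $\sA$ is cocomplete and $\otimes$ preserves colimits in each variable. The assignment $(n,m) \mapsto n\otimes_x m$ is $\sB$-balanced, since
\[(n \otimes_x b)\otimes_x m \simeq n \otimes_x (b \otimes_x m)\]
by associativity of relative tensor products. Hence it descends along $\sN \otimes \sM \to \sN \otimes_\sB \sM$.

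\textbf{Step 2: the unit.} The candidate unit is the $(\sB,\sB)$-bilinear functor $\eta: \sB \to \sM \otimes_\sA \sN$, and the key input is an identification $\sM \otimes_\sA \sN \simeq \sB$. For this I would use the standard equivalences
\[x\tmod(\sA) \simeq x\tmod(\Vect)\otimes \sA\]
in the module-category sense, or more concretely Lurie's result that for a presentable monoidal $\sA$, the category $x\tmod(\sA)\otimes_\sA x\tmod^r(\sA)$ is equivalent to $x\bimod(\sA)$ (cf.\ \cite[\S4.8.4]{lurie2017higher}, where $\mathrm{LMod}_x(\sA)\simeq \sA_x\otimes_\sA \sA$-type formulas appear). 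Concretely, one writes $\sM \simeq x\tmod(\sA)$ as the category of modules over the monad $x\otimes(-)$ on $\sA$, so that tensoring over $\sA$ with $\sN$ produces modules in $\sN$ for the monad $x\otimes(-)$, that is, $x$-bimodules. Under this identification $\eta$ is the identity of $\sB$. It is sent to the object $x \otimes x$, viewed as $x \otimes_\sA x$, which corresponds to the diagonal bimodule $x$.

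\textbf{Step 3: the triangle identities.} The composite
\[\sM \totext{\eta\otimes\id} \sM\otimes_\sA\sN\otimes_\sB\sM \totext{\id\otimes\epsilon} \sM\]
sends $m$ to $x\otimes_x m \simeq m$, and similarly for $\sN$. Both composites are $x \otimes_x (-) \simeq \id$, and these isomorphisms are canonically bilinear.

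\textbf{Main obstacle.} I expect the main difficulty to be Step 2: establishing the equivalence $\sM\otimes_\sA\sN \simeq \sB$ compatibly with the bimodule structures. The plan for this step is to reduce via monadicity to Lurie's tensor product theorem for module categories (\cite[Theorem 4.8.4.6]{lurie2017higher}), which identifies $\mathrm{RMod}_x(\sA)\otimes_\sA \mathrm{LMod}$-type products. The remaining step-1 and step-3 coherences are then formal, because all the functors involved are given by relative tensor products.
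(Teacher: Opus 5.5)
Your proposal is correct and is the same argument as the paper's. The unit is the canonical equivalence $\sB \simeq x\tmod(\sA)\otimes_{\sA} x\tmod^r(\sA)$: you justify it via Lurie's tensor product theorem for module categories, where the paper simply calls it natural. The counit is induced by the relative tensor product $(-)\otimes_x(-)$, and the triangle identities follow by unwinding the definitions.

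One small slip, which plays no role in the argument: the object $x\boxtimes x$ of $x\tmod(\sA)\otimes_{\sA} x\tmod^r(\sA)$ corresponds to the free bimodule $x\otimes x$, not to the diagonal bimodule $x$. The diagonal bimodule instead corresponds to the geometric realization of the bar construction.
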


\begin{proof}
The unit of the duality is the natural isomorphism 
\[\epsilon: \sB \simeq x\tmod(\sA) \underset{\sA}{\otimes} x\tmod^r(\sA).\]
The counit 
\[\eta: x\tmod^r(\sA) \underset{\sB}{\otimes} x\tmod(\sA) \to \sA\]
is the map coming from $(-) \underset{x}{*} (-)$. The triangle identities follow by unwinding the definitions. 
\end{proof}

\begin{proposition}\label{prop:Bc-morita-equiv}
The category $\sB_c$ is Morita equivalent to $\sH_c^\access$. 
\end{proposition}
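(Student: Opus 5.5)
The approach is to use the bimodule $\sM = \gr_0(1_c)\tmod(\sH_c)$ of left $\gr_0(1_c)$-modules, which is a $(\sB_c, \sH_c)$-bimodule. I would show three things: (i) the right $\sH_c$-action on $\sM$ factors through the colocalization $\sH_c \to \sH_c^\access$; (ii) $\sM$ generates $\sH_c^\access$; (iii) $\sM$ implements a Morita equivalence. For (iii), \Cref{prop:semi-rigid-duality-left-right-mod} already identifies $\sM$ as left-dualizable with dual $\sM^\vee = \gr_0(1_c)\tmod^r(\sH_c)$, and it identifies the unit $\sB_c \simeq \sM \otimes_{\sH_c} \sM^\vee$ as an isomorphism. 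What remains is to show that the counit
\[\eta: \sM^\vee \underset{\sB_c}{\otimes} \sM \to \sH_c\]
is fully faithful with essential image $\sH_c^\access$. Granting this, $\sM$ becomes an invertible $(\sB_c, \sH_c^\access)$-bimodule.

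For (i), the plan is to use the fact that $\sH_c^\access \subseteq \sH_c$ is a two-sided ideal and a colocalization. This follows from the ideal property of $\sH_{\le c}^\access$ established above, after tensoring down to $\sH_c$. It then suffices to show that $\sM \otimes_{\sH_c} (-)$ kills $\sH_c^\inaccess$; equivalently, that $\gr_0(1_c)$ itself lies in the accessible part, and hence so do all free modules $\gr_0(1_c) * x$. By \Cref{prop:jc-unit}, $\gr_0(1_c) \simeq \bigoplus_{d\in\sD_c} J_d[-a]$. Each $J_d = \tau_{\not<c}L_d$ is accessible, so every object of the form $\gr_0(1_c) * x$ lies in the ideal $\sH_c^\access$. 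Since $\sM$ is generated by these free modules, this gives (i).

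For (ii) and (iii), the essential image of $\eta$ is the ideal generated by $\gr_0(1_c)$, so I need $L_w$ (or rather $\tau_c L_w$) to lie in the ideal generated by $\bigoplus_d J_d$ for every $w \in c$. This is the categorified statement that Duflo involutions are "units" for the cell: each $w \in c$ lies in a left cell containing a unique Duflo involution $d$, and $L_w$ appears as a summand of $H^a(L_w * L_d)$, i.e. $J_w * J_d \simeq J_w$ in $\sJ_c$. Since $\sJ_c$ is obtained from $\sH_c$ by $\gr_0$, lifting this summand back to $\sH_c$ shows that $\tau_c L_w$ is generated by $L_w * \gr_0(1_c)$ up to weight-filtration pieces. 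Because the weight filtration is exhaustive and each associated graded piece is again a sum of $J_v$'s, an induction along the weight filtration should put $\tau_c L_w$ in the ideal. This gives essential surjectivity onto $\sH_c^\access$.

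Full faithfulness of $\eta$ is the remaining point. I would compute $\sM^\vee \otimes_{\sB_c} \sM$ as modules over $\End_{\sH_c}(\gr_0(1_c))$ in the $\sH_c$-enriched sense, reducing it to the statement that $\gr_0(1_c) * (-) * \gr_0(1_c)$ detects Homs between accessible objects. Here I would use semi-rigidity of $\sH_c$ and \Cref{prop:Bc-semi-rigid} to get continuous bilinear right adjoints, so the check reduces to the compact generators $\tau_c L_w$.

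I expect (ii) to be the main obstacle: showing that $\gr_0(1_c)$ generates $\sH_c^\access$ as a two-sided ideal genuinely inside $\sH_c$, not just at the level of $\sJ_c$. The first thing I would try is to use left-compatibility (\Cref{prop:shifted-A1-action-left-compatible}) together with the unit map $1_c \to \gr_0(1_c)$. If $\tau_{\le -1}1_c * L_w$ can be shown to be built from objects already in the ideal, then $L_w \simeq 1_c * L_w$ follows.
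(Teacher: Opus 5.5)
\textbf{Verdict: genuine gap.} Your overall architecture matches the paper's. You use the bimodule $\sM=\gr_0(1_c)\tmod(\sH_c)$ with dual $\sM^\vee=\gr_0(1_c)\tmod^r(\sH_c)$ from \Cref{prop:semi-rigid-duality-left-right-mod}, note that the unit is invertible, and reduce to showing that the counit $\eta\colon \sM^\vee\otimes_{\sB_c}\sM\to\sH_c$ is fully faithful with image $\sH_c^\access$. Your containment of the image in $\sH_c^\access$, via accessibility of $\gr_0(1_c)$, is also the paper's argument.

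The gap is full faithfulness of $\eta$. This is where the real content lies, and you give no working argument for it. Your reduction to ``the compact generators $\tau_c L_w$'' fails for two reasons:
\begin{enumerate}
\item These objects need not be compact in $\sH_c$. For $SL_2$ and $c=\set{1}$, the object $\tau_c L_1=1_c$ is not compact.
\item Checking that the unit $\id\to\eta^R\eta$ is an isomorphism requires generators of the \emph{source} $\sM^\vee\otimes_{\sB_c}\sM$, together with an explicit description of $\eta^R$, which you do not have.
\end{enumerate}

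The missing idea is purely formal; it is \cite[Theorem 1.3]{ben2020highest}, which the paper invokes. Put $\mathscr{I}=\sM^\vee\otimes_{\sB_c}\sM$.
\begin{enumerate}
\item Since the unit $\sB_c\to\sM\otimes_{\sH_c}\sM^\vee$ is an equivalence, the triangle identities make both $\id_{\mathscr I}\otimes\eta$ and $\eta\otimes\id_{\mathscr I}$ equivalences $\mathscr I\otimes_{\sH_c}\mathscr I\to\mathscr I$.
\item Suppose $\eta$ has a continuous $(\sH_c,\sH_c)$-bilinear right adjoint. Then $\id_{\mathscr I}\otimes(\id\to\eta^R\eta)$ is the unit of the adjunction for the equivalence $\id_{\mathscr I}\otimes\eta$, so it is invertible.
\item The equivalence $\eta\otimes\id_{\mathscr I}$ intertwines $\id_{\mathscr I}\otimes\eta^R\eta$ with $\eta^R\eta$, by the interchange law. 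Transporting along it shows that $\id\to\eta^R\eta$ is itself invertible.
\end{enumerate}
Bilinearity of $\eta^R$ is automatic from semi-rigidity of $\sH_c$. Continuity comes from compactness of $x_0*\gr_0(1_c)*y_0$ for compact $x_0,y_0$, which holds by \Cref{prop:convolution-cpt-constr}, exactly as in \Cref{prop:Bc-semi-rigid}. You also need semi-rigidity of $\sB_c$ to pass from $\sM^\vee\otimes\sM$ to $\sM^\vee\otimes_{\sB_c}\sM$. So you had the right ingredient, continuous bilinear right adjoints, but not the step that converts it into full faithfulness.

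You have also misplaced the difficulty: essential surjectivity onto $\sH_c^\access$ is easy.
\begin{itemize}
\item Every object of $\sJ_c=\sH_c(a)^{\A^1}$ is canonically a module over the unit $\gr_0(1_c)$. The lax monoidal inclusion $\sJ_c\to\sH_c^\Gm$ carries this to a $\gr_0(1_c)$-module structure in $\sH_c$.
\item Hence $J_w$ lies in the image of $\oblv$; concretely, it is a retract of $\gr_0(1_c)*J_w$, so it lies in the image of $\eta$.
\end{itemize}
Your Duflo-involution route also works, but without any induction. By the decomposition theorem, $L_w*L_d$ is a direct sum of shifted simple objects, including $L_w[-a]$. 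Applying the monoidal projection $\sH\to\sH_c$, the object $J_w[-a]$ is a direct summand of $J_w*J_d$, and $J_d[-a]$ is a summand of $\gr_0(1_c)$. You should drop the induction along the weight filtration you sketch. As stated it is circular, because the lower-weight pieces are other $J_v$ not yet known to lie in the ideal.
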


\begin{proof}
\Cref{prop:semi-rigid-duality-left-right-mod} shows that 
\[(\gr_0(1_c)\tmod^r(\sH_c), \gr_0(1_c)\tmod(\sH_c))\] 
is a pair of adjoint functors between $\sH_c\tmod$ and $\sB_c\tmod$. Note that the counit map 
\[\eta: \gr_0(1_c)\tmod^r(\sH_c) \underset{\sB_c}{\otimes} \gr_0(1_c)\tmod(\sH_c) \to \sH_c\]
admits a continuous $(\sH_c, \sH_c)$-bilinear right adjoint: indeed, bilinearity is automatic from the semi-rigidity of $\sH_c$, and because $\sB_c$ is rigid, it suffices to check that the map 
\begin{equation}\label{eq:convolution-map-module-cats}
\gr_0(1_c)\tmod^r(\sH_c) \otimes \gr_0(1_c)\tmod(\sH_c) \to \sH_c
\end{equation}
admits a continuous right adjoint. The category $\gr_0(1_c)\tmod^r(\sH_c)$ is compactly generated by objects of the form $x = x_0 * \gr_0(1_c)$ with $x_0$ compact in $\sH_c$, and similarly for $\gr_0(1_c)\tmod(\sH_c)$. The map (\ref{eq:convolution-map-module-cats}) sends 
\[x \otimes y \mapsto x \underset{\gr_0(1_c)}{*} y,\]
and if $x = x_0 * \gr_0(1_c)$ and $y = \gr_0(1_c) * y_0$, the image is $x_0 * \gr_0(1_c) * y_0$, which is compact in $\sH_c$ by \Cref{prop:convolution-cpt-constr}. 

It follows by \cite[Theorem 1.3]{ben2020highest} (see also \cite[Proposition 4.2.4]{beraldo2024coherentsheavessheareddmodules}) that $\eta$ is fully faithful, so it remains to check that its essential image is $\sH_c^\access$. Because $x_0 * \gr_0(1_c) * y_0$ lies in $\sH_c^\access$, we know that 
\[\gr_0(1_c)\tmod^r(\sH_c) \underset{\sB_c}{\otimes} \gr_0(1_c)\tmod(\sH_c) \subseteq \sH_c^\access\]
Conversely, given any $w \in c$, the irreducible $L_w$ lies in the image of 
\[\oblv: \gr_0(1_c)\tmod(\sH_c) \to \sH_c,\]
so every sheaf in $\sH_c^\access$ lies in the image of $\eta$. 
\end{proof}

\subsection{Analysis of the $\A^1$-action}\label{ss:A1-action-analysis}

\subsubsection{} The goal of this section is to analyze the action of $\A^1$ on $\sB_c$, which was defined in \S\ref{sss:A1-action-Bc-defn}. 

\begin{proposition}\label{prop:Bc-A1-invts-Jc}
\[\sB_c^{\A^1} \simeq \sJ_c.\]
\end{proposition}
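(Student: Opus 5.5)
The plan is to compute $\sB_c^{\A^1}$ using \Cref{prop:left-lax-equiv-monad}. By \S\ref{sss:A1-action-Bc-defn}, the $\A^1$-action on $\sB_c$ is the unique one for which $\oblv: \sB_c \to \sH_c(a)$ is $\A^1$-equivariant. We work throughout with the $a$-shifted action, so that the monad $T = \gr_0(1_c) * (-) * \gr_0(1_c)$ is left-lax $\A^1$-equivariant. This equivariance should follow from the left-compatibility in \Cref{prop:shifted-A1-action-left-compatible}, together with the fact that $\gr_0(1_c)$ is pure of weight $0$ for the shifted action.

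\Cref{prop:left-lax-equiv-monad} then says that $\sB_c^{\A^1}$ is monadic over $\sH_c(a)^{\A^1} = \sJ_c$, with monad $\tau_{\ge 0}T^\Gm$ restricted to weight-zero objects. For $x \in \sJ_c$, viewed in $\sH_c^\Gm$ via the inclusion $F^R$, this monad is
\[x \mapsto \gr_0\bigl(\gr_0(1_c) * x * \gr_0(1_c)\bigr).\]
By the description of the monoidal structure in \Cref{constr:monoidal-structure-A1-invts}, this is the truncated convolution $1_{\sJ_c} *_{\sJ} x *_{\sJ} 1_{\sJ_c}$ computed in $\sJ_c$. Since $\gr_0(1_c)$ is the unit of $\sJ_c$, this is $x$.

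The key step is to identify the monad structure with the identity monad, not just the underlying functor. The unit is $x \to \gr_0(1_c * x * 1_c) \to \gr_0(\gr_0(1_c) * x * \gr_0(1_c))$. Here I would use that the algebra structure on $\gr_0(1_c)$ in \S\ref{sss:algebra-structure-gr0} comes from the lax-monoidal structure on $F^R$. Applying the monoidal functor $F$ (the projection $\sH^\Gm_{c,\le 0} \to \sJ_c$) sends the algebra $\gr_0(1_c)$ to the unit algebra $1_{\sJ_c}$, together with its unit isomorphism. Hence the monad becomes $1_{\sJ_c}\textrm{-bimod}(\sJ_c) \simeq \sJ_c$.

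Concretely, the equivalence $\sB_c^{\A^1} \simeq \gr_0(1_c)\bimod(\sH^\Gm_{c,\le 0})/\gr_0(1_c)\bimod(\sH^\Gm_{c,\le -1})$ should be compared with $F$-images. Since $F$ is monoidal, it induces a functor $\gr_0(1_c)\bimod(\sH^\Gm_{c,\le 0}) \to F(\gr_0(1_c))\bimod(\sJ_c) = \sJ_c$. This functor kills the weight $\le -1$ bimodules, so it factors through $\sB_c^{\A^1}$. It is compatible with the forgetful functors to $\sJ_c$, and the resulting map of monads is an isomorphism by the computation above. Barr--Beck then gives the equivalence.

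I expect the main obstacle to be verifying that $T$ is genuinely left-lax $\A^1$-equivariant, so that \Cref{prop:left-lax-equiv-monad} applies and $\oblv$ on invariants is the naive one. Concretely, one must show that $\gr_0(1_c) * y * \gr_0(1_c)$ lies in weights $\le 0$ for $y$ in weights $\le 0$. This requires care because the unshifted action is not monoidal. I would check it using \Cref{prop:shifted-A1-action-left-compatible} applied twice, noting that $\gr_0(1_c) \in \sH^\Gm_{c,\le 0}$ for the $a$-shifted action.
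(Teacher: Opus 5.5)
Your proposal is correct. Its first half is the paper's argument: \Cref{prop:left-lax-equiv-monad} makes $\sB_c^{\A^1}$ monadic over $\sJ_c$ with monad $x \mapsto \gr_0(\gr_0(1_c) * x * \gr_0(1_c))$, and this functor is isomorphic to the identity because $F$ is monoidal and $\gr_0(1_c)$ is the unit of $\sJ_c$. The left-lax equivariance you flag as the main obstacle is simply asserted in \S\ref{sss:A1-action-Bc-defn}. Your justification via \Cref{prop:shifted-A1-action-left-compatible} and the purity of $\gr_0(1_c)$ is the right one.

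You part ways with the paper in how you upgrade the abstract isomorphism of functors to an isomorphism of monads.

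\textbf{The paper's route.} It cites the general \Cref{lem:algebra-structure-unit} in $\End(\sJ_c)$: an algebra whose underlying object is merely isomorphic to the unit has invertible unit map. The proof is a one-sided-inverse argument plus the $\E_2$-structure on $\End(1)$, and it needs no knowledge of where the isomorphism comes from.

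\textbf{Your route.} You build the comparison functor $M \mapsto F(M)$ from $\gr_0(1_c)$-bimodules in $\sH^\Gm_{c,\le 0}$ to $F(\gr_0(1_c))\bimod(\sJ_c)$. It kills the weight $\le -1$ part, so it descends to $\sB_c^{\A^1}$. Two facts then finish the argument.
\begin{enumerate}
\item The induced map of monads is the monoidality constraint $F(\gr_0(1_c)) * x * F(\gr_0(1_c)) \simeq F(\gr_0(1_c) * F^R x * \gr_0(1_c))$. It is therefore invertible for structural reasons, not just because both sides happen to be isomorphic to $x$.
\item The final identification $F(\gr_0(1_c))\bimod(\sJ_c) \simeq \sJ_c$ uses that the counit $FF^R \to \id$ of the monoidal/lax-monoidal adjunction is an invertible algebra map. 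So $F(\gr_0(1_c)) \simeq 1_{\sJ_c}$ as algebras, and your phrase ``together with its unit isomorphism'' is exactly this point.
\end{enumerate}
This algebra-level isomorphism does the work that \Cref{lem:algebra-structure-unit} does in the paper.

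\textbf{Comparison.} The paper's route is shorter and uses nothing beyond the plain isomorphism of functors. Yours is slightly longer but more explicit: it exhibits the equivalence concretely as $\gr_0 \circ \oblv$, with the inverse coming from the free-bimodule functor. That concreteness is convenient when you later track extra structure, such as summands or tensor products of bimodules.
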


\begin{proof}
\Cref{prop:left-lax-equiv-monad} shows that, $\sB_c^{\A^1}$ is monadic over $\sH_c^{\A^1} = \sJ_c$, and the monad identifies with the zeroth graded piece of 
\[\gr_0(1_c) * (-) * \gr_0(1_c).\]
However, this is just the identity endofunctor of $\sJ_c$, so we see that the equivalence
\[\sB_c^{\A^1} \simeq \sJ_c\]
follows from the following (well-known) \Cref{lem:algebra-structure-unit}, applied to the monoidal category $\sC = \End(\sJ_c)$. 
\end{proof}

\begin{lemma}\label{lem:algebra-structure-unit}
Suppose that $\sC$ is a monoidal category. If $x \in \Alg(\sC)$ is an algebra object such that $x$ is isomorphic to the unit $1$ as a plain object of $\sC$, then the unit map $1 \to x$ is an isomorphism in $\Alg(\sC)$.
\end{lemma}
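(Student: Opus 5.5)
The plan is to reduce the statement to the contractibility of the space of algebra structures on the unit, using that $\Alg(\sC)$ has $1$ as its initial object. Fix the unit map $u: 1 \to x$ of the algebra $x$. The claim is that $u$ is an isomorphism of underlying objects. Since the forgetful functor $\Alg(\sC) \to \sC$ is conservative (see \cite[Lemma 3.2.2.6]{lurie2017higher}), this already implies that $u$ is an isomorphism in $\Alg(\sC)$.

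To see that $u$ is invertible, I would work in the homotopy category and use the given (non-algebra) isomorphism $\phi: x \simeq 1$. Consider the multiplication $m: x \otimes x \to x$ and its unit axiom $m \circ (u \otimes \id_x) = \id_x$. Transporting along $\phi$ turns $x$ into the unit object, so the composite $e := \phi \circ u \in \End(1)$ is an element of the commutative monoid $\End(1)$, and we obtain an endomorphism $\mu = \phi\circ m \circ(\phi^{-1}\otimes\phi^{-1}) \in \End(1\otimes 1)\simeq\End(1)$.

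The unit axiom then gives
\[\mu \cdot e = \id_1\]
in $\End(1)$, using that $u \otimes \id_x$ corresponds to $e \otimes \id_1$, which is multiplication by $e$. Because $\End(1)$ is commutative (Eckmann–Hilton), $e$ is invertible. Hence $\phi \circ u$ is an isomorphism, and so $u$ is an isomorphism.

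The main obstacle is making sure that the identification $1 \otimes 1 \simeq 1$ interacts correctly with $u \otimes \id$. Concretely, one must check that under the unitor, tensoring with $e$ on the left agrees with composition by $e$. This is the standard fact that the two actions of $\End(1)$ coincide, which is also where Eckmann–Hilton enters. Since everything happens in the homotopy category, this is routine, and no higher coherence is needed thanks to conservativity of $\oblv$.
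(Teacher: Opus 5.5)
Your proposal is correct and follows essentially the same route as the paper: reduce to the underlying map in $\sC$, use the given isomorphism to turn the unit axiom $\mult\circ(\unit\otimes\id)=\id$ into a left inverse for $\phi\circ u\in\End(1)$, and conclude from the commutativity of $\End(1)$ coming from its $\E_2$-structure (the paper uses the conjugate element $u\circ\alpha\in\End(x)$ instead of $\phi\circ u$). The reduction to contractibility of algebra structures on $1$ that you announce at the start is never used and can be dropped.
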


\begin{proof}
It suffices to show that the unit map $1 \to x$ is an isomorphism in $\sC$. If we write $\alpha: x \simeq 1$ for the given isomorphism between $x$ and $1$, then it suffices to show that 
\[x \totext{\alpha} 1 \totext{\unit} x\]
is an isomorphism. Because $\End(x) \simeq \End(1)$ is an $\E_2$-algebra, once we know that endomorphism has a left-inverse, it follows that it is invertible. Tensoring $\unit: 1 \to x$ with $\alpha: x \to 1$, we obtain a commutative square 
\[
\begin{tikzcd}
x \arrow[d, "\alpha"'] \arrow[r, "\unit \otimes \id"] & x \otimes x \arrow[d, "\id \otimes \alpha"] \\
1 \arrow[r, "\unit"'] & x
\end{tikzcd}
\]
so the existence of the desired left-inverse follows from the fact that $\mult$ is a left inverse of $\unit \otimes \id$. 
\end{proof}

\subsubsection{} We now wish to show that the $\A^1$ action on $\sB_c$ is strictly compatible with the monoidal structure on $\sB_c$. First, we note that the argument of \Cref{prop:Bc-A1-invts-Jc} also applies to the categories of left- and right-modules for $\gr_0(1_c)$ in $\sH_c$. This implies the following technical result which will be useful shortly: 

\begin{proposition}\label{prop:deg-zero-summand}
Suppose that $x \in \gr_0(1_c)\tmod^r(\sH_c^\Gm)$ is concentrated in a single graded degree. Then any summand of $x$ as a plain object of $\sH_c^\Gm$ is naturally a summand of $x$ as a right $\gr_0(1_c)$-module.
\end{proposition}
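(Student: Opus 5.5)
The plan is to reduce to the right-module analogue of \Cref{prop:Bc-A1-invts-Jc}, as the paragraph preceding the statement suggests. Since the $\Rep(\Gm)$-action on $\gr_0(1_c)\tmod^r(\sH_c^\Gm)$ is by invertible objects $\sO(m)$, I may twist $x$ so that it is concentrated in graded degree $0$. Here "concentrated in a single graded degree" means $x$ lies in the full subcategory $\sH^\Gm_{c,\le 0}\cap \sH^\Gm_{c,>-1}$, which identifies with $\sH_c^{\A^1}=\sJ_c$ via the semiorthogonal decomposition $\sH^\Gm_{c,\le -1}\rightleftarrows \sH^\Gm_{c,\le 0}\rightleftarrows \sH_c^{\A^1}$. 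Likewise, right $\gr_0(1_c)$-modules concentrated in degree $0$ form the invariants $\gr_0(1_c)\tmod^r(\sH_c)^{\A^1}$.

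First I apply \Cref{prop:left-lax-equiv-monad} to the monad $(-)*\gr_0(1_c)$ on $\sH_c$. The argument of \Cref{prop:Bc-A1-invts-Jc} shows that $\gr_0(1_c)\tmod^r(\sH_c)^{\A^1}$ is monadic over $\sJ_c$ with monad $\tau_{\ge 0}((-)*\gr_0(1_c))$. This monad is the identity, because $\gr_0(1_c)$ is the unit of $\sJ_c$. By \Cref{lem:algebra-structure-unit}, applied in $\End(\sJ_c)$, the monad is the identity monad, so $\oblv$ identifies $\gr_0(1_c)\tmod^r(\sH_c)^{\A^1}\simeq \sJ_c$.

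Next, let $y$ be a summand of $x$ in $\sH_c^\Gm$. Since $\sH^\Gm_{c,\le 0}$ and its right orthogonal are closed under retracts, $y$ is also concentrated in degree $0$. Hence $y\in\sJ_c$, and the splitting $y\to x\to y$ consists of morphisms in $\sJ_c$ (the inclusion $\sJ_c\subseteq\sH_c^\Gm$ as degree-zero objects being fully faithful). The equivalence of the previous step then transports $y$ uniquely to an object of $\gr_0(1_c)\tmod^r(\sH_c)^{\A^1}$. It also transports the idempotent $x\to y\to x$ to an idempotent of $x$ as a module, whose image is $y$ with its module structure. Finally, I include $\gr_0(1_c)\tmod^r(\sH_c)^{\A^1}$ back into $\gr_0(1_c)\tmod^r(\sH_c^\Gm)$ as the degree-zero objects; this inclusion is fully faithful and compatible with $\oblv$, which gives the claimed summand as a right $\gr_0(1_c)$-module. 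Naturality follows because everything is induced by an equivalence of categories.

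The main obstacle is checking that the identification of "concentrated in a single degree" with the $\A^1$-invariants is compatible with $\oblv$ on module categories. Concretely, I need the inclusion of degree-zero modules into $\gr_0(1_c)\tmod^r(\sH_c^\Gm)$ to be fully faithful. I would verify this using the semiorthogonal decomposition in the proof of \Cref{prop:left-lax-equiv-monad}: the functor $\oblv$ commutes with the weight truncations because it is strictly $\A^1$-equivariant, so Hom spaces between degree-zero modules can be computed in $\sJ_c$.
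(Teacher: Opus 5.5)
Your proposal is correct and follows the same route as the paper. The paper's proof is a single line invoking the right-module analogue of \Cref{prop:Bc-A1-invts-Jc}, namely $\gr_0(1_c)\tmod^r(\sH_c)^{\A^1} \simeq \sJ_c \simeq \sH_c^{\A^1}$, which is exactly the equivalence you use. Your additional steps fill in details the paper leaves implicit: twisting to degree $0$, closure of degree-zero objects under retracts, and $\oblv$ commuting with weight truncations (from \Cref{prop:left-lax-equiv-monad}), so that degree-zero modules are the $\A^1$-invariants.
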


\begin{proof}
Follows from the fact that $\gr_0(1_c)\tmod^r(\sH_c)^{\A^1} \simeq \sJ_c \simeq \sH_c^{\A^1}$, which is proved like in \Cref{prop:Bc-A1-invts-Jc}. 
\end{proof}

\subsubsection{}\label{sss:left-cells} For any $w \in c$, the object $L_w$ is concentrated in a single graded degree, so it naturally acquires the structure of a $\gr_0(1)$-bimodule. Our next result will be a technical statement about the convolution 
\[L_w \underset{\gr_0(1)}{*} L_v.\]
Before we state it, let us recall some facts about left cells:
\begin{enumerate}
    \item Every left cell contains a unique Duflo involution \cite[Theorem 1.10]{LUSZTIG1987536} (see also \cite[Theorem 6.11]{Curtis1988}).
    \item\label{item:sequence} If $w$ and $v$ lie in the same left cell, then there is a sequence of elements 
    \[v = u_0, u_1, \ldots, u_n = w\]
    with each $u_i$ lying in $c$, such that for each $i$ between $1$ and $n$ there is a simple reflection $s_i$ such that $s_i u_{i - 1} > u_{i - 1}$ (for the Bruhat order), and $L_{u_i}$ appears as a direct summand of $L_{s_i} * L_{u_{i - 1}}$ \cite[Lemma 5.3.(ii)]{Curtis1988} (of course this follows from the material of \cite{kazhdan1979representations}).
\end{enumerate}

\begin{proposition}\label{prop:weights-bounded-from-below}
For $w, v \in c$, the weights of 
\[L_w \underset{\gr_0(1)}{*} L_v\]
are bounded from below. In other words, this object lies in $\sH_{c, \ge N}^\Gm$ for some $N$. 
\end{proposition}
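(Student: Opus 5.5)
We will reduce to the case where $L_v$ is (a summand of) $\gr_0(1_c)$ itself, using the left-cell facts of \S\ref{sss:left-cells}.

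\emph{Base case.} First suppose $v = d$ is the Duflo involution of the left cell containing $v$. By \Cref{prop:jc-unit}, $L_d[-a]$ is a summand of $\gr_0(1_c)$ as a plain object of $\sH_c^\Gm$, concentrated in a single graded degree. By \Cref{prop:deg-zero-summand} (applied to left modules, which the argument of \Cref{prop:Bc-A1-invts-Jc} also covers), $L_d$ is then a summand of $\gr_0(1_c)$ as a left $\gr_0(1_c)$-module, up to shift. Hence $L_w \underset{\gr_0(1_c)}{*} L_d$ is a summand of
\[L_w \underset{\gr_0(1_c)}{*} \gr_0(1_c) \simeq L_w,\]
and $L_w$ is pure, so its weights are bounded below. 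Some care is needed to check that the left-module structure on $L_d$ from \S\ref{sss:left-cells} agrees with the one it inherits as a summand. Since both are in a single degree, I would again invoke \Cref{prop:deg-zero-summand}, that is, the equivalence $\gr_0(1_c)\tmod(\sH_c)^{\A^1}\simeq \sJ_c$, where module structures on objects of a fixed degree are unique.

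\emph{Induction along the left cell.} For general $v$ in the same left cell as $d$, take the chain $d = u_0, \dots, u_n = v$ from \S\ref{sss:left-cells}(\ref{item:sequence}). Then $L_{u_i}$ is a direct summand of $L_{s_i} * L_{u_{i-1}}$ in $\sH_c$. The goal is to promote this to a summand of left $\gr_0(1_c)$-modules, where $L_{s_i} * L_{u_{i-1}}$ carries the module structure from the right factor via convolution on the left by the plain object $L_{s_i}$. This object lives in several graded degrees, but it is pure, so its projection to $\sH_c$ splits by degree. Each graded piece is a $\gr_0(1_c)$-module concentrated in one degree, and \Cref{prop:deg-zero-summand} makes $L_{u_i}$ a module summand. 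Then
\[L_w \underset{\gr_0(1_c)}{*} L_{u_i} \quad\text{is a summand of}\quad L_w * L_{s_i} \underset{\gr_0(1_c)}{*} L_{u_{i-1}},\]
using that left convolution by the plain object $L_{s_i}$ commutes with relative tensor products.

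\emph{Bounding the weights.} The functor $L_w * L_{s_i} * (-)$ shifts weights by a bounded amount. Indeed, $L_w * L_{s_i}$ is a finite sum of shifts of pure objects, and convolution with $L_{s_i}$ lowers weights by at most a fixed amount in $\sH^\Gm$. Rather than $L_w * L_{s_i}$, I would apply this to $L_w \underset{\gr_0(1_c)}{*} L_{u_{i-1}}$: the summand statement can be rewritten as $L_w \underset{\gr_0(1_c)}{*} L_{u_i}$ being a summand of $(L_w * L_{s_i}) \underset{\gr_0(1_c)}{*} L_{u_{i-1}}$. Decomposing $L_w * L_{s_i}$ into shifted $L_{w'}$ with $w' \in c$ (the other summands die in $\sH_c$), it suffices to know the bound for all pairs $(w', u_{i-1})$. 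By induction on $i$, uniformly over the finitely many $w' \in c$, we get a bound $N$ for all pairs.

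\emph{Main obstacle.} The hardest step is controlling the weights of the projection $\tau_c$. The projection $\sH_{\le c}\to\sH_c$ is $\A^1$-equivariant, so it preserves weight $\ge$ bounds only laxly. Concretely, I need that convolution with $L_{s}$ on $\sH_c^\Gm$ lowers weights by at most a constant. I would try to deduce this from $\Gm$-equivariance together with the right-lax equivariance of the inclusion, arguing as in the proof of \Cref{prop:shifted-A1-action-left-compatible} but with inequalities reversed.
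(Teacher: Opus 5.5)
\textbf{There is a genuine gap: your module structures are on the wrong side.}

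The chain of \S\ref{sss:left-cells}(\ref{item:sequence}) is a left-cell chain. It makes $L_{u_i}$ a summand of the \emph{left} convolution $L_{s_i} * L_{u_{i-1}}$. Write $A=\gr_0(1_c)$. Because you induct on the right factor, you need $L_{u_i}$ to be a summand of \emph{left} $A$-modules. But left convolution by a plain object does not transport left module structures. If $N$ is a left $A$-module, $L_{s_i} * N$ has no natural left $A$-action, since you would have to move $A$ past $L_{s_i}$. It is $N * L_{s_i}$ that is a left module, just as $L_{s_i} * M$ is a right module when $M$ is.

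For the same reason the identity
\[L_w \underset{A}{*} (L_{s_i} * L_{u_{i-1}}) \simeq (L_w * L_{s_i}) \underset{A}{*} L_{u_{i-1}}\]
is not well posed: $L_w * L_{s_i}$ carries no right $A$-action. So your ``bounding the weights'' step, which decomposes $L_w * L_{s_i}$ into $L_{w'}$'s and inducts uniformly over $w'$, has nothing underneath it.

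You have also dropped the hypothesis $s_iu_{i-1} > u_{i-1}$. That hypothesis is exactly what makes $L_{s_i} * L_{u_{i-1}}$ perverse, hence concentrated in a single graded degree, which is what makes \Cref{prop:deg-zero-summand} applicable. Your substitute, splitting by degree, does not work. A splitting of the underlying object by degree is not a splitting of modules: $\gr_k(M)$ is only a subquotient of $M$ in the module category. So you would only learn that $L_w \underset{A}{*} L_{u_i}$ is a summand of $L_w \underset{A}{*} \gr_k(M)$, and that object is not controlled by $L_w \underset{A}{*} M$.

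\textbf{The repair is to match sides.} There are two ways to do this.
\begin{enumerate}
\item Use right cells, via the anti-monoidal inversion $w\mapsto w^{-1}$, which fixes Duflo involutions. Then $L_{u_i}$ is a left-module summand of the single-degree object $L_{u_{i-1}} * L_{s_i}$ with $u_{i-1}s_i>u_{i-1}$. Hence $L_w \underset{A}{*} L_{u_i}$ is a summand of $(L_w \underset{A}{*} L_{u_{i-1}}) * L_{s_i}$.
\item Do what the paper does and induct on the left factor. Take $d$ to be the Duflo involution in the left cell of $w$. Then $L_w$ is a right-module summand of $L_s * L_x$ with $sx>x$, and $(L_s*L_x)\underset{A}{*} L_v \simeq L_s * (L_x \underset{A}{*} L_v)$.
\end{enumerate}
Either way, the only weight input needed is that convolution with a single $L_s$ lowers weights by a bounded amount.

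For that step, your suggested tool does not work: the inclusion $\sH_c\to\sH_{\le c}$ is only left-lax, so it cannot lift lower weight bounds. Use instead the right adjoint $Q^R$ of the $\A^1$-equivariant projection $Q:\sH_{\le c}\to\sH_c$. It is right-lax, so it preserves weights $\ge n$. Then $L_s * y\simeq Q(L_s * Q^R y)$. On $\sH^\Gm$, convolution with $L_s$ sends $L_u\bracket{j}$ into degrees $[j-1,j+1]$.
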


\begin{proof}
If $w$ is a Duflo involution, then it is a summand of $\gr_0(1_c)$ (necessarily as a right $\gr_0(1_c)$-module by \Cref{prop:deg-zero-summand}). It follows that 
\[L_w \underset{\gr_0(1)}{*} L_v\]
is a summand of 
\[\gr_0(1)\underset{\gr_0(1)}{*} L_v \simeq L_v\]
so the result follows when $w$ is a Duflo involution. Now suppose that $w$ is an arbitrary element of $c$. Let $d$ denote the unique Duflo involution lying in the left cell containing $w$. We know that there is some sequence 
\[d = u_0, u_1, \ldots, u_n = w\]
as in \S\ref{sss:left-cells}(\ref{item:sequence}). Arguing by induction on the minimal length of such a sequence, we may assume that there is some $x \in c$ and some simple reflection $s$ such that 
\[L_x \underset{\gr_0(1)}{*} L_w\]
has weights bounded from below, $sx > x$, and $L_w$ appears as a direct summand of $L_s * L_x$. By the assumption that $sx > x$, we know that $L_s * L_x$ is concentrated in a single graded degree, so $L_w$ is a summand of $L_s * L_x$ as a right $\gr_0(1)$-module. It follows that 
\[L_w \underset{\gr_0(1)}{*} L_v\]
is a summand of 
\[(L_s * L_x) \underset{\gr_0(1)}{*} L_v \simeq L_s * \left(L_x \underset{\gr_0(1)}{*} L_v\right),\]
so its weights are bounded from below. 
\end{proof}

\begin{lemma}\label{lem:single-degree}
Suppose that $\sA$ is a category acted on by $\A^1$. Suppose that $\sA$ is monoidal compatibly with the action of $\Gm$, and that the monoidal structure is left-lax $\A^1$-equivariant. If the unit of $\sA^{\Gm}$ lies in degree zero and the multiplication preserves $\sA^\Gm_{= 0}$, then the $\A^1$ action on $\sA$ is strictly compatible with the monoidal structure. 
\end{lemma}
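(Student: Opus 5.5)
The plan is to verify strict compatibility one structure map at a time, using the closing remark of \S\ref{ss:3-lax}: a functor between categories acted on by $\A^1$ that is both left-lax and right-lax $\A^1$-equivariant is strictly $\A^1$-equivariant. Here "strictly compatible" means that $\mult: \sA \otimes \sA \to \sA$ and $\unit: \Vect \to \sA$ are $\A^1$-equivariant. Left-laxness of $\mult$ is part of the hypotheses. So for $\mult$ the task is to show right-laxness. By the right-lax analogue of \Cref{con:preserve-le-0-left-lax}, this amounts to showing that $\mult^\Gm$ sends $(\sA\otimes\sA)^\Gm_{\ge n}$ into $\sA^\Gm_{\ge n}$. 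Here $\ge n$ denotes the right orthogonal to $\le n-1$.

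First I would reduce to graded pieces. By \Cref{prop:A1-action-char}, $\sA^\Gm$ is the colimit of the colocalizations $\sA^\Gm_{\le n}$. Each $\sA^\Gm_{= n} = \sO(n)\cdot\sA^\Gm_{=0}$ sits as the cofiber of $\sA^\Gm_{\le n-1}\to\sA^\Gm_{\le n}$, so every object of $\sA^\Gm_{\ge n}$ is built by a filtered limit/colimit procedure from objects of $\sA^\Gm_{=m}$ with $m\ge n$. On the tensor product $\sA\otimes\sA$, the weight filtration is the convolution of the two filtrations. Hence $(\sA\otimes\sA)^\Gm_{=n}$ is generated by external products $x\boxtimes y$ with $x\in\sA^\Gm_{=p}$, $y\in\sA^\Gm_{=q}$ and $p+q=n$. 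Since multiplication is $\Gm$-equivariant, $\mult^\Gm(\sO(p)x_0\boxtimes\sO(q)y_0)=\sO(p+q)\,\mult^\Gm(x_0\boxtimes y_0)$. The hypothesis that multiplication preserves $\sA^\Gm_{=0}$ then gives that $\mult^\Gm$ sends $(\sA\otimes\sA)^\Gm_{=n}$ into $\sA^\Gm_{=n}$. Combined with the left-lax property that $\le n$ maps to $\le n$, this shows that $\mult^\Gm$ preserves the full weight decomposition: it commutes with the truncations $\tau_{\le n}$ and $\tau_{>n}$.

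Next I would pass from graded pieces to $\ge n$. The aim is to show that $\mult^\Gm$ kills the map $\tau_{\le n-1}z\to z$ only in the trivial sense, i.e.\ that for $z\in(\sA\otimes\sA)^\Gm_{\ge n}$ the truncation $\tau_{\le n-1}\mult^\Gm(z)$ vanishes. Since $\mult^\Gm$ is continuous, $\Rep(\Gm)$-linear, and preserves both the colocalizations $\le m$ and the pieces $=m$, it induces functors on the associated graded. This is the step I expect to be the main obstacle. The difficulty is that $\sA^\Gm_{\ge n}$ is not generated under colimits by the $=m$ pieces. The right orthogonal involves limits, while $\mult$ only commutes with colimits. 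What I would try first is to characterize $\ge n$ as the kernel of the continuous colocalization functor $\tau_{\le n-1}$, and then show $\tau_{\le n-1}\circ\mult^\Gm \simeq \mult^\Gm\circ\tau_{\le n-1}$. Both sides are continuous, so the equivalence can be checked on generators of $(\sA\otimes\sA)^\Gm$. By the colimit presentation these generators can be taken in $\le m$ and, filtering further, in $=m$. On objects of $=m$ both sides are computed by the previous paragraph: if $m\le n-1$ both sides equal $\mult^\Gm$, and otherwise both vanish. The needed natural map between the two sides comes from left-laxness.

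Finally, for the unit, right-laxness means $\unit^\Gm$ sends $\Rep(\Gm)_{\ge n}$ into $\sA^\Gm_{\ge n}$. Since $\unit^\Gm(\sO(n))=\sO(n)\cdot 1$ and the unit lies in degree zero by hypothesis, $\unit^\Gm(\sO(n))$ lies in $\sA^\Gm_{=n}$. This gives right-laxness, and together with left-laxness it gives equivariance. Higher coherences come for free, because equivariance of a $\Gm$-equivariant functor is a property by \Cref{prop:A1-action-char}.
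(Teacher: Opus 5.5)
Your reduction agrees with the paper's. Given left-laxness, strict compatibility comes down to right-laxness of $\mult$ and $\unit$, and on single-degree objects this follows from $\Gm$-linearity and the hypothesis on $\sA^\Gm_{=0}$. Your unit argument is fine.

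The gap is in the step you flag as the main obstacle: the two generation statements you use there are the wrong way round. The difficulty you describe does not exist, because $\sA^\Gm_{\ge n}$ \emph{is} generated under colimits by the pieces $\sA^\Gm_{=k}$ with $k\ge n$. Since $\sA^\Gm=\colim_m\sA^\Gm_{\le m}$ along inclusions with continuous right adjoints, every $x$ satisfies $x\simeq\colim_m\tau_{\le m}x$. If $x\in\sA^\Gm_{\ge n}$, then $\tau_{\le n-1}\tau_{\le m}x\simeq\tau_{\le n-1}x=0$. Splitting off $\tau_{\le m-1},\tau_{\le m-2},\dots$ then exhibits $\tau_{\le m}x$ as a finite extension of objects of $\sA^\Gm_{=k}$ with $n\le k\le m$. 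Moreover $\sA^\Gm_{\ge n}=\ker(\tau_{\le n-1})$ is closed under colimits, and $\mult^\Gm$ is continuous in each variable and $\Rep(\Gm)$-linear. So your single-degree computation already finishes the proof, and no limits ever enter. This is exactly the paper's one-line argument, which phrases right-laxness directly as $\sA^\Gm_{\ge 0}*\sA^\Gm_{\ge 0}\subseteq\sA^\Gm_{\ge 0}$. Your own second paragraph essentially stated this fact before you retracted it.

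Conversely, the claim your workaround relies on is false in general: that generators of $(\sA\otimes\sA)^\Gm_{\le m}$ can be taken in single degrees after ``filtering further''. Failure of generation by graded pieces happens at the other end, in $\bigcap_m\sC^\Gm_{\le m}$, which need not vanish. Take the action of $\A^1$ on $\Vect$ with $\act_0=0$ from the example after \Cref{prop:A1-action-triv}. There all $\sC^\Gm_{\le m}$ coincide with $\Rep(\Gm)$, so every $\sC^\Gm_{=m}$ is zero. This is also why the proof of \Cref{prop:dual-obj-Bc} has to kill $\bigcap_n\sB^\Gm_{c,\le n}$ by a separate argument.

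Your comparison map $\mult^\Gm\circ\tau_{\le n-1}\to\tau_{\le n-1}\circ\mult^\Gm$ can still be shown to be an isomorphism, with the generators handled correctly:
\begin{itemize}
\item On $(\sA\otimes\sA)^\Gm_{\le n-1}$ both sides equal $\mult^\Gm$, by left-laxness.
\item For $z\in(\sA\otimes\sA)^\Gm_{\le m}$ with $m\ge n$, the cofiber of $\tau_{\le n-1}z\to z$ is a finite extension of single-degree pieces in degrees $n,\dots,m$, and both sides vanish on these.
\end{itemize}
Once the first point is corrected, however, this detour is unnecessary.
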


\begin{proof}
The only thing that needs to be checked is that the monoidal structure on $\sA^\Gm$ preserves $\sA^\Gm_{\ge 0}$. But every object of $\sA^\Gm_{\ge 0}$ can be written as a colimit of terms concentrated in a single degree, so the result follows from the compatibility of $\mult^\Gm$ with colimits and the action of $\sO(n)$. 
\end{proof}

\begin{proposition}\label{prop:A1-action-strict}
The $\A^1$-action on $\sB_c$ is strictly compatible with the monoidal structure. 
\end{proposition}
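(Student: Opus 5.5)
The plan is to apply \Cref{lem:single-degree} to $\sA = \sB_c$. This requires three inputs.
\begin{enumerate}
\item The monoidal structure on $\sB_c$ is left-lax $\A^1$-equivariant, i.e.\ left-compatible in the sense of \S\ref{sss:left-compat}.
\item The unit $\gr_0(1_c)$ of $\sB_c^\Gm$ lies in graded degree zero.
\item The relative convolution $(-) \underset{\gr_0(1_c)}{*} (-)$ preserves $\sB^\Gm_{c,=0}$.
\end{enumerate}

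\emph{Step (2).} This is immediate from the construction: $\gr_0(1_c)$ is the unit of $\sJ_c = \sH_c(a)^{\A^1}$. Moreover, $\oblv: \sB_c \to \sH_c$ is $\A^1$-equivariant by \S\ref{sss:A1-action-Bc-defn}, and the grading on $\sB_c$ is detected by $\oblv$.

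\emph{Step (1).} By \S\ref{sss:monad-A1-action-constr}, $\sB^\Gm_{c,\le 0}$ is generated under colimits by free bimodules $\gr_0(1_c) * x * \gr_0(1_c)$ with $x \in \sH^\Gm_{c,\le 0}$. For two such generators, the relative product is
\[
(\gr_0(1_c)* x*\gr_0(1_c)) \underset{\gr_0(1_c)}{*} (\gr_0(1_c)*y*\gr_0(1_c)) \simeq \gr_0(1_c)*x*\gr_0(1_c)*y*\gr_0(1_c).
\]
This lies in weights $\le 0$ by \Cref{prop:shifted-A1-action-left-compatible}, since each factor does. Hence multiplication sends $\sB^\Gm_{c,\le 0}\otimes \sB^\Gm_{c,\le 0}$ into $\sB^\Gm_{c,\le 0}$. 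The unit condition holds by (2).

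\emph{Step (3).} This is the main obstacle. First I would reduce to generators. An object of $\sB^\Gm_{c,=0}$ forgets to an object of $\sH^\Gm_{c,=0} \simeq \sJ_c$, which is semisimple with simples $J_w$ (\Cref{prop:Jc-char}). By \Cref{prop:deg-zero-summand} and its bimodule analogue (via \Cref{prop:Bc-A1-invts-Jc}), $\sB^\Gm_{c,=0} \simeq \sJ_c$ is generated under colimits by the bimodules $L_w$ for $w \in c$, as in \S\ref{sss:left-cells}. Since the product commutes with colimits, it suffices to show that $L_w \underset{\gr_0(1)}{*} L_v$ is concentrated in degree $0$.

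By Step (1), this object lies in weights $\le 0$, so it remains to show that it lies in weights $\ge 0$. \Cref{prop:weights-bounded-from-below} gives that its weights are bounded below by some $N$. To improve the bound to $0$, I would use the Verdier-duality / pivotal symmetry of $\sH_c$, which exchanges weights $\le$ and $\ge$, and which preserves $\gr_0(1_c)$ and the set $\set{L_w}$ up to $w \mapsto w^{-1}$. Applying this symmetry to the weight $\le 0$ statement of Step (1) should give weights $\ge 0$.

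If the duality argument is awkward, the fallback is as follows. Take the lowest weight piece $\gr_N$ of $L_w \underset{\gr_0(1)}{*} L_v$, with $N<0$. It is a summand in $\sJ_c$ and produces a nonzero map $\gr_N \to L_w \underset{\gr_0(1)}{*} L_v$. By adjunction with the free-module description and the semisimplicity of $\sJ_c$, this map would contradict the fact that maps from weight $<0$ objects into $\tau_{\ge 0}$-type objects vanish. I expect this weight-$\ge 0$ bound to be the delicate part of the proof, since the bounded-below statement in \Cref{prop:weights-bounded-from-below} is exactly what makes the truncation argument well-founded.
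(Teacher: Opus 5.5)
Your Steps (1) and (2), and the reduction in Step (3) to showing that $L_w \underset{\gr_0(1_c)}{*} L_v$ has no weights below $0$, are fine and match the paper. The gap is in that lower bound: neither of your two arguments establishes it.

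\textbf{The duality argument.} The duality you invoke does not have the properties you ascribe to it.
\begin{itemize}
\item In the normalization of $\sH_c(a)$ it does not reflect about weight $0$, and it does not fix $\gr_0(1_c)$. The dual of $\gr_0(1_c)\simeq\bigoplus_{d\in\sD_c}L_d[-a]$ is $\bigoplus_d L_d[a]$, which sits $2a$ away in weight and is a coalgebra rather than an algebra.
\item Sanity check: suppose some anti-monoidal symmetry of $\sH_c(a)$ exchanged weights $\le 0$ and $\ge 0$ and fixed $\gr_0(1_c)$ and the $L_w$. Then left-compatibility would force plain products such as $L_w * \gr_0(1_c)$ to be concentrated in weight $0$. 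But for $a>0$ this object has the nonzero bottom piece $\gr_{-2a}\simeq L_w[2a]\bracket{-2a}$. Duality turns the bound ``$\le 0$'' into ``$\ge -2a$'', not ``$\ge 0$''.
\item Duality is contravariant and does not commute with colimits. It turns the bar construction computing $L_w \underset{\gr_0(1_c)}{*} L_v$ into a totalization, i.e.\ a cotensor product over the dual coalgebra. So the bound from Step (1) cannot be transported to the object you care about.
\item You might instead use duality inside $\sB_c$, where the unit is self-dual. That would require knowing that duals in $\sB_c$ reverse weights. This is the kind of statement the paper only gets after the present proposition, via the monoidality of $\gr_\sbullet$ (\Cref{prop:gr-monoidal}, \Cref{prop:dual-obj-Bc}).
\end{itemize}

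\textbf{The fallback.} It is circular. Maps from weight $<0$ objects vanish only into objects already known to lie in weight $\ge 0$, and that is what is to be proved.

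\textbf{The missing idea.} The paper proves $\gr_k\bigl(L_w \underset{\gr_0(1_c)}{*} L_v\bigr)=0$ for $k<0$ by an upward induction on $k$, simultaneously over all $w,v\in c$. The base case is \Cref{prop:weights-bounded-from-below}, which is uniform because $c$ is finite. For the inductive step:
\begin{itemize}
\item Look at the free right module $L_w * \gr_0(1_c)$. It lies in weights $[-2a,0]$ and its top piece is $L_w$.
\item Verdier duality, applied only to this honest constructible pure complex coming from $\sH$, identifies its bottom piece as $\gr_{-2a}\simeq L_w[2a]\bracket{-2a}$.
\item Tensor the triangle $\gr_{-2a}\to L_w*\gr_0(1_c)\to\tau_{>-2a}(L_w*\gr_0(1_c))$ over $\gr_0(1_c)$ with $L_v$. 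The middle term becomes $L_w*L_v$, which has no weights below $-2a$.
\item Hence for $k<0$, $\gr_k\bigl(L_w \underset{\gr_0(1_c)}{*} L_v\bigr)$ is a shift of $\gr_{k-2a}\bigl(\tau_{>-2a}(L_w*\gr_0(1_c))\underset{\gr_0(1_c)}{*}L_v\bigr)$.
\item The object $\tau_{>-2a}(L_w*\gr_0(1_c))$ is an iterated extension of $L_u[i]\bracket{-i}$ with $i<2a$. So this graded piece is built from $\gr_{k-2a+i}\bigl(L_u\underset{\gr_0(1_c)}{*}L_v\bigr)$ with $k-2a+i<k$, which vanish by the inductive hypothesis.
\end{itemize}
This is where the bounded-below result is genuinely needed. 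It is also the mechanism by which the relative tensor product over $\gr_0(1_c)$ cancels the weights in $[-2a,0)$ that the plain product retains, and no duality argument by itself sees that cancellation.
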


\begin{proof}
It is obvious that $\gr_0(1)$ lies in degree zero. By \Cref{lem:single-degree} it remains to show that for any irreducibles $L_w$, $L_v$ the map 
\begin{equation}\label{eq:lax-monoidal-arrow}
L_w \underset{\gr_0(1)}{*} L_v \to \gr_0(L_w * L_v)
\end{equation}
is an isomorphism. It's enough to do this after forgetting the $\gr_0(1_c)$-bimodule structure on both sides. The morphism $1_c \to \gr_0(1_c)$ of algebras in $\sH_c^\Gm$ induces a map of simplicial objects 
\[\mathrm{Bar}(L_w, 1_c, L_v) \to \mathrm{Bar}(L_w, \gr_0(1_c), L_v)\] 
which becomes an isomorphism after applying $\tau_{\ge 0}$. This shows that (\ref{eq:lax-monoidal-arrow}) becomes an isomorphism after applying $\tau_{\ge 0}$, so it remains to show that 
\[\gr_k\left(L_w \underset{\gr_0(1)}{*} L_v\right) = 0\]
for all $k < 0$. Because there are only finitely many elements in $c$, \Cref{prop:weights-bounded-from-below} shows that there is some $N$ such that 
\[\gr_k\left(L_w \underset{\gr_0(1)}{*} L_v\right) = 0\] 
for all $w, v \in c$ and $k < N$. Now it suffices to show that if $k < 0$, and 
\[\gr_\ell\left(L_w \underset{\gr_0(1)}{*} L_v\right) = 0\]
for all $\ell < k$ and all $w \in c$, then 
\[\gr_k\left(L_w \underset{\gr_0(1)}{*} L_v\right) = 0.\]

To see this, consider the object $L_w * \gr_0(1)$, the free right $\gr_0(1)$-module on $L_w$. By Verdier duality we know that $L_w * \gr_0(1)$ lies in weight $\ge -2a$ and $\gr_{-2a}(L_w * \gr_0(1)) \simeq L_w[2a]$. Additionally, 
\[\left(L_w * \gr_0(1)\right) \underset{\gr_0(1)}{*} L_v \simeq L_w * L_v,\]
so we obtain an exact triangle
\[L_w[2a]\bracket{-2a} \underset{\gr_0(1_c)}{*} L_v \to L_w * L_v \to \tau_{> -2a} (L_w * \gr_0(1)) \underset{\gr_0(1)}{*} L_v.\]
Since $L_w * L_v$ lies in degree $\ge -2a$, for any $k < 0$ we obtain an isomorphism 
\[\gr_{k - 2a}\left(\tau_{> -2a} (L_w * \gr_0(1)) \underset{\gr_0(1)}{*} L_v\right)[-1] \simeq \gr_{k - 2a}\left(L_w\bracket{-2a} \underset{\gr_0(1_c)}{*} L_v\right)[2a]\]
so in other words 
\[\gr_k\left(L_w\underset{\gr_0(1_c)}{*} L_v\right) \simeq \gr_{k - 2a}\left(\tau_{> -2a} (L_w * \gr_0(1)) \underset{\gr_0(1)}{*} L_v\right)[-1 - 2a].\]
$\tau_{> -2a} (L_w * \gr_0(1))$ is an iterated extension of objects of the form 
\[L_u[i]\bracket{-i}\]
for $i < 2a$. By our inductive assumption
\[\gr_{k - 2a}\left(L_u[i]\bracket{-i} \underset{\gr_0(1)}{*} L_v\right)[-1 - 2a] \simeq \gr_{k - 2a + i}\left(L_u \underset{\gr_0(1)}{*} L_v\right)[i - 1 - 2a] = 0,\]
so the result follows. 
\end{proof}

\subsection{The monoidal structure of the category of invariants}\label{ss:5-rees}

\subsubsection{} If $\sA$ is a monoidal category acted on by $\A^1$ (strictly), the category $\sA^{\A^1}$ acquires a monoidal structure by functoriality. On the other hand, because the $\A^1$-action is left-compatible with the monoidal structure (in the sense of \S\ref{sss:left-compat}), \Cref{constr:monoidal-structure-A1-invts} equips $\sA^{\A^1}$ with an \textit{a priori} different monoidal structure. The goal of this subsection is to show that these two monoidal structures agree.  

\begin{proposition}\label{prop:localization-two-monoidal-structures}
Suppose that $\sA$ is a monoidal category, equipped with a semiorthogonal decomposition 
\[\sA_\ell \rightleftarrows \sA \rightleftarrows \sA_r\]
such that $\sA_r$ is a (unital) monoidal subcategory of $\sA$ and $\sA_\ell$ is a two-sided ideal. Then the two monoidal structures on $\sA_r$ (as a subcategory and as a quotient of $\sA$) agree.
\end{proposition}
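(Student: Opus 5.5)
The plan is to show that the quotient functor $p: \sA \to \sA_r$, i.e.\ the left adjoint of the inclusion $i: \sA_r \to \sA$, is monoidal when $\sA_r$ carries its monoidal structure as a \emph{subcategory}. Then both monoidal structures on $\sA_r$ make $p$ monoidal. By the uniqueness statement in \cite[Proposition 2.2.1.9]{lurie2017higher} (used as in \Cref{prop:semiorthogonal-semirigid}), a monoidal structure on a localization making the localization functor monoidal is unique. So the two structures agree.

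First I record the two structures. The quotient structure is the one constructed in \Cref{prop:semiorthogonal-semirigid}: since $\sA_\ell$ is a two-sided ideal, $p$ is compatible with the monoidal structure in Lurie's sense, and $\sA_r$ receives the unique structure making $p$ monoidal. The subcategory structure is the one for which $i$ is strictly monoidal; here we use that $\sA_r$ is a unital monoidal subcategory, so $1_\sA \in \sA_r$ and $\sA_r$ is closed under $*$.

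Next I show directly that $p$ is monoidal for the subcategory structure. Since $i$ is strictly monoidal, its left adjoint $p$ acquires a canonical oplax monoidal structure, with maps
\[p(x * y) \to p(x) * p(y), \qquad p(1_\sA) \to 1_{\sA_r}.\]
The unit map is an isomorphism because $1_\sA \in \sA_r$. For the multiplication map, I write the cofiber sequences $x_\ell \to x \to ip(x)$ and $y_\ell \to y \to ip(y)$ with $x_\ell, y_\ell \in \sA_\ell$. Then $x * y \to ip(x) * ip(y) = i(p(x) * p(y))$ has fiber built out of $x_\ell * y$ and $ip(x) * y_\ell$. Both of these lie in $\sA_\ell$ because $\sA_\ell$ is a two-sided ideal. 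Hence this map exhibits $p(x)*p(y)$ as $p(x*y)$, and so the oplax structure is strict. One could also check that the maps coincide with those coming from Lurie's construction. I would rather not do that, and instead rely on the uniqueness of the monoidal structure making $p$ monoidal.

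The main obstacle is making this uniqueness argument precise at the $\infty$-categorical level. Lurie's statement says that the localization $\sA \to \sA_r$, which is compatible with the monoidal structure, admits a monoidal structure on the target with $p$ monoidal, and that this structure is unique. More precisely, the space of such structures, together with the monoidal upgrade of $p$, is contractible: the monoidal structure on the localization is the pushforward along the localization of $\infty$-operads.

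So the key point I must verify is that ``$p$ with the oplax structure induced from strictly monoidal $i$, shown strict above'' really is a monoidal upgrade of the same localization functor. That is immediate, since it is the same underlying functor. Once that is settled, contractibility identifies the two monoidal structures on $\sA_r$, and the equivalence is the identity on underlying categories.
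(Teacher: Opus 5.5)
Your argument is correct, but it takes a longer route than the paper.

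The paper's argument is a single observation. The composite $\sA_r^{\mathrm{sub}} \totext{i} \sA \totext{i^L} \sA_r^{\mathrm{quot}}$ is monoidal, since $i$ is monoidal by definition of the subcategory structure and $i^L$ is monoidal by definition of the quotient structure. Its underlying functor is $i^L i \simeq \id$ via the counit. A monoidal functor whose underlying functor is an equivalence is a monoidal equivalence, and that finishes the proof.

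You instead upgrade $p = i^L$ to a monoidal functor \emph{into} $\sA_r^{\mathrm{sub}}$ and then invoke uniqueness of monoidal localizations. The upgrade has two parts: an oplax structure on $p$ as the left adjoint of the strictly monoidal $i$, and a fiber computation showing this structure is strict. Your individual steps are sound. The fiber of $x * y \to ip(x) * ip(y)$ is an extension of $x_\ell * y$ and $ip(x) * y_\ell$, both in the ideal $\sA_\ell$. The target lies in $\sA_r$ because $\sA_r$ is closed under $*$. A map with fiber in $\sA_\ell$ and target in $\sA_r$ is the localization unit, so the oplax map is an isomorphism.

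The cost is extra machinery. You need $\infty$-categorical doctrinal adjunction to produce and strictify the oplax structure, and you need the universal property of monoidal localization to get contractibility. The paper needs neither.

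Your intermediate conclusion also comes for free from the paper's argument. If $\phi = i^L i$ denotes the monoidal equivalence, then $p \simeq \phi^{-1} \circ i^L$ is monoidal for the subcategory structure.

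What your route adds is an explicit verification that the quotient tensor product of two objects of $\sA_r$ agrees with the ambient one. This is the real content behind the statement, but the paper's formal argument never needs to surface it.
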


\newcommand{\sub}{\mathrm{sub}}
\newcommand{\quot}{\mathrm{quot}}

\begin{proof}
Let $\sA_r^\sub$ denote $\sA_r$, viewed as having the subcategory monoidal structure, and let $\sA_r^\quot$ denote $\sA_r$ with the quotient monoidal structure. If we write $i: \sA_r \to \sA$ for the inclusion, then the composite 
\[\sA_r^\sub \totext{i} \sA \totext{i^L} \sA_r^\quot\]
is a monoidal equivalence between $\sA_r^\sub$ and $\sA_r^\quot$.
\end{proof}

\begin{proposition}\label{prop:monoidal-structure-A1-invts}
Suppose that $\sA$ is a monoidal category acted on by $\A^1$ (strictly). Then the monoidal structure on $\sA^{\A^1}$ of \Cref{constr:monoidal-structure-A1-invts} realizes $\sA^{\A^1}$ as the $\A^1$-invariants of the monoidal category $\sA$. 
\end{proposition}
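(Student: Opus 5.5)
The plan is to compare the two monoidal structures on $\sA^{\A^1}$ by passing through $\sA^\Gm_{\le 0}$, and then apply \Cref{prop:localization-two-monoidal-structures} to the semiorthogonal decomposition that defines the structure of \Cref{constr:monoidal-structure-A1-invts}.

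First I would make the invariants of the strict monoidal action concrete. Since the action is strictly compatible, $\sA^\Gm$ is monoidal and $\Rep(\Gm)$-linear, and $\sA^\Gm_{\le 0}$ and $\sA^\Gm_{\ge 0}$ are both closed under the monoidal product. Hence
\[\sA^\Gm_{=0} := \sA^\Gm_{\le 0} \cap \sA^\Gm_{\ge 0}\]
is a unital monoidal subcategory of $\sA^\Gm_{\le 0}$. Indeed, the unit lies in degree $0$: unit compatibility for a strict action says that $\unit^\Gm$ sends $\Rep(\Gm)_{\le 0}$ into $\sA^\Gm_{\le 0}$ and $\Rep(\Gm)_{\ge 0}$ into $\sA^\Gm_{\ge 0}$.

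Next I would identify this subcategory with the invariants. Under \Cref{prop:A1-action-char}, the functor $\act_0$ corresponds to $\gr_0$. I would check, using \Cref{prop:A1-invts-char}, that $\sA^{\A^1}$ with its functorial monoidal structure is equivalent to $\sA^\Gm_{=0}$ with the subcategory structure. Here the maps are the inclusion $i:\sA^\Gm_{=0}\to\sA^\Gm\to\sA$ and the retraction $r=\gr_0$.

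The monoidal upgrade of this equivalence is the main obstacle. The structure induced by functoriality on invariants is the one whose structure maps come from $\act_0$ being a monoidal endofunctor, i.e. a map of $\A^1$-equivariant monoidal categories $\sA\to\sA^{\triv}$. I would first try to make the factorization $\act_0 = i\circ r$ monoidal, via the idempotent description of \Cref{prop:invts-idempotent} carried out in the $\infty$-category of monoidal categories with $\A^1$-action. There $\act_0$ is an idempotent monoidal endomorphism, so its splitting is a monoidal category $\sA^{\A^1}$ with monoidal maps $i$ and $r$. Concretely, $\gr_0:\sA^\Gm_{\ge 0}\to\sA^\Gm_{=0}$ is monoidal for a strict action, and the inclusion $\sA^\Gm_{=0}\hookrightarrow \sA^\Gm$ is monoidal.

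It then remains to relate this to the structure of \Cref{constr:monoidal-structure-A1-invts}. That structure is the quotient structure from the semiorthogonal decomposition
\[\sA^\Gm_{\le -1}\rightleftarrows \sA^\Gm_{\le 0}\rightleftarrows \sA^{\A^1}.\]
Its right piece is realized inside $\sA^\Gm_{\le 0}$ as the right orthogonal of $\sA^\Gm_{\le -1}$, which is $\sA^\Gm_{\le 0}\cap\sA^\Gm_{\ge 0}=\sA^\Gm_{=0}$. By the previous step this is a unital monoidal subcategory, and $\sA^\Gm_{\le -1}$ is a two-sided ideal in $\sA^\Gm_{\le 0}$. \Cref{prop:localization-two-monoidal-structures} then says the quotient structure agrees with the subcategory structure. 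Combined with the previous identification of the subcategory structure with the functorial one, this gives the claim.
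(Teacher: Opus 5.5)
Your outer structure matches the paper's. You identify the invariants monoidal structure on $\sA^{\A^1}$ with the subcategory structure on the weight-zero part $\sA^\Gm_{=0}\subseteq\sA^\Gm$, then apply \Cref{prop:localization-two-monoidal-structures} to $\sA^\Gm_{\le -1}\rightleftarrows\sA^\Gm_{\le 0}\rightleftarrows\sA^{\A^1}$. Your first and last paragraphs are fine.

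The gap is the monoidal identification in the middle: you flag it as the main obstacle but do not carry it out. To run \Cref{prop:A1-invts-char} in monoidal categories with $\A^1$-action with $\sD=\sA^\Gm_{=0}$, you need three things: a monoidal, $\A^1$-equivariant retraction $r\colon\sA\to\sA^\Gm_{=0}$ defined on all of $\sA$, and monoidal identifications $i\circ r\simeq\act_0$ and $r\circ i\simeq\id$. That $r$ is not $\gr_0$. Under \Cref{prop:A1-action-char}, $(\act_0)^\Gm$ is $\oblv\circ\gr_\sbullet$, the full associated graded, so $r$ is the de-equivariantization of $\gr_\sbullet\colon\sA^\Gm\to\sA^{\A^1}\otimes\Rep(\Gm)$. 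Monoidality of $\gr_0$ on $\sA^\Gm_{\ge 0}$ does not give monoidality of $\gr_\sbullet$ on all of $\sA^\Gm$. The latter is \Cref{prop:gr-monoidal}, which the paper obtains only as a corollary of the proposition you are proving. Invoking the abstract splitting of $\act_0$ in monoidal categories does not close the gap either. It yields monoidal $i$ and $r$ for the invariants structure, and declaring that these are ``concretely'' the inclusion and $\gr_0$ presupposes the comparison you want.

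The repair, which is the paper's proof, is to drop $r$ and use only $i$. The invariants structure makes the canonical map $i\colon\sA^{\A^1}\to\sA$ an $\A^1$-equivariant monoidal functor. Its equivariantization $i^\Gm\colon\sA^{\A^1}\otimes\Rep(\Gm)\simeq(\sA^{\A^1})^\Gm\to\sA^\Gm$ is therefore monoidal. Precompose with the monoidal functor $\mathrm{wt}_0$; its composite with $\oblv$ is a monoidal equivalence, so nothing changes on $\sA^{\A^1}$. The result is a monoidal functor $\sA^{\A^1}\to\sA^\Gm$ that is fully faithful with essential image $\sA^\Gm_{=0}$. A fully faithful monoidal functor identifies its source with its image carrying the subcategory structure, which is exactly the comparison you need. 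It also gives, without your separate argument, that $\sA^\Gm_{=0}$ is a unital monoidal subcategory of $\sA^\Gm_{\le 0}$.

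Passing to $\Gm$-equivariant objects in weight zero is essential here: your $i$, the composite $\sA^\Gm_{=0}\to\sA^\Gm\to\sA$, is usually not fully faithful. Already for $\QCoh(\A^1)$ with the scaling action, $\sA^{\A^1}\simeq\Vect$, but $i$ sees morphisms of nonzero weight that disappear only after restricting to weight zero.
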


\begin{proof}
Regarding $\sA^{\A^1}$ as the $\A^1$-invariants of the monoidal category $\sA$, there is a canonical monoidal functor $i: \sA^{\A^1} \to \sA$. Equivariantizing, we obtain a monoidal functor 
\[\sA^{\A^1} \otimes \Rep(\Gm) \simeq (\sA^{\A^1})^\Gm \totext{i^\Gm} \sA^\Gm\]
(the first equivalence holds because the action of $\A^1$ (and hence $\Gm$) on $\sA^{\A^1}$ is canonically trivial). The composite 
\[\sA^{\A^1} \totext{\mathrm{wt}_0} \sA^{\A^1} \otimes \Rep(\Gm) \totext{\oblv} \sA^{\A^1}\]
is a monoidal equivalence (as it is a composite of monoidal functors, and it is an equivalence). Moreover, the functor 
\[\sA^{\A^1} \totext{\mathrm{wt}_0} \sA^{\A^1} \otimes \Rep(\Gm) \totext{i^\Gm} \sA^\Gm \]
is fully faithful, so we see that the monoidal structure on $\sA^{\A^1}$ coming from the fact that it is the $\A^1$-invariants of $\sA$ identifies with the monoidal structure it acquires as a subcategory of $\sA^\Gm$. Now the result follows by applying \Cref{prop:localization-two-monoidal-structures} to 
\[\sA^\Gm_{\le -1} \rightleftarrows \sA^\Gm_{\le 0} \rightleftarrows \sA^{\A^1}.\]
\end{proof}

\begin{corollary}\label{prop:gr-monoidal}
The functor $\gr_\sbullet: \sA^\Gm \to \sA^{\A^1} \otimes \Rep(\Gm)$ acquires a natural monoidal structure making the following diagrams commute 
\begin{equation}\label{eq:A1-triangle-1}
\begin{tikzcd}
& \sA^{\A^1} \otimes \Rep(\Gm) \arrow[rd, "\oblv"]& \\
\sA^\Gm \arrow[rr, "(\act_0)^\Gm"'] \arrow[ru, "\gr_\sbullet"] & & \sA^\Gm
\end{tikzcd}    
\end{equation}
\begin{equation}\label{eq:A1-triangle-2}
\begin{tikzcd}
\sA^{\A^1} \otimes \Rep(\Gm) \arrow[dr, "\oblv"'] \arrow[rr, "\id"] & & \sA^{\A^1} \otimes \Rep(\Gm) \\
& \sA^\Gm \arrow[ur, "\gr_\sbullet"']&
\end{tikzcd}    
\end{equation}
(as diagrams of monoidal functors). 
\end{corollary}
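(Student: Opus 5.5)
The plan is to obtain the monoidal structure on $\gr_\sbullet$ by applying the $\Gm$-equivariantization functor $(-)^\Gm$ to the monoidal structures already present on the triangles of \Cref{prop:A1-invts-char}. Since the $\A^1$-action on $\sA$ is strictly compatible with the monoidal structure, $\sA$ is an algebra object in $\A^1\textrm{-Cat}$. By \Cref{prop:monoidal-structure-A1-invts}, $\sA^{\A^1}$, with the structure of \Cref{constr:monoidal-structure-A1-invts}, is the $\A^1$-invariants of this algebra object, so the canonical maps $i: \sA^{\A^1} \to \sA$ and $r: \sA \to \sA^{\A^1}$ are monoidal.

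The key steps are as follows. First, I will check that $\act_0: \sA \to \sA$ is monoidal. It is the action of a point of the monoid acting strictly monoidally. Moreover, $r$ is monoidal because it is the unique factorization of $\act_0$ through the invariants, taken inside the $\infty$-category of algebra objects in $\A^1\textrm{-Cat}$. The factorization there exists by the same universal property used in \Cref{prop:invts-idempotent}, now applied to algebra objects, since the trivial-action construction is symmetric monoidal.

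Second, I will apply $(-)^\Gm$, which is lax symmetric monoidal and in fact symmetric monoidal on the relevant objects, to the two commuting triangles of monoidal functors from \Cref{prop:A1-invts-char}:
\[
\sA \totext{r} \sA^{\A^1} \totext{i} \sA \simeq \act_0, \qquad \sA^{\A^1} \totext{i} \sA \totext{r} \sA^{\A^1} \simeq \id.
\]
Because the action on $\sA^{\A^1}$ is trivial, $(\sA^{\A^1})^\Gm \simeq \sA^{\A^1} \otimes \Rep(\Gm)$ monoidally. Under this identification $i^\Gm$ becomes $\oblv$ and $r^\Gm$ becomes a monoidal functor $\sA^\Gm \to \sA^{\A^1}\otimes \Rep(\Gm)$. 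I define the monoidal structure on $\gr_\sbullet$ to be the one transported from $r^\Gm$. The diagrams (\ref{eq:A1-triangle-1}) and (\ref{eq:A1-triangle-2}) are then the equivariantized triangles, and they commute as diagrams of monoidal functors.

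The main obstacle is identifying $r^\Gm$ with the functor $\gr_\sbullet$ as it was previously defined, namely $\gr_n = \tau_{\ge n}\tau_{\le n}$ under the weight filtration of \Cref{prop:A1-action-char}. To handle this, I would characterize $r^\Gm$ restricted to $\sA^\Gm_{\le 0}$ as the quotient map $\sA^\Gm_{\le 0} \to \sA^\Gm_{\le 0}/\sA^\Gm_{\le -1} \simeq \sA^{\A^1}$, and then use $\Rep(\Gm)$-linearity, i.e. twisting by $\sO(n)$, to recover all graded pieces. This is also the step where \Cref{prop:localization-two-monoidal-structures} enters. It ensures that the monoidal structure on the weight-zero part agrees with the one from \Cref{constr:monoidal-structure-A1-invts}, so no conflict arises between the "quotient" and "invariants" monoidal structures.
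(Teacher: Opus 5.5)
Your proposal is correct and has the same architecture as the paper's proof. The functors $r$ and $i$ become monoidal $\A^1$-equivariant functors by running \Cref{prop:invts-idempotent} for $\sA$ as an algebra in $\A^1$-categories, with \Cref{prop:monoidal-structure-A1-invts} matching the monoidal structure on $\sA^{\A^1}$. The monoidal structure on $\gr_\sbullet$ is then transported from $r^\Gm$.

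The only divergence is the step you flag as the main obstacle, identifying $r^\Gm$ with $\gr_\sbullet$. The paper never computes $r^\Gm$. Forgetting monoidal structures, $(\gr_\sbullet)^{\mathrm{deeq}}$ and $\oblv^{\mathrm{deeq}}$ are already $\A^1$-equivariant functors splitting the idempotent $\act_0$; this is the non-monoidal content of the two triangles. Uniqueness of idempotent splittings, as in \Cref{prop:A1-invts-char}, then gives $r \simeq (\gr_\sbullet)^{\mathrm{deeq}}$ and $i \simeq \oblv^{\mathrm{deeq}}$ directly.

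Your hands-on identification also works, but it needs to be stated more carefully. $r^\Gm$ restricted to $\sA^\Gm_{\le 0}$ is not the quotient map, since it has components in every weight $\le 0$; only its weight-zero component is. A precise version runs as follows:
\begin{itemize}
\item A $\Rep(\Gm)$-linear functor into the (co)free module $\sA^{\A^1} \otimes \Rep(\Gm)$ is determined by its weight-zero component $\sA^\Gm \to \sA^{\A^1}$.
\item By \Cref{prop:A1-action-char} and the equivariance of $r$, this component kills both $\sA^\Gm_{\le -1}$ and $\sA^\Gm_{>0}$.
\item It therefore factors through $\tau_{\le 0}$ followed by the quotient $\sA^\Gm_{\le 0}/\sA^\Gm_{\le -1}$.
\item Finally, $r \circ i \simeq \id$ identifies the induced functor on the quotient with the standard equivalence.
\end{itemize}
The paper's route is shorter, while yours makes the identification explicit at the cost of this bookkeeping.

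You do not need \Cref{prop:localization-two-monoidal-structures} again at this step. It is already absorbed into \Cref{prop:monoidal-structure-A1-invts}, which you invoked at the outset.
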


\newcommand{\deeq}{\mathrm{deeq}}

\begin{proof}
By \Cref{prop:invts-idempotent}, there is a monoidal functor $r: \sA \to \sA^{\A^1}$, making the diagrams 
\[
\begin{tikzcd}
& \sA^{\A^1} \arrow[rd, "i"]& \\
\sA\arrow[rr, "\act_0"'] \arrow[ru, "r"] & & \sA
\end{tikzcd}
\quad
\begin{tikzcd}
\sA^{\A^1}  \arrow[dr, "i"'] \arrow[rr, "\id"] & & \sA^{\A^1}  \\
& \sA \arrow[ur, "r"']&
\end{tikzcd}
\]
commute (as $\A^1$-equivariant monoidal categories). Forgetting the monoidal structure, we obtain similar diagrams of $\A^1$-equivariant categories. However, the de-equivariantizations of (\ref{eq:A1-triangle-1}, \ref{eq:A1-triangle-2}) are diagrams of the same shape, so we obtain natural isomorphisms $r \simeq (\gr_\sbullet)^\deeq$ and $i \simeq \oblv^\deeq$ of $\A^1$-equivariant functors. This gives the desired upgrade of $(\gr_\sbullet)^\deeq$ to an $\A^1$-equivariant monoidal functor. 
\end{proof}

\subsubsection{} We will end by using the monoidal structure on $\gr_\sbullet$ to give a new proof of the rigidity of $\sJ_c$. This was first proved for affine and finite Weyl groups in \cite[\S4.3]{bezrukavnikov2009tensor} by explicitly describing $\sJ_c$ (see \cite[\S5]{EliasWilliamson2021} for more discussion and a proof for more general Coxeter groups). This result will be used later in \S\ref{s:6}. 

\begin{proposition}\label{prop:Jc-rigid}
The functor $\gr_\sbullet$ admits a continuous right adjoint and the category $\sJ_c$ is rigid. 
\end{proposition}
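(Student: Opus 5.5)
The context is $\sA = \sB_c$, with its strictly compatible $\A^1$-action from \Cref{prop:A1-action-strict}. Then $\sA^{\A^1} \simeq \sJ_c$ by \Cref{prop:Bc-A1-invts-Jc}, and \Cref{prop:gr-monoidal} makes $\gr_\sbullet: \sB_c^\Gm \to \sJ_c \otimes \Rep(\Gm)$ monoidal. I will prove the two assertions in order: first that $\gr_\sbullet$ has a continuous right adjoint, then that $\sJ_c$ is rigid.

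\textbf{Continuity of the right adjoint.} Write $\gr_\sbullet = \bigoplus_n \gr_n$, where $\gr_n(x) = \tau_{\ge n}\tau_{\le n} x$, twisted by $\sO(-n)$ to land in weight zero. Each weight truncation functor is continuous and has a continuous adjoint on the appropriate side: $\tau_{\le n}$ is the right adjoint of an inclusion, and $\tau_{\ge n}$ is a localization. The right adjoint of $\gr_\sbullet$ then sends $y \otimes \sO(n)$ to the object $y$ placed in weight $n$, viewed in $\sB_c^\Gm$ through the inclusion of the weight-$n$ heart. It suffices to show that $\gr_\sbullet$ preserves compact objects.

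I check this on generators. $\sB_c^\Gm$ is compactly generated by the free bimodules $\gr_0(1_c) * x * \gr_0(1_c)$ with $x$ compact in $\sH_c^\Gm$. By monoidality of $\gr_\sbullet$, the image of such a generator is $1_{\sJ_c} \otimes \gr_\sbullet(x) \otimes 1_{\sJ_c}$, up to the identification $\gr_\sbullet(\gr_0(1_c)) = 1$. So I must show that $\gr_\sbullet(x)$ is compact for compact $x$, and it is enough to take $x = \tau_{\le c}\pi_! P_w$. Such an $x$ has only finitely many nonzero weight-graded pieces: its weights are bounded, because $\pi_! P_w$ is constructible and mixed. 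Each piece is a finite sum of shifts of simples $J_u$ in the semisimple category $\sJ_c$ of \Cref{prop:Jc-char}. Hence $\gr_\sbullet(x)$ is compact.

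\textbf{Rigidity.} $\sJ_c$ is compactly generated by the $J_w$, and its unit is compact by \Cref{prop:jc-unit}. So rigidity reduces to showing that each $J_w$ admits left and right duals. $\sB_c$ is semi-rigid by \Cref{prop:Bc-semi-rigid}. Its compact generators are the images of dualizable objects of $\sH_c$; these are dualizable because $\sH_{\le c}$ and $\sH_c$ are pivotal semi-rigid, so compact objects there are dualizable. Tensoring up to bimodules preserves duals. Monoidal functors preserve dualizable objects, so $\gr_\sbullet$ sends these generators to dualizable objects of $\sJ_c \otimes \Rep(\Gm)$, and taking weight components gives dualizable objects of $\sJ_c$. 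Each $J_w$ lies in weight zero, so $J_w \otimes \sO(0) = \gr_\sbullet(\oblv(J_w))$ by the triangle (\ref{eq:A1-triangle-2}). $J_w$ is a summand of $\gr_0$ of a compact generator, namely $L_w$ tensored up. Summands of dualizable objects are dualizable, so $J_w$ is dualizable. Combined with compactness of the unit, $\sJ_c$ is rigid.

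\textbf{Expected obstacle.} The delicate step is the claim that compact objects of $\sB_c$ (or $\sH_c$) are genuinely dualizable. Semi-rigidity alone does not obviously give this. I will try first to deduce it from the pivotal rigidity of the compact constructible part of $\sH$ (Ben-Zvi--Nadler), applying the monoidal projection $\sH \to \sH_c$ and then inducing to $\gr_0(1_c)$-bimodules. If that route fails, the fallback is to verify duality directly in $\sJ_c$ using the explicit fusion rules, via $J_w^\vee = J_{w^{-1}}$.
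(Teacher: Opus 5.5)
Your first half has a genuine gap. The identity $\gr_\sbullet(\gr_0(1_c) * x * \gr_0(1_c)) \simeq 1 \otimes \gr_\sbullet(x) \otimes 1$ is not available, for three reasons:
\begin{itemize}
\item $x$ lives in $\sH_c$, not in $\sB_c$.
\item The free bimodule is not a $\otimes_{\gr_0(1_c)}$-product involving $x$.
\item $\gr_\sbullet$ on $\sH_c^\Gm$ is not monoidal for $*$, since the $\A^1$-action on $\sH_c$ is only left-compatible (\Cref{prop:shifted-A1-action-left-compatible}). This is the whole reason for passing to $\sB_c$.
\end{itemize}
Worse, the claim you reduce to is false. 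Your reason for bounded weights is a property of $\pi_!P_w$, but $\tau_{\le c}$ destroys constructibility. Take $G = SL_2$ and the minimal cell $c = \set{s}$, so $\sH_c = \sH_{\le c}$. The computation of \S\ref{ss:sl2-calculation} gives $H_i(\tau_{\le c}\Delta_s) \simeq L_s$ for every $i \ge 0$. Since $\pi^!$ commutes with $\tau_{\le c}$ (\Cref{prop:semiorthogonal-compat-H-O}) and $\pi^!\pi_!P_s$ is a sum of shifts of $P_s = \Delta_s$, the compact object $\tau_{\le c}\pi_!P_s$ of $\sH_c$ has infinitely many nonzero perverse cohomologies. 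Its weights drift off as well; compare the $SL_2$ example after \Cref{prop:right-cat-A1-equiv}. Because $\gr_\sbullet$ is t-exact, $\gr_\sbullet(\tau_{\le c}\pi_!P_s)$ is then not compact in $\sJ_c \otimes \Rep(\Gm)$. So on $\sH_c^\Gm$ the functor $\gr_\sbullet$ genuinely fails to preserve compact objects. For the same reason, your description of its right adjoint as the (always continuous) inclusion of pure-weight objects cannot be right in general.

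The missing idea is to \emph{keep} the sandwich. Because $\gr_0(1_c) \in \sH_{\le c}$ and $\tau_{\le c}$ is monoidal, $\gr_0(1_c) * \tau_{\le c}\pi_!P_w * \gr_0(1_c) \simeq \gr_0(1_c) * \pi_!P_w * \gr_0(1_c)$. This object is compact in $\sH$ by \Cref{prop:convolution-cpt-constr}, hence constructible, hence supported in finitely many weights, with each graded piece a finite sum of irreducibles. The two-sided convolution with $\gr_0(1_c)$ is precisely what lets you drop $\tau_{\le c}$.

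The rigidity half has related problems. The free-bimodule functor $x \mapsto \gr_0(1_c) * x * \gr_0(1_c)$ from $\sH_c$ to $\sB_c$ is not monoidal, so ``tensoring up preserves duals'' is not justified. Also, ``$L_w$ tensored up'' is not a compact generator: for $SL_2$ and $c = \set{1}$ one has $\sB_c = \sH_c$, and $L_1$ tensored up is the unit, which is not compact (\S\ref{ss:Bc-ren}).

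The paper avoids both issues. Compact objects of $\sB_c^\Gm$ are left and right dualizable because $\sB_c^\Gm$ itself is semi-rigid (\Cref{prop:Bc-semi-rigid}). So the step you flag as delicate is settled inside $\sB_c$, not by transport from $\sH_c$. Then $J_w$ is the weight-zero summand of $\gr_\sbullet(\gr_0(1_c) * \pi_!P_w * \gr_0(1_c))$. Monoidality of $\gr_\sbullet$ (\Cref{prop:gr-monoidal}), closure of dualizable objects under retracts, and the monoidal functor $\oblv \colon \sJ_c^\Gm \to \sJ_c$ then give dualizability of $J_w$. Your fallback via Lusztig's fusion rules, $J_w^\vee = J_{w^{-1}}$, would prove the statement. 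But it does so only by importing the classical rigidity result that this proposition is meant to reprove.
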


\begin{proof}
$\sB_c^\Gm$ is compactly generated by (shifts and Tate twists of) objects of the form 
\[\gr_0(1_c) * \tau_{\le c}\pi_!P_w * \gr_0(1_c) \simeq \gr_0(1_c) * \pi_!P_w * \gr_0(1_c)\] 
for $w \in c$ (here we have used the fact that $\gr_0(1_c)$, being a direct sum of irreducibles in $c$, already lies in $\sH_{\le c}$). By \Cref{prop:convolution-cpt-constr}, the object $\gr_0(1_c) * \pi_!P_w * \gr_0(1_c)$ is compact in $\sH$, so it lies in finitely many graded degrees and its associated graded pieces are direct sums of finitely many irreducible objects. It follows that $\gr_\sbullet(\gr_0(1_c) * \pi_!P_w * \gr_0(1_c))$ is compact in $\sJ_c \otimes \Rep(\Gm)$. This shows that $\gr_\sbullet$ admits a continuous right adjoint. 

To see that $\sJ_c$ is rigid, first note that the unit $\gr_0(1_c)$ is compact so it suffices to show that every compact object is left- and right-dualizable. Since $\oblv: \sJ_c^\Gm \to \sJ_c$ is monoidal, it suffices to show that $J_w$, viewed as an object of $\sJ_c^\Gm$ living in weight zero, is left- and right-dualizable. However, $J_w$ is a summand of $\gr_\sbullet(\gr_0(1_c) * \pi_!P_w * \gr_0(1_c))$ (being the zeroth graded peice). Since $\gr_0(1_c) * \pi_!P_w * \gr_0(1_c)$ is compact, it is left- and right-dualizable (as $\sB_c^\Gm$ is semi-rigid). By \Cref{prop:gr-monoidal}, applied with $\sA = \sB_c$, we know that $\gr_\sbullet$ is monoidal, which shows that $J_w$ is a summand of a left- and right-dualizable object. 
\end{proof}

\section{Unipotent representations}\label{s:6}

\subsubsection{} The goal of this section is to complete the proof of \Cref{thm:intro-main}. 

\medskip 

The outline of this section is as follows:
\begin{itemize}
    \item In \S\ref{ss:motives} we show that the prior work of Soergel and Wendt on mixed Tate motives implies that the action of Frobenius on $\sH$ identifies with the action of $\sqrt{q}$, with respect to the $\A^1$ action of \S\ref{ss:action-of-A1}.
    \item In \S\ref{ss:6-tr-inaccess} we show that $HH(\sH_c; \Frob)$ agrees with $HH(\sH_c^\access; \Frob)$ using the equivalence $HH(\sH; \Frob) \simeq \Rep(G(\F_q))^\unip$. 
    \item In \S\ref{ss:6-action-semisimple} we show that every action of $\Gm$ on a semisimple category is trivial and give a criterion for an $\A^1$ action to be trivial. 
    \item In \S\ref{ss:Bc-ren} we employ a certain renormalization of $\sB_c$ to show that the action of $\A^1$ on $HH(\sB_c; \Frob)$ is trivial. 
    \item In \S\ref{ss:6-main} we finish the proof of \Cref{thm:intro-main}.
\end{itemize}

\subsection{Motives and the action of $\W$}\label{ss:motives}

\subsubsection{} Recall the group $\W$ from \S\ref{ss:defn-W}. Let $\sH^\naive$ denote the monoidal category which is equal to $\sH$ as a plain category, but whose monoidal structure is given by !-pull (as opposed to perverse pullback) and *-push along the correspondence 
\[
\begin{tikzcd}
& B\bs G \times^B G /B \arrow[dl] \arrow[dr] & \\
B \bs G /B \times B \bs G / B & & B \bs G / B  
\end{tikzcd}
\] 
As in \S\ref{ss:action-of-A1}, $\W$ acts on $\sH^\naive$ compatibly with its monoidal structure. 

\medskip 

Now let $\pi: \W \to \Gm$ denote the map which is Cartier dual to the map $\Z \to \Weil_q$ sending 1 to $q$.

\begin{proposition}
The action of $\W$ on $\sH^\naive$ factors through $\pi: \W \to \Gm$ compatibly with the monoidal structure on $\sH^\naive$. 
\end{proposition}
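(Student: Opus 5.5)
The plan is to deduce this from Soergel and Wendt's motivic construction of the Hecke category \cite{soergel2018perverse}, i.e.\ to show that $\sH^\naive$ is the $\ell$-adic realization of a monoidal category of mixed Tate motives, on which the Weil structure is controlled entirely by Tate twists. Concretely, Soergel--Wendt construct a category $\mathrm{MTDer}(B\bs G/B)$ of stratified mixed Tate motives, with a t-structure whose heart has a weight structure such that the pure weight-zero objects are the motivic lifts of $L_w\bracket{j}$ (Tate twists of motivic IC sheaves). It is closed under !-pullback and $*$-pushforward along the convolution correspondence, and the $\ell$-adic realization is monoidal for the naive convolution. The realization of such a motive to $\Sh(X)^{\Weil,\mathrm{wt}}$ lands in the subcategory of Weil sheaves whose Frobenius eigenvalues (on stalks of cohomology) are integral powers of $q$. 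I would first prove the key point: the subcategory of $\Sh(B\bs G/B)^{\Weil,\mathrm{wt}}$ generated under colimits, shifts and integral Tate twists $(n)$ by the motivic lifts of the $L_w$ is stable under naive convolution and generates, after de-equivariantizing along $\pi: \W \to \Gm$, all of $\sH^\naive$.

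The steps, in order, are as follows. \textbf{(1)} Recall the description of a $\W$-action via \S\ref{ss:defn-W}: $\W$ is Cartier dual to $\Gm^\wedge_{\Weil_q}$, so a $\W$-equivariant category is the same as a category whose equivariantization is a module for $\QCoh(\Gm^\wedge_{\Weil_q})$, i.e.\ graded by Weil numbers. Factoring through $\pi$ amounts to restricting the grading to the subgroup $q^\Z \subseteq \Weil_q$. \textbf{(2)} Let $\sH^{\naive,\mathrm{mot}} \subseteq (\sH^\naive)^\W = \Sh(B\bs G/B)^{\Weil,\mathrm{wt}}$ be the full subcategory generated by realizations of Soergel--Wendt motives. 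By their theorem, realization is monoidal and fully faithful after enhancing Hom to graded Hom, and $\Hom$'s between Tate twists of IC's carry Frobenius eigenvalues in $q^\Z$ (pointwise purity and Tate-ness of Kazhdan--Lusztig stalks). Hence $\sH^{\naive,\mathrm{mot}}$ is a $\Rep(\Gm)$-module via $\sO(1) \mapsto (-1)$, and it is a monoidal $\Rep(\Gm)$-linear subcategory. \textbf{(3)} Show that the natural functor $\sH^{\naive,\mathrm{mot}} \otimes_{\Rep(\Gm)} \QCoh(\Gm^\wedge_{\Weil_q}) \to (\sH^\naive)^\W$ is a monoidal equivalence. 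Fully faithfulness on generators follows because the Hom spaces in $(\sH^\naive)^\W$ between $L_w(n)\otimes\chi$ and $L_v(m)\otimes\chi'$ vanish unless $\chi/\chi' \in q^\Z$. Essential surjectivity holds since every object is generated by twists of the $L_w$ by rank-one Weil sheaves, which are in the image. \textbf{(4)} De-equivariantize: $\sH^\naive \simeq \sH^{\naive,\mathrm{mot}} \otimes_{\Rep(\Gm)} \Vect$, with $\W$ acting through $\pi$, monoidally, since all constructions were monoidal and $\Rep(\Gm)$-linear.

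The main obstacle is step (2)/(3) at the level of $\infty$-categories: one needs the Hom complexes (not just Ext groups) between generators in $\Sh^{\Weil,\mathrm{wt}}$ to be formal in the Frobenius direction, i.e.\ Frobenius acts semisimply with eigenvalues exactly $q^n$ on $\Ext^{2n}$-type pieces. I would get this from purity (Kazhdan--Lusztig parity vanishing together with Soergel--Wendt's weight/t-structure compatibility), which makes Frobenius-eigenspace decompositions canonical. For coherence of the monoidal structure, I would rely on the realization functor from motives being a symmetric-monoidal six-functor formalism, rather than checking it by hand.
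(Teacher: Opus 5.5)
There is a genuine gap in steps (2)--(3). You define $\sH^{\naive,\mathrm{mot}}$ as a full subcategory of $\Sh(B\bs G/B)^{\Weil,\mathrm{wt}}$. But realization of motives into Weil sheaves over $\F_q$ is not fully faithful, and this breaks the full faithfulness you claim in step (3).

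On $\Spec \F_q$, motivically $\Hom(\Q,\Q(n)[m])$ vanishes unless $m=n=0$, because rational higher $K$-theory of $\F_q$ vanishes. For Weil sheaves, however, $\Hom(\ol{\Q}_\ell,\ol{\Q}_\ell)$ is the fiber of $\Frob-1$, namely $\ol{\Q}_\ell\oplus\ol{\Q}_\ell[-1]$. So your subcategory already contains the unipotent-Frobenius extensions: on the point it is the category of Weil representations with generalized Frobenius eigenvalues in $q^{\Z}$, not $\Rep(\Gm)$. Base change along $\Rep(\Gm)\to\Rep(\W)$ then adds this arithmetic $\mathrm{Ext}^1$ a second time. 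Concretely, on the point the unit of $\sH^{\naive,\mathrm{mot}}\otimes_{\Rep(\Gm)}\Rep(\W)$ has endomorphisms $(\ol{\Q}_\ell\oplus\ol{\Q}_\ell[-1])^{\otimes 2}$, whereas in $\Rep(\W)$ they are $\ol{\Q}_\ell\oplus\ol{\Q}_\ell[-1]$. The same doubling occurs for $\End(L_1)$ in $\sH^{\naive,\W}$.

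Your full-faithfulness argument only kills the cross terms between different cosets of $q^{\Z}$, which is not where the problem lies. The confusion starts in step (1). $\Rep(\W)\simeq\QCoh(\Gm^\wedge_{\Weil_q})$ is not the category of $\Weil_q$-graded vector spaces. Factoring through $\pi$ means discarding the formal directions as well as restricting to $q^{\Z}$. The Frobenius semisimplicity you flag as the main obstacle would not fix this. Weil-sheaf Homs are derived $\Frob$-invariants, so they contain the extra $[-1]$ term however semisimply $\Frob$ acts on the geometric Homs.

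The missing idea is to use the motivic category itself rather than its image in Weil sheaves. Let $\sC$ be the category of $\ol{\Q}_\ell$-linear stratified mixed Tate motives on $B\bs G/B$. It is monoidal and linear over $DTM(\Spec\F_q)\simeq\Rep(\Gm)$; this is where the vanishing of rational higher $K$-theory of $\F_q$ is used. Realization then gives a $\Rep(\W)$-linear monoidal functor $\Rep(\W)\otimes_{\Rep(\Gm)}\sC\to\sH^{\naive,\W}$. To show it is an equivalence, de-equivariantize for $\W$. The resulting functor $\Vect\otimes_{\Rep(\Gm)}\sC\to\sH^\naive$ is an equivalence by the Soergel--Wendt degrading theorem \cite[Theorem 3(2)]{soergel2018perverse}. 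That theorem concerns realization to sheaves over $\ol{\F}_q$, not to Weil sheaves. In this formulation the Frobenius-formality issue never has to be handled by hand.
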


\begin{proof}
It suffices to show an equivalence of $\Rep(\W)$-linear monoidal categories
\[\sH^{\naive, \W} \simeq \Rep(\W) \underset{\Rep(\Gm)}{\otimes} \sC\]
for some monoidal category $\sC$ acted on by $\Rep(\Gm)$. As in \cite{soergel2018perverse}, let $\sC$ denote the category of $\ol{\Q}_\ell$-linear stratified mixed Tate motives on $B \bs G / B$. This category is monoidal by the compatibility of stratified mixed Tate motives with pushforward along Whitney-Tate maps (see \cite[\S3.1]{richarz2020intersection}). There is a natural $\ell$-adic realization functor $\sC \to \Shv(B\bs G /B)^{\Weil, \mathrm{wt}}$ which is compatible with six functors. In particular, it is monoidal and linear for the action of $DTM(\Spec \F_q)$. By the vanishing of the rational higher $K$-theory of $\F_q$, we know that $DTM(\Spec \F_q) \simeq \Rep(\Gm)$. We therefore obtain a map of $\Rep(\W)$-linear monoidal categories
\[\Rep(\W) \underset{\Rep(\Gm)}{\otimes} \sC \to \sH^{\naive, \W}.\]
It suffices to show that this map is an equivalence after de-equivariantiztion for $\W$, where the result follows from \cite[Theorem 3(2)]{soergel2018perverse}. 
\end{proof}

\subsubsection{Remark} The appearance of motives in the above proof is not really necessary for our present purposes; we will briefly sketch an alternative approach just involving $\ell$-adic sheaves. One could take $\sC$ to be the category of stratified Tate $\ell$-adic sheaves in the sense of \cite[\S7.2]{achar2013koszul} (the category which is called $D^\mathrm{mix}_{\mathscr{S}}(X)$ in \textit{loc. cit.}). Perhaps up to some difficulties with homotopy coherence, the ``genuineness'' results of \cite[\S9]{achar2013koszul} imply that this category is monoidal (and one can imagine getting around the homotopy coherence problem using the t-structure on $D^\mathrm{mix}_{\mathscr{S}}(X)$). Then \cite[Proposition 7.5(ii)]{achar2013koszul} implies that the natural functor $\Rep(\W) \underset{\Rep(\Gm)}{\otimes} \sC \to \sH^{\naive, \W}$ is an isomorphism. 

\subsubsection{}\label{sss:new-monoidal-structure} Suppose that $\sC$ is a monoidal category and $x \in \Alg(\sC)$. We will say that a map 
\[\ell: 1 \to x\]
is an invertible point of $x$ if the composition
\begin{equation}\label{eq:invertible-mult}
x \totext{\id \otimes \ell} x \otimes x \totext{\mult} x    
\end{equation}
is an isomorphism. Given such an $\ell$, the object $x$ acquires a new algebra structure by transport of structure along the isomorphism (\ref{eq:invertible-mult}). Concretely, if $\sC = \dgCat$, $x = \sA$ is a monoidal category, and $\ell = \sL$ is an invertible object of $\sA$, then the new monoidal structure on $\sA$ has multiplication map 
\[x *^\sL y \simeq x * \sL * y\] 
and unit $\sL^{\inv}$. Note that the modified multiplication is indeed associative, because 
\[(x * \sL * y) * \sL * z \simeq x * \sL * (y * \sL * z).\]

\subsubsection{}\label{ss:frob-act-v} If we pick a choice $v$ of a square-root of $q$, we obtain a retract $r$ of the map $\Gm \to \W$. The kernel of the retract lies in the kernel of $\pi$, and the following diagram commutes 
\[
\begin{tikzcd}
\W \arrow[rr, "r"] \arrow[dr, "\pi"'] & & \Gm \arrow[dl, "(-)^2"] \\
& \Gm
\end{tikzcd}
\]
For clarity of notation, going forward we will denote this new copy of $\Gm$ which is the image of $r$ by $\Gm^{1/2}$. 

\begin{proposition}\label{prop:frob-action-Gm}
The action of $\W$ on $\sH$ factors through $r: \W \to \Gm^{1/2}$ compatibly with the monoidal structure on $\sH$. 
\end{proposition}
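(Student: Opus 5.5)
We reduce \Cref{prop:frob-action-Gm} to the preceding proposition about $\sH^\naive$ by comparing the two monoidal structures on $\sH$. The convolution on $\sH$ uses perverse pullback $f^{1/2}$ along the smooth map $B\bs G\times^B G/B \to B\bs G/B \times B\bs G/B$. Its $\W$-equivariance, as in \S\ref{ss:A1-action-defn}, uses the half Tate twist $(\tfrac{\dim}{2})$ coming from the chosen $v=\sqrt q$. So the first step is to write $\sH$, with its $\W$-equivariant monoidal structure, as $\sH^\naive$ with a modified monoidal structure in the sense of \S\ref{sss:new-monoidal-structure}. Concretely, $x * y \simeq x *^\naive \sL * y$, where $\sL$ is the unit of $\sH^\naive$ tensored with the one-dimensional $\W$-character $\ol{\Q}_\ell[\dim T-\dim B](\tfrac{\dim B - \dim T}{2})$, up to sign conventions. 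The relative dimension here is $\dim(B\bs G\times^B G/B) - 2\dim(B\bs G/B) = -\dim(B\bs G/B)-\dim(B\bs G/B)+\ldots$, and in fact a constant. The point is only that the twist is by a constant shift and half-twist pulled back from $\Spec \F_q$. Such an object is invertible, so \S\ref{sss:new-monoidal-structure} gives an isomorphism of algebra objects in $\W$-equivariant categories between $\sH$ and $(\sH^\naive)^{\sL}$.

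Next, we apply the preceding proposition. The $\W$-action on $\sH^\naive$ factors through $\pi:\W\to\Gm$ monoidally, so $\sH^{\naive,\W}\simeq \Rep(\W)\otimes_{\Rep(\Gm)}\sC$. Pulling back along $(-)^2:\Gm^{1/2}\to\Gm$, and using $\pi = (-)^2\circ r$ from \S\ref{ss:frob-act-v}, we get that the $\W$-action on $\sH^\naive$ factors through $r$, monoidally. We take $\sC' = \Rep(\Gm^{1/2})\otimes_{\Rep(\Gm)}\sC$. It remains to see that the twisting object $\sL$ comes from $\sC'$. Indeed, $\sL$ is the unit twisted by the $\W$-character corresponding to the Weil number $v^{\pm(\dim)}$. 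By the definition of $r$ as the splitting determined by $v$, this character is pulled back along $r$ from a character of $\Gm^{1/2}$. Hence $\sL$ lies in the image of $\sC'^{\,}$, i.e.\ it is an invertible point of the algebra $\sC'$ inside $\Rep(\Gm^{1/2})$-linear categories. Transport of structure (\S\ref{sss:new-monoidal-structure}) then commutes with the base change $\Rep(\W)\otimes_{\Rep(\Gm^{1/2})}(-)$. This yields a monoidal equivalence $\sH^\W\simeq \Rep(\W)\otimes_{\Rep(\Gm^{1/2})}\sC'^{\sL}$, which is exactly the statement that the action factors through $r$ compatibly with the monoidal structure.

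The main obstacle is the first step. We must make precise, homotopy-coherently, that the $\W$-equivariant monoidal structure on $\sH$, built from $f^{1/2}$ with half twists, agrees with the $\sL$-twisted naive one. This requires both the associativity data and the $\W$-equivariance data to match, not just the underlying functors. We would handle it at the level of correspondences. Perverse pullback differs from $!$-pullback by a twist that is functorial in smooth maps, being additive in relative dimension. So the comparison is a natural transformation of monoidal functors out of the correspondence category, given by tensoring with a character of $\W$ pulled back from the point. Additivity of relative dimension along compositions supplies the coherence. In the verification we would ensure that the character used is a power of $r^*$ of the standard character and not merely of $\pi^*$, since that is where the choice of $v$ enters.
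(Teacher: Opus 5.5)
Your proposal is correct and follows the paper's route. The $\W$-action on $\sH^\naive$ factors through $\pi=(-)^2\circ r$, hence through $\Gm^{1/2}$. The convolution on $\sH$ is then recovered by the twisting construction of \S\ref{sss:new-monoidal-structure} in $\Gm^{1/2}$-categories, using the invertible point $\sO(\dim B)[-\dim B]\otimes 1_{\sH^\naive}$; the paper takes for granted the homotopy-coherent comparison you flag as the main obstacle.

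One small correction: the smooth map $B\bs G\times^B G/B\to B\bs G/B\times B\bs G/B$ has fibers isomorphic to $B$. Its relative dimension is therefore $\dim B$, and $\dim T$ should not appear in your twisting character.
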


\begin{proof}
Since the action of $\W$ on $\sH^\naive$ factors through $\Gm$, it factors through $\Gm^{1/2}$ as well. Now apply the construction of \S\ref{sss:new-monoidal-structure} where $\sC = \Gm^{1/2}\textrm{-Cat}$ and $x = \sH^\naive$, and $\ell$ is the object 
\[\sO(\dim B)[-\dim B] \otimes 1_{\sH^\naive}\]
of 
\[\Rep(\Gm^{1/2}) \underset{\Rep(\Gm)}{\otimes} \sH^{\naive} \simeq \sH^{\naive, \Gm^{1/2}} \simeq \Hom_{\Gm^{1/2}}(\Vect, \sH^{\naive}).\]
\end{proof}

\subsection{Calculating the trace of $\sH^\inaccess$}\label{ss:6-tr-inaccess}

\subsubsection{}\label{sss:trace-of-hecke} The arguments of \cite[\S6.3]{eteve2024free} show that 
\[\tr(\Frob; \sH\tmod) \simeq \Rep(G(\F_q))^\unip\]
in such a way that the canonical trace map 
\[\sH \to \tr(\Frob; \sH\tmod)\]
identifies with pull-push along the diagram 
\begin{equation}\label{eq:DL-ind-diagram}
\begin{tikzcd}
& G/\Ad_{\Frob}(B) \arrow[dl] \arrow[dr] & \\
B \bs G/B & & G/\Ad_{\Frob}(G) 
\end{tikzcd}  
\end{equation}
However, this result is not stated as-is in \textit{loc. cit}, so for the sake of completeness we note that this follows directly from \cite[Proposition 8.57]{zhu2025tamecategoricallocallanglands}. Namely, in the notation of \textit{loc. cit}, let $X = BB$, let $Y = BG$, and let $f: X \to Y$ denote the natural map. Let $\phi_X$ and $\phi_Y$ denote the respective Frobenius endomorphisms. Let $Z = Y$, let $g_1 = \id$, and let $g_2 = \phi_Y$. Since $f$ is proper and $\Delta_X$ is smooth, the assumptions of the proposition are satisfied. Therefore we learn that the category
\[HH(\sH; \Frob) \simeq HH(\Sh(X \times_Y X); \Frob)\]
identifies with the full subcategory of 
\[\Sh(Y \times_{Y \times Y} Z) \simeq \Sh(G / \Ad_{\Frob}(G)) \simeq \Rep(G(\F_q))\]
generated by pull-push along 
\[
\begin{tikzcd}
& X  \times_{Y \times Y} Z \arrow[dl] \arrow[dr] & \\
X \times X \times_{Y \times Y} Z & & Y \times_{Y \times Y} Z
\end{tikzcd}
\]
in such a way that the canonical trace functor identifies with pull-push along this diagram. However, after unwinding definitions this push-pull diagram identifies with (\ref{eq:DL-ind-diagram}). The functor of pull-push along (\ref{eq:DL-ind-diagram}) is precisely Deligne-Lusztig induction with trivial central character, so indeed
\[HH(\sH; \Frob) \simeq \Rep(G(\F_q))^\unip\]
(essentially by the definition \cite[Definition 7.8]{deligne1976representations} of what it means for a representation to be unipotent).

\begin{proposition}\label{prop:tr-inaccess}
\[\tr(\Frob; \sH_{\le c}^\inaccess\tmod) = 0.\]
\end{proposition}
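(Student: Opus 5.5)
We will show that the semiorthogonal decomposition
\[\sH_{\le c}^\access \rightleftarrows \sH_{\le c} \rightleftarrows \sH_{\le c}^\inaccess\]
becomes an equivalence on the $\sH_{\le c}^\access$ side after applying $\tr(\Frob; -\tmod)$. This gives the vanishing of the complementary term.

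\emph{Step 1: the trace of $\sH_{\le c}^\inaccess$ is a direct summand of a semisimple category.} Since $\sH_{\le c}^\access$ is a two-sided ideal in $\sH_{\le c}$, \Cref{prop:semiorthogonal-semirigid} applies, as does \Cref{prop:semirigid-loc-coloc}. Moreover $\sH_{\le c}$ is semi-rigid by \S\ref{ss:defn-Hc}. Hence this is a semiorthogonal decomposition of semi-rigid monoidal categories by $(\sH_{\le c},\sH_{\le c})$-bilinear functors, and it is preserved by $\Frob$ because $\Frob$ fixes each $L_w$. \Cref{constr:semiorthogonal-trace} therefore yields a semiorthogonal decomposition
\[\tr(\Frob;\sH_{\le c}^\access\tmod)\rightleftarrows \tr(\Frob;\sH_{\le c}\tmod)\rightleftarrows \tr(\Frob;\sH_{\le c}^\inaccess\tmod).\]
Applying the same construction to $\sH_{\not\le c}\rightleftarrows\sH\rightleftarrows\sH_{\le c}$ exhibits $\tr(\Frob;\sH_{\le c}\tmod)$ as a semiorthogonal summand of $\tr(\Frob;\sH\tmod)\simeq\Rep(G(\F_q))^\unip$ (\S\ref{sss:trace-of-hecke}). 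That category is semisimple, so all of these decompositions split as direct sums. In particular, $\tr(\Frob;\sH_{\le c}^\inaccess\tmod)$ is the full subcategory of $\Rep(G(\F_q))^\unip$ orthogonal to the image of $\tr(\Frob;\sH_{\le c}^\access\tmod)$.

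\emph{Step 2: the access part generates.} By \S\ref{sss:hh-monadic}, each of these traces is generated by the image of the universal trace map from the underlying category. So $\tr(\Frob;\sH_{\le c}\tmod)$ is generated by the images of the compact generators $\tau_{\le c}\pi_!P_w$. Under \Cref{prop:hh-map}, the functor to $\tr(\Frob;\sH\tmod)$ corresponds to pull-push along (\ref{eq:DL-ind-diagram}), i.e. Deligne--Lusztig induction. It therefore suffices to show that each object $\tr(\tau_{\le c}\pi_!P_w)$ lies in the subcategory generated by the objects $\tr(L_v)$ with $v\le c$. The objects $\tr(L_v)$ are exactly the images of the generators of $\sH_{\le c}^\access$.

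In the semisimple target, membership in this subcategory is detected by $\Hom$ into simple objects. So we must check that any simple unipotent $\rho$ orthogonal to all $\tr(L_v)$, $v\le c$, is also orthogonal to $\tr(\tau_{\le c}\pi_!P_w)$. To do this, we pass to right adjoints of the trace maps, which are continuous. A summand $\rho$ of $\tr(\Frob;\sH_{\le c}\tmod)$ orthogonal to the access part maps under $\tr^R$ to an object $\sF_\rho\in\sH_{\le c}$ with $\Hom(L_v,\sF_\rho)=0$ for all $v\le c$, that is, $\sF_\rho\in\sH_{\le c}^\inaccess$.

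\emph{Step 3 (main obstacle): $\sF_\rho$ is bounded below, hence zero.} The right adjoint of the universal trace is $*$-pull/$!$-push along (\ref{eq:DL-ind-diagram}), which is of finite cohomological amplitude. A finite-dimensional representation $\rho$ therefore produces a bounded, indeed constructible, complex on $B\bs G/B$. We then apply the $\le c$ localization, i.e. $\sF_\rho$ is the image under the right adjoint composed with $\tau_{\le c}$, compatibly by \Cref{prop:hh-map}.

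The key point to verify is that $\tau_{\le c}$ preserves cohomologically bounded-below objects. We expect this to follow because $\tau_{\le c}$ is computed as a cofiber of $\tau^{\le c}$, which is generated by projectives and is right t-exact up to a bounded shift, as in the $SL_2$ computation of \S\ref{ss:sl2-calculation}. Granting it, $\sF_\rho$ is bounded below. Every bounded-below object of $\sH_{\le c}$ lies in $\sH_{\le c}^\access$, since it is a limit of its truncations each of which is accessible, as noted in \S\ref{sss:intro-cells}. An object that is both accessible and inaccessible is $0$. Hence $\sF_\rho=0$, so $\rho=0$ because the trace map is generating, and the inaccessible trace vanishes.

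The delicate step is this bounded-below control together with the identification of the right adjoint. If it fails, we would instead argue weight-theoretically: the inaccessible objects, such as the $SL_2$ example of \S\ref{sss:access-vs-all-sl2}, have nontrivial extension classes in $H^2(\bP^1)$ of nonzero Frobenius weight, and these should be killed by the $\Frob$-twisted trace.
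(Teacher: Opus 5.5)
Your Steps 1--2 match the paper's reduction. You split $\tr(\Frob;\sH_{\le c}\tmod)$ using \Cref{constr:semiorthogonal-trace}, use semisimplicity, and reduce to showing that $\sF_\rho=\oblv(\rho)\in\sH_{\le c}$ is accessible for each simple $\rho$.

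\textbf{The gap is in Step 3.} The lemma you propose there is false. Take $c=\{s\}$ for $SL_2$. By the computation in \S\ref{ss:sl2-calculation}, $H_i(\tau_{\le c}L_1)\simeq L_s$ for all $i\ge 1$. So $\tau_{\le c}$ sends the constructible object $L_1$ to something cohomologically unbounded below. Right $t$-exactness of $\tau^{\le c}$ only bounds $\tau_{\le c}$ on the other side.

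That other bound would not help either. $\sH^\access_{\le c}$ is closed under colimits, not limits, so ``a limit of accessible truncations'' proves nothing. Cohomologically bounded-below objects are accessible because they are \emph{colimits} of their truncations $\tau^{\le n}$.

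In fact $\tau_{\le c}L_1$ is not accessible at all. Let $\sF\in(\sO_0)^\inaccess_{\le c}$ be nonzero, for instance the object in \S\ref{sss:access-vs-all-sl2}. Then $\sF$ is right-orthogonal to $L_s$. Since $L_1$ and $L_s$ generate $\sO_0$ under colimits, $\sF$ cannot also be right-orthogonal to $L_1$. Hence $\Hom(\tau_{\le c}L_1,\sF)=\Hom(L_1,\sF)\neq 0$. So no argument that controls $\tau_{\le c}$ on constructible objects can close Step 3.

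\textbf{The fix is that you never need $\tau_{\le c}$.} \Cref{constr:semiorthogonal-trace} also says that $\oblv$ commutes with the \emph{inclusions} $\tr(\Frob;\sH_{\le c}\tmod)\to\tr(\Frob;\sH\tmod)$ and $\sH_{\le c}\to\sH$. Hence $\sF_\rho$, viewed in $\sH$, is literally the Deligne--Lusztig pull--push of the finite-dimensional representation $\rho$. This is constructible because $G/\Ad_{\Fr}(B)\to B\bs G/B$ is schematic. Equivalently, the constructible complex you produce already lies in $\sH_{\le c}$, so $\tau_{\le c}$ acts on it as the identity.

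A constructible object of $\sH_{\le c}$ is a finite extension of shifts of $L_v$ with $v\le c$, hence accessible. Being also inaccessible, it is zero. This is exactly the paper's proof, phrased there as conservativity of $\tr(\Frob;\sH_{\le c}\tmod)\to\sH_{\le c}\to\sH^\access_{\le c}$. With this observation the weight-theoretic fallback is unnecessary.
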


\begin{proof}
Applying \Cref{constr:semiorthogonal-trace} to the semiorthogonal decomposition 
\begin{equation}\label{eq:access-semiorthogonal}
\sH_{\le c}^\access \rightleftarrows \sH_{\le c} \rightleftarrows \sH_{\le c}^\inaccess    
\end{equation}
we see that it's enough to show that the composite
\[\tr(\Frob; \sH_{\le c} \tmod) \totext{\oblv} \sH_{\le c} \to \sH_{\le c}^\access\]
is conservative, where the second arrow is the natural projection from (\ref{eq:access-semiorthogonal}). Because $\tr(\Frob; \sH_{\le c} \tmod)$ is semisimple, it's enough to show that the image of any simple object under $\oblv$ is constructible. We can check this after including $\sH_{\le c}$ into $\sH$, and applying \Cref{constr:semiorthogonal-trace} to the semiorthogonal decomposition
\[\sH_{\not\le c} \rightleftarrows \sH \rightleftarrows \sH_{\le c}\]
we see that 
\[\tr(\Frob; \sH_{\le c} \tmod) \totext{\oblv} \sH_{\le c} \to \sH\]
identifies with 
\[\tr(\Frob; \sH_{\le c} \tmod) \to \tr(\Frob; \sH\tmod) \totext{\oblv} \sH.\]
\S\ref{sss:trace-of-hecke} shows that $\oblv$ identifies with pull-push along the diagram 
\[
\begin{tikzcd}
& G/\Ad_\Fr(B) \arrow[dl] \arrow[dr] & \\
G/\Ad_\Fr(G) & & B \bs G/B
\end{tikzcd}
\]
and because $G/\Ad_\Fr(B) \to B \bs G/B$ is schematic, *-push preserves constructibility. 
\end{proof}

\begin{proposition}\label{prop:hh-hc-equals-hh-bc}
\[HH(\sH_c; \Frob) \simeq HH(\sB_c; \Frob)\]
\end{proposition}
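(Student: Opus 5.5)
The plan is to factor the desired equivalence through $\sH_c^\access$:
\[HH(\sH_c; \Frob) \simeq HH(\sH_c^\access; \Frob) \simeq HH(\sB_c; \Frob).\]
The second equivalence should come from Morita invariance. By \Cref{prop:Bc-morita-equiv}, the bimodule $\gr_0(1_c)\tmod(\sH_c)$ gives an equivalence $\sH_c^\access\tmod \simeq \sB_c\tmod$. Using \S\ref{sss:hh-morita-invt}, it then suffices to check that this equivalence intertwines $\ind_\Frob$ on the two sides.

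For this intertwining, I would use \Cref{prop:frob-action-Gm}: Frobenius acts on $\sH$, and hence on $\sH_c$, through the monoidal $\Gm^{1/2}$-action, and this action preserves the weight filtration. It therefore fixes the algebra $\gr_0(1_c)$ up to canonical isomorphism as an algebra. Concretely, $\Frob^*$ and the structure maps are compatible with the construction of \S\ref{sss:algebra-structure-gr0}, since $\Frob$ is $\A^1$-equivariant (it is the action of a point of $\W$, which commutes with $\A^1$). This yields a monoidal Frobenius on $\sB_c$, together with a Frobenius-equivariant structure on the bimodule $\gr_0(1_c)\tmod(\sH_c)$, which is exactly the intertwining we need. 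I also need the Frobenius on $\sH_c^\access$ to be the restriction of the one on $\sH_c$, which holds because $\Frob$ preserves the generators $\tau_c(L_w)$.

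The first equivalence is the substantial part. I would apply \Cref{constr:semiorthogonal-trace} to the decomposition
\[\sH_{\le c}^\access \rightleftarrows \sH_{\le c} \rightleftarrows \sH_{\le c}^\inaccess.\]
This is a decomposition by $(\sH_{\le c},\sH_{\le c})$-bilinear functors, since $\sH_{\le c}^\access$ is a two-sided ideal, and \Cref{prop:semiorthogonal-semirigid} applies because $\sH_{\le c}$ is semi-rigid. It is also preserved by $\Frob$, because $\Frob$ permutes the $L_w$. Combined with \Cref{prop:tr-inaccess}, this gives $HH(\sH_{\le c}; \Frob) \simeq HH(\sH_{\le c}^\access; \Frob)$, and by \Cref{prop:hh-map} this equivalence is induced by functoriality.

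Next I would base change along the monoidal localization $\sH_{\le c} \to \sH_c$. Tensoring the decomposition over $\sH_{\le c}$ with $\sH_c$ gives
\[\sH_c^\access \rightleftarrows \sH_c \rightleftarrows \sH_c^\inaccess, \qquad \sH_c^\inaccess := \sH_c\otimes_{\sH_{\le c}} \sH_{\le c}^\inaccess.\]
Here $\sH_c^\access$ is an ideal in the semi-rigid category $\sH_c$, so \Cref{constr:semiorthogonal-trace} applies again. What remains is to show $HH(\sH_c^\inaccess;\Frob)=0$. I expect this to be the main obstacle, since \Cref{prop:tr-inaccess} was proved only at the $\le c$ level.

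To handle it, I would write $\sH_c^\inaccess$ as a localization of $\sH_{\le c}^\inaccess$. By \Cref{constr:semiorthogonal-trace}, applied in the $\sH_{\le c}$-bimodule setting, $HH(\sH_{\le c};\sH_c^\inaccess\otimes\Frob)$ then becomes a semiorthogonal piece of $HH(\sH_{\le c}; (\sH_{\le c}^\inaccess)_\Frob)$, which vanishes. Finally, I would identify $HH(\sH_c;(\sH_c^\inaccess)_\Frob)$ with that $\sH_{\le c}$-relative Hochschild homology, using $\sH_c\otimes_{\sH_{\le c}}\sH_c\simeq\sH_c$, exactly as in the last line of the proof of \Cref{constr:semiorthogonal-trace}.

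If that identification proves delicate, the fallback is to rerun the argument of \Cref{prop:tr-inaccess} directly for $\sH_c$. There, $HH(\sH_c;\Frob)$ is a summand of the semisimple category $\Rep(G(\F_q))^\unip$, and its simple objects map under $\oblv$ to constructible objects. Constructible objects project to $\sH_c^\access$, which makes the projection conservative.
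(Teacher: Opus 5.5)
Your proposal is correct and follows the paper's route. You kill $HH(\sH_c^\inaccess;\Frob)$ using \Cref{prop:tr-inaccess} and \Cref{constr:semiorthogonal-trace}, treating $\sH_c^\inaccess$ as a semiorthogonal piece of $\sH_{\le c}^\inaccess$ (the paper calls it a coreflective subcategory), then apply \Cref{constr:semiorthogonal-trace} again to pass from $\sH_c$ to $\sH_c^\access$, and conclude with the Morita equivalence of \Cref{prop:Bc-morita-equiv}. Your bimodule-coefficient version of the vanishing step and your explicit check that the Morita bimodule is Frobenius-equivariant fill in details the paper leaves implicit, and the preliminary equivalence $HH(\sH_{\le c};\Frob)\simeq HH(\sH_{\le c}^\access;\Frob)$ is not needed.
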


\begin{proof}
Since 
\[\sH_c^\inaccess = \sH_c \underset{\sH_{\le c}}{\otimes} \sH_{\le c}^\inaccess\]
is a coreflective subcategory of $\sH_{\le c}^\inaccess$, it follows from \Cref{constr:semiorthogonal-trace} and \Cref{prop:tr-inaccess} that 
\[HH(\sH_c^\inaccess; \Frob) = 0.\]
Applying \Cref{constr:semiorthogonal-trace} again we see that 
\[HH(\sH_c; \Frob) \simeq HH(\sH_c^\access; \Frob).\]
Now the result follows from \Cref{prop:Bc-morita-equiv}
\end{proof}

\subsection{Actions on semisimple categories}\label{ss:6-action-semisimple}

\begin{proposition}\label{prop:gm-action-trivial}
Suppose that $\sC$ is a split semisimple dg category. Every action of $\Gm$ on $\sC$ is (noncanonically) trivializable.
\end{proposition}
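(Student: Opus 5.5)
A $\Gm$-action on a dg category $\sC$ is the same as a comodule structure for the Hopf algebra $\sO(\Gm)$, which dually is a coaction of $\QCoh(\Gm)$. Equivalently, by de-equivariantization it is the same as a $\Rep(\Gm)$-module category $\sC^\Gm$ together with an identification $\sC \simeq \sC^\Gm \otimes_{\Rep(\Gm)} \Vect$. Rather than this global description, I would analyze the action directly on the simple objects of $\sC$.

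Write $\sC \simeq \bigoplus_{i \in I} \Vect \cdot e_i$, where the $e_i$ are the simple objects, so that $\End(e_i) = k$ by split semisimplicity. Any $\Gm$-action gives, for each $\lambda \in \Gm$, an automorphism $\act_\lambda$. Since $\Gm$ is connected, the induced action on the set of isomorphism classes of simples is trivial: this set is discrete and the action map factors through $\pi_0(\Gm) = *$. Hence, up to equivalence, the coaction functor $\coact: \sC \to \sC \otimes \QCoh(\Gm)$ sends $e_i$ to $e_i \boxtimes \sL_i$ for some object $\sL_i \in \QCoh(\Gm)$. The coassociativity and counit axioms force $\sL_i$ to be a character sheaf, that is, a line bundle on $\Gm$ with multiplicative structure $m^*\sL_i \simeq \sL_i \boxtimes \sL_i$, trivialized at $1$, together with higher coherences.

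The plan is therefore to reduce to classifying such data. The action restricted to each summand $\Vect \cdot e_i$ is an action of $\Gm$ on $\Vect$. Such actions are classified by maps $B\Gm \to B\mathrm{Pic}(\Vect)$, or equivalently by $\E_1$-maps $\Gm \to \mathrm{Pic}(\Vect) = \Z \times B\Gm^{\mathrm{alg}}$, taken in stacks. Because $\Gm$ is connected, the $\Z$ component is trivial. The remaining data is a multiplicative $\Gm$-torsor on $\Gm$, i.e. a central extension of $\Gm$ by $\Gm$, and every such extension splits: $\mathrm{Ext}^1(\Gm, \Gm) = 0$ for tori. One must also rule out the higher $\E_1$-coherences. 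These are governed by $H^2$ of $B\Gm$ with coefficients in $\Gm$, i.e. maps $B\Gm \to B^2\Gm$ compatible with the group structure. I would show these vanish by noting that such data amounts to an extension of group stacks, and that the relevant group cohomology $H^2_{\mathrm{alg}}(\Gm; \Gm)$ is zero since $\Gm$ is linearly reductive and $\Gm$ has no nontrivial extensions as above.

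Two points remain. First, the action on $\sC$ genuinely decomposes as a product of the actions on the summands: an automorphism of $\sC$ preserving each simple up to isomorphism is diagonal, so the coaction preserves the orthogonal decomposition. Second, a trivialization of each factor assembles into one on $\sC$. I expect the main obstacle to be making the vanishing of higher coherences rigorous, namely showing that $\Gm$-actions on $\Vect$ are classified by $\mathrm{Hom}(B\Gm, B\mathrm{Pic})$ and that this space is connected. Noncanonicity enters because the trivializations can be twisted by characters of $\Gm$ on each summand, which reflects $\pi_1$ of that space being $\Z$.
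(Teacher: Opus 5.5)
Your architecture matches the paper's: show $\coact(e_i)\simeq e_i\boxtimes\sL_i$, note that $\sL_i$ is a multiplicative line bundle, and trivialize it. The gap is the diagonalization step, which is where the paper's proof does its work and which you only assert (``the action map factors through $\pi_0(\Gm)$''). The paper treats the triviality of multiplicative line bundles on $\Gm$ as well known.

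In this algebraic setting there is no such action map. The action is a coaction $\sC\to\sC\otimes\QCoh(\Gm)$. Knowing the individual autoequivalences $\act_\lambda$ at closed points $\lambda$ neither shows that the induced permutation of simples is independent of $\lambda$ nor controls the family. For instance, $k(t)\in\QCoh(\Gm)$ is nonzero although all its fibres at closed points vanish. So even knowing that every $\act_\lambda$ fixes $e_i$ would not force $\sF^i_j:=\ev_j\circ\coact\circ\ins_i(k)$ to vanish for $j\neq i$.

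You can close this in two ways.

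(a) The paper's route. The counit gives $i_1^*\sF^i_j\simeq k^{\delta_{ij}}$, and coassociativity gives $\mult^*\sF^i_j\simeq\bigoplus_k\sF^i_k\boxtimes\sF^k_j$. Pulling back along $(\id,\mathrm{inv})$ gives $\bigoplus_k\sF^i_k\otimes\mathrm{inv}^*\sF^k_j\simeq\sO_\Gm^{\delta_{ij}}$. Indecomposability of $\sO_\Gm$ then isolates a single invertible entry in each row, and the counit puts it on the diagonal.

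(b) An honest version of your connectedness argument. Write $\coact=\mathrm{sh}\circ((-)\boxtimes\sO_\Gm)$, where $\mathrm{sh}$ is the $\QCoh(\Gm)$-linear extension of $\coact$ to $\sC\otimes\QCoh(\Gm)$; this is an autoequivalence because $\Gm$ has inverses. So $\coact$ preserves compact objects, the $\sF^i_j$ are perfect, and only finitely many are nonzero. The fibre of $\coact(e_i)$ at $\lambda$ is $\act_\lambda(e_i)$, whose endomorphism algebra is $k$, so it is a shift of a single $e_j$. Hence the supports of the $\sF^i_j$ are finitely many disjoint closed subsets covering $\Gm$. Connectedness together with $\act_1\simeq\id$ then gives $\sF^i_j=0$ for $j\neq i$ and $\sF^i_i$ a line bundle.

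Route (b) is a legitimate alternative to the paper's argument, but the compactness input is exactly what your write-up is missing.

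Conversely, the step you flag as the main obstacle is not one, and part of your justification for it is wrong. Since $B\Gm$ is $1$-truncated, the space of $\E_1$-maps $\Gm\to B\Gm$ (pointed maps $B\Gm\to B^2\Gm$) has:
\begin{itemize}
\item $\pi_0$ equal to the set of central extensions of $\Gm$ by $\Gm$;
\item $\pi_1=\Hom(\Gm,\Gm)=\Z$;
\item no higher homotopy.
\end{itemize}
The ``higher coherences'' are just the cocycle condition already encoded in the extension, so there is no further $H^2$ to kill.

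Linear reductivity is irrelevant here, since it governs cohomology with coefficients in representations rather than in $\Gm$. For example, $(\Z/2)^2$ is linearly reductive in characteristic zero, yet $H^2((\Z/2)^2;k^\times)\simeq\Z/2$ produces a nontrivial action on $\Vect$. The correct reason is the one you also state: such an extension is commutative, because its commutator pairing is a bimultiplicative map $\Gm\times\Gm\to\Gm$ and hence trivial, and $\mathrm{Ext}^1(\Gm,\Gm)=0$ for commutative algebraic groups.

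Your explanation of the noncanonicity via $\pi_1=\Z$ is right. It is exactly the freedom the paper uses in \Cref{prop:A1-action-triv} to renormalize $n_i=0$.
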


\begin{proof}
Let's write $\sC \simeq \Vect^I$ and let $\ins_i: \Vect \to \sC$ and $\ev_i: \sC \to \Vect$ denote insertion and evaluation functors corresponding to $i \in I$. Let $\sF^i_j \in \QCoh(\Gm)$ denote 
\[\ev_j \circ\ \coact \circ \ins_i(k).\]
The definition of a coaction implies that 
\begin{align*}
i_1^*(\sF^i_j) &\simeq k^{\delta_{ij}} \\
\mult^*(\sF^i_j) &\simeq \bigoplus_k \sF^i_k \boxtimes \sF^k_j
\end{align*}
By the commutativity of the following diagram 
\[
\begin{tikzcd}
\Gm \arrow[d] \arrow[r, "\id \times \mathrm{inv}"] & \Gm^2 \arrow[d, "\mult"] \\
* \arrow[r, "i_1"'] & \Gm
\end{tikzcd}
\]
we learn that 
\begin{equation}\label{eq:gm-action}
\bigoplus_k \sF^i_k \otimes \mathrm{inv}^*(\sF^k_j) \simeq (\sO_{\Gm})^{\delta_{ij}}.  
\end{equation}
Because $\sO_\Gm$ is indecomposable as an object of $\QCoh(\Gm)$, setting $j = i$ in \Cref{eq:gm-action} we learn that for each $i$ there is a unique $k$ such that 
\begin{equation}\label{eq:gm-action-3}
\sF^i_k \otimes \mathrm{inv}^*(\sF^k_i) \simeq \sO_\Gm    
\end{equation}
and for all other values of $k$, the sheaf $\sF^i_k$ is zero. Note in particular that \Cref{eq:gm-action-3} implies that the sheaf $\sF^i_k$ is invertible. Setting $j = k$ in \Cref{eq:gm-action}, we see that 
\begin{equation}\label{eq:gm-action-2}
\sF^i_k \otimes \mathrm{inv}^*(\sF^k_k) \oplus \sG \simeq (\sO_\Gm)^{\delta_{ik}}    
\end{equation}
for some sheaf $\sG$. Since $i_1^*(\sF^k_k) \neq 0$, it follows that the left hand side of \Cref{eq:gm-action-2} is nonzero, which implies that $k = i$. All in all, we have shown that 
\[
\sF^i_j = \begin{cases}
\sL_i & i = j \\
0 & i \neq j
\end{cases}
\]
where $\sL_i$ is some invertible sheaf in $\QCoh(\Gm)$. In fact, the coaction of $\Gm$ on $\sC$ endows each $\sL_i$ with the structure of a multiplicative line bundle over $\Gm$. A trivialization of the action of $\Gm$ on $\sC$ amounts to a trivialization of each $\sL_i$ as a multiplicative line bundle, so the result follows from the well-known fact that every multiplicative line bundle on $\Gm$ is (noncanonically) trivializable.
\end{proof}

\begin{proposition}\label{prop:A1-action-triv}
Suppose that $\sC$ is a split semisimple dg category with an action of $\A^1$. If $\act_0$ is conservative, then the action of $\A^1$ is trivializable. 
\end{proposition}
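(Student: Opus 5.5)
First I would trivialize the underlying $\Gm$-action. By \Cref{prop:gm-action-trivial}, write $\sC \simeq \Vect^I$ with $\Gm$ acting trivially, so that $\sC^\Gm \simeq \Vect^I \otimes \Rep(\Gm)$. By \Cref{prop:A1-action-char}, the extension of this $\Gm$-action to an $\A^1$-action amounts to a colocalization $\sC^\Gm_{\le 0} \subseteq \sC^\Gm$, with right adjoint, satisfying
\[\sO(m)\cdot\sC^\Gm_{\le n} = \sC^\Gm_{\le n+m} \quad\text{and}\quad \colim_n \sC^\Gm_{\le n} = \sC^\Gm.\]
The trivial $\A^1$-action corresponds to the choice $\sC^\Gm_{\le 0} = \Vect^I \otimes \Rep(\Gm)_{\le 0}$. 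Thus the task is to show that, after possibly changing the trivialization of the $\Gm$-action, $\sC^\Gm_{\le 0}$ has exactly this form.

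Next I would analyze the simple objects. The category $\sC^\Gm$ is semisimple with simple objects $e_i\otimes\sO(m)$. Since $\sC^\Gm_{\le 0}$ is a colocalizing subcategory that is stable under colimits and shifts, and closed under summands, I expect it to be the span of those simples it contains. Indeed, $\tau_{\le 0}$ of a simple object $s$ is a retract-type object mapping to $s$ with cofiber in $\sC^\Gm_{>0}$; semisimplicity forces the triangle to split, which puts each simple entirely on one side. Stability under $\sO(1)$ and the exhaustion condition then mean that for each $i$ there is an integer $n_i$ with $e_i\otimes\sO(m)\in\sC^\Gm_{\le 0}$ exactly when $m \le n_i$. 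The existence of a finite threshold $n_i$ (rather than all $m$ or none) uses both exhaustion and the orthogonal $\sC^\Gm_{>0}$; I would rule out the case "all $m$" at the next step.

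I then compute the invariants. The quotient $\sC^{\A^1} = \sC^\Gm_{\le 0}/\sC^\Gm_{\le -1}$ is $\Vect^I$, with the $i$-th copy spanned by $e_i\otimes\sO(n_i)$. The functor $\act_0 = i\circ r$ sends $e_i$ to the weight-zero part of the de-equivariantized image. Explicitly, $\act_0(e_i)$ is computed by $\gr_0$ of $e_i\otimes\sO(0)$, and it is nonzero iff $e_i\otimes\sO(0)$ lies in $\sC^\Gm_{\le 0}$ but not in $\sC^\Gm_{\le -1}$, i.e. iff $n_i = 0$. If instead some $i$ had $n_i=\infty$ (every twist in $\le 0$), then $\gr_0$ would vanish on $e_i$. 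So conservativity of $\act_0$ forces $n_i=0$ for every $i$, which is exactly the trivial action's subcategory, and \Cref{prop:A1-action-char} gives the trivialization.

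The main obstacle is handling the freedom of \S\ref{sss:automorphisms}. The trivialization of $\Gm$ from \Cref{prop:gm-action-trivial} is noncanonical, and regrading by $\sO(k_i)$ on the $i$-th summand shifts $n_i$. One could therefore worry that $\act_0$ being nonzero is compatible with any $n_i$ after regrading. I would address this by first choosing the $\Gm$-trivialization so that $n_i=0$ for all $i$, which is possible by twisting each factor by $\sO(-n_i)$ whenever $n_i$ is finite, and then using conservativity only to exclude $n_i=\pm\infty$. I need to check carefully that $\act_0$ detects exactly that degenerate case.
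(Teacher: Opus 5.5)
Your proposal is correct and follows the paper's own argument: trivialize the $\Gm$-action via \Cref{prop:gm-action-trivial}, record for each simple the threshold weight at which it enters $\sC^\Gm_{\le 0}$, use conservativity of $\act_0$ to exclude the case where a simple lies in every $\sC^\Gm_{\le n}$ (exhaustion excludes the other infinite case), and regrade each summand to normalize the threshold to $0$. The one slip is in your third paragraph, where you identify $\act_0(e_i)$ with the single graded piece $\gr_0$; since $(\act_0)^\Gm\simeq\oblv\circ\gr_\sbullet$ sums over all weights, $\act_0(e_i)\simeq e_i$ for any finite threshold, so conservativity cannot force $n_i=0$ before normalizing, which your final paragraph already recognizes.
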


\begin{proof}
By \Cref{prop:gm-action-trivial}, we know that the action of $\Gm \subseteq \A^1$ is trivializable. It follows that 
\[\sC^\Gm \simeq \Rep(\Gm)^I.\]
Moreover, using the trivialization of the $\Gm$ action on $\sC$, for every $i \in I$ we can endow $k_i = \ins_i(k)$ with a $\Gm$-equivariant structure. Let $n_i$ denote the infimum of the $n$ such that $k_i \in \Rep(\Gm)_{\le n}$. If $n_i = -\infty$, then $\act_0(k_i) = 0$. Otherwise, by changing the trivialization of the $\Gm$ action we may set $n_i = 0$. 
\end{proof}

\subsubsection{} For completeness, let's give an example of a split semisimple category with an action of $\A^1$ such that $\act_0$ isn't conservative. We will take the category $\sC$ to be $\Vect$; it remains to specify the $\A^1$-action. Note that the diagram 
\[
\begin{tikzcd}
\Gm \times \Gm \arrow[d, "\mult"'] \arrow[r, "j \times j"] & \A^1 \times \A^1 \arrow[d, "\mult"] \\
\Gm \arrow[r, "j"'] & \A^1
\end{tikzcd}
\]
is \textit{Cartesian} (here $j: \Gm \to \A^1$ is the canonical embedding). It follows that the functor $j^*: \QCoh(\A^1) \to \QCoh(\Gm)$ is symmetric monoidal (where the source and target have the convolution symmetric monoidal structures), and our strange $\A^1$ action on $\Vect$ is given by restricting the trivial $\Gm$-action along $j^*$. Since $j^*(\delta_0) = 0$, this action evidently has the property that $\act_0 = 0$. 

\subsection{A renormalization of $\sB_c$}\label{ss:Bc-ren}

\newcommand{\constr}{\mathrm{constr}}
\renewcommand{\pr}{\mathrm{pr}}
\newcommand{\unren}{\textrm{un-ren}}
\newcommand{\dual}{\mathrm{dual}}

\subsubsection{} The goal of this section is to prove that $HH(\sB_c; \Frob) \simeq HH(\sJ_c; \id)$. To do this, we first need to introduce an auxiliary category, which we will call $\sB_c^\ren$. 

\medskip

Let $\sB_c^{\Gm, \dual}$ denote the small subcategory of $\sB_c$ consisting of objects which are both left- and right-dualizable. 

\begin{proposition}\label{prop:dual-obj-Bc}
An object $x \in \sB_c^\Gm$ lies in $\sB_c^{\Gm, \dual}$ if and only if $\gr_\sbullet(x)$ is compact in $\sJ_c \otimes \Rep(\Gm)$. 
\end{proposition}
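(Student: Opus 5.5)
One direction is formal. By \Cref{prop:gr-monoidal} applied with $\sA = \sB_c$ (legitimate by \Cref{prop:A1-action-strict}), the functor $\gr_\sbullet: \sB_c^\Gm \to \sJ_c \otimes \Rep(\Gm)$ is monoidal. It therefore carries left- and right-dualizable objects to left- and right-dualizable objects. By \Cref{prop:Jc-rigid}, $\sJ_c$ is rigid, hence so is $\sJ_c \otimes \Rep(\Gm)$. In a rigid category whose unit is compact, the dualizable objects are exactly the compact ones. So if $x \in \sB_c^{\Gm,\dual}$, then $\gr_\sbullet(x)$ is compact.

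For the converse, suppose $\gr_\sbullet(x)$ is compact. Concretely, this means that $x$ has only finitely many nonzero graded pieces $\gr_k(x)$, and each is a finite direct sum of shifts of the simples $J_w$. I will show that $x$ is built by finitely many extensions from objects of the form $L_w\bracket{k}[n]$, with $w \in c$, each viewed as a $\gr_0(1_c)$-bimodule concentrated in a single degree as in \S\ref{sss:left-cells}.

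The first step is to use the weight filtration on $\sB_c^\Gm$, which \Cref{prop:left-lax-equiv-monad} provides compatibly with $\oblv$. Since only finitely many $\gr_k(x)$ are nonzero, $x$ is a finite iterated extension of objects concentrated in a single degree. I will need the truncation to be exhaustive, i.e.\ $x \simeq \colim_n \tau_{\le n} x$ and $\lim_n \tau_{\le n} x$ behaves correctly. This holds because $\gr_k(x) = 0$ outside a finite range and the filtration is separated. I expect this separatedness to be the main obstacle. The plan is to deduce it from the corresponding property of $\sH_c^\Gm$, checking it after applying the conservative functor $\oblv$, and using that in $\sH_c^\Gm$ objects with vanishing $\gr_\sbullet$ vanish. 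That last fact can be argued via the description of $\sH_{\le c}^{\Gm}$ through weight-truncated perverse cohomology, reducing to \Cref{prop:Hlec-semisimple}.

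The second step handles single-degree objects. Applying \Cref{prop:Bc-A1-invts-Jc} in degree $k$ (twisted by $\sO(k)$), the degree-$k$ part of $\sB_c^\Gm$ is identified with $\sJ_c$. Under this identification, a degree-$k$ object whose $\gr_k$ is a finite sum of shifts of $J_w$ is a finite sum of shifts of $L_w\bracket{k}$ with their canonical bimodule structure.

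The final step is to check that each such $L_w$ is left- and right-dualizable in $\sB_c^\Gm$. By \Cref{prop:deg-zero-summand} and its bimodule analogue, $L_w$ is a bimodule summand of the degree-zero part, which is also the zeroth graded piece, of $\gr_0(1_c) * \pi_!P_w * \gr_0(1_c)$. That object is compact and hence dualizable, since $\sB_c^\Gm$ is semi-rigid by \Cref{prop:Bc-semi-rigid}. Its degree-zero summand is split off by the degree decomposition, so $L_w$ is a retract of a dualizable object and therefore dualizable. Dualizable objects are closed under shifts, twists by $\sO(k)$, finite extensions and retracts, which gives $x \in \sB_c^{\Gm,\dual}$.
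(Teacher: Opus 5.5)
Your forward direction, and the overall shape of your converse, match the paper: finitely many graded pieces, then $\tau_{\le n}x = 0$ for $n \ll 0$ checked on the Hecke side, then dualizability of the graded pieces. However, your final step is false as written.

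\textbf{The gap.} In $\sB_c^\Gm$ the weight structure is a \emph{filtration}, not a grading. The zeroth graded piece of an object $y$ is the cofiber of $\tau_{\le -1}y \to \tau_{\le 0}y$, a subquotient of $y$. There is no ``degree decomposition'' that splits it off as a retract. The summand argument in \Cref{prop:Jc-rigid} only takes place after applying $\gr_\sbullet$, inside $\sJ_c \otimes \Rep(\Gm)$, where objects really do split by degree.

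Your argument fails concretely for $G = SL_2$ and $c = \set{1}$. There $\gr_0(1_c) = 1_c$ and $\sB_c = \sH_c \simeq \QCoh(\Omega\A^1)$, and $\gr_0(1_c) * \pi_!P_1 * \gr_0(1_c)$ is compact. Your reasoning would make $J_1 = 1_c$ a retract of it, hence compact in $\sB_c^\Gm$ and therefore in $\sB_c$. But the unit of $\sB_c$ is not compact, as noted in \S\ref{ss:Bc-ren}.

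\textbf{The repair.} The conclusion you want is still true, and the fix is short. Your Step 2 already shows that an object of $\sB_c^\Gm$ concentrated in a single degree lies in the essential image of $\oblv\colon \sJ_c \otimes \Rep(\Gm) \to \sB_c^\Gm$. By \Cref{prop:gr-monoidal} (diagram \eqref{eq:A1-triangle-2}), this functor is monoidal. Since each $J_w$ is dualizable by \Cref{prop:Jc-rigid}, the graded pieces of $x$ are dualizable, and so is their finite iterated extension $x$. This is what the paper's phrase ``each of which is dualizable'' rests on, and it makes your retract step unnecessary.

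\textbf{A smaller point on separatedness.} The property ``vanishing $\gr_\sbullet$'' does not transport along the inclusion $\sH_c \to \sH_{\le c}$, because that inclusion is only left-lax $\A^1$-equivariant. Phrase the condition instead as $\tau_{\le n}x \in \bigcap_m \sB^\Gm_{c,\le m}$. Then push forward along the conservative left-lax composite $\sB_c \to \sH_c \to \sH_{\le c} \to \sH$, which preserves these subcategories by \Cref{prop:left-lax-preserve-le-0}. This is exactly how the paper argues.
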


\begin{proof}
Since $\gr_\sbullet$ is monoidal, if $x$ is both left- and right-dualizable then $\gr_\sbullet(x)$ has this property as well, which implies that $\gr_\sbullet(x)$ is compact in $\sJ_c \otimes \Rep(\Gm)$ by \Cref{prop:Jc-rigid}. Conversly, suppose that $\gr_\sbullet(x)$ is compact. It follows that $\gr_\sbullet(x)$ is concentrated in finitely many degrees, and we claim that this implies that $\tau_{\le n}x = 0$ for $n \ll 0$. Indeed, for $n \ll 0$, the object $\tau_{\le n}x$ lies in the intersection 
\[\bigcap_{n \in \Z} \sB_{c, \le n}^\Gm.\]
The composite functor 
\[\sB_c \to \sH_c \to \sH_{\le c} \to \sH\]
is conservative and left-lax $\A^1$-equivariant, so it follows that $\tau_{\le n}x = 0$ for $n$ sufficiently negative. Now it follows that $x$ can be written as a finite colimit of its graded pieces, each of which is dualizable, so $x$ itself must be dualizable.  
\end{proof}

\subsubsection{} Now let $\sB_c^{\Gm, \ren}$ denote $\Ind(\sB_c^{\Gm, \dual})$. Because $\sB_c^\Gm$ is semi-rigid, every compact object fo $\sB_c^\Gm$ lies in $\sB_c^{\Gm, \dual}$. It follows that we obtain a pair of adjoint functors 
\[\ren: \sB_c^\Gm \rightleftarrows \sB_c^{\Gm, \ren}: \unren\]
which exhibit $\sB_c^\Gm$ as a colocalization of $\sB_c^{\Gm, \ren}$. Because $\sB_c^{\Gm, \dual}$ is a small monoidal subcategory of $\sB_c^\Gm$, the category $\sB_c^{\Gm, \ren}$ acquires a monoidal structure such that $\unren$ is monoidal. With respect to this monoidal structure, $\sB_c^{\Gm, \ren}$ is tautologically rigid. 

\medskip 

Because $\sB_c^{\Gm, \dual}$ is stable under the action of $\Perf(B\Gm) \subseteq \Rep(\Gm)$, the category $\sB_c^{\Gm, \ren}$ acquires a $\Rep(\Gm)$-module structure such that $\unren$ is $\Rep(\Gm)$-linear. Let's write $\sB_c^\ren$ for the de-equivariantization of $\sB_c^{\Gm, \ren}$. Finally, if we define 
\[\sB_{c, \le n}^{\Gm, \ren} = \Ind(\sB_c^{\Gm, \dual} \cap \sB_{c, \le n}^\Gm)\]
then this equips $\sB_c^\ren$ with an $\A^1$-action. It follows from \Cref{prop:dual-obj-Bc} that the truncation functors of $\sB_c^\Gm$ preserve $\sB_c^{\Gm, \dual}$, so with respect to this $\A^1$-action the (de-equivariantization of) $\unren$ is strictly $\A^1$-equivariant. Moreover, the monoidal structure of $\sB_c^{\ren}$ is strictly compatible with the action of $\A^1$. 

\subsubsection{} Let's give an example of the difference between $\sB_c$ and $\sB_c^\ren$: when $G = SL_2$ and $c = \set{1}$, we saw in \S\ref{sss:sl2-1-qcoh} that 
\[\sH_c \simeq \QCoh(\Omega \A^1),\]
and this equivalence exchanges the monoidal structure on $\sH_c$ with the convolution monoidal structure on $\Omega \A^1$. Furthermore, for this cell $1_c = \gr_0(1_c)$, so $\sB_c = \sH_c$. Note that the monoidal unit fails to be compact, even though it is of course dualizable (this is perhaps not so surprising, because the monoidal unit of $\sH$ failed to be compact to begin with). On the other hand, 
\[\sB_c^\ren \simeq \ICoh(\Omega \A^1),\]
again with the convolution monoidal structure. With respect to these identifications, the canonical monoidal functor $\sB_c^\ren \to \sB_c$ identifies with 
\[\Psi: \ICoh(\Omega \A^1) \to \QCoh(\Omega \A^1).\]

\begin{proposition}\label{prop:coact-Bren-str-cts}
$\coact: \sB_c^\ren \to \QCoh(\A^1) \otimes \sB_c^\ren$ admits a continuous right adjoint.
\end{proposition}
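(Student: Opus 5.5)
Since $\sB_c^\ren$ is compactly generated by $\sB_c^{\ren,c}$, the de-equivariantization of $\sB_c^{\Gm,\dual}$, it suffices to show that $\coact$ sends a set of compact generators to compact objects of $\QCoh(\A^1)\otimes\sB_c^\ren$. The plan is to use the description of the $\A^1$-action through \Cref{prop:A1-action-char}. After $\Gm$-equivariantization, $\coact$ becomes the functor
\[\sB_c^{\Gm,\ren}\to \QCoh(\A^1/\Gm)\otimes_{\Rep(\Gm)}\sB_c^{\Gm,\ren}\simeq \Vect^{\Fil}\otimes_{\Rep(\Gm)}\sB_c^{\Gm,\ren}.\]
It sends $x$ to its weight filtration $n\mapsto \tau_{\le n}x$, viewed as a filtered object. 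Continuity of the right adjoint can be checked $\Gm$-equivariantly, because equivariantization and de-equivariantization are compatible with compact generation and $\Rep(\Gm)$-linear functors. So the task reduces to showing that for $x\in\sB_c^{\Gm,\dual}$, the filtered object $(\tau_{\le n}x)_n$ is compact in filtered objects of $\sB_c^{\Gm,\ren}$.

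Compactness of a filtered object means two things: each term is compact, and the filtration is eventually constant in both directions. Concretely, it should be $0$ for $n\ll0$ and equal to $x$ for $n\gg0$; a finite filtration with compact terms is a finite colimit of the generators $\sO(m)\otimes y$. First, each $\tau_{\le n}x$ lies in $\sB_c^{\Gm,\dual}$, since the truncation functors preserve $\sB_c^{\Gm,\dual}$ by \Cref{prop:dual-obj-Bc}. Hence each $\tau_{\le n}x$ is compact in $\sB_c^{\Gm,\ren}$ by definition of $\Ind$. Second, $\gr_\sbullet(x)$ is compact in $\sJ_c\otimes\Rep(\Gm)$ by \Cref{prop:dual-obj-Bc}, so it is concentrated in finitely many degrees. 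The proof of that proposition shows that $\tau_{\le n}x=0$ for $n\ll0$, using conservativity and left-lax equivariance of $\sB_c\to\sH$. Since $x$ is then a finite extension of its graded pieces, also $\tau_{\le n}x=x$ for $n\gg0$. Thus the filtration is finite with compact terms, and $\coact$ preserves compact objects.

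The main obstacle is to make sure the identification of $\coact^\Gm$ with the weight-filtration functor is valid in $\sB_c^\ren$, and not only in $\sB_c$. The $\A^1$-action on $\sB_c^\ren$ was defined by $\sB_{c,\le n}^{\Gm,\ren}=\Ind(\sB_c^{\Gm,\dual}\cap\sB_{c,\le n}^\Gm)$. On compact objects, the truncation in $\sB_c^{\Gm,\ren}$ is therefore computed by the truncation of $\sB_c^\Gm$: the latter preserves $\sB_c^{\Gm,\dual}$ and gives the required cofiber sequences inside the small category. I would verify this first, and then apply the criterion of \cite{gaitsgory2026excursionalgebra} identifying $\QCoh(\A^1/\Gm)$-comodule structures with filtrations. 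Once that is in place, the remaining steps are formal.
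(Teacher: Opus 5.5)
Your proposal is correct and is essentially the paper's argument. You pass to $\Gm$-equivariant objects and identify $\coact^\Gm$ with $x\mapsto(\tau_{\le n}x)_n$ into filtered objects of $\sB_c^{\Gm,\ren}$, followed by $\id\otimes\oblv$ into $\QCoh(\A^1/\Gm)\otimes\sB_c^\ren$ (this last functor preserves compact objects by rigidity of $\Rep(\Gm)$), and then \Cref{prop:dual-obj-Bc} shows that for $x\in\sB_c^{\Gm,\dual}$ this filtration is finite with compact terms; your check that truncations in $\sB_c^{\Gm,\ren}$ of objects of $\sB_c^{\Gm,\dual}$ are computed in $\sB_c^\Gm$ is exactly what the paper uses implicitly when it defines the $\A^1$-action on $\sB_c^\ren$.
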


\begin{proof}
We can check this after taking $\Gm$-invariants. The functor 
\[\coact^\Gm: \sB_c^{\Gm, \ren} \to \QCoh(\A^1/\Gm) \otimes \sB_c^\ren\]
is the composite 
\begin{equation}\label{eq:composite-arrow}
\sB_c^{\Gm, \ren} \to \QCoh(\A^1/\Gm) \otimes \sB_c^{\Gm, \ren} \totext{\id \otimes \oblv} \QCoh(\A^1/\Gm) \otimes \sB_c^\ren    
\end{equation}
where the first functor send $x \in \sB_c^{\Gm, \ren}$ to 
\[(\tau_{\le n}x) \in \sB_c^{\Gm, \ren, \Fil} \simeq \QCoh(\A^1/\Gm) \otimes \sB_c^{\Gm, \ren}.\] 
The second arrow in (\ref{eq:composite-arrow}) has a continuous right adjoint by the rigidity of $\Rep(\Gm)$, so it suffices to show that for any $x \in \sB_c^{\Gm, \dual}$, the filtered object $(\tau_{\le n}x)$ is compact as such. Since $\tau_{\le n}x = 0$ for $n \ll 0$, its enough to show that $\gr_\sbullet(x)$ is compact in $\Rep(\Gm) \otimes \sB_c^{\ren, \Gm}$, and this follows from \Cref{prop:dual-obj-Bc}. 
\end{proof}

\subsubsection{} By \Cref{prop:frob-action-Gm}, the endomorphism $\Frob$ of $\sB_c$ identifies with $\act_{\sqrt{q}}$. In particular, $\Frob$ is $\A^1$-equivariant. By \S\ref{sss:hh-A1-equiv}, it follows that $HH(\sB_c; \Frob)$ acquires a natural $\A^1$-action. 

\begin{proposition}\label{prop:str-cts-semisimple-conservative}
Suppose that $\sC$ and $\sD$ are categories acted on by $\A^1$. Suppose that there is an $\A^1$-equivariant functor $\pi: \sD \to \sC$ which admits a fully-faithful (but not necessarily $\A^1$-equivariant) left adjoint. If $\sC$ is semisimple and the coaction functor 
\[\coact: \sD \to \sD \otimes \QCoh(\A^1)\]
is strongly continuous, then the endofunctor $\act_0$ of $\sC$ is conservative. 
\end{proposition}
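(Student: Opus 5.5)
Let $\pi^L: \sC \to \sD$ be the fully faithful left adjoint, so $\pi\pi^L \simeq \id_\sC$. Since $\sC$ is semisimple, it suffices to show that $\act_0(x) \neq 0$ for every simple $x \in \sC$; every object is a direct sum of simples and $\act_0$ is continuous. The plan is to move the question to $\sD$, use strong continuity there, and then come back to $\sC$ along $\pi$.

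First I reduce to a statement about $\sD$. Because $\pi$ is $\A^1$-equivariant, $\pi \circ \act_0^\sD \simeq \act_0^\sC \circ \pi$, and hence
\[\act_0^\sC(x) \simeq \act_0^\sC(\pi\pi^L x) \simeq \pi(\act_0^\sD(\pi^L x)).\]
So I need to show that $\act_0^\sD(y)$, for $y = \pi^L x$, has nonzero image under $\pi$. The natural route is to produce a nonzero map out of $x$ that factors through $\act_0$. For this I use the coaction. By definition $\act_\lambda$ is $\coact$ followed by restriction to the point $\lambda$, i.e. $(\id \otimes i_\lambda^*)\circ \coact$. Strong continuity of $\coact$ means that for compact $y$ the object $\coact(y)$ is compact in $\sD \otimes \QCoh(\A^1)$, so it is a perfect family over $\A^1$. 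Its fiber at $1$ is $y$, and its fiber at $0$ is $\act_0(y)$.

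The key step uses upper semicontinuity. For a perfect family over $\A^1$, if the fiber at $0$ vanished, the support of $\coact(y)$ would be a closed subset missing $0$, hence finite. By $\Gm$-equivariance, however, the support is $\Gm$-stable and contains $1$, so it contains all of $\Gm$. This is a contradiction, so $\act_0(y) \neq 0$ for every nonzero compact $y$ in $\sD$. To make this rigorous I will pair against compact objects: $\Hom(z, \coact(y))$ is a perfect complex over $k[t]$, and it is nonzero at $t=1$ when $z = y$.

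Finally I return to $\sC$, and this is the main obstacle. Knowing $\act_0^\sD(y) \neq 0$ does not immediately give $\pi(\act_0^\sD y) \neq 0$, and $\pi^L x$ need not be compact. The first thing I would try is a cleaner formulation, with the adjunction units built in: test against $\Hom_\sD(\pi^L x, \act_0^\sD \pi^L x) \simeq \Hom_\sC(x, \act_0^\sC x)$. Consider the coaction matrix coefficient
\[M = \Hom_\sD\bigl(\pi^L x, \coact(\pi^L x)\bigr) \in \QCoh(\A^1),\]
computed using the continuous right adjoint $\coact^R$, so that $M \simeq \Hom(\coact^R(\ldots))$ is well behaved. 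This $M$ is a $\Gm$-equivariant $k[t]$-module with fiber at $1$ equal to $\End(x) = k$ (using semisimplicity and full faithfulness). Its fiber at $0$ is $\Hom_\sC(x, \act_0 x)$. I expect strong continuity to force $M$ to be perfect, or at least finitely generated, over $k[t]$. In that case $\Gm$-equivariance together with a nonzero generic fiber forces a nonzero fiber at $0$, which gives $\act_0 x \neq 0$. Justifying the finiteness of $M$ when $\pi^L x$ is not compact is the delicate point. I would handle it by writing $x$ as a retract of $\pi$ of a compact object of $\sD$: semisimplicity makes $x$ compact, and its image under the continuous functor $\pi$ receives maps from compacts of $\sD$.
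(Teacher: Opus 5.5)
The reduction to simple $x$, the identity $\act_0^\sC(x)\simeq\pi(\act_0^\sD(\pi^L x))$, and your identification of the fibers of $M$ at $1$ and at $0$ are all fine. There is, however, a genuine gap, exactly at the step you call delicate.

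\textbf{The false premise.} It comes from treating compact objects of $\sD\otimes\QCoh(\A^1)$ as perfect families over $\A^1$. That would require Hom spaces between compact objects of $\sD$ to be finite-dimensional, which is not assumed.

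Your third paragraph is already false. Take $\sD=\QCoh(\A^1)$ with the scaling action. Its coaction is pullback along $(\lambda,x)\mapsto\lambda x$, which is strongly continuous. The skyscraper at $x=1$ is a nonzero compact object killed by $\act_0$. Its coaction is the structure sheaf of the hyperbola $\lambda x=1$, whose support over $\A^1_\lambda$ is $\Gm$, which is not closed.

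\textbf{Why $M$ need not be finitely generated.} For the same reason, nothing you say makes $M$ finitely generated. By adjunction and equivariance of $\pi$, $M\simeq\Hom(x\otimes\sO_{\A^1},\coact_\sC(x))$ is a matrix coefficient of the coaction on $\sC$, and that coaction is not assumed strongly continuous. For the $\A^1$-action on $\Vect$ at the end of \S\ref{ss:6-action-semisimple}, $M$ equals $k[t^{\pm1}]$. This is a $\Gm$-equivariant module with fiber $k$ at $1$ and zero fiber at $0$, which is precisely what has to be excluded.

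So finiteness of $M$ is essentially equivalent to the statement itself, and your proposed remedy does not address it. Note also that $\pi^L x$ \emph{is} compact, since its right adjoint $\pi$ is continuous, so that was never the obstacle.

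\textbf{The missing idea.} Use compactness of $F=\coact_\sD(\pi^L x)$ itself, via the triangle $\Gamma_0F\to F\to j_*j^*F$ (sections supported at $0$, then restriction to $\Gm$).
\begin{enumerate}
\item If $\act_0x=0$, then $\Gamma_0\coact_\sC(x)=0$, because $i_0^*$ kills $j_*$ and is conservative on objects supported at $0$. By equivariance of $\pi$ and $\pi\pi^L\simeq\id$, this says $\Gamma_0F$ lies in the kernel of $\pi\otimes\id$.
\item A $\Gm$-equivariant structure on $x$ (from \Cref{prop:gm-action-trivial}), together with the $\Gm$-equivariance of $\pi^L$, gives $j_*j^*F\simeq\pi^L(x)\otimes j_*\sO_\Gm$. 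This lies in the image of $\pi^L\otimes\id$.
\item By adjunction the connecting map $j_*j^*F\to\Gamma_0F[1]$ vanishes. Hence $\pi^L(x)\otimes j_*\sO_\Gm$ is a direct summand of the compact object $F$.
\item This is impossible. Since $\End(\pi^L x)\simeq k$, applying $\Hom(\pi^L x\otimes\sO_{\A^1},-)$ would make $k[t^{\pm1}]$ a retract of $k[t]$.
\end{enumerate}
That same $\Gm$-equivariance of $\pi^L x$ is also what you would need, and did not supply, to regard $M$ as a graded $k[t]$-module at all.
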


\begin{proof}
Slightly abusing notation, we will write $j^*$ and $j_*$ for the restriction functor 
\[\sC \otimes \QCoh(\A^1) \to \sC \otimes \QCoh(\Gm)\]
and its right adjoint, respectively. Similarly, we will write $\hat{\iota}^*$ and $\hat{\iota}_+$ for the restriction  
\[\sC \otimes \QCoh(\A^1) \to \sC \otimes \QCoh(\A^1)_0\]
and its left adjoint. We will also use this notation for the corresponding functors into and out of $\sD \otimes \QCoh(\A^1)$, since it will be clear from context which categories we're mapping out of.  

To show that $\act_0$ is a conservative endomorphism of $\sC$, it suffices to show that for every simple object $x \in \sC$, $\act_0(x) \neq 0$. Note that $i_0^*: \QCoh(\A^1)_0 \to \Vect$ admits a left adjoint whose image generates $\QCoh(\A^1)_0$. It follows that 
\[\sC \otimes \QCoh(\A^1)_0 \totext{\id \otimes i_0^*} \sC\]
is conservative, so if $\act_0(x) = 0$ then $\hat{\iota}_+\hat{\iota}^* \coact_\sC(x) = 0$. 

We have an exact triangle 
\[\hat{\iota}_+\hat{\iota}^* \coact_\sD \pi^L(x) \to \coact_\sD \pi^L(x) \to j_*j^* \coact_\sD \pi^L(x)\]
in $\sD \otimes \QCoh(\A^1)$, and by assumption the middle term is compact ($x$ is compact in $\sC$ because it is simple; $\pi^L(x)$ is compact in $\sD$ because $\pi^L$ has a continuous right adjoint). Because $\pi$ is $\A^1$-equivariant, we know that $\pi^L$ is left-lax $\A^1$-equivariant, and in particular $\Gm$ equivariant. By \Cref{prop:gm-action-trivial}, it follows that 
\[j^*\circ \coact\circ  \pi^L(x) \simeq \pi^L(x) \otimes \sO_\Gm.\] 
If $\hat{\iota}_+\hat{\iota}^* \coact_\sC(x) = 0$ then 
\[\hat{\iota}_+\hat{\iota}^* \coact_\sD(x) \in \sC^\perp\]
which would imply the vanishing of the boundary map
\[j_*j^* \coact_\sD \pi^L(x) \to \hat{\iota}_+\hat{\iota}^* \coact_\sD \pi^L(x)[1].\]
In particular, we would have 
\[\coact_\sD \pi^L(x) \simeq \hat{\iota}_+\hat{\iota}^* \coact_\sD \pi^L(x)  \oplus \left(\pi^L(x) \otimes j_*(\sO_\Gm)\right).\]
This contradicts the compactness of $\coact_\sD \pi^L(x)$, because 
\[\pi^L(x) \otimes j_*(\sO_\Gm) \simeq \colim \pi^L(x) \otimes \sO_{\A^1}\]
but the projection 
\[\coact_\sD \pi^L(x) \to \pi^L(x) \otimes j_*(\sO_\Gm)\]
doesn't factor through any finite term of the colimit. 
\end{proof}

\begin{proposition}\label{prop:A1-hh-Bc-triv}
The action of $\A^1$ on $HH(\sB_c; \Frob)$ is trivializable.
\end{proposition}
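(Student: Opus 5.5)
The plan is to apply \Cref{prop:A1-action-triv} to $\sC = HH(\sB_c; \Frob)$. This needs two inputs. First, $\sC$ must be split semisimple. Second, $\act_0$ on $\sC$ must be conservative, and this second input will come from \Cref{prop:str-cts-semisimple-conservative}.

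For semisimplicity, I would combine \Cref{prop:hh-hc-equals-hh-bc} with \Cref{constr:semiorthogonal-trace}. Applying the construction to the cell decompositions of $\sH$ realizes $HH(\sH_c; \Frob)$ as a subquotient, hence a direct summand, of $HH(\sH; \Frob) \simeq \Rep(G(\F_q))^\unip$, which is split semisimple over $\ol{\Q}_\ell$. So $HH(\sB_c;\Frob)$ is split semisimple.

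For conservativity, take $\sD = HH(\sB_c^\ren; \Frob)$. The functor $\unren: \sB_c^\ren \to \sB_c$ is strictly monoidal and strictly $\A^1$-equivariant. Since $\Frob = \act_{\sqrt q}$ (\Cref{prop:frob-action-Gm}), it intertwines $\Frob$ on both sides, so the functoriality of \S\ref{sss:trace-functoriality}, performed in $\A^1$-categories as in \S\ref{sss:hh-A1-equiv}, gives an $\A^1$-equivariant functor
\[\pi = HH(\unren): HH(\sB_c^\ren; \Frob) \to HH(\sB_c; \Frob).\]
The left adjoint $\ren$ is fully faithful and $(\sB_c^\ren,\sB_c^\ren)$-bilinear, since $\unren$ is a monoidal colocalization with compactly generated source. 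It should therefore induce, after tensoring over $\sB_c^\ren \otimes \sB_c^{\ren,\rev}$, a fully faithful left adjoint $\pi^L$. One way to see this is to note that $\sB_c$ is a bimodule colocalization of $\sB_c^\ren$, so $HH(\sB_c^\ren; \sB_c) \simeq HH(\sB_c; \Frob)$, and the colocalization persists after tensoring. In any case, $\pi^L$ need not be $\A^1$-equivariant, and \Cref{prop:str-cts-semisimple-conservative} does not require it. The hypothesis that the coaction on $\sD$ is strongly continuous follows from \Cref{prop:hh-strong-cts-coact}: $\sB_c^\ren$ is rigid, hence semi-rigid, and its coaction is strongly continuous by \Cref{prop:coact-Bren-str-cts}. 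Then \Cref{prop:str-cts-semisimple-conservative} makes $\act_0$ conservative on $HH(\sB_c;\Frob)$, and \Cref{prop:A1-action-triv} finishes the proof.

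The main obstacle is the middle step, namely that $HH(\unren)$ is $\A^1$-equivariant and admits a fully faithful left adjoint. I would first verify that $\ren$ is bilinear. This reduces to checking that $\sB_c^\Gm \to \sB_c^{\Gm,\ren}$ is compatible with convolution on compact generators, which holds because compact objects of $\sB_c^\Gm$ are dualizable and $\ren$ is the identity on them. The fully faithfulness should then follow from the monadicity of $HH$ over the underlying category (\S\ref{sss:hh-monadic}).
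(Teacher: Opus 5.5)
Your proposal is correct and follows the paper's proof essentially step for step. Semisimplicity of $HH(\sB_c;\Frob)\simeq \Rep(G(\F_q))^\unip_c$ reduces the claim, via \Cref{prop:A1-action-triv}, to conservativity of $\act_0$. That conservativity comes from \Cref{prop:str-cts-semisimple-conservative} applied to $\pi = HH(\textrm{un-ren};\Frob)\colon HH(\sB_c^\ren;\Frob)\to HH(\sB_c;\Frob)$, with strong continuity supplied by \Cref{prop:coact-Bren-str-cts} and \Cref{prop:hh-strong-cts-coact}. The step you flag as the main obstacle is handled in the paper by \Cref{constr:semiorthogonal-trace} together with \Cref{prop:hh-map}, which is your ``colocalization persists after tensoring'' argument. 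Strict bilinearity of $\ren$ follows automatically from the (semi-)rigidity of $\sB_c^\ren$, as in \Cref{prop:semiorthogonal-semirigid}, so your hands-on check on generators is not needed.
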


\begin{proof}
Since 
\[HH(\sB_c; \Frob) \simeq \Rep(G(\F_q))^\unip_c\]
is semisimple, by \Cref{prop:A1-action-triv} it suffices to show that the endofunctor $\act_0$ of $HH(\sB_c; \Frob)$ is conservative. By \Cref{constr:semiorthogonal-trace} and \Cref{prop:hh-map}, we have a diagram 
\[HH(\unren; \Frob)^L: HH(\sB_c; \Frob) \rightleftarrows HH(\sB_c^\ren; \Frob): HH(\unren; \Frob)\]
exhibiting $HH(\sB_c; \Frob)$ as a coreflective subcategory of $HH(\sB_c^\ren; \Frob)$. By \S\ref{sss:hh-A1-equiv} the functor $\pi = HH(\unren; \Frob)$ is $\A^1$-equivariant. By \Cref{prop:coact-Bren-str-cts} and \Cref{prop:hh-strong-cts-coact} the coaction functor for $HH(\sB_c^\ren;\Frob)$ is strongly continuous, so we may apply \Cref{prop:str-cts-semisimple-conservative} with $\sC = HH(\sB_c; \Frob)$, $\sD = HH(\sB_c^\ren; \Frob)$, and $\pi$ the functor between them. 
\end{proof}

\begin{proposition}\label{prop:hh-bc-equals-hh-jc}
\[HH(\sB_c; \Frob) \simeq HH(\sJ_c; \id)\]
\end{proposition}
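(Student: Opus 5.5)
The plan is to combine the triviality of the $\A^1$-action on $HH(\sB_c; \Frob)$ (\Cref{prop:A1-hh-Bc-triv}) with the computation of invariants of Hochschild homology (\Cref{prop:invts-hh}). The $\A^1$-action on $\sB_c$ is strictly compatible with the monoidal structure (\Cref{prop:A1-action-strict}), and by \Cref{prop:frob-action-Gm} the endomorphism $\Frob$ of $\sB_c$ identifies with $\act_{\sqrt{q}}$, hence is $\A^1$-equivariant and monoidal. So we are in the setup of \S\ref{sss:hh-A1-equiv}, and \Cref{prop:invts-hh} gives
\[HH(\sB_c; \Frob)^{\A^1} \simeq HH(\sB_c^{\A^1}; \Frob).\]
Here $\sB_c^{\A^1}$ carries the monoidal structure coming from functoriality of invariants. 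By \Cref{prop:monoidal-structure-A1-invts} this agrees with the structure of \Cref{constr:monoidal-structure-A1-invts}. Combined with \Cref{prop:Bc-A1-invts-Jc}, this identifies $\sB_c^{\A^1}$ with $\sJ_c$ as a monoidal category. One should check that the equivalence of \Cref{prop:Bc-A1-invts-Jc} is monoidal; it is induced by $\oblv: \sB_c \to \sH_c$ on $\A^1$-invariants, and both sides are quotients of the $\le 0$ parts.

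Next I will identify the restriction of $\Frob$ to $\sJ_c$. Since $\Frob = \act_{\sqrt{q}}$ and the action of $\A^1$ on the invariants $\sB_c^{\A^1}$ is canonically trivial, the induced endomorphism of $\sB_c^{\A^1}$ is canonically the identity as a monoidal functor. This holds because $\act_\lambda \circ i \simeq i$ as monoidal $\A^1$-equivariant functors by the universal property of invariants. Hence $HH(\sB_c^{\A^1}; \Frob) \simeq HH(\sJ_c; \id)$.

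It remains to see that $HH(\sB_c; \Frob)^{\A^1} \simeq HH(\sB_c; \Frob)$. By \Cref{prop:A1-hh-Bc-triv} the $\A^1$-action on $HH(\sB_c;\Frob)$ is trivializable. For a category $\sC$ with trivial $\A^1$-action, $\act_0 \simeq \id$, so by \Cref{prop:invts-idempotent} the map $i: \sC^{\A^1} \to \sC$ is an equivalence. The equivalence with the trivialized action transports this statement to our action, since invariants are functorial in equivalences of $\A^1$-categories.

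The main subtlety I expect is bookkeeping, not new mathematics. I need to verify that the triviality statement gives an equivalence of $\A^1$-categories; the trivialization is noncanonical, but only the existence of an equivalence is needed. I also need the monoidal identification $\sB_c^{\A^1} \simeq \sJ_c$ to be compatible with the $\Frob$-twist. If the first point gives trouble, I would instead argue directly that $r: HH(\sB_c;\Frob) \to HH(\sB_c;\Frob)^{\A^1}$ is an equivalence. This follows because $\act_0$ is conservative (shown in the proof of \Cref{prop:A1-hh-Bc-triv}) and idempotent on a semisimple category, hence isomorphic to the identity on each simple summand after the trivialization of \Cref{prop:A1-action-triv}.
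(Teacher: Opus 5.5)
Your proposal is correct and follows the same route as the paper. Trivializability of the $\A^1$-action (\Cref{prop:A1-hh-Bc-triv}) gives $HH(\sB_c;\Frob)\simeq HH(\sB_c;\Frob)^{\A^1}$, \Cref{prop:invts-hh} identifies this with $HH(\sB_c^{\A^1};\Frob)$, and \Cref{prop:Bc-A1-invts-Jc}, \Cref{prop:monoidal-structure-A1-invts} and the triviality of $\Frob=\act_{\sqrt q}$ on invariants turn it into $HH(\sJ_c;\id)$. Your extra bookkeeping (using \Cref{prop:invts-idempotent} to see that $i$ is an equivalence once $\act_0\simeq\id$, and flagging monoidality of $\sB_c^{\A^1}\simeq\sJ_c$) makes explicit what the paper's three-line proof leaves implicit.
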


\begin{proof}
By \Cref{prop:A1-hh-Bc-triv} and \Cref{prop:invts-hh} we know that 
\[HH(\sB_c; \Frob) \simeq HH(\sB_c; \Frob)^{\A^1} \simeq HH(\sB_c^{\A^1}; \Frob).\]
By \Cref{prop:Bc-A1-invts-Jc} we know that $\sB^{\A^1} \simeq \sJ_c$ (in fact, this is true as a monoidal category by \Cref{prop:monoidal-structure-A1-invts}), and by \Cref{prop:frob-action-Gm} we know that $\Frob$ acts trivially on $\sJ_c$, so the result follows. 
\end{proof}

\subsection{Proof of \Cref{thm:intro-main}}\label{ss:6-main}

\subsubsection{}\label{sss:vect-gamma-twist} If $\Gamma$ is a finite group and $k$ is an algebraically closed field of characteristic zero, then 
\[H^3(\Gamma; k^\times) \simeq H^3(\Gamma; \mu_\infty) \simeq H^4(\Gamma; \Z).\]
Given a cocycle $\omega \in H^3(\Gamma; k^\times)$, we get an isomorphism class of maps 
\[B\Gamma \to B^3\Gm.\]
Every map $B\Gamma \to B^3\Gm$ may be upgraded to a pointed map, and the space of pointed structures is connected (because we are working over an algebraically closed field). Applying $\Omega$, we obtain an $\E_1$-map $\Gamma \to B^2\Gm$, well-defined up to isomorphism. Pulling back the universal gerbe on $B^2\Gm$, we obtain a monoidal category $\Vect_{\Gamma, \omega}$. This category is isomorphic to $\QCoh(\Gamma)$ as a plain category, but the associativity constraint of its monoidal structure is modified according to $\omega$. 

\medskip 

The $(\infty, 2)$-category 
\[\Vect_{\Gamma, \omega}\tmod\]
is fully-dualizable, and corresponds under the cobordism hypothesis to a 3d TQFT which is called Dijkgraaf-Witten theory (with gauge group $\Gamma$ and level $\omega$). For more information, see \cite{quinn1998group}.

\begin{theorem}\label{thm:main}
The category $\Rep(G(\F_q))^\unip$ admits a direct sum decomposition indexed by two-sided cells 
\[\Rep(G(\F_q))^\unip \simeq \bigoplus_c \Rep(G(\F_q))^\unip_c\]
such that 
\[\Rep(G(\F_q))^\unip_c \simeq \QCoh_\sG(A_c/A_c)\]
for a finite group $A_c$ and gerbe $\sG$.  
\end{theorem}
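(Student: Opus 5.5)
The plan is to assemble the results already established, in two stages: first the decomposition by cells, then the identification of each summand.

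\emph{Decomposition.} Start from Eteve's equivalence $HH(\sH; \Frob) \simeq \Rep(G(\F_q))^\unip$ (\S\ref{sss:trace-of-hecke}). Choose a linear refinement $c_1, \dots, c_n$ of the two-sided order. The filtration of $\sH$ by the ideals $\sH_{\le c}$ then gives a finite chain of semiorthogonal decompositions, as in (\ref{eq:semiorthogonal-decomp-cells}). All arrows in these decompositions are $(\sH,\sH)$-bilinear, by \Cref{prop:semiorthogonal-semirigid} and \Cref{prop:cells-ideals}, and they are preserved by $\Frob$. Applying \Cref{constr:semiorthogonal-trace} repeatedly, together with \Cref{prop:semirigid-loc-coloc} to keep semi-rigidity at each stage, yields a filtration of $HH(\sH;\Frob)$ whose associated graded pieces are $HH(\sH_c;\Frob)$. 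Since $\Rep(G(\F_q))^\unip$ is semisimple, every semiorthogonal decomposition of it splits. We therefore set $\Rep(G(\F_q))^\unip_c := HH(\sH_c;\Frob)$ and obtain the direct sum decomposition.

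\emph{Identification of each summand.} Fix $c$. Chain together the following equivalences:
\[HH(\sH_c;\Frob)\simeq HH(\sB_c;\Frob)\simeq HH(\sJ_c;\id).\]
The first is \Cref{prop:hh-hc-equals-hh-bc} and the second is \Cref{prop:hh-bc-equals-hh-jc}. Next, invoke Losev--Ostrik's Morita equivalence $\sJ_c\tmod\simeq\Vect_{A_c,\omega}\tmod$ from \S\ref{sss:intro-losev-ostrik}, with $A_c$ and $\omega$ defined there. It is induced by the bimodule $\sM_c$ and is compatible with the identity endomorphisms, so Morita invariance of categorical traces (\S\ref{sss:hh-morita-invt}) gives
\[HH(\sJ_c;\id)\simeq\tr(\id;\Vect_{A_c,\omega}\tmod).\]
Finally, use the computation of the Hochschild homology of a twisted group category \cite{freed1994higher,willerton2008twisted}. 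Writing $\Vect_{A_c,\omega}$ as $\QCoh(A_c)$ with convolution twisted by the $\E_1$-map $A_c\to B^2\Gm$ of \S\ref{sss:vect-gamma-twist}, the tensor product $\Vect_{A_c,\omega}\otimes_{\Vect_{A_c,\omega}\otimes\Vect_{A_c,\omega}^\rev}\Vect_{A_c,\omega}$ is computed by a twisted fiber product. This fiber product is $\QCoh$ of the inertia stack $A_c/A_c$ twisted by the transgressed gerbe $\sG$, which gives $\QCoh_\sG(A_c/A_c)$.

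\emph{Main obstacle.} Almost all of the hard work is already contained in \Cref{prop:hh-bc-equals-hh-jc} and \Cref{prop:hh-hc-equals-hh-bc}. The remaining delicate point is the transgression step: one must check that the twisted trace is indeed sheaves on $A_c/A_c$ twisted by a gerbe. I would handle this by expressing everything in terms of the $\E_1$-map $A_c\to B^2\Gm$. The trace of a group algebra object in the symmetric monoidal category of correspondences with local systems is then computed by the loop space $L(BA_c)=A_c/A_c$. The twist is the transgression $L(B^3\Gm)\to B^2\Gm$ pulled back along $L(BA_c)\to L(B^3\Gm)$, and this produces a gerbe.

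A minor additional check is that the Losev--Ostrik equivalence is an equivalence of module $(\infty,2)$-categories in our dg setting and not merely an abelian one. This should follow from semisimplicity: both sides are fusion categories, so the equivalence of abelian module categories extends to the dg setting by ind-completion.
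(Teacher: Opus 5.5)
Your proposal is correct and follows the paper's proof essentially step for step. The steps are: Eteve's identification $HH(\sH;\Frob)\simeq\Rep(G(\F_q))^\unip$; \Cref{constr:semiorthogonal-trace} together with semisimplicity to split off the summands $HH(\sH_c;\Frob)$; the chain $HH(\sH_c;\Frob)\simeq HH(\sB_c;\Frob)\simeq HH(\sJ_c;\id)$; and then Losev--Ostrik Morita invariance and the Freed/Willerton computation. Your extra remarks (refining the two-sided order to a linear one, the transgression picture for $\sG$, and lifting the Losev--Ostrik equivalence to the dg setting via semisimplicity) only spell out points the paper handles by citation or leaves implicit. One small correction: for the linear refinement, the chain of ideals should be $\sH_S$ for the down-closed unions $S=c_1\cup\dots\cup c_k$, since the $\sH_{\le c_k}$ themselves do not form a chain when cells are incomparable.
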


\begin{proof}
By \S\ref{sss:trace-of-hecke}, 
\[\Rep(G(\F_q))^\unip \simeq HH(\sH; \Frob).\]
Therefore \Cref{constr:semiorthogonal-trace} implies that the system of semiorthogonal decompositions of $\sH$ defined in \S\ref{ss:defn-Hc} gives rise to a system of semiorthogonal decompositions of $\Rep(G(\F_q))^\unip$. Every semiorthogonal decomposition of a semisimple category is a direct sum decomposition, so we obtain 
\[\Rep(G(\F_q))^\unip \simeq \bigoplus_c \Rep(G(\F_q))^\unip_c\]
where 
\[\Rep(G(\F_q))^\unip_c = HH(\sH_c; \Frob).\]
Now we have 
\[HH(\sH_c; \Frob) \overset{(\textrm{\Cref{prop:hh-hc-equals-hh-bc}})}{\simeq} HH(\sB_c; \Frob) \overset{(\textrm{\Cref{prop:hh-bc-equals-hh-jc}})}{\simeq} HH(\sJ_c; \id).\]
As explained in \S\ref{sss:intro-losev-ostrik}, \cite[Theorem 5.1]{Losev_Ostrik_2014}
implies the existence of an equivalence 
\[\sJ_c\tmod \simeq \Vect_{A_c, \omega}\]
for some finite group $A_c$ and some 3-cocycle $\omega$. Now the result follows from \cite{freed1994higher, willerton2008twisted}, which show that 
\[\tr(\id; \Vect_{A_c, \omega}) \simeq \QCoh_\sG(A_c/A_c)\]
for some gerbe $\sG$ defined in terms of $\omega$. 
\end{proof}

\subsubsection{}\label{sss:gerbe-trivalize} In fact, it is known that the gerbe $\sG$ appearing in \Cref{thm:main} may always be trivialized. This is because case-by-case calculations show that for all but three two-sided cells (one in type $E_7$ and two in type $E_8$), $\omega$ is trivial. These three cells with nontrivial $\omega$ are called exceptional, and for all three exceptional cells $A_c = \Z/2$. For the non-exceptional cells, the triviality of $\omega$ implies the triviality of $\sG$. On the other hand, for the exceptional cells, 
\[A_c/A_c \simeq B(\Z/2) \sqcup B(\Z/2)\]
and every gerbe on $B(\Z/2)$ is trivializable. 

\bibliographystyle{alpha}
\bibliography{text}

\end{document}